\newtheorem{thm}{Theorem}
\newtheorem{rem}{ Remark}[section]
\newtheorem{prop}{\bf Proposition}[section]
\newtheorem{lem}{\bf Lemma}[section]
\newtheorem{assump}{\bf Assumption}[section]
\begin{document}
%
\title{
A general framework for denoising phaseless diffraction measurements
}
%
%

\author{Huibin Chang,
Stefano Marchesini
\thanks{H. Chang is  with Department
of Mathematical Sciences, Tianjin Normal University, Tianjin, 300387 China, e-mail:changhuibin@gmail.com.
He is currently a visiting scholar of the Center For Applied Mathematics for Energy Research Applications at Lawrence Berkeley National Laboratory.
}
\thanks{S. Marchesini is with Computational Research Division, Lawrence Berkeley National Laboratory, Berkeley.}
\thanks{
}
}

\markboth{Journal of \LaTeX\ Class Files,~Vol.~11, No.~4, December~2012}%
{Shell \MakeLowercase{\textit{et al.}}: Bare Demo of IEEEtran.cls for Journals}

\maketitle

\begin{abstract}
We propose a general framework to recover underlying images  from noisy phaseless diffraction measurements
 based on the alternating directional method of multipliers and the plug-and-play technique.
 The algorithm consists of three-step iterations: (i) Solving a generalized least square problem with the maximum a posteriori (MAP) estimate  of the noise, (ii) Gaussian denoising  and (iii) updating the multipliers.
The denoising step utilizes higher order filters such as total generalized variation and nonlocal sparsity based filters including nonlocal mean (NLM) and Block-matching and 3D filtering (BM3D) filters.
The multipliers are updated by  a symmetric technique to increase convergence speed.
 The proposed method with low computational complexity is provided with theoretical convergence guarantee, and it enables recovering  images  with sharp edges, clean background and repetitive features from noisy phaseless measurements.
  Numerous numerical experiments for Fourier phase retrieval (PR) as  coded diffraction and ptychographic patterns are performed to verify the convergence and efficiency, showing that our proposed method outperforms the  state-of-art PR algorithms without any regularization and those with total variational regularization.
\end{abstract}

\begin{IEEEkeywords}
Phaseless diffraction measurements, Ptychographic phase retrieval,  coded diffraction pattern, image denoising, ADMM, BM3D
\end{IEEEkeywords}

%
\IEEEpeerreviewmaketitle

\section{Introduction}

\IEEEPARstart{P}hase retrieval (PR) plays a central role in the X-ray diffraction imaging in vast industrial and scientific applications, such as  crystallography and optics in \cite{Walther1963,marchesini2003x,Harrison:93,Miao2008,Shechtman2014}, \emph{etc}, and its goal is to reconstruct an object from phaseless measurements, \emph{i.e.} only pointwise magnitudes of the Fourier transform  (FT) of the object are available. Throughout the paper we discuss  the  PR from the noisy phaseless measurements in a discrete setting.

Assume we have an underlying  2-dimensional unknown object $u$  with  $n_1\times n_2$ pixels.  We denote $u$ in terms of a vector of size $n=n_1 \times n_2$  by a lexicographical order, i.e.
 $$u:\Omega=\{0,1,\cdots,n-1\}\rightarrow  \mathbb{C}^{}.$$
We call classical PR problem \cite{wen2012} when
one is given the magnitudes of the Fourier transform of $u$, \emph{i.e.},
$|\mathcal{F} u|^2$, where $|\cdot|^2$ denotes the pointwise  square of the absolute value of a vector, and $\mathcal{F}:\mathbb{C}^{n}\rightarrow \mathbb{C}^{n}$ denotes the discrete Fourier transform (DFT), and
the problem consists of  retrieving the underlining image $u$.
  A general PR problem \cite{Waldspurger2012,jaganathan2015phase} can be obtained as follows
\begin{equation*}\label{eqGPR}
{\text{To find ~}u\in \mathbb C^n,~~} s.t.~~|\mathcal A u|^2= b,
\end{equation*}
with the phaseless measurement $b:\tilde \Omega=\{0,1,\cdots,m-1\}\rightarrow\mathbb R_+$,
if one can extend the DFT operator $\mathcal F$ to arbitrary linear operator $\mathcal{A}:\mathbb C^n\rightarrow \mathbb C^m$.

The measurements can be contaminated by noise, or miscalibration of the experimental geometry, or the illumination mask, which leads to further trouble for the algorithm design. Generally speaking, PR involves solving a quadratic inverse problem, which is ill-posed and challenging. Here we give a brief review for  the computational tools for it. In 1947 D. Gabor \cite{gabor1948new} combined a known ``reference'' signal which linearized the PR problem, and received the Noble prize in Physics in 1971, in the 1950s   Karle and Hauptman  \cite{karle1950phases}, as well as Sayre \cite{sayre1952squaring} developed methods to solve the PR problem for binary atomic crystals. Hauptman and Karle  received the Nobel prize in Chemistry in 1985.  In earlier work, phaseless measurements of X-ray and electron diffraction played a pivotal role in unequivocally showing the existance of atoms \cite{eckert2012disputed}, atomic structure of crystals by the Braggs, molecular components of life including DNA, and later RNA, and over 100,000 proteins structures to date awarded by more than 10 nobel prizes in physics, chemistry and biology. In 1969 Hoppe \cite{hoppe1969beugung} proposed to use multiple diffraction measurements from a scanning sample, and in 1990s Rodemburg, Bates, and Chapman independently experimentally demonstrated a linearized inversion scheme based on the Wigner Distribution Deconvolution method \cite{rodenburg:92,chapman1996phase}. The complexities of the Wigner Deconvolution method are very high, which is about the square of the number of unknowns, and therefore it has not gained popularity in the experimental community.
In 1972  Gerchberg and Saxton \cite{Gerchberg1972}, introduced an alternating projection algorithm for a problem whereby one records phaseless measurements of the the scattering amplitude, and sample transmission.  Fienup extended the method and applied it to the classical phase retrieval problem \cite{Fienup1982}. The connection between Fienup' Hybrid Input Output and the Douglas-Rachford algorithm was discussed in \cite{Bauschke2002,Bauschke2003}, and the connection between the Douglas-Rachford and the ADM algorithm was discussed in \cite{wen2012}.  Several variant heuristic algorithms popular in the optics community including \cite{Elser2003,Luke2005} were proposed to solve the classical PR problems, and one can  refer to  \cite{marchesini2007invited,wen2012} and the reference therein. These methods use projections onto nonconvex constraint set, and therefore it is very difficult to ensure the convergence theoretically. Recently, more work has focused on the convergence theory. A global convergence for  Gaussian measurements was  analyzed by Netrapalli {\em et al.} \cite{netrapalli2015phase}.  The convergence under a generic frame  was proved in  \cite{marchesini2015alternating}, and spectral methods including initialization by truncation phase synchronization and framewise phase-synchronization were employed to accelerate  projection methods for large scale ptychographic PR.  Newton-type algorithms were proposed by Qian \emph{et al.} \cite{qian2014efficient} and Zhong \emph{et al.}\cite{zhongnonlinear} to accelerate the convergence. A modified Levenberg-Marquardt  method was proposed \cite{maglobally} with global convergence guarantees.  A Wirtinger flow (WF) approach was proposed in \cite{candes2015phase} by comprised of an initialization  by spectral method and gradient descent, and was further improved by  truncated Wirtinger flow (TWF) \cite{chen2015solving}. A Douglas-Rachford (DR) algorithm for PR with oversampling measurements was proved to be locally and geometrically convergent by Chen and Fannjiang \cite{chen2015fourier}.  In order to deal with the  conventional nonconvex projection algorithms for PR, the PhaseLift \cite{candes2013phaselift} algorithm was proposed by Cand\'es {\em et al.} with the lift technique of semi-definite programming(SDP). The  PhaseCut algorithm  was proposed by Waldspurger \emph{et al.}~\cite{Waldspurger2012}, where  the PR problem was convexified by separating phases and magnitudes.  A computationally tractable low-rank factorization method using lift technique of SDP  was proposed in \cite{horstmeyer2015solving} for solving ptychographic PR.


Sparse prior information of the unknown $u$ can be efficiently incorporated into PR in order to increase the quality of reconstructed image, especially when the data is noisy and incomplete.   The \textsc{shrink-wrap} algorithm generalized the regression model by slowly shrinking the size of the support of non-zero coefficients of $u$ \cite{marchesini:2003} and has been applied to several ground breaking experiments using X-ray Free Electron Lasers \cite{chapman2006femtosecond}. Similar methods based on hard thresholding, inversion \cite{chargeflip}, and $L_1$ soft thresholding have been proposed over the years, e.g. \cite{moravec2007compressive,marchesini2009ab}.  SDP-based methods were proposed to solve a sparse PR problem  \cite{ohlsson2012cprl,li2013sparse}. Directly extension of the Fieup's method \cite{Fienup1982} was proposed in  \cite{mukherjee2014fienup} by an additional sparsity constraint  of the $L_0$ norm for $u$. The $L_1$ regularization based variational method \cite{yang2013robust} was applied to the classical PR problem. An efficient local search method for recovering a sparse signal for the classical PR was presented in \cite{shechtman2014gespar}. A  probabilistic method  based on the generalized approximate message passing algorithm was proposed in \cite{schniter2015compressive}.   The shearlet and total variation sparsity regularization methods were considered in \cite{loock2014phase,chang2015,chang2016} by assuming the objects possess a sparse representation in the transform domain. Dictionary learning methods were proposed to reconstruct the image in \cite{tillmann2016dolphin,qiu2016undersampled}, where a dictionary was automatically learned from the redundant  image patches.

In this paper, we consider the PR problem for  the sparse images $u\in \mathbb C^n$ (or $\mathbb R^n$) in which the measured data $f\in \mathbb R^m_+$ are corrupted due to severe noisy measurements as
\[f=\mathrm{Corrupt}(|\mathcal A u|^2),\]   and a general minimization problem driven by regularization method of the underlying image can be established as
\begin{equation}\label{model}
\min\limits_{u} \Upsilon(u):=\mathbf B(|\mathcal A u|^2,f)+\lambda {\mathbf R}(u),~~s.t.~u\in \mathscr K
\end{equation}
where $u\in \mathscr K$ is an underlying image that we want to reconstruct from magnitude data $f$, $\mathbf  R(u)$ is the sparse promoted term, $\mathbf B(|\mathcal A u|^2,f)$ is the data fitting term deduced by the  maximum a posteriori (MAP) of the noise distribution,  $\lambda$ is the positive parameter to balance the sparsity and data fitting terms. Note that the set $\mathscr K$ is introduced to add some condition as positivity \cite{Shechtman2014} or the box constraint \cite{chang2015} and support set. Stimulated by the alternating directional method of multipliers \cite{Wu&Tai2010,boyd2011distributed} (ADMM) and the plug-and-play technique \cite{venkatakrishnan2013plug,li2014universal}, we  reformulate it to a novel framework for PR with theoretical convergence guarantee, which consist of three steps: Solving a generalized least square problem with the maximum a posteriori (MAP) estimate  of the noise, Gaussian denoising  and updating the multipliers. In order to implement  the denoising step, we utilize the higher order total variation such as total generalized variation and nonlocal sparsity based filters including nonlocal mean and Block-matching and 3D filtering (BM3D) filters in order to further improve the image quality compared with the work in \cite{loock2014phase,chang2015,chang2016}. Moreover, a symmetric  technique  is applied to the multiplier updating to order to increase the convergence speed.

The rest of this paper is organized as follows.
In section \ref{sec:model}, we will show the general framework starting from the ADMM for solving \eqref{model}. In section \ref{secAlg}, detailed numerical algorithms will be given, and the convergence of the proposed method is proved as well. In section \ref{sec:experiments}, the numerical experiments are performed to demonstrate the effectiveness of the proposed methods.  Conclusions and future works are given in section \ref{sec:conclusions}.

\section{General framework}\label{sec:model}

\subsection{MAP of noise}
We  consider two important types of noise as Poisson and Guassian noise. The measurements are usually contaminated by the Poisson noise for photon-counting, which follow Poisson distribution as follows
 \begin{equation*}
 \mathrm{Pr}_\mu(n)=\dfrac{e^{-\mu}{\mu^n}}{n!}, \qquad n\geq 0,
 \end{equation*}
 where $\mu$ is the mean and standard deviation.
The counted number of photons at pixel located at index $i$, denoted as $f(i)$, follows i.i.d. Poisson distributions with $h(i)$ being the ground-truth value  as \begin{equation}
 f(i)\stackrel{\mathrm{ind.}}{\sim}\mathrm{Poisson}(h(i)),~\forall i\in \tilde\Omega.
\end{equation}
By  MAP for a clean image $h$, the denoising problem can be expressed as $\max \mathrm {Pr}(h(i)|f(i))$ if   $f$ is measured. One can readily have
\begin{equation}
\label{eq:MLE}
{\mathrm {Pr}}(h(i)|f(i))=\dfrac{{\mathrm {Pr}}(f(i)|h(i)){\mathrm {Pr}}(h(i))}{{\mathrm {Pr}}(f(i))},
\end{equation}
according to the Bayes' Law, and as a result, maximization of $\mathrm {Pr}(g(i)|f(i))$ is equivalent to maximization of $ {\mathrm {Pr}}(f(i)|g(i)){\mathrm {Pr}}(g(i))$. Therefore, we have
\begin{equation*}
{\mathrm {Pr}}(f(i)|h(i)) = \mathrm{Pr}_{h(i)}(f(i)) = \dfrac{e^{-h(i)}{h(i)^{f(i)}}}{(f(i))!}.
\end{equation*}
Finally, one should minimize the logarithm of the ${{\mathrm {Pr}}(f(i)|h(i)){\mathrm {Pr}}(h(i))}$  instead,  {\em i.e.}
\begin{equation*}\label{Bayes}
\begin{split}
\min\limits_{h\geq 0}&\sum\limits_{i\in\tilde \Omega} -\log{{\mathrm {Pr}}(f(i)|h(i))}-\log {\mathrm {Pr}}(h(i))\\
=\min\limits_{h\geq 0}&\sum\limits_{i\in\tilde \Omega} (h(i)-f(i)\log{h}(i))-\log {\mathrm {Pr}}(h(i)).
\end{split}
\end{equation*}
In a similar manner by replacing the Poisson distribution with Gaussian distribution, one can readily get the MAP of it, and we summarize them as follows
\begin{equation}\label{MAP}
\mathbf B(h,f):=
\left\{
\begin{split}
&\frac12\langle h-f\log h,\bm 1\rangle, \text{ for  Poisson noised $f$};\\
&\frac12\|h-f\|^2, \qquad~\text{ for Gaussian noised $f$, }
\end{split}
\right.
\end{equation}
where  $\bm 1$ denotes  a vector whose entries are all one, and  $\langle\cdot,\cdot\rangle$ denotes the $L^2$ inner product of two vectors.

\subsection{General framework for PR}

Regularization often plays an important role for noise removal, and therefore, one can  establish a general model \eqref{model} if the measurements are collected from PR based on the above MAP. We will show how to build up a general framework starting from the ADMM  step by step.

As a very popular solver, ADMM is  simple and efficient to solve the existing variation models for noise removal. It can be obtained   by introducing an auxiliary variable $\bm p=\nabla u$ as \cite{chang2015,chang2016} to prevent directly solving a non-differential minimization problems by splitting technique. However, in order to solve \eqref{model}, there is an equivalent form to formulate the ADMM, which can provide a quite different understanding of the algorithm as \cite{venkatakrishnan2013plug}. By introducing an auxiliary viable $u=v$, one can readily decouple the data fitting term and regularization term as follows:
\begin{equation}\label{model1}
\begin{split}
\min\limits_{u,v}&\mathbf B(|\mathcal Au|^2,f)+I_{\mathscr K}(u)+\lambda \mathbf R(v),\\\
 &s.t.\qquad~u=v,
\end{split}
\end{equation}
where the indicator function $I_{\mathscr K}(u)$ defined by
\[
I_{\mathscr K}(u)=\left\{
\begin{split}
0,&~~u\in \mathscr K;\\
+\infty,&~~\text{otherwise,}
\end{split}
\right.
\]
with constrained set $\mathscr K$.

In order to solve \eqref{model1}, the corresponding  augmented Lagrangian is introduced with the multiplier $\Lambda$ as
\begin{equation}\label{Lagrangian}
\begin{split}
\mathcal L_r(u,v;\Lambda)=&\mathbf B(|\mathcal Au|^2,f)+I_{\mathscr K}(u)+\lambda \mathbf R(v)\\
+\mathrm{Re}&(\langle u-v,\Lambda\rangle)+\frac{r}{2}\|u-v\|^2,
\end{split}
\end{equation}
 where   the parameter $r$ is  a positive constant, and $\mathrm {Re}(\cdot)$ denotes the  real part of a complex-valued number.
The ADMM with symmetric updating scheme  following \cite{he2014strictly} can be written in order to solve the saddle point problem
\[
\max\limits_{\Lambda}\min\limits_{u,v}\mathcal L_r(u,v;\Lambda)
\]
as
\begin{equation}\label{ADMM}
\left\{
\begin{split}
&u^{k+1}=\arg\min\limits_{u} \mathcal L_r(u,v^{k};\Lambda^{k}),\\
&\Lambda^{k+1/2}=\Lambda^{k}+r(u^{k+1}-v^{k}),\\
&v^{k+1}=\arg\min\limits_{v} \mathcal L_r(u^{k+1},v;\Lambda^{k+1/2}),\\
&\Lambda^{k+1}=\Lambda^{k+1/2}+ r(u^{k+1}-v^{k+1}),
\end{split}
\right.
\end{equation}
if provided with the $k^{th}$ iterative solution $(u^k,v^k,\Lambda^k).$
 The  above algorithm is slightly different to \cite{he2014strictly} since a relax parameter for the step size $r$ can not accelerate the convergence based on the numerical experiments reported in \cite{he2014strictly} and therefore we omit it for simplicity. In our numerical experiments, updating the multiplier with symmetric style can indeed speedup the proposed method (see details in Figure \ref{hist1}) compared with asymmetric style.

Let us analyze the first and the third subproblems one by one. First for the first subproblem, one needs to solve
\[
\begin{split}
u^{k+1}=&\arg\min\limits_{u} \mathcal L_r(u,v^k;\Lambda^k)\\
=\arg\min\limits_u&\mathbf B(|\mathcal Au|^2,f)+I_{\mathscr K}(u)+\frac{r}{2}\|u-(v^{k}-{\Lambda^k}/{r})\|^2,
\end{split}
\]
which we call it a generalized least square minimization problem with respect to $u$ with data term as $v^{k}-\frac{\Lambda^k}{r}$ in order to satisfy the statistical property of the noise.
Similarly, for the third subproblem with respect to the variable $v$,
\begin{equation}\label{denoiseModel}
v^{k+1}=\arg\min\limits_v  \lambda \mathbf R(v)+\frac{r}{2}\|v-(u^{k+1}+\Lambda^{k+1/2}/r)\|^2,
\end{equation}
which we call it a denoising step in order to  smooth the data $u^{k+1}+\Lambda^{k+1/2}/r$. If setting $\mathbf R(v)=\mathrm{TV}(v)$, the well-known total variation \cite{osher1990feature,alvarez1992image,malladi1995image,rudin1992nonlinear} based model was considered in for Gaussian noised data in \cite{chang2015} and Poisson noised data in \cite{chang2016}.

Naturally one can build other generalized least square forms in the first subproblem by the maximum a posteriori (MAP) estimate for other kinds of noise, and in this paper we only focus on the Gaussian and Poisson noise, which are common and challengeable for PR. One can also generalize the third step by arbitrary variational denoising method such as higher order TV \cite{LLT2006,bredies2010total,tai2011fast} instead of the traditional TV in order to promote the sparsity. Moreover, similar to the work in \cite{danielyan2012bm3d,venkatakrishnan2013plug,li2014universal}, some advanced filters as BM3D can be introduced to replace the variational image denoising methods.  Therefore the proposed general framework  is written in a unified form as
\begin{equation}\label{ADMMProximal}
\left\{
\begin{array}{lll}
\text{Step 1:}&u^{k+1}=\mathrm{Prox}_{\mathbf G/r+I_{\mathscr K}}\left(v^{k}-\frac{\Lambda^k}{r}\right),\\
\text{Step 2:}&\Lambda^{k+1/2}=\Lambda^{k}+r(u^{k}-v^{k+1}),\\
\text{Step 3:}&v^{k+1}=\mathrm{Prox}_{\sigma \mathbf{R}}\left(u^{k+1}+\frac{\Lambda^{k+1/2}}{r}\right) ,\\
\text{Step 4:}&\Lambda^{k+1}=\Lambda^{k+1/2}+ r(u^{k+1}-v^{k+1}),
\end{array}
\right.
\end{equation}
where the proximal operator is denoted as
\[\mathrm{Prox}_{s}(v)=\min\limits_u s(u)+\frac{1}{2}\|u-v\|^2,\] the operator $\mathrm{Prox}_{\sigma\mathbf{R}}$  denoises the data $v$, the noise level is directly denoted by $\sigma:=\frac{\lambda}{r}$ in \cite{rond2015poisson}, and \[\mathbf G(u):= \mathbf B(|\mathcal Au|^2,f).\]

In this paper, we will consider to implement this framework in two aspects. First, we consider two  different data fitting terms of  $\mathbf B(\cdot,\cdot)$  for  the Poisson and Gaussian  noise removal respectively. Second, for the denosing procedure in Step 3, two kinds of operators can be employed: One is the variational models with the explicit expression of $\mathbf G$   as ROF \cite{rudin1992nonlinear}, LLT \cite{lysaker2003noise}, nonlocal TV \cite{zhang2010bregmanized} and total generalized variation (TGV) \cite{bredies2010total}; The other kind is the filter based without the explicit expression of $\mathbf R$ such as the bilateral  filter   \cite{tomasi1998bilateral}, nonlocal means filter (NLM) \cite{buades2010image}, and BM3D filter \cite{dabov2007image}. Our proposed method is rather simple and flexible, and can transform the denoising the noisy PR to traditional PR task (without regularization or filter) and the image denoising task. Therefore, a group of denoising methods can be employed to further improve the start-of-art PR methods. It will be very efficient, since we combine some advanced filters, and on this sense, we introduce a group of new filters for PR.
\subsection{Connection with other related methods for PR}
Our proposed method belongs to the ``black box'' methods. One do not need to specifically design elaborate algorithms for the original optimization models, and only focus on designing efficient solvers for subproblems.
Actually similar ideas already exist in \cite{danielyan2012bm3d,li2014universal,li2016new} where a general framework for image deblurring and inpainting in image and transform domains were proposed. Compared with the work \cite{chang2015,chang2016}, one can design different and more efficient algorithm for the inner loop for the Step 1 and Step 3 of our proposed method to solve the generalized least square and  image denoising problems. Compared with the work \cite{tillmann2016dolphin,qiu2016undersampled}, more fast and efficient filter as BM3D can be incorporated for the denoising step in our framework, and furthermore Poisson noised data can be dealt with by our proposed method. In a recent work in \cite{metzler2016bm3d}, a general framework was also proposed based on generalized approximate message passing from noisy data. However, they consider a different problem with measurements $f$  as
\[
f=|\mathcal A u+\epsilon|
\]
where $\epsilon$ denotes the additive noise, and  one readily sees that the noise happens to  $\mathcal A u$.
    In summary, we aim to bridge the gap between the PR algorithm and the image processing methods in this paper. The core idea in this paper is quite close to the error reduction algorithm \cite{Gerchberg1972} for PR, and the alternating projection is computed between the magnitude constraint set and the sparsity promoted evolution.

\section{Numerical algorithm and convergence analysis}\label{secAlg}
In this section we will present the algorithm details in the following two subsections for the generalized least quare problem in the step 1 of \eqref{ADMMProximal} and the denoising problems in the Step 3 of \eqref{ADMMProximal} separately.
\subsection{Step 1 of \eqref{ADMMProximal}: Solving the generalized least square subproblems}
We focus on  solving the first subproblem this subsection. By setting $g=v^{k}-\frac{\Lambda^k}{r}$,  it becomes
\[u^{k+1}=\mathrm{Prox}_{\mathbf G/r+I_{\mathscr K}}(g)\]
There are many choices for solve the traditional PR problems. However, these methods can not be directly applied to such model with additional quadratic term. An ADMM can be used directly to solve this problem. Similarly to \cite{wen2012,chang2015}, we reformulate it by introducing $z=\mathcal Au$ as
\begin{equation}\label{LagrangianLS}
\min\limits_{u,z} \mathbf B(|z|^2,f)+I_{\mathscr K}(u)+\frac{r}{2}\|u-g\|^2, s.t. z=\mathcal A u,
\end{equation}
with the corresponding augmented Lagrangian as
\[
\begin{split}
 \hat{\mathcal L}_\eta(u,z;\hat \Lambda)&=\mathbf B(|z|^2,f)+I_{\mathscr K}(u)+\frac{r}{2}\|u-g\|^2\\
 +& \mathrm{Re}(\langle z-\mathcal A u,\hat\Lambda\rangle)+\frac{\eta}{2}\|z-\mathcal Au\|^2.
\end{split}
\]
The ADMM consists of three steps. For the subproblem with respect to the variable $u$, one shall compute
the problem as
\[
\min\limits_{u\in \mathscr K} \frac{r}{2}\|u-g\|^2+\frac{\eta}{2}\|\mathcal Au-(z+\hat\Lambda/\eta)\|^2.
\]
If the constrained set $K=\mathbb C^n$, similarly to \cite{chang2016}, the real and complex parts of minimizer of the above problem satisfies the  the following equations
\begin{equation}\label{subsubU}
\begin{split}
	&\left[
	\begin{matrix}
	\eta{\mathrm{Re}(\mathcal A^* \mathcal A)}+r\mathbf I& -{\mathrm{Im}(\mathcal A^* \mathcal A)}\\
	&\\
	{\mathrm{Im}(\mathcal A^* \mathcal A)}& \eta{\mathrm{Re}(\mathcal A^* \mathcal A)}+r\mathbf I
	\end{matrix}
	\right]
	\left[\begin{matrix}
	\mathrm{Re}(u)\\
	\\
	\mathrm{Im}(u)
\end{matrix}\right]\\
\\
&\qquad=
	\left[
	\begin{matrix}
	\eta\mathrm{Re}(\mathcal A^* \hat z)+r \mathrm{Re}(g)\\
	\\
	\eta\mathrm{Im}(\mathcal A^* \hat z)+r\mathrm{Im}(g)\\
	\end{matrix}
	\right],
\end{split}
	\end{equation}
	with $\hat z=z+\hat\Lambda/\eta$.

We can simplify the solution of above subproblem if the matrix $\mathcal A$ involves
Fourier measurements with masks $\{I_k\}_{k=1}^K$  for coded diffraction pattern as
	\begin{equation}\label{cdp}
	\mathcal Au=
	\left[
	\begin{matrix}
	\mathcal F(w_0\circ u)\\
	\mathcal F(w_1\circ u)\\
	\vdots\\
	\mathcal F(w_{K-1}\circ u)
	\end{matrix}
	\right],
	\end{equation}
where $\circ$ denotes the pointwise multiplication, $w_k$ is a (mask) matrix indexed by $k$, each of which is represented by a vector in $\mathbb C^{n}$ in a lexicographical order.
	Therefore we have $\mathcal A^* \mathcal A=\sum\limits_j w_j^*\circ w_j,$ which is a real-valued matrix. Finally we have
\begin{equation}\label{subsubUSimple}
u_{min}=\big(\eta{\mathcal A^* \mathcal A}+r\mathbf I\big)^{-1}\big(\eta\mathcal A^*(z+\hat\Lambda/\eta)+rg\big),
\end{equation}
since the diagonal matrix ${\eta\mathcal A^* \mathcal A}+r\mathbf I$ is non-singular. One can easily get the same equations as \eqref{subsubUSimple} for ptychographic PR, and we omit the details here. However, if $\mathscr K\not=\mathbb C^n$, it requires us to introduce additional variable to establish the ADMM as \cite{chang2015}. In order to simplify the algorithm, one can use an additional projection step for such case.

For the subproblem with respect to the variable $z$, the following subproblem should be considered
\[
\min\limits_{z}\mathbf B(|z|^2,f)+\frac{\eta}{2}\|z-z_0\|^2,
\]
with $z_0:=\mathcal Au-\hat\Lambda/\eta$.

When the measurement is contaminated by the Poisson noise, we can directly get the solution to the minimization problem
\begin{equation}\label{zsub}
\min\limits_{z}\frac12\langle |z|^2-2f\log|z|,\bm 1 \rangle+\frac{\eta}{2}\|z-z_0\|^2,
\end{equation}
following \cite{wu2011augmented,chang2016} in the following lemma.
\begin{lem}
The minimizer to \eqref{zsub} is
\begin{equation}\label{eqPoisson}
\begin{split}
z_{min}(j)=&\dfrac{\eta|z_0(j)|+\sqrt{\eta^2|z_0(j)|^2+4(1+\eta)f(j)}}{2(1+\eta)}
\\&\qquad\times\mathrm{sign}(z_0(j)),
\end{split}
\end{equation}
for all $j\in \tilde \Omega.$
\end{lem}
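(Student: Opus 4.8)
The plan is to solve the minimization problem \eqref{zsub} by a decoupling argument, reducing the complex-valued optimization to a one-dimensional real problem in the modulus. First I would observe that the objective in \eqref{zsub} is separable over the index $j\in\tilde\Omega$, so it suffices to minimize, for each fixed $j$, the scalar function $\varphi(z) = \tfrac12(|z|^2 - 2f(j)\log|z|) + \tfrac{\eta}{2}|z-z_0(j)|^2$ over $z\in\mathbb{C}$. Writing $z = \rho\,e^{\mathrm{i}\theta}$ with $\rho\geq 0$, the term $|z|^2 - 2f(j)\log|z| = \rho^2 - 2f(j)\log\rho$ depends only on $\rho$, while the coupling term $|z-z_0(j)|^2 = \rho^2 - 2\rho|z_0(j)|\cos(\theta - \arg z_0(j)) + |z_0(j)|^2$ is, for fixed $\rho>0$, minimized precisely when $\cos(\theta-\arg z_0(j)) = 1$, i.e. when $\theta = \arg z_0(j)$, so that $z$ shares the phase of $z_0(j)$; this yields the factor $\mathrm{sign}(z_0(j))$ in \eqref{eqPoisson}. (The degenerate case $z_0(j)=0$ can be handled separately, with any phase being optimal.)

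Next I would substitute $\theta = \arg z_0(j)$ and reduce to minimizing over $\rho\geq 0$ the function $\psi(\rho) = \tfrac12\rho^2 - f(j)\log\rho + \tfrac{\eta}{2}(\rho - |z_0(j)|)^2$. Since $f(j)\geq 0$ and, generically, $f(j)>0$, the term $-\log\rho$ forces $\psi(\rho)\to+\infty$ as $\rho\to 0^+$ and the quadratic growth forces $\psi(\rho)\to+\infty$ as $\rho\to\infty$, so a minimizer exists in the open interval $(0,\infty)$ and is characterized by the first-order condition $\psi'(\rho)=0$, namely
\begin{equation*}
\rho - \frac{f(j)}{\rho} + \eta(\rho - |z_0(j)|) = 0,
\end{equation*}
which, after multiplying by $\rho>0$, becomes the quadratic $(1+\eta)\rho^2 - \eta|z_0(j)|\,\rho - f(j) = 0$. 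Solving by the quadratic formula and discarding the negative root (inadmissible since $\rho\geq 0$) gives exactly $\rho = \dfrac{\eta|z_0(j)| + \sqrt{\eta^2|z_0(j)|^2 + 4(1+\eta)f(j)}}{2(1+\eta)}$, which matches \eqref{eqPoisson}. I would also note that $\psi$ is strictly convex on $(0,\infty)$ (the second derivative is $1 + \eta + f(j)/\rho^2 > 0$), so this critical point is the unique global minimizer.

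The main obstacle, such as it is, is mostly a matter of care rather than depth: one must handle the boundary/degenerate cases cleanly — in particular $z_0(j)=0$ (the phase is arbitrary but the modulus formula still holds by continuity) and the behavior of the $\log|z|$ term at $z=0$ (which is why the minimizer is automatically interior and never zero when $f(j)>0$; if $f(j)=0$ the problem degenerates but the formula still returns the correct value, namely $\rho = \eta|z_0(j)|/(1+\eta)$, the ordinary proximal shrinkage). Once the phase alignment is justified, the remaining computation is the routine solution of a quadratic, so I would present the phase-decoupling step and the convexity/interiority argument in full and merely state the algebra for the quadratic root.
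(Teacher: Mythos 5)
Your proof is correct: the per-index separation, the phase-alignment argument yielding the factor $\mathrm{sign}(z_0(j))$, and the reduction to the scalar quadratic $(1+\eta)\rho^2-\eta|z_0(j)|\rho-f(j)=0$ with the nonnegative root selected is exactly the argument the paper relies on (it defers the details to the cited references, and its own proof of the companion Gaussian lemma follows the identical template). Your added remarks on strict convexity and the degenerate cases $z_0(j)=0$, $f(j)=0$ only strengthen the argument.
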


When the noise is Gaussian, a direct $L^2$ data fitting term can be obtained by the MAP as the first equation in \eqref{MAP}.
Hence the minimization problem should be considered as
\begin{equation}\label{LSG}
\min\limits_{z} \frac12\big\||z|^2-f\big\|^2+\frac{\eta}{2}\|z-z_0\|^2.
\end{equation}
We give a lemma to show how to compute such problem.
\begin{lem}
The minimizer to \eqref{LSG} is
\begin{equation}\label{subsubZ}
z_{min}(j)=\rho(j)\mathrm{sign}(z_0(j)),
\end{equation}
where $\mathrm{sign}(z_0(j))=\dfrac{z_0(j)}{|z_0(j)|}$, and
\begin{equation}\label{cubicsolver}
\begin{split}
&\rho(j)=\\
&\left\{
\begin{split}
&\sqrt[3]{\frac{\eta|z_0(j)|}{4}+\sqrt{D(j)}}+\sqrt[3]{\frac{\eta|z_0(j)|}{4}-\sqrt{D(j)}},~\mbox{if~} D(j)\geq 0; \\
&2\sqrt{{(f(j)-\eta/2)}/{3}}\cos\big(\arccos({\theta_j}/{3})\big),\qquad \mbox{otherwise,}
\end{split}
\right.
\end{split}
\end{equation}
 for all $j\in \tilde\Omega,$
 with \[
D(j)=\dfrac{(\eta/2-f(j))^3}{27}+\dfrac{\eta^2|z_0(j)|^2}{16},\]
and
\[\theta(j)=\dfrac{\eta|z_0(j)|}{4\sqrt{-(\eta/2-f(j))^3/27}}.\]
\end{lem}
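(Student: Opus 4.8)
The plan is to solve the minimization problem \eqref{LSG} by first separating the phase from the magnitude, and then reducing the magnitude part to a scalar cubic equation that can be solved in closed form. First I would observe that, since $\frac12\big\||z|^2-f\big\|^2$ depends on $z$ only through $|z|$, the only place the phase of $z(j)$ enters the objective is through the cross term in $\frac{\eta}{2}\|z-z_0\|^2$, namely $-\eta\,\mathrm{Re}(\overline{z(j)}\,z_0(j))$. For any fixed magnitude $\rho(j)=|z(j)|\ge 0$, this term is minimized by aligning the phase of $z(j)$ with that of $z_0(j)$, i.e. $z(j)=\rho(j)\,\mathrm{sign}(z_0(j))$ with $\mathrm{sign}(z_0(j))=z_0(j)/|z_0(j)|$, which gives \eqref{subsubZ}. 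After this substitution the problem decouples pixelwise and, for each $j$, reduces to minimizing over $\rho\ge 0$ the function
\[
\phi(\rho)=\tfrac12(\rho^2-f(j))^2+\tfrac{\eta}{2}(\rho-|z_0(j)|)^2 .
\]

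Next I would take the derivative and set it to zero: $\phi'(\rho)=2\rho(\rho^2-f(j))+\eta(\rho-|z_0(j)|)=0$, i.e.
\[
2\rho^3+(\eta-2f(j))\rho-\eta|z_0(j)|=0 .
\]
This is a depressed cubic in $\rho$; dividing by $2$ puts it in the standard form $\rho^3+p\rho+q=0$ with $p=(\eta-2f(j))/2=\eta/2-f(j)$ and $q=-\eta|z_0(j)|/2$. I would then apply Cardano's formula. The discriminant-type quantity is $D(j)=(q/2)^2+(p/3)^3=\eta^2|z_0(j)|^2/16+(\eta/2-f(j))^3/27$, exactly as stated. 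When $D(j)\ge 0$ there is a unique real root given by Cardano's formula $\rho=\sqrt[3]{-q/2+\sqrt{D}}+\sqrt[3]{-q/2-\sqrt{D}}$ with $-q/2=\eta|z_0(j)|/4$, matching the first branch of \eqref{cubicsolver}. When $D(j)<0$ (which forces $p<0$, i.e. $f(j)>\eta/2$), the three real roots are given by the trigonometric (\emph{casus irreducibilis}) formula $\rho=2\sqrt{-p/3}\cos\!\big(\tfrac13\arccos(\tfrac{3q}{2p}\sqrt{-3/p})-\tfrac{2\pi k}{3}\big)$, $k=0,1,2$; one checks that the argument $\theta(j)=\eta|z_0(j)|/\big(4\sqrt{-(\eta/2-f(j))^3/27}\big)$ in the statement is precisely $-\tfrac{3q}{2p}\sqrt{-3/p}$ up to sign bookkeeping, and the branch $k=0$ selects the relevant root, giving the second line of \eqref{cubicsolver}.

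The last and most delicate step is to confirm that the critical value produced by these formulas is the \emph{global} minimizer of $\phi$ on $[0,\infty)$, not merely a stationary point. Since $\phi$ is a quartic with positive leading coefficient it is coercive and attains its minimum; the minimizer is either an interior critical point or the endpoint $\rho=0$. I would argue that the chosen root is nonnegative and that, when $\phi'$ has three real roots, the outermost positive one (selected by the $k=0$ branch) is the one with smaller objective value — this can be seen from the sign pattern of $\phi'$ (negative at large $\rho$ is impossible; $\phi'(\rho)\to+\infty$, $\phi'(0)=-\eta|z_0(j)|\le 0$, so the largest root is a local min) together with a direct comparison against $\phi(0)$. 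I expect the main obstacle to be this case analysis for $D(j)<0$: verifying that the trigonometric branch returns a value in $[0,\infty)$ and that it beats both the other stationary points and the boundary, which requires carefully tracking the $\arccos$ branch and the signs of $p$ and $q$. The $D(j)\ge 0$ case is comparatively routine since the real root is then unique. Throughout, the degenerate subcase $z_0(j)=0$ should be handled separately (then $q=0$ and $\rho=\sqrt{\max(f(j)-\eta/2,0)}$ or $\rho=0$), and $\mathrm{sign}(0)$ fixed by convention.
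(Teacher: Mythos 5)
Your proposal is correct and follows essentially the same route as the paper: decouple the problem entrywise, align the phase of $z(j)$ with $z_0(j)$, reduce to minimizing $\phi(x)=\frac12(x^2-f(j))^2+\frac{\eta}{2}(x-|z_0(j)|)^2$ over $x\ge 0$, and solve the resulting depressed cubic $x^3+(\eta/2-f(j))x-\frac{\eta}{2}|z_0(j)|=0$ by Cardano's formula (with the trigonometric form when $D(j)<0$). The only difference is in the last step: where you propose a sign analysis of $\phi'$ and a comparison with the boundary $\rho=0$ to certify global minimality and branch selection, the paper disposes of this via Vieta's formulas (the roots sum to zero and have product $\frac{\eta}{2}|z_0(j)|\ge 0$, so there is exactly one nonnegative real root, which must be the minimizer since $\phi'(0)\le 0$ and $\phi$ is coercive) --- both arguments are valid.
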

\begin{proof}
One readily sees that the optimization procedure is independent for each entry of $z$. Therefore we only consider the problem for each entry  $z(i)$
as
\[
z_{min}(j)=\arg\min\limits_{z(j)} \frac12\big||z(j)|^2-f(j)\big|^2+\frac{\eta}{2}|z(j)-z_0(j)|^2.
\]
Then  we have $z_{min}(j)=\rho(j) \mathrm{sign}(z_0(j))$, where $\rho(j)\geq 0$. We focus on the determination of $\rho(j)$ as
\begin{equation}\label{phi}
\begin{split}
\rho(j)&=\arg\min\limits_{x\geq 0} \phi(x),
\end{split}
\end{equation}
with $$\phi(x):=\frac12(x^2-f(j))^2+\frac\eta2(x-|z_0(j)|)^2.$$
The stationary points of the above problem satisfy the following cubic equation as
\[
x^3+\big(-f(j)+\frac{\eta}{2}\big)x-\frac\eta2|z_0(j)|=0.
\]
The minimizer should be either  the stationary points of $\phi(x)$  or zero.
Based on a simple discussion by Vieta's formulas and the close solution to cubic equation \cite{cubic}, it only has one real non-negative root as \eqref{cubicsolver}, which is also the unique minimizer of \eqref{phi}. Therefore, we can compute the value the non-negative root $\rho(j)$ by \eqref{subsubZ}  to get the minimizer of  \eqref{phi}, that concludes to this lemma.
\end{proof}

In summary, we give the overall algorithm  for this subproblem of Step 1 in \eqref{ADMMProximal} as
\begin{equation}\label{leastSquareAlg}
\left\{
\begin{split}
\text{Step 1:}&\text{ Solve~} u_{k+1} {\text {~by \eqref{subsubU}}} \text{~with~} (z,\hat\Lambda):=(z_k,\hat\Lambda_k),\\
\text{Step 2:}&\text{ Solve~} z_{k+1} {\text {~by \eqref{subsubZ} and \eqref{eqPoisson}}} \\
              &\qquad\qquad \text{with~} (u,\hat\Lambda):=(u_{k+1},\hat\Lambda_k),\\
\text{step 3:}&\text { Update~ } \hat \Lambda_{k+1} \text{~as~}\\
& \qquad\hat\Lambda_{k+1}=\hat\Lambda_{k}+\eta(z_{k+1}-\mathcal A u_{k+1}),
\end{split}
\right.
\end{equation}
for the ${(k+1)}^{th}$ iterations if provided with $(u_k,z_k,\hat\Lambda_k).$

\subsection{Step 3 of \eqref{ADMMProximal}: Denoising subproblems with respect to the variable $v$}

As mentioned above, two  kinds of operators can be considered. First one  is the variational image methods as ROF, LLT, NLTV and TGV, and the other kind is the filter such as the bilateral  filter,  nonlocal means filter and BM3D filter.
\subsubsection{Variational methods}
One needs to compute the subproblem of the third step of \eqref{ADMMProximal} as
\begin{equation}\label{varDenoi}
\min\limits_{v\in \mathbb C^n } \sigma\mathbf{R}(v) +\frac{1}{2}\|v-v_0\|^2,
\end{equation}
where we rewrite \eqref{denoiseModel}  with $v_0:=v^{k}-\frac{\Lambda^k}{r}.$

First for the underling real-valued image, one can directly use the existing algorithm to solve the variational models. However, in order to deal with the complex-valued image, we should give precise definition of TV as
\[
\mathrm{TV}(v)=\sqrt{\sum\limits_j |(\mathrm{D}_x v)_j|^2+|(\mathrm{D}_y v)_j|^2},
\]
where $\mathrm{D}_x(\cdot),\mathrm{D}_y(\cdot)$ denote the forward difference operators of the complex-valued image  with respect to the $x$-direction and $y$-direction. For TV based denoising, we need to compute the minimization problem \eqref{denoiseModel} as
\[
\min\limits_{v\in \mathbb C^n } \sigma\mathrm{TV}(v) +\frac{1}{2}\|v-v_0\|^2,
\]
with the complex-valued variable $v_0$.
By introducing the variable $p=\nabla u:=(\mathrm{D}_x u, \mathrm{D}_y u)$, we have to solve the following saddle point problem as
\[
\begin{split}
\max_{\Psi}\min\limits_{p,v} &\quad\sigma\|p\|+\frac12\|v-v_0\|^2+\mathrm{Re}(\langle p-\nabla v,\Psi\rangle)\\
&\quad\qquad+\frac{\gamma}{2}\|p-\nabla v\|^2,
\end{split}
\]
with  a positive constant $\gamma$.
It consists of three steps w.r.t the variables $v,p$ and the update of the multiplier $\Psi$. For the subproblem w.r.t $v$,
one needs to solve
\[
\min\limits_{v} \frac12\|v-v_0\|^2+\frac{\gamma}{2}\|p+\frac{1}{\gamma}\Psi-\nabla v\|^2.
\]
The above minimization problem can be considered as minimizing of the real and the complex parts of the variable $v$ independently, and therefore by simply separating the real and complex parts of $v$ as \eqref{subsubU} and computing the stationary points, we have
\[
\left\{
\begin{split}
&(-\gamma \Delta+\mathbf I) \mathrm{Re}(v)=\mathrm{Re} (v_0)+\gamma\mathrm{Re}(\nabla (p-\frac1\gamma \Psi) ),\\
&(-\gamma \Delta+\mathbf I) \mathrm{Im}(v)=\mathrm{Im} (v_0)+\gamma\mathrm{Im}(\nabla (p-\frac1\gamma \Psi) ),
\end{split}
\right.
\]
where $\Delta u=\nabla\cdot \nabla u$, $\nabla \cdot p$ denotes the backward difference operator of $p$ which satisfies the adjoint relation as
\[
\langle \nabla v, p\rangle=-\langle v,\nabla\cdot p\rangle.
\]
Finally we obtain the uniform formula for the complex-valued image as
\begin{equation}\label{vsubproblem}
(-\gamma \Delta+\mathbf I) (v)=v_0+\gamma\nabla (p-\frac1\gamma \Psi),
\end{equation}
which actually has a same form for the real-valued image. Furthermore, if using the periodical boundary condition for variable $v$, fast Fourier transformations (FFTs) can be used to solve the above problem with optimal time complexity.

For the subproblem w.r.t the variable $p$, one needs to compute the minimizer $p_{min}$ by the soft thresholding as
\begin{equation}\label{psubproblem}
p_{min}=\max\{0,|\nabla v-\Psi/r|-\sigma/\gamma\}\circ\mathrm{sign}(\nabla v-\Psi/r).
\end{equation}
 In summary, combining the solutions $v_{k+1}$ of \eqref{vsubproblem} and the solution $p_{k+1}$ of \eqref{vsubproblem} with the update of the multipliers $\Psi$ by \[\Psi_{k+1}=\Psi_k+\gamma (p_{k+1}-\nabla v_{k+1}),\] a fast minimization algorithm for  the subproblem \eqref{denoiseModel} is established. For other kind of higher order TV models, one can readily build the efficient ADMMs just by considering the complex-valued image instead of the real-valued image, since the analysis of solving the above TV  model demonstrates the similarity of the cases between the real-valued and complex-valued image. Limited to the space, we do not give all the detailed algorithms for them.

\subsubsection{Filters based methods}
There are a considerable mount of filters for image denoising, and a very comprehensive and deep review was given in \cite{milanfar2013tour}, where it showed that the bilateral filter, boosting, kernel, and spectral method, and nonlocal means and so on are  deeply connected with the current popular iterative methods as the Bregman iterations \cite{goldstein2009split}. Among  these filters, we focus on NLM and BM3D filters in this paper, which promote the patch sparsity and can  be used for the complex-valued image directly.

The nonlocal mean filter can be defined as a linear combination of the nonlocal neighbours \cite{buades2010image} as
\[
\mathrm{NLM}(v)(x)=\sum\limits_{y\in \Omega_x} w(x,y) v(y),~\forall x\in \Omega,
\]
where $v$ is the noisy image, the nonlocal weight function $w(x,y)$ measures the similarity between two pixels satisfying $w(x,y)\geq 0, ~\sum\limits_{y\in\Omega_x} w(x,y)=1, ~\forall x \in \Omega,~y\in \Omega_x$, and the region $\Omega_x$ is the search window at point $x$ which is usually square and selected over the entire image. Nonlocal total variation regularized methods and fast algorithms stimulated by the nonlocal means were also developed \cite{gilboa2008nonlocal,zhang2010bregmanized} for image deblurring, inpainting and reconstruction.

Different to the idea employing the weighted average of nonlocal patches from the noisy image, BM3D enhanced the sparsity by grouping 2D image patches consisting of similar structures or features into 3D patches. Four steps includes (1) Analysis step: Grouping the similar patches into a 3D blocks and linear transformation of the 3D blocks, (2) Processing: Hard thresholding of the transform domain, and (3) Synthesis: Inverse 3D transformation of the shrinkage of the transform domain to the image domain. The variational methods combining the BM3D filter for image deblurring were proposed in \cite{danielyan2012bm3d} and inpainting \cite{li2014universal,li2016new} based on the regularization by $L^p$ norm of the transform domain. The Nash equilibrium problem  as a bilevel optimization problem was further presented, and therefore the method by a direct use of BM3D filter greatly improved the restoration results of traditional variational methods with the single-objective minimization problem.

\subsection{Theoretical analysis}
We will give the theoretical analysis of the proposed framework in \eqref{ADMMProximal}. The denoising subproblem is well defined for both the regularized term or the filters. First we analyze the convergence of ADMM for the generalized least square subproblem in \eqref{leastSquareAlg}. Readily one can infer that the functional $\mathbf G(u)$ is nonconvex, which leads to the main challenge for the convergence study. However, from the numerical performances, it is quite robust. Following \cite{wen2012}, we give the convergence results as follows.
\begin{prop}
Assuming that the multiplier of \eqref{LagrangianLS} exists, the sequences of the multipliers $\hat \Lambda_{k+1}-\hat \Lambda_{k}\rightarrow 0$ as $k\rightarrow \infty,$ and $\{u_k\}$ is bounded, then there exists an accumulative point of $\{u_k,z_k,\hat\Lambda_k\}$ satisfies the Karush-Kuhn-Tucker (KKT) condition of \eqref{LagrangianLS} (a stationary point).
\end{prop}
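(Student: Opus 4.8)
The plan is to mimic the classical convergence argument for ADMM applied to a nonconvex problem (as in \cite{wen2012}), exploiting that only the $z$-subproblem carries the nonconvexity while the coupling is linear. I would start by writing down the first-order optimality conditions of each subproblem in \eqref{leastSquareAlg} at iteration $k+1$. For the $u$-update, since it is a strongly convex quadratic (the constrained case being handled by an extra projection, so assume $\mathscr K=\mathbb C^n$ here), the minimizer $u_{k+1}$ satisfies
\[
r(u_{k+1}-g)-\mathrm{Re}\big(\mathcal A^*(\hat\Lambda_k+\eta(z_k-\mathcal A u_{k+1}))\big)=0.
\]
For the $z$-update, Lemmas~1 and~2 give an explicit minimizer of a (generally nonconvex) separable problem; the key fact I would record is that $z_{k+1}$ is a \emph{stationary point} of $\mathbf B(|z|^2,f)+\frac{\eta}{2}\|z-(\mathcal A u_{k+1}-\hat\Lambda_k/\eta)\|^2$, i.e.
\[
0\in \partial_z \mathbf B(|z_{k+1}|^2,f)+\eta\big(z_{k+1}-\mathcal A u_{k+1}+\hat\Lambda_k/\eta\big),
\]
interpreting $\partial_z$ as the appropriate (Wirtinger / real-imaginary) subdifferential. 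Combining this with the multiplier update $\hat\Lambda_{k+1}=\hat\Lambda_k+\eta(z_{k+1}-\mathcal A u_{k+1})$ turns the $z$-condition into $0\in\partial_z\mathbf B(|z_{k+1}|^2,f)+\hat\Lambda_{k+1}$, and the $u$-condition into $r(u_{k+1}-g)=\mathrm{Re}\big(\mathcal A^*\hat\Lambda_k\big)+\eta\,\mathrm{Re}\big(\mathcal A^*(z_k-z_{k+1})\big)$.

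Next I would pass to the limit along a convergent subsequence. Boundedness of $\{u_k\}$ is assumed; boundedness of $\{\hat\Lambda_k\}$ follows once we know the multiplier increments vanish and one increment is bounded, and boundedness of $\{z_k\}$ follows because $z_{k+1}=\mathcal A u_{k+1}+(\hat\Lambda_{k+1}-\hat\Lambda_k)/\eta$. So pick a subsequence along which $(u_k,z_k,\hat\Lambda_k)\to(u^\star,z^\star,\hat\Lambda^\star)$. The hypothesis $\hat\Lambda_{k+1}-\hat\Lambda_k\to0$ immediately forces the primal feasibility $z^\star=\mathcal A u^\star$ in the limit (from the multiplier update), and also kills the $\eta\,\mathrm{Re}(\mathcal A^*(z_k-z_{k+1}))$ term since $z_k-z_{k+1}=(\hat\Lambda_k-\hat\Lambda_{k+1})/\eta-(\mathcal A u_k-\mathcal A u_{k+1})$... here I would need the extra observation that consecutive $u$-differences also go to zero, which I would get from the $u$-optimality relation once the right-hand side increments vanish, or alternatively restrict to a further subsequence on which $u_k$ and $u_{k+1}$ share the same limit. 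Passing to the limit in the two optimality inclusions then yields $r(u^\star-g)=\mathrm{Re}(\mathcal A^*\hat\Lambda^\star)$ and $0\in\partial_z\mathbf B(|z^\star|^2,f)+\hat\Lambda^\star$, which together with $z^\star=\mathcal A u^\star$ are exactly the KKT conditions of \eqref{LagrangianLS} (stationarity in $u$, stationarity in $z$, primal feasibility).

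The main obstacle, and the point that needs the most care, is the nonsmoothness and nonconvexity of $\mathbf B(|z|^2,f)$: one must use a notion of subdifferential (limiting / Clarke) that is \emph{closed under limits}, so that $0\in\partial_z\mathbf B(|z_{k+1}|^2,f)+\hat\Lambda_{k+1}$ survives the passage to the limit to give $0\in\partial_z\mathbf B(|z^\star|^2,f)+\hat\Lambda^\star$; for the Gaussian case the function is smooth away from $z=0$ and one would argue directly, while for the Poisson case the explicit formula of Lemma~1 shows $z_{min}(j)\neq 0$ whenever $f(j)>0$, keeping us in the smooth region. A secondary technical point is justifying that the subsequential limit of $u_{k+1}$ (and not just $u_k$) exists and coincides; I would handle this by extracting nested subsequences. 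I would also note upfront that, as in \cite{wen2012}, this is a \emph{conditional} convergence statement — the hypotheses $\hat\Lambda_{k+1}-\hat\Lambda_k\to0$ and boundedness of $\{u_k\}$ are assumed rather than derived — so no descent/Lyapunov argument is attempted; the entire proof is the optimality-condition bookkeeping above plus a careful limit.
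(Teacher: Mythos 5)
Your argument is correct and is essentially the paper's own route: the paper's proof of this proposition is just a pointer to \cite{wen2012,chang2015,chang2016}, whose conditional nonconvex-ADMM argument --- record the optimality conditions of the $u$- and $z$-subproblems, rewrite them using the multiplier update, use the vanishing multiplier increments to obtain feasibility $z^*=\mathcal A u^*$, and pass to subsequential limits --- is exactly what you reconstruct (and it matches the style of the paper's own proof of Theorem \ref{thm1}). One remark: of your two fixes for the $z_k$ versus $z_{k+1}$ mismatch, only the first is safe --- successive $u$-differences vanish because the unconstrained $u$-update is affine, $u_{k+1}=(\eta\mathcal A^*\mathcal A+r\mathbf I)^{-1}(\eta\mathcal A^* z_k+\mathcal A^*\hat\Lambda_k+rg)$ with $z_k=\mathcal A u_k+(\hat\Lambda_k-\hat\Lambda_{k-1})/\eta$, so $u_{k+1}-u_k$ obeys a recursion with contraction factor $\|(\eta\mathcal A^*\mathcal A+r\mathbf I)^{-1}\eta\mathcal A^*\mathcal A\|<1$ plus perturbations that vanish by the multiplier hypothesis --- whereas merely extracting a further subsequence does not by itself force the limit of the shifted sequence $u_{k+1}$ to coincide with $u^*$.
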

\begin{proof}
See the detailed proof in the Refs. \cite{wen2012,chang2015,chang2016}.
\end{proof}

 Here we give a very general assumption of the convex regularization  $\mathbf R(v)$ as follows.
\begin{assump}\label{assump}
The functional $\mathbf R(v)$ is proper, closed, and convex, and lower semi-continuous.
\end{assump}

We analyze the convergence of the ADMM for the denosing step of \eqref{ADMMProximal}  when variational regularization models are incorporated in.
\begin{prop}
The ADMM converges to the unique global minimizer of \eqref{varDenoi} for the denoising step of  \eqref{ADMMProximal}.
\end{prop}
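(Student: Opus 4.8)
The plan is to identify the ADMM of this subsection as the classical two-block ADMM for a sum of two proper, closed, convex functions under a linear constraint, invoke the standard convergence theory, and then add one extra argument to upgrade convergence of the residual and multiplier to convergence of $v^{k}$ itself. I would begin with the \emph{uniqueness} half, which is purely about \eqref{varDenoi} and not about the algorithm: by Assumption \ref{assump} the map $v\mapsto\sigma\mathbf R(v)+\tfrac12\|v-v_0\|^2$ is proper, lower semicontinuous and, thanks to the quadratic fidelity term, $1$-strongly convex, hence coercive; therefore it has a unique global minimizer $v^{\star}$. Then I would set up the splitting abstractly: writing $\mathbf R(v)=\mathbf J(\mathbf D v)$ with $\mathbf D$ the bounded linear differential operator of the model ($\mathbf D=\nabla$ for TV, the appropriate higher-order operator for LLT/TGV-type models) and $\mathbf J$ proper, closed and convex — and, for the norm-type regularizers considered here, finite and continuous on the whole space — the denoising problem is equivalent to $\min_{v,p}\tfrac12\|v-v_0\|^2+\sigma\mathbf J(p)$ subject to $p=\mathbf D v$, and \eqref{vsubproblem}, \eqref{psubproblem} together with the multiplier update are precisely the ADMM iterations for it.

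Next I would check that each subproblem is well posed: the $v$-update solves the symmetric positive-definite linear system \eqref{vsubproblem} (the operator $\mathbf I-\gamma\Delta$ is positive definite, hence invertible), and the $p$-update \eqref{psubproblem} is the single-valued proximal map of $\sigma\mathbf J/\gamma$. Since $\mathbf J$ is finite everywhere, a Slater-type condition holds automatically, so strong duality holds and the augmented Lagrangian has a saddle point $(v^{\star},p^{\star};\Psi^{\star})$. With this in hand I would apply the classical ADMM convergence theorem (see \cite{boyd2011distributed} and references therein): for a two-block problem with proper closed convex summands and an existing saddle point, as $k\to\infty$ the primal residual $\mathbf D v^{k}-p^{k}\to 0$, the objective value $\tfrac12\|v^{k}-v_0\|^2+\sigma\mathbf J(p^{k})$ converges to the optimal value, and $\Psi^{k}\to\Psi^{\star}$.

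The only delicate point — and the one I would write out carefully — is that $\mathbf D=\nabla$ (and the higher-order operators) is \emph{not} injective, its kernel containing the constants, so vanishing of the residual and convergence of $\Psi^{k}$ do not by themselves pin down $v^{k}$. Here I would exploit the $1$-strong convexity of the $v$-block: comparing the optimality condition of the $v$-subproblem at step $k$ with the characterization of $v^{\star}$, and using $\mathbf D v^{k}-p^{k}\to 0$ together with $\Psi^{k}\to\Psi^{\star}$, one obtains an estimate of the form $\|v^{k}-v^{\star}\|^2\le\varepsilon_k$ with $\varepsilon_k\to0$, hence $v^{k}\to v^{\star}$; alternatively one can cite the known refinement of ADMM convergence under strong convexity of one block. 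Everything else is routine bookkeeping; this last step, converting strong convexity of one summand into strong convergence of the corresponding primal iterate despite the rank deficiency of $\mathbf D$, is the main obstacle.
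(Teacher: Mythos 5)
Your proposal is correct and follows essentially the same route as the paper: uniqueness of the minimizer of \eqref{varDenoi} from the strong convexity contributed by the quadratic term under Assumption \ref{assump}, and convergence by recognizing the denoising iteration as the classical two-block ADMM for a convex problem and invoking the standard theory (the paper simply cites \cite{Wu&Tai2010,boyd2011distributed} after lifting the complex-valued $v$ into its real and imaginary parts, a step you leave implicit but which is harmless since all operators act componentwise). Your additional care in upgrading residual/dual convergence to convergence of the $v$-iterates via strong convexity of the $v$-block, despite $\nabla$ having a nontrivial kernel, is a detail the paper delegates to the cited references rather than a genuinely different argument.
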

\begin{proof}
Readily one can prove the unique existence of the minimizer to \eqref{varDenoi} under Assumption \ref{assump}. By lifting the dimension of the complex-valued variable $v$ by separating the corresponding complex  and real parts as \cite{chang2016}, one can conclude this proposition for this convex minimization problem following \cite{Wu&Tai2010,boyd2011distributed}.
\end{proof}

We have shown the convergence study of the algorithms for each subproblems, and  one can also derive the following convergence theorem for the general framework in \eqref{ADMMProximal} if the  convex variational regularization functional $\mathbf R(v)$ satisfies Assumption \ref{assump}.
\begin{thm}\label{thm1}
If the subproblems in Step 1 and Step 3 are exactly solved, the algorithm in \eqref{ADMMProximal} is convergent, i.e. the iterative sequences have an accumulative point $(u^*,v^*,\Lambda^*)$, which satisfies the KKT condition of  \eqref{Lagrangian} if the sequences $\{\Lambda^{k+1}-\Lambda^{k+1/2}\}$ or $\{\Lambda^{k+1/2}-\Lambda^{k}\}$ converge to zero and $\{u^k\}$ is bounded.
\end{thm}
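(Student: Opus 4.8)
The plan is to treat the outer iteration \eqref{ADMMProximal} as an exact two-block ADMM with symmetric multiplier updates applied to the separable problem \eqref{model1}, where the two blocks are $u$ (with objective $\mathbf G(u)+I_{\mathscr K}(u)$) and $v$ (with objective $\lambda\mathbf R(v)$), coupled by the linear constraint $u=v$. Since the excerpt assumes the Step 1 and Step 3 subproblems are solved exactly, the iterates satisfy the exact first-order optimality conditions of the two partial minimizations of $\mathcal L_r$ in \eqref{Lagrangian}. First I would write down these two optimality conditions together with the two multiplier updates in Step 2 and Step 4 of \eqref{ADMMProximal}: from the $u$-update, $0\in\partial_u\mathbf G(u^{k+1})+\partial I_{\mathscr K}(u^{k+1})+\Lambda^{k}+r(u^{k+1}-v^{k})$, i.e.\ $0\in\partial_u\mathbf G(u^{k+1})+\partial I_{\mathscr K}(u^{k+1})+\Lambda^{k+1/2}$ after substituting $\Lambda^{k+1/2}=\Lambda^{k}+r(u^{k+1}-v^{k})$; from the $v$-update, $0\in\lambda\partial\mathbf R(v^{k+1})-\Lambda^{k+1/2}-r(u^{k+1}-v^{k+1})=\lambda\partial\mathbf R(v^{k+1})-\Lambda^{k+1}$ after substituting the Step 4 update. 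These two relations are exactly two of the three KKT conditions of \eqref{Lagrangian}; the third, primal feasibility $u^*=v^*$, is what the limiting argument must supply.

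Next I would extract primal feasibility from the hypotheses. From Step 4, $u^{k+1}-v^{k+1}=\tfrac1r(\Lambda^{k+1}-\Lambda^{k+1/2})$, and from Step 2, $u^{k+1}-v^{k}=\tfrac1r(\Lambda^{k+1/2}-\Lambda^{k})$; in either case the assumed convergence of $\{\Lambda^{k+1}-\Lambda^{k+1/2}\}$ or of $\{\Lambda^{k+1/2}-\Lambda^{k}\}$ to zero forces the corresponding residual to vanish along the iterations. Combined with boundedness of $\{u^k\}$ (and hence, via the residual bound just derived together with boundedness of the multiplier increments, boundedness of $\{v^k\}$ and of $\{\Lambda^k\}$ along a subsequence, using the standing assumption that a multiplier of \eqref{model1} exists so the sequences do not escape to infinity), the Bolzano--Weierstrass theorem produces a subsequence $(u^{k_j},v^{k_j},\Lambda^{k_j})\to(u^*,v^*,\Lambda^*)$, along which I would also pass to the limit in the neighbouring indices $k_j\pm\tfrac12$ using the vanishing increments. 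Passing to the limit in $u^{k+1}-v^{k}\to0$ (or $u^{k+1}-v^{k+1}\to0$) yields $u^*=v^*$.

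Finally I would close the loop on the subdifferential inclusions. The term $\partial I_{\mathscr K}$ has closed graph, and $\partial\mathbf R$ has closed graph by Assumption \ref{assump} (proper, closed, convex, lower semicontinuous implies maximal monotonicity, hence demiclosedness of the graph). For $\partial_u\mathbf G$ one must be a little careful because $\mathbf G(u)=\mathbf B(|\mathcal Au|^2,f)$ is nonconvex; here I would invoke that $\mathbf G$ is (locally) continuously differentiable — $|z|^2$ is smooth and the Poisson/Gaussian $\mathbf B$ of \eqref{MAP} is $C^1$ on its domain — so $\partial_u\mathbf G$ reduces to the ordinary gradient $\nabla\mathbf G$, which is continuous, and the limit passage is immediate. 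Then the limit of $0\in\nabla\mathbf G(u^{k_j+1})+\partial I_{\mathscr K}(u^{k_j+1})+\Lambda^{k_j+1/2}$ gives $0\in\nabla\mathbf G(u^*)+\partial I_{\mathscr K}(u^*)+\Lambda^*$, and the limit of $0\in\lambda\partial\mathbf R(v^{k_j+1})-\Lambda^{k_j+1}$ gives $0\in\lambda\partial\mathbf R(v^*)-\Lambda^*$; together with $u^*=v^*$ these are precisely the KKT conditions of \eqref{Lagrangian}. The main obstacle I expect is the bookkeeping needed to guarantee boundedness of $\{v^k\}$ and $\{\Lambda^k\}$ (not merely of $\{u^k\}$) so that an accumulation point exists and the subdifferential inclusions can be closed — this is where the existence-of-a-multiplier hypothesis and the vanishing of the multiplier increments must be combined carefully, and where, because of the nonconvexity of $\mathbf G$, one can only hope for a KKT/stationary point rather than a global minimizer; this is entirely analogous to the situation already handled for the inner iteration in the Proposition preceding Assumption \ref{assump}.
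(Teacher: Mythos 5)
Your argument is correct in outline and shares the paper's two-step skeleton (first boundedness and extraction of a convergent subsequence, then passage to the limit to obtain \eqref{KKTVar}), but the two places where the real work happens are handled differently. For the limit passage, the paper never differentiates the Step-1 objective along the iterates: it writes the exact-minimization inequality $\mathbf G(u^{k+1})+I_{\mathscr K}(u^{k+1})+\frac r2\|u^{k+1}-(v^k-\Lambda^k/r)\|^2\le \mathbf G(u)+I_{\mathscr K}(u)+\frac r2\|u-(v^k-\Lambda^k/r)\|^2$ for all $u$, passes to the limit using only lower semicontinuity of $\mathbf G$ and $I_{\mathscr K}$, and reads the first KKT relation off the limiting proximal characterization (and argues the $v$-relation in the same way). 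You instead pass to the limit in the first-order inclusions, which forces you to assume continuity of $\nabla\mathbf G$ along the subsequence; for the Poisson fidelity in \eqref{MAP} this degenerates wherever $\mathcal Au$ has vanishing entries, so the paper's lsc route is slightly more robust there (differentiability is then needed only at the limit point, to state \eqref{KKTVar}). Both routes are legitimate, and your bookkeeping of the symmetric updates (Step 2 turning the $u$-inclusion into one involving $\Lambda^{k+1/2}$, Step 4 turning the $v$-inclusion into one involving $\Lambda^{k+1}$, and the residuals $u^{k+1}-v^k$, $u^{k+1}-v^{k+1}$ being $1/r$ times the multiplier increments) matches the paper's.

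The one step that is not a proof as written is your justification of the boundedness of $\{\Lambda^k\}$: the existence of a multiplier for \eqref{model1} does not prevent the dual iterates from drifting to infinity, and the vanishing of $\Lambda^{k+1}-\Lambda^{k+1/2}$ (or $\Lambda^{k+1/2}-\Lambda^k$) controls only differences, not the sequence. The paper closes this gap by using the exact Step-3 optimality, $u^{k+1}+\Lambda^{k+1/2}/r-v^{k+1}\in\frac\lambda r\,\partial\mathbf R(v^{k+1})$: since $\{u^k\}$ is bounded by hypothesis and $\{v^k\}$ is bounded through the update/residual relations, local boundedness of the convex subdifferential of $\mathbf R$ on the bounded set of $v$-iterates bounds $\{\Lambda^{k+1/2}\}$, hence $\{\Lambda^k\}$ via the vanishing increments. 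You in fact already derived the needed inclusion, $\Lambda^{k+1}\in\lambda\partial\mathbf R(v^{k+1})$; invoke that (rather than the multiplier-existence hypothesis) and your boundedness step becomes the paper's, after which the rest of your argument goes through.
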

\begin{proof}
One needs to prove $(u^*,v^*,\Lambda^*)$ satisfies that
\begin{equation}\label{KKTVar}
\left\{
\begin{split}
&0\in\nabla_u \mathbf G(u^*)+\partial_u  I_{\mathscr K}(u^*)+\Lambda^*, \\
&0\in \lambda \partial_v \mathbf R(v^*)-\Lambda^*,\\
&0=u^*-v^*.
\end{split}
\right.
\end{equation}
We finish the proof in two steps. In Step 1, we prove the boundedness of the iterative solutions.
First by the multiplier update steps, the sequence $\{v^k\}$ is bounded as a result of the boundedness of $\{u^k\}$. Therefore, the multiplier $\{\Lambda^{k+1/2}\}$ is bounded due to $(u^{k+1}+\Lambda^{k+1/2}/r)-v^{k+1}\in \frac\lambda r \partial_v\mathbf R(v^{k+1}).$ With the assumption of the convergence of $\{\Lambda^{k+1}-\Lambda^{k+1/2}\}$ or $\{\Lambda^{k+1/2}-\Lambda^{k}\}$, the boundedness of $\{\Lambda^k\}$ is derived. Therefore there exists a subsequence
$\{u^k,v^k,\Lambda^k\}$ (still use the same notation to represent this subsequence) and a triple $(u^*,v^*,\Lambda^*)$, such that
\[
\{u^k,v^k,\Lambda^k\}\rightarrow \{u^*,v^*,\Lambda^*\} \text{~as~} k\rightarrow \infty.
\]

In Step 2, we will prove the triple $(u^*,v^*,\Lambda^*)$ satisfies \eqref{KKTVar}. One can readily has $u^*=v^*$ by taking limit in the step of multiplier updates.
Readily one has
\[
\begin{split}
&\mathbf G(u^{k+1})+I_{\mathscr K}(u^{k+1})+\frac{r}{2}\|u^{k+1}-(v^{k}-{\Lambda^k}/{r})\|^2\\
&-\frac{r}{2}\|u-(v^{k}-{\Lambda^k}/{r})\|^2
\leq \mathbf G(u)+I_{\mathscr K}(u)~\forall~u
\end{split}
\]
By taking limits of $u^k,v^k,\Lambda^k$, and the lower semi-continuity of $\mathcal G$ and $I_{\mathscr K}$, one has
\[
\begin{split}
&\mathbf G(u^{*})+I_{\mathscr K}(u^{*})+\frac{r}{2}\|u^*-(v^{*}-{\Lambda^*}/{r})\|^2\\
\leq &\mathbf G(u)+I_{\mathscr K}(u)+\frac{r}{2}\|u-(v^{*}-{\Lambda^*}/{r})\|^2~\forall~u,
\end{split}
\]
which is the first relation of the KKT condition. The second one can be derived in a similar manner. That concludes to the theorem.
\end{proof}
The convergence of the proposed method combining the filters is much difficult than the above case, since one does not know the explicit form of $\mathbf R(v)$ for a general filters such as NLM and BM3D. The convergence is only limited to the fixed-point sense. We give the following assumption.
\begin{assump}
There exists a fixed point $(u^*,v^*,\Lambda^*)$ for the framework  \eqref{ADMMProximal}, i.e.
\begin{equation}\label{fp}
\left\{
\begin{split}
&u^*=\mathrm{Prox}_{\mathbf G/r+I_{\mathscr K}} (v^*-\Lambda^*/r), \\
&v^*=\mathbf D_{\sigma}(u^*+\Lambda^*/r),\\
&u^*=v^*,
\end{split}
\right.
\end{equation}
where
\[\mathrm{D}_{\sigma}(u):=\mathrm{Prox}_ {\sigma\mathbf R } (u)\]
  represents a filter.
\end{assump}
We can eliminate $v^*$ in order to get the following form for $(u^*,\Lambda^*)$
\begin{equation}\label{fixedPoint}
\left\{
\begin{split}
&u^*=\arg\min\limits_u \mathrm{Prox}_{\mathbf G/r+I_{\mathscr K}} (u^*-\Lambda^*/r), \\
&u^*=\mathbf D_{\sigma}(u^*+\Lambda^*/r).
\end{split}
\right.
\end{equation}

Using the condition as Theorem \ref{thm1}, we have the following theorem as follows.
\begin{thm}
Assume the filter is  continuous, i.e.
\[
\mathbf D_\sigma(\hat v_k)\rightarrow \mathbf D_\sigma(\hat v^*)\text{~if~} \hat v_k \rightarrow \hat v^* \text{as~} k\rightarrow\infty
.\]  If the subproblem in Step 1  are exactly solved, the algorithm in \eqref{ADMMProximal} is convergent, i.e. the iterative sequences have an accumulative point $(u^*,v^*,\Lambda^*)$, which is a fixed point  of  \eqref{fp} if the sequences $\{\Lambda^{k+1}-\Lambda^{k+1/2}\}$ or $\{\Lambda^{k+1/2}-\Lambda^{k}\}$ converge to zero and $\{u^k\}$ is bounded.
\end{thm}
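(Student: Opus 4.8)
The plan is to follow the structure of the proof of Theorem \ref{thm1} almost verbatim, replacing the subdifferential characterization of the denoising update by a fixed-point characterization that uses only the continuity hypothesis on the filter. I would organize the argument in two steps exactly as before: first establish boundedness of the whole sequence $\{u^k,v^k,\Lambda^k\}$ and extract a convergent subsequence, and then pass to the limit to verify that the limit point satisfies the fixed-point system \eqref{fp}.

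For Step 1, the multiplier-update equations \eqref{ADMMProximal} give $v^{k+1}=u^{k+1}+(\Lambda^{k+1/2}-\Lambda^{k+1})/r$ and $v^{k}=u^{k+1}+(\Lambda^k-\Lambda^{k+1/2})/r$ (using $u^{k+1}-v^k=(\Lambda^{k+1/2}-\Lambda^k)/r$). Since $\{u^k\}$ is assumed bounded and either $\{\Lambda^{k+1}-\Lambda^{k+1/2}\}$ or $\{\Lambda^{k+1/2}-\Lambda^k\}$ converges to zero, one of these two identities shows $\{v^k\}$ is bounded; then the other telescoping relation, together with the hypothesis that the \emph{other} multiplier gap also tends to zero (note both are assumed small in the statement — or, if only one is, combine it with the fact that $\Lambda^{k+1}-\Lambda^k=(\Lambda^{k+1}-\Lambda^{k+1/2})+(\Lambda^{k+1/2}-\Lambda^k)$ and the boundedness just obtained) yields boundedness of $\{\Lambda^{k+1/2}\}$ and hence of $\{\Lambda^k\}$. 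This is a little more delicate here than in Theorem \ref{thm1}, because there we could also bootstrap boundedness of $\{\Lambda^{k+1/2}\}$ from the inclusion $(u^{k+1}+\Lambda^{k+1/2}/r)-v^{k+1}\in\frac{\lambda}{r}\partial_v\mathbf R(v^{k+1})$; for a general filter no such subgradient is available, so the boundedness of $\{\Lambda^k\}$ must come entirely from the assumed smallness of the multiplier increments plus boundedness of $\{u^k\}$ and $\{v^k\}$. Having bounded $\{u^k,v^k,\Lambda^k\}$, extract a subsequence (keep the same notation) converging to some $(u^*,v^*,\Lambda^*)$.

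For Step 2, taking limits in $u^{k+1}-v^{k+1}=(\Lambda^{k+1}-\Lambda^{k+1/2})/r\to 0$ gives $u^*=v^*$, the third equation of \eqref{fp}. For the second equation, write Step 3 as $v^{k+1}=\mathbf D_\sigma\!\left(u^{k+1}+\Lambda^{k+1/2}/r\right)$; since $u^{k+1}\to u^*$ and $\Lambda^{k+1/2}\to\Lambda^*$ along the subsequence (the latter because $\Lambda^{k+1/2}=\Lambda^{k}+r(u^{k+1}-v^k)$ and all three terms converge, using also $\Lambda^{k+1/2}-\Lambda^k\to0$), the argument of $\mathbf D_\sigma$ converges, and by the assumed continuity of the filter $v^{k+1}\to\mathbf D_\sigma(u^*+\Lambda^*/r)$, whence $v^*=\mathbf D_\sigma(u^*+\Lambda^*/r)$. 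For the first equation of \eqref{fp} I would reuse exactly the argument from the proof of Theorem \ref{thm1}: Step 1 of \eqref{ADMMProximal} is the optimality of $u^{k+1}$ for $\min_u \mathbf G(u)+I_{\mathscr K}(u)+\frac{r}{2}\|u-(v^k-\Lambda^k/r)\|^2$, which gives, for all $u$,
\[
\mathbf G(u^{k+1})+I_{\mathscr K}(u^{k+1})+\tfrac{r}{2}\|u^{k+1}-(v^{k}-\Lambda^k/r)\|^2\le \mathbf G(u)+I_{\mathscr K}(u)+\tfrac{r}{2}\|u-(v^{k}-\Lambda^k/r)\|^2;
\]
passing to the limit along the subsequence and using lower semicontinuity of $\mathbf G$ and $I_{\mathscr K}$ yields that $u^*$ minimizes $\mathbf G(u)+I_{\mathscr K}(u)+\frac{r}{2}\|u-(v^*-\Lambda^*/r)\|^2$, i.e. $u^*=\mathrm{Prox}_{\mathbf G/r+I_{\mathscr K}}(v^*-\Lambda^*/r)$, which is the first equation of \eqref{fp}. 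This completes the identification of $(u^*,v^*,\Lambda^*)$ as a fixed point.

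The main obstacle I anticipate is the boundedness argument in Step 1: without the subgradient inclusion used in Theorem \ref{thm1}, deducing boundedness of $\{\Lambda^k\}$ genuinely relies on the hypothesis that \emph{both} consecutive multiplier gaps (or at least their sum $\Lambda^{k+1}-\Lambda^k$) vanish, and one must be careful that the subsequence along which $\{u^k,v^k,\Lambda^k\}$ converges is chosen \emph{after} — not before — invoking these facts, so that the limits of $\Lambda^{k+1/2}$, $v^k$ and $v^{k+1}$ along that subsequence all exist and agree as needed. Everything else (the limiting argument for the prox step, the continuity argument for the filter step, and $u^*=v^*$) is a routine repetition of the proof of Theorem \ref{thm1} and of the definition of $\mathbf D_\sigma$.
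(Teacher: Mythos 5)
Your overall architecture --- repeat the two-step proof of Theorem \ref{thm1}, replacing the subdifferential characterization of the denoising step by the identity $v^{k+1}=\mathbf D_\sigma\big(u^{k+1}+\Lambda^{k+1/2}/r\big)$ together with continuity of $\mathbf D_\sigma$ --- is exactly what the paper intends (its proof is literally ``argue as in Theorem \ref{thm1}''), and your Step 2 is sound: $u^*=v^*$ from the multiplier update, the prox inequality plus lower semicontinuity for the first relation of \eqref{fp}, and continuity of the filter for the second, modulo the subsequence bookkeeping you yourself flag.

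The genuine gap is in your Step 1. You discard the inclusion $(u^{k+1}+\Lambda^{k+1/2}/r)-v^{k+1}\in\frac{\lambda}{r}\partial_v\mathbf R(v^{k+1})$ on the grounds that no subgradient is available for a general filter, and then claim that boundedness of $\{\Lambda^k\}$ ``must come entirely from the assumed smallness of the multiplier increments plus boundedness of $\{u^k\}$ and $\{v^k\}$.'' That implication is false: even if both half-step gaps $\Lambda^{k+1/2}-\Lambda^k$ and $\Lambda^{k+1}-\Lambda^{k+1/2}$ tend to zero, the sequence $\Lambda^k$ may diverge (e.g.\ $\Lambda^k\sim\log k$); vanishing increments control $u^{k+1}-v^k$ and $u^{k+1}-v^{k+1}$, hence give boundedness of $\{v^k\}$, but say nothing about the size of $\Lambda^k$ itself. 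In Theorem \ref{thm1} the boundedness of $\{\Lambda^{k+1/2}\}$ comes precisely from that subgradient inclusion (local boundedness of $\partial\mathbf R$ on the bounded set of iterates $v^{k+1}$), and only afterwards is the gap hypothesis used to bound $\{\Lambda^k\}$. In the filter setting the analogous ingredient has to come from somewhere: either from the paper's formal definition $\mathbf D_\sigma=\mathrm{Prox}_{\sigma\mathbf R}$, so that the same inclusion is still available for the implicit $\mathbf R$, or from an additional assumption that the residuals $u^{k+1}+\Lambda^{k+1/2}/r-v^{k+1}$ (equivalently the multipliers) stay bounded. Without such an ingredient you cannot extract a convergent subsequence of the full triple $(u^k,v^k,\Lambda^k)$, so the accumulation point $(u^*,v^*,\Lambda^*)$ that the theorem asserts is never obtained; the rest of your argument, which presupposes it, is otherwise a faithful repetition of the paper's route.
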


\begin{proof}
It can be readily proved similarly to the proof of Theorem \ref{thm1}, and we omit the details here.
\end{proof}

\begin{rem}
In the above theorem, we assume that the filter operator is continuous. In real applications, the operator  possibly  relies on the image and therefore one can  hardly give a verification. In the work \cite{chan2016plug}, the operator is assumed to be bounded, and their proposed method  with adaptive steps is convergent to its fixed point if $\mathrm{R}(v)$ has bounded gradient, that can not apply to  the data fitting term in our model.
\end{rem}
%
%

\section{Experiments and discussion}\label{sec:experiments}

\subsection{Numerical implementation}
\label{subsec:comparisons}


In the numerical part, we set $\mathscr K=\mathbb R^n$  and $\mathbb C^n$ for real-valued and complex-valued image respectively in \eqref{model}.  Although we have given a  framework for  PR with arbitrary linear operator $\mathcal A$, we only show the performance  on Fourier measurements involved with two  types of patterns of linear operators $\mathcal A$: coded diffraction pattern (CDP) with random masks
 and ptychographic PR with zone plate lens. For the first pattern,  the octanary CDP is explored, and specifically   each
element of $I_j$ in \eqref{cdp} takes a value randomly among the eight candidates, \emph{i.e.}, $\{\pm \sqrt{2}/2, \pm \sqrt{2}{\mathbf i}/2, \pm \sqrt{3},\pm \sqrt{3}{\mathbf i}\}$. Set $K=2, 4$ for real-valued and complex-valued image respectively  as \cite{chang2016}. The setting of ptychographic PR is given as follows. The number of frames is $16\times16$ with sliding distance $16$ pixels and the frame size  $64\times 64$. The len and the generated  illumination are shown in Figure \ref{illumination}.
\begin{figure}
\begin{center}
\subfigure[]{\includegraphics[width=.11\textwidth]{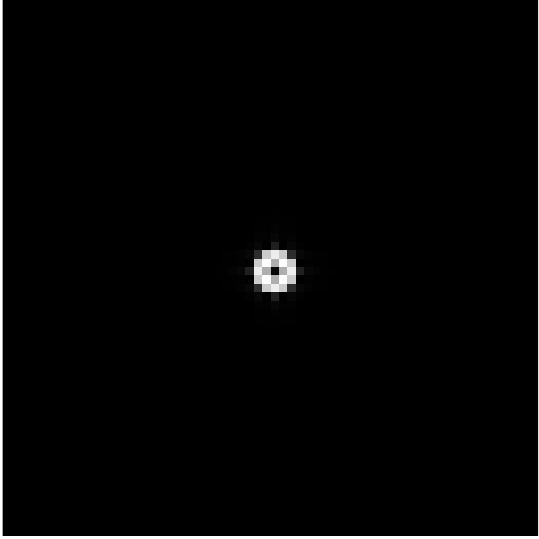}}\quad\qquad
\subfigure[]{\includegraphics[width=.11\textwidth]{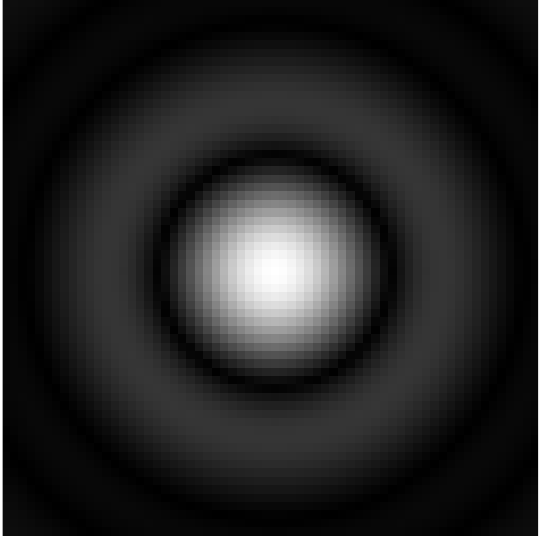}}
\end{center}
\caption{Lens in (a) and the generated illumination in (b) used for ptychographic PR. }\label{illumination}
\end{figure}

The ground truth images  consist of five images including four real-valued with $512\times 512$ pixels and one complex-valued with  $256\times 256$ pixels.
\begin{figure}
\begin{center}
\subfigure[]{\includegraphics[width=.11\textwidth]{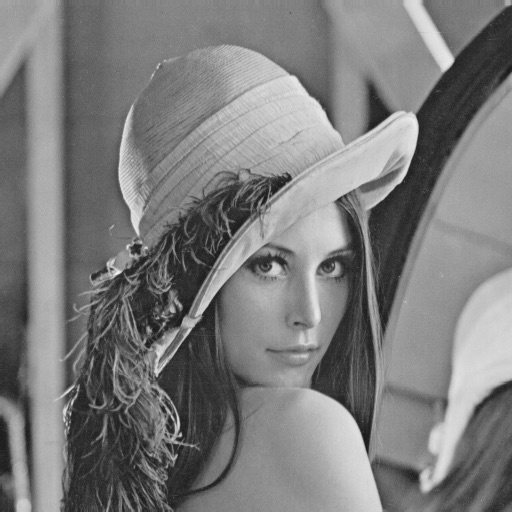}}
\subfigure[]{\includegraphics[width=.11\textwidth]{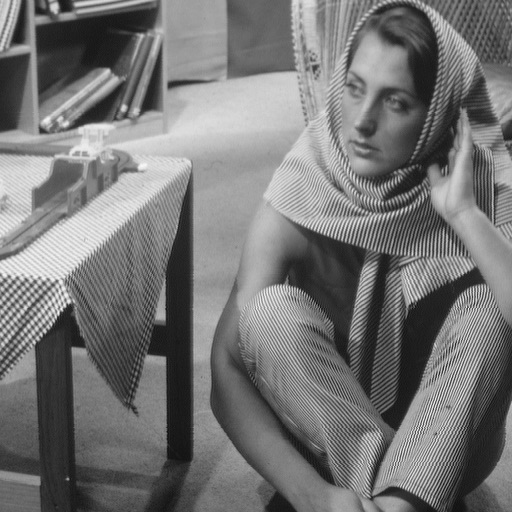}}
\subfigure[]{\includegraphics[width=.11\textwidth]{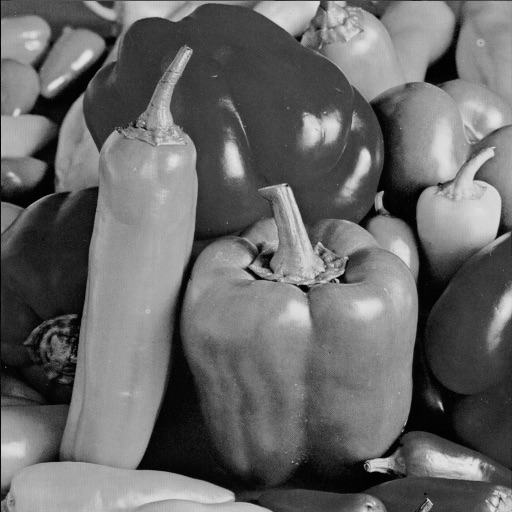}}
\subfigure[]{\includegraphics[width=.11\textwidth]{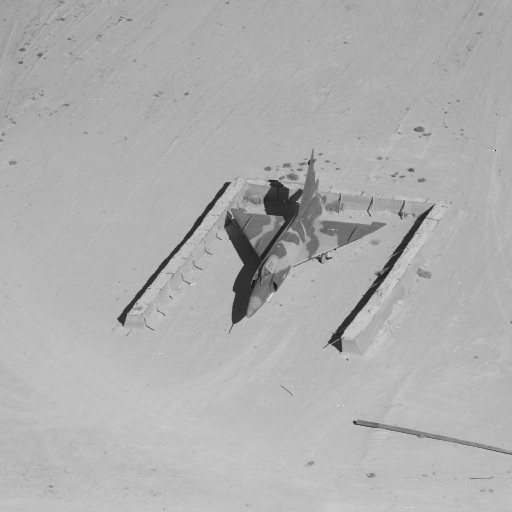}}\\
\subfigure[]{\includegraphics[width=.11\textwidth]{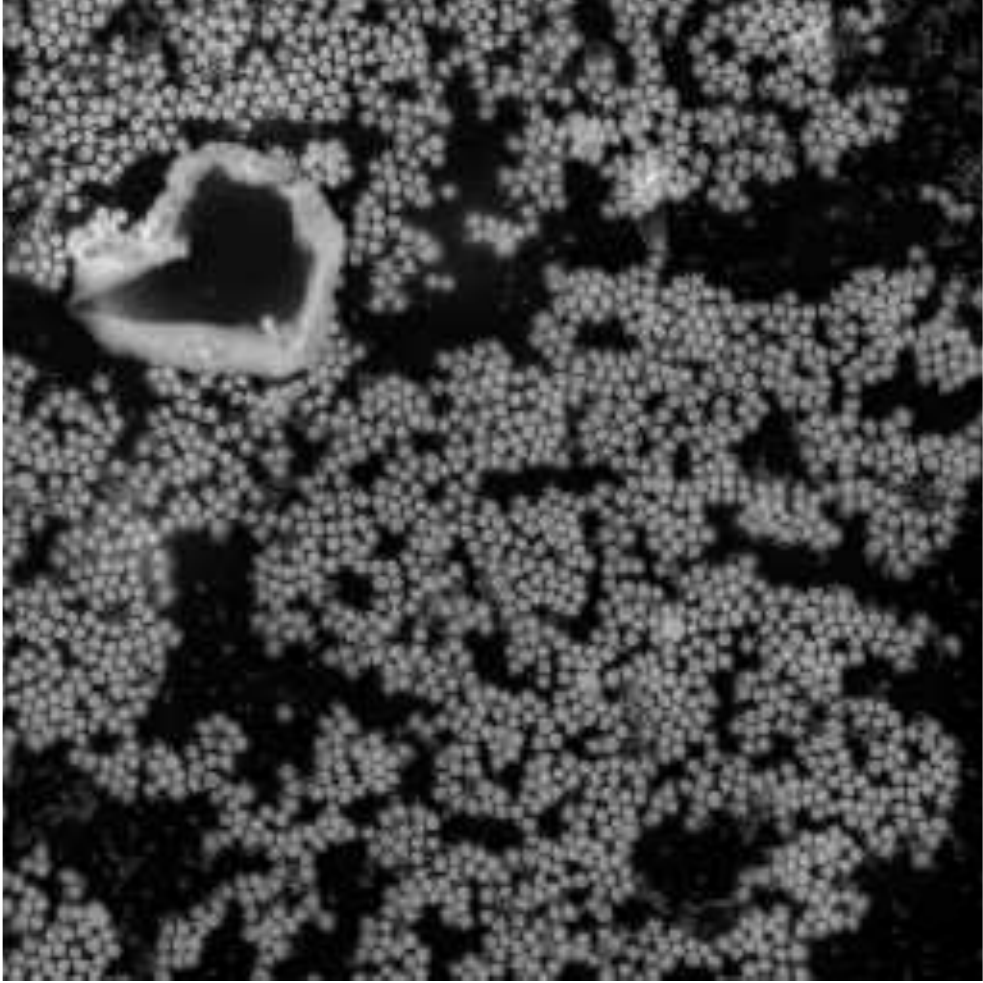}}\quad
\subfigure[]{\includegraphics[width=.11\textwidth]{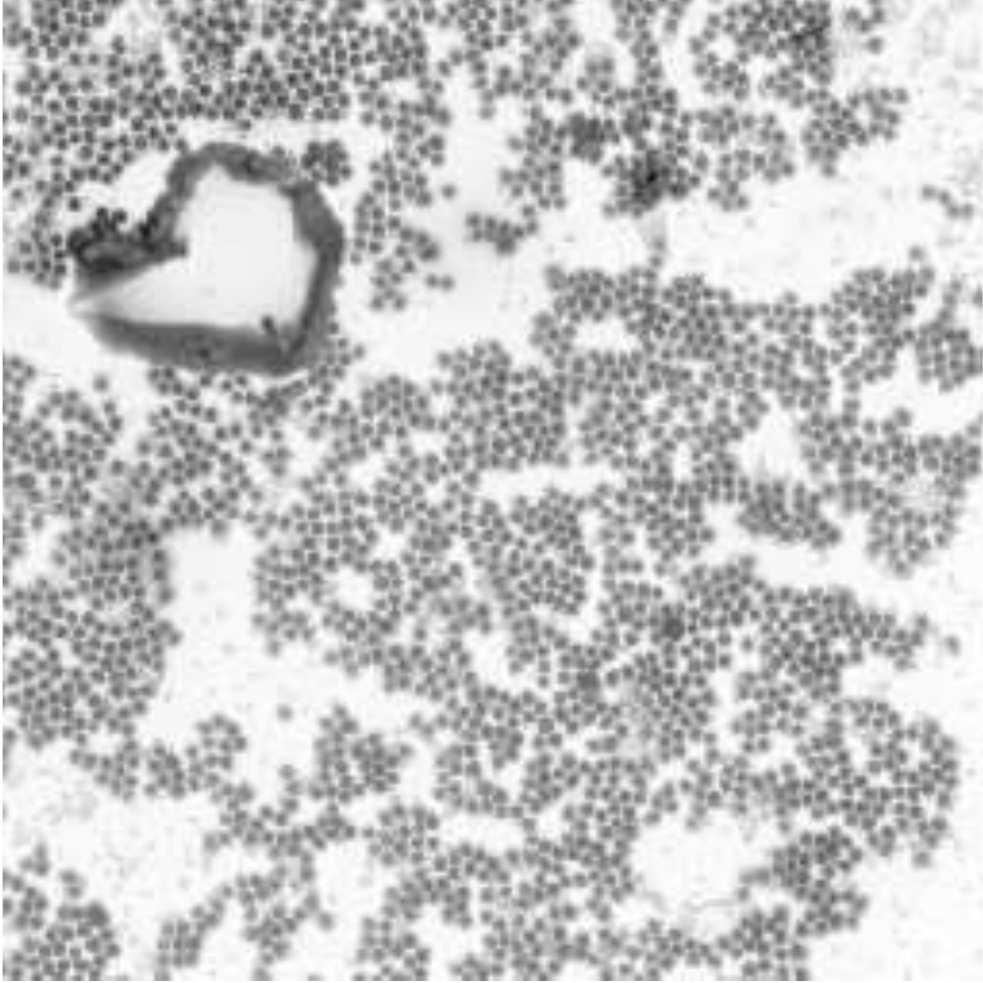}}\quad
\subfigure[]{\includegraphics[width=.11\textwidth]{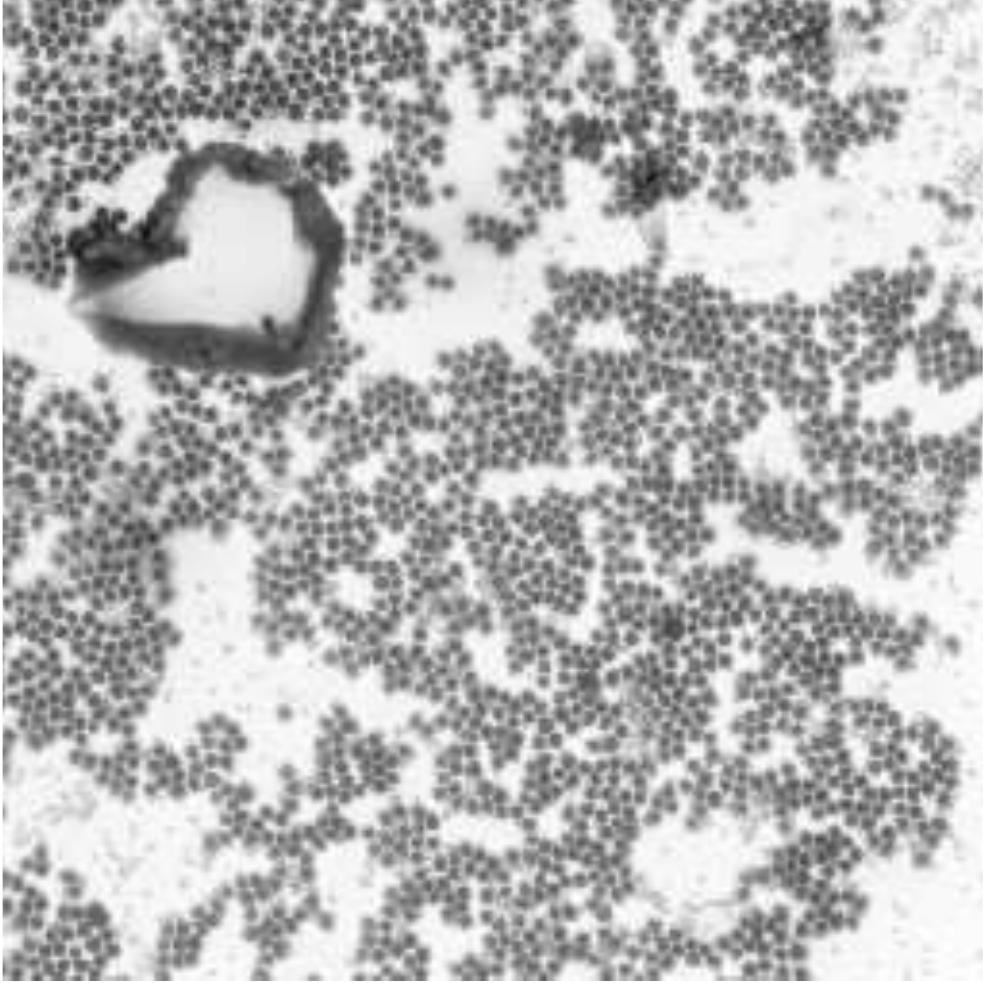}}
\end{center}
\caption{Ground truth images. Real-valued images ($512\times512$ pixels) ``Lena'' in (a), ``Barbara'' in (b), ``Peppers'' in (c), ``Plane'' in (d),   and complex valued image ($256\times 256$ pixels) with magnitude in (e), real and imaginary parts in (f) and (g) respectively. }\label{groundtruth}
\end{figure}

The Poisson noise is added to the clean image with different peak levels, and different scale image $u_\nu$ are used as $u_\nu=\nu u$  with the peak level $\nu$ for ground truth $u$. We measure the quality of the reconstructed the image $\tilde u$ by
\[
\mathrm{SNR}(\tilde u,u)=-20\log(\|c^*\tilde u-u\|/\|c^*\tilde  u\|),
\]
with the ground truth image $u$, and a scale constant
as
\[
c^*=\max\limits_{|c|=1} \|c\tilde u-u\|
.\]
 For Gaussian noise, SNR is also used to measure the noise level for the contaminated data.

There are many choices for PR without regularization, such as error reduction (ER)\cite{Gerchberg1972},  ADMM \cite{wen2012,chang2015,chang2016}, Wirtinger flow \cite{candes2015phase,chen2015solving} and so on, while we only use the ADMM  for comparison. For the methods with regularization, we compare with  TV based method in \cite{chang2016} for Poisson noise, and a variant method by inserting \eqref{subsubZ} for Gaussian noise. For our proposed methods, the second order TGV realization of $\mathbf R(v)$ and two filters including NLM, and BM3D are used. For simplicity, we use ``LS-PR'' to denotes the ADMM  for solving the original PR problem without any regularization, ``TV-PR'' for TV based methods in \cite{chang2016}, ``TGV-PR'' for our proposed method with TGV regularization, ``NLM-PR'' for our proposed method with NLM filtering, and ``BM3D-PR'' with BM3D filtering. It is quite difficult to set a unify fair stopping condition for such different algorithms, and the algorithms are all stopped if they reach a maximum iteration number $T$, where $T$ is select heuristically. For ``LS-PR'' and ``TV-PR'', set $T=50$, while for ``TGV-PR'', ``NLM-PR'' and ``BM3D-PR'', set $T=30$ (the iterations number are more than real number needed, and usually $10\thicksim15$ iterations are sufficient to produce satisfactory results). In the inner loop for solving the generalized least square problem, five inner iterations are adopted.  The other parameters  used in the experiments will be addressed in the following subsections.

All the tests are performed on a laptop with Intel i7-5600U\@2.6GHZ, and 16GB RAM. The implementation of the codes is in MATLAB. For our proposed methods, we solve the denoising subproblem with TGV with the package by S. Keiichiro {\footnote{\url{https://www.mathworks.com/matlabcentral/mlc-downloads/downloads/submissions/49717/versions/2/download/zip}}}, NLM filter  by J.V. Manj\'on  {\footnote{\url{https://www.mathworks.com/matlabcentral/mlc-downloads/downloads/submissions/13176/versions/1/download/zip}}} and BM3D by K. Dabov {\footnote{\url{http://www.cs.tut.fi/~foi/GCF-BM3D/BM3D.zip}}}.

 \subsection{Poisson noise removal}
\subsubsection{Coded diffraction pattern (CDP)}
For CDP, we first show the performance on real-valued  images in (a)-(d) of Figure \ref{groundtruth}  with different noise levels by setting the peak level $\nu\in\{3.0\times10^{-3},5.0\times 10^{-3},1.0\times 10^{-2}\}$, and see  the results in Figure \ref{poicdp3}-Figure \ref{poicdp1}. With a fixed noise level  the parameters of each compared methods are set to the same for each image, and see the detailed parameters in Table \ref{tab1}.

By observing the reconstructed images in  Figure \ref{poicdp3}-Figure \ref{poicdp1}, if the noise is not so heavy, all the methods work well as shown in Figure \ref{poicdp3}.  When noise level increases as in Figure \ref{poicdp2}, results by ``LS-PR'' show obvious noise, while all the regularized and filtering methods can work. If the noise level is very high as in Figure \ref{poicdp1}, the reconstructed results by ``LS-PR'' can not be acceptable, while ``NLM-PR'' and ``BM3D-PR'' outperforms the others. ``TV-PR'' with simple regularization by TV is effective to generate the noiseless images for different noise level, and meanwhile it introduces the staircase artifacts and break  some important information as texture; ``TGV-PR'' can suppress the staircase artifacts, and produce  resulted images with sharp edges. However, the repetitive structures can not be kept. It seems that ``NLM-PR'' and ``BM3D-PR'' can both deal with the texture parts well, e.g. the hair of ``Lena'' and the stripe structures of ``Barbara'', which outperform other compared methods. Meanwhile, it seems that ``BM3D-PR'' produces the cleanest background for ``Plane'' among all the compared methods.  The SNRs  are put in Figure \ref{snr2}, where one can readily see that our proposed ``TGV-PR'', ``NLM-PR'' and ``BM3D-PR'' outperform ``TV-PR'', and ``BM3D-PR'' gains the largest SNRs among them.  The average SNRs are $6.10, 20.32, 20.87, 21.98, 22.94$ dB for ``LS-PR'', ``TV-PR'', ``TGV-PR'', ``NLM-PR'' and ``BM3D-PR'' respectively, and ``BM3D-PR'' improves the image quality  with SNRs about 2dB higher  than that of ``TV-PR'' on average.
\begin{figure}
\begin{center}
\subfigure[]{\includegraphics[width=.08\textwidth]{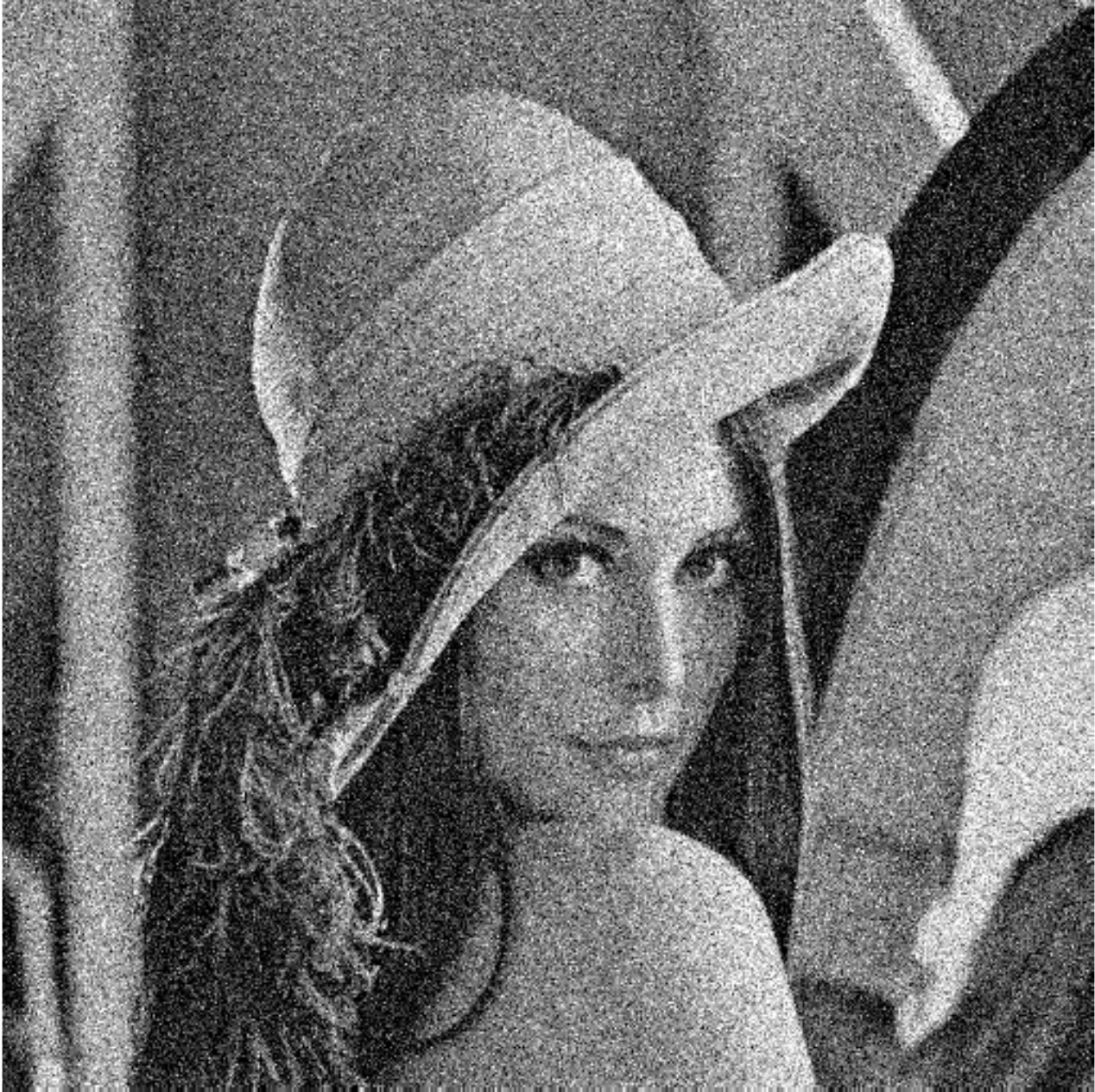}}~~
\subfigure[]{\includegraphics[width=.08\textwidth]{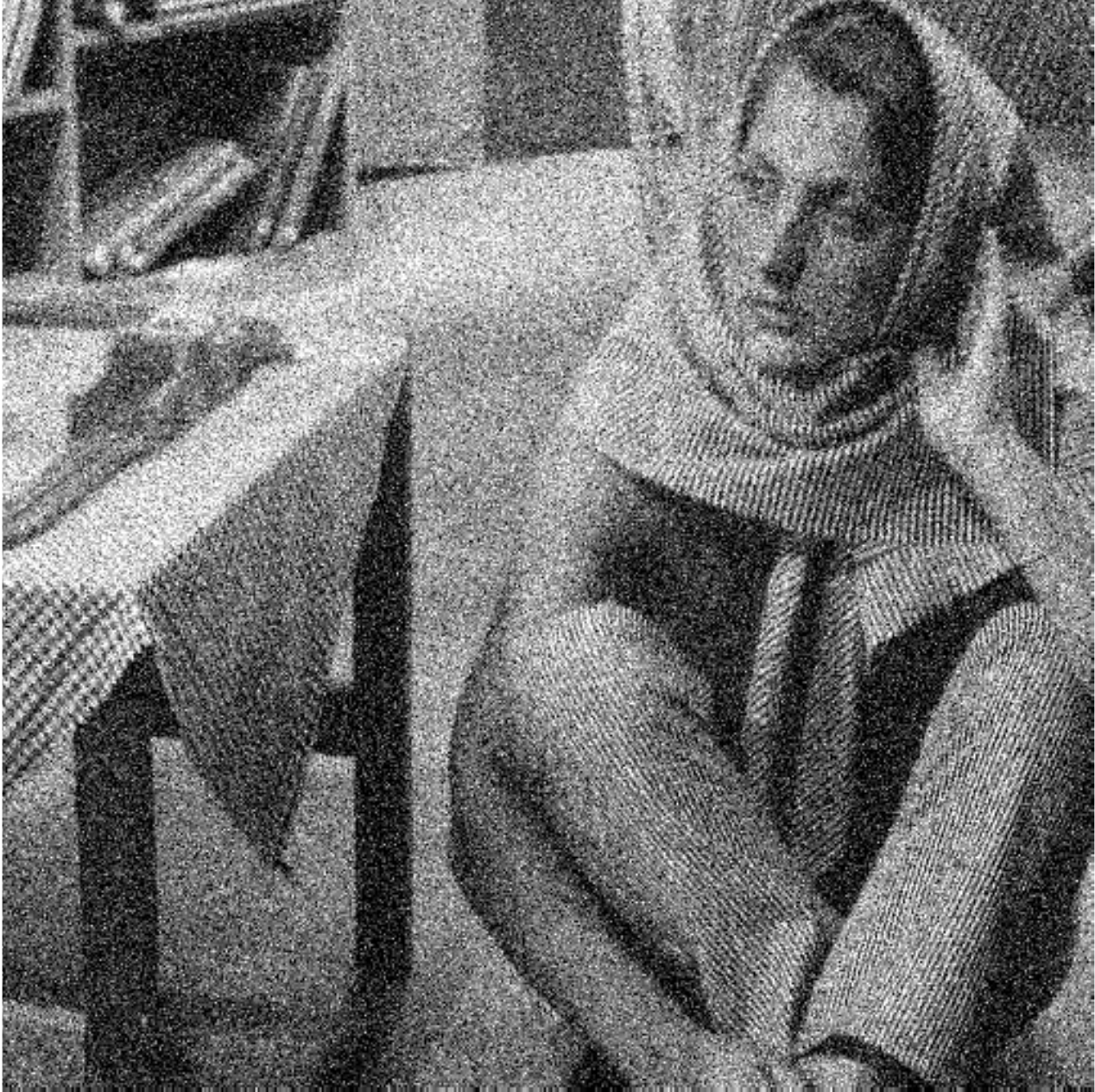}}~~
\subfigure[]{\includegraphics[width=.08\textwidth]{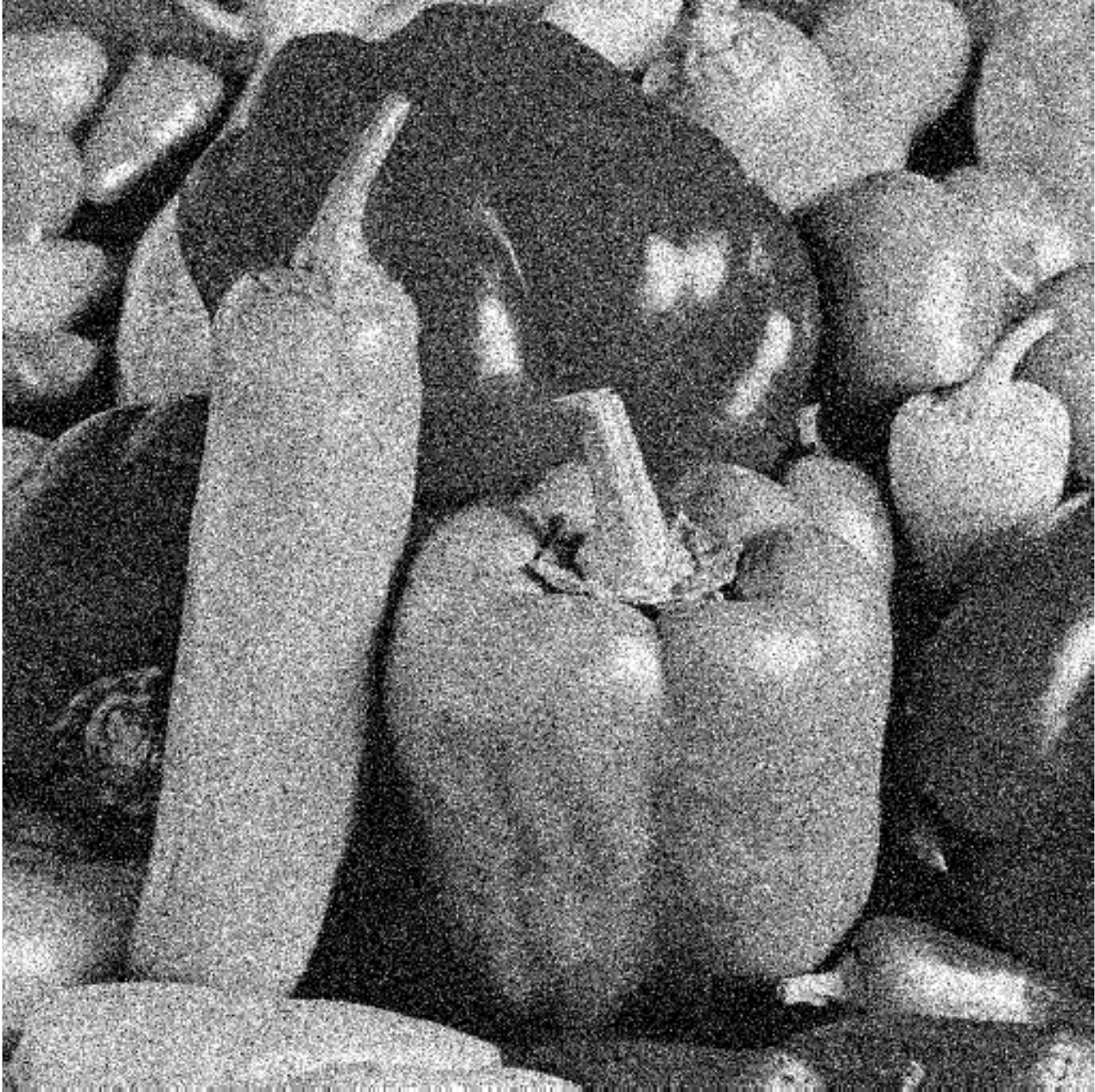}}~~
\subfigure[]{\includegraphics[width=.08\textwidth]{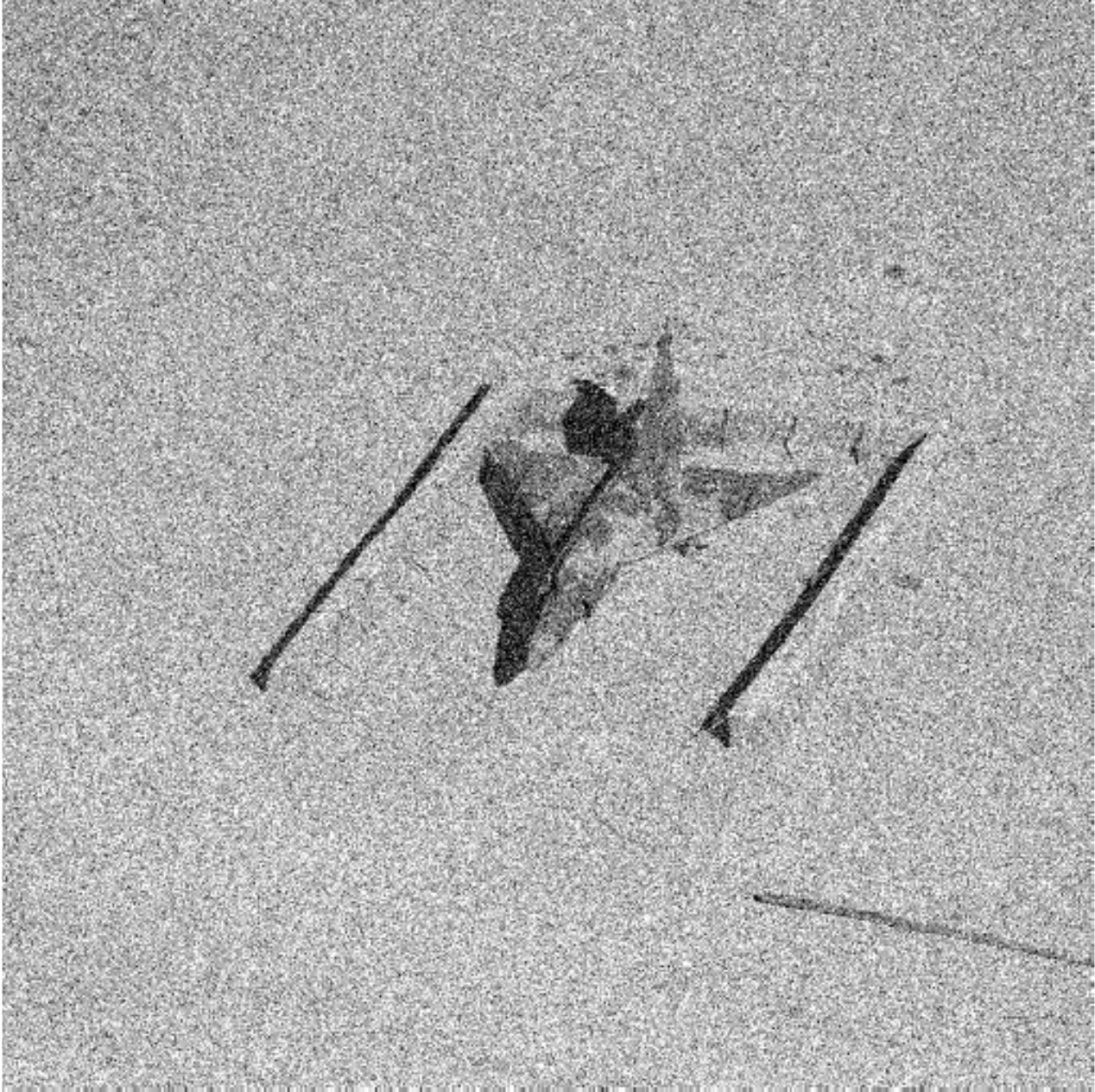}}\\
\subfigure[]{\includegraphics[width=.08\textwidth]{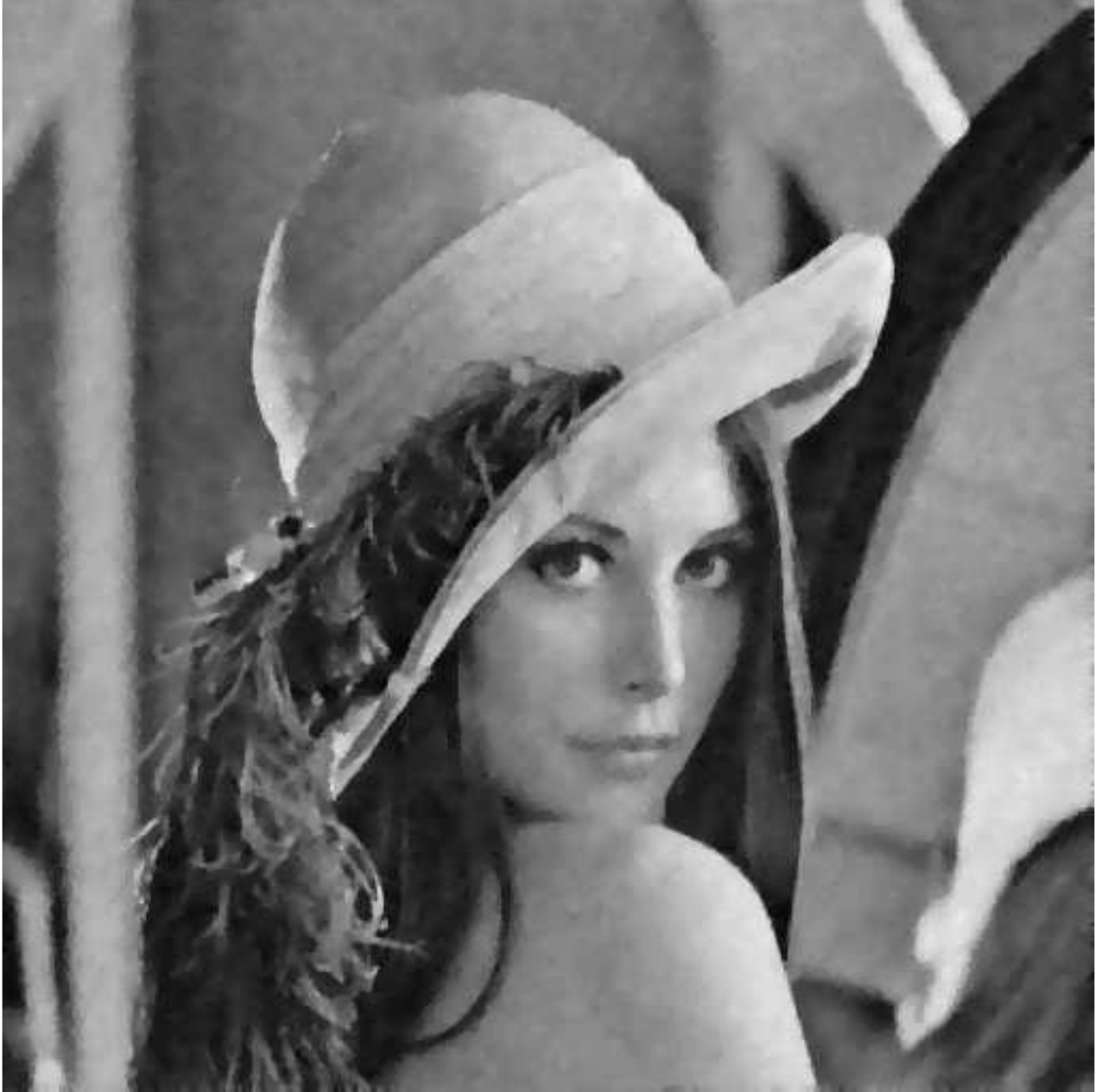}}~~
\subfigure[]{\includegraphics[width=.08\textwidth]{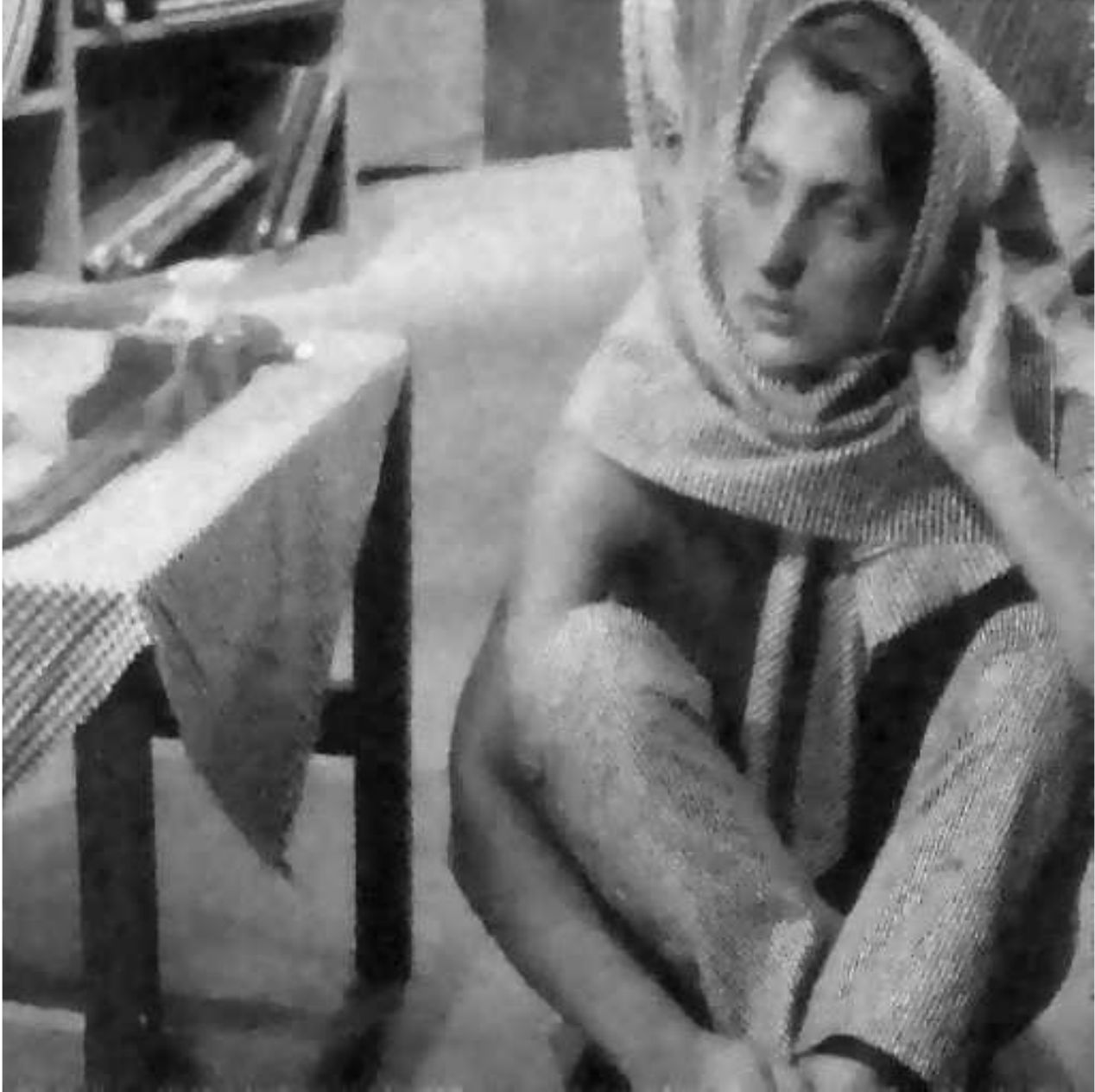}}~~
\subfigure[]{\includegraphics[width=.08\textwidth]{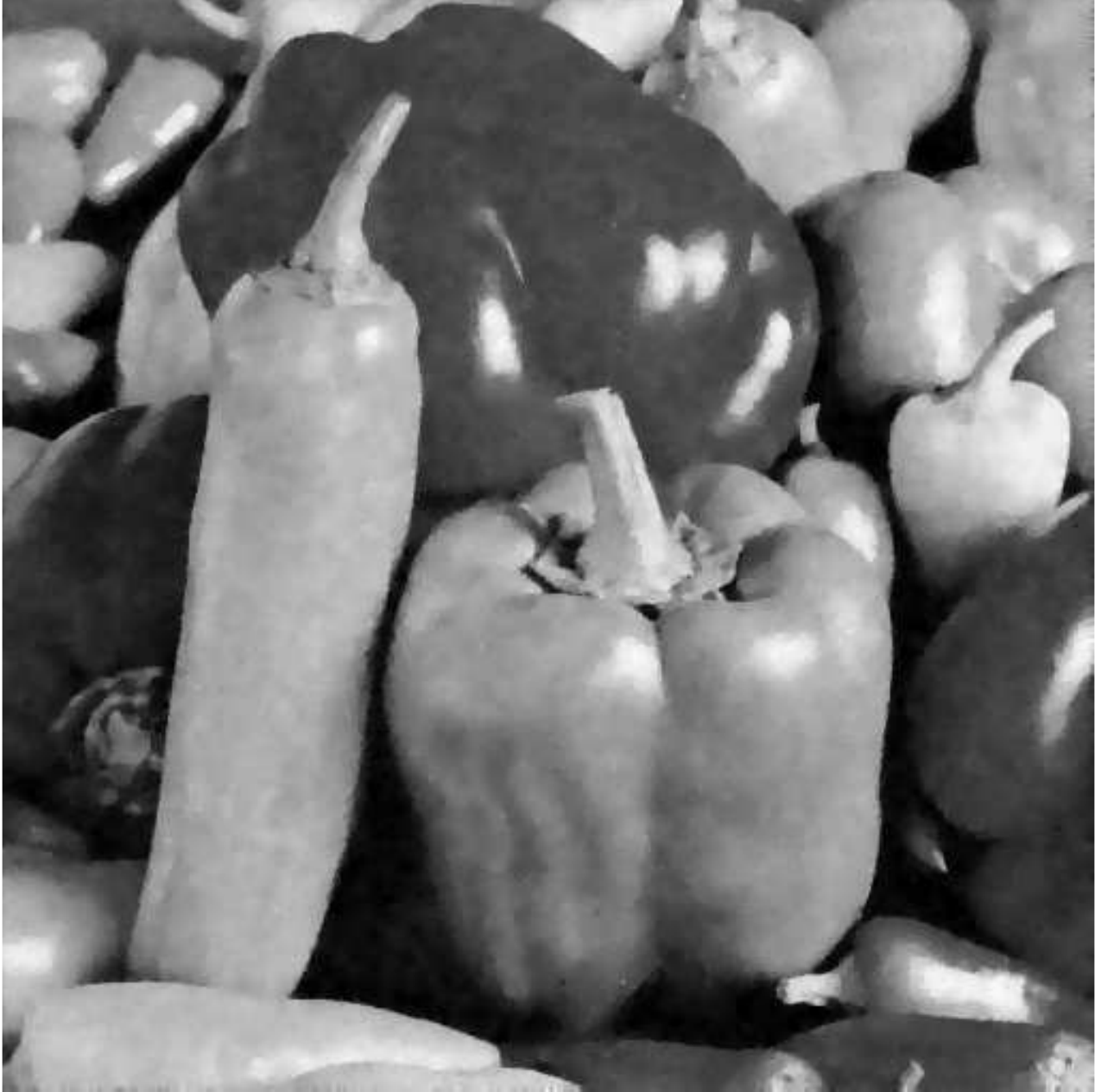}}~~
\subfigure[]{\includegraphics[width=.08\textwidth]{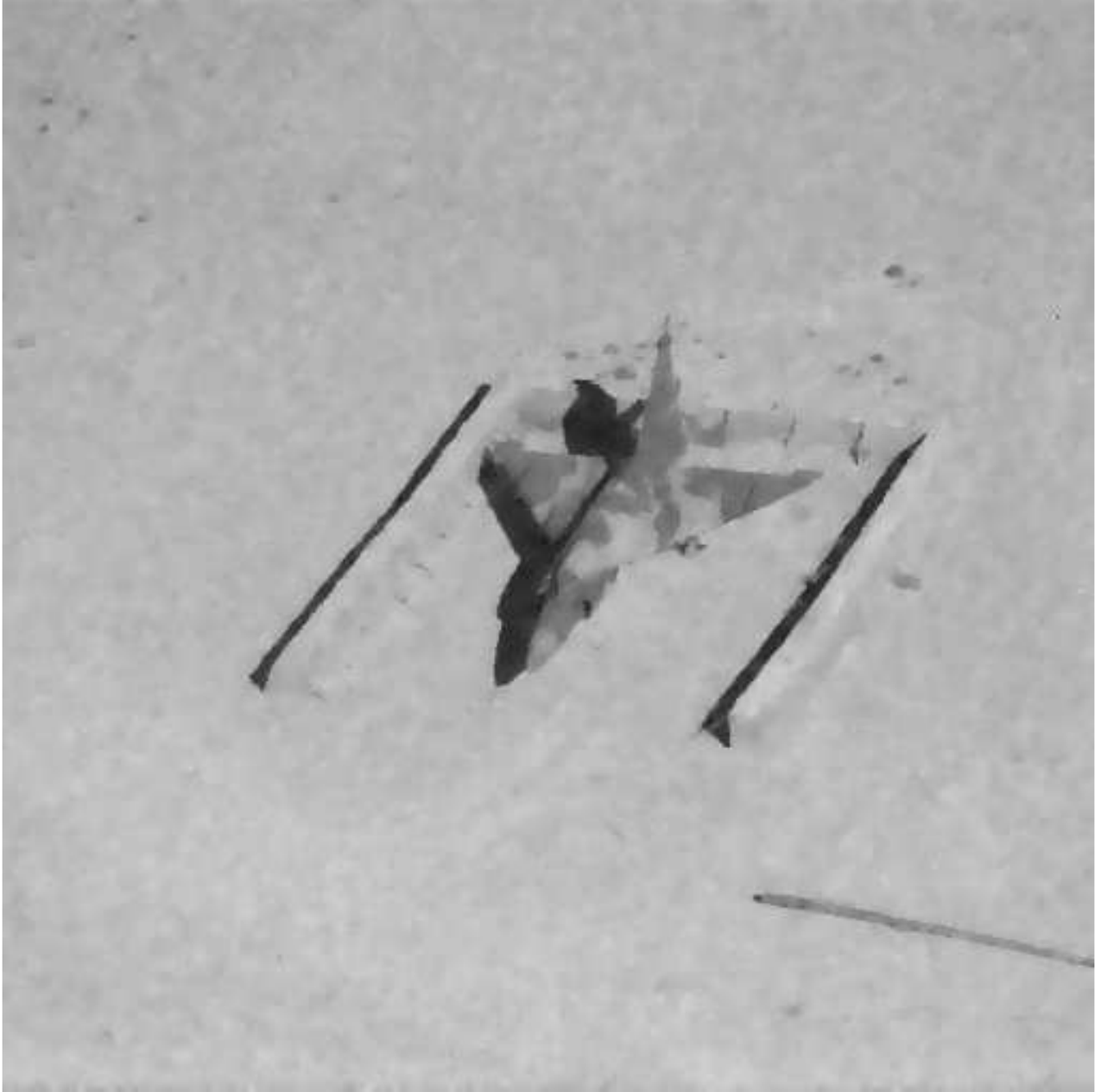}}\\
\subfigure[]{\includegraphics[width=.08\textwidth]{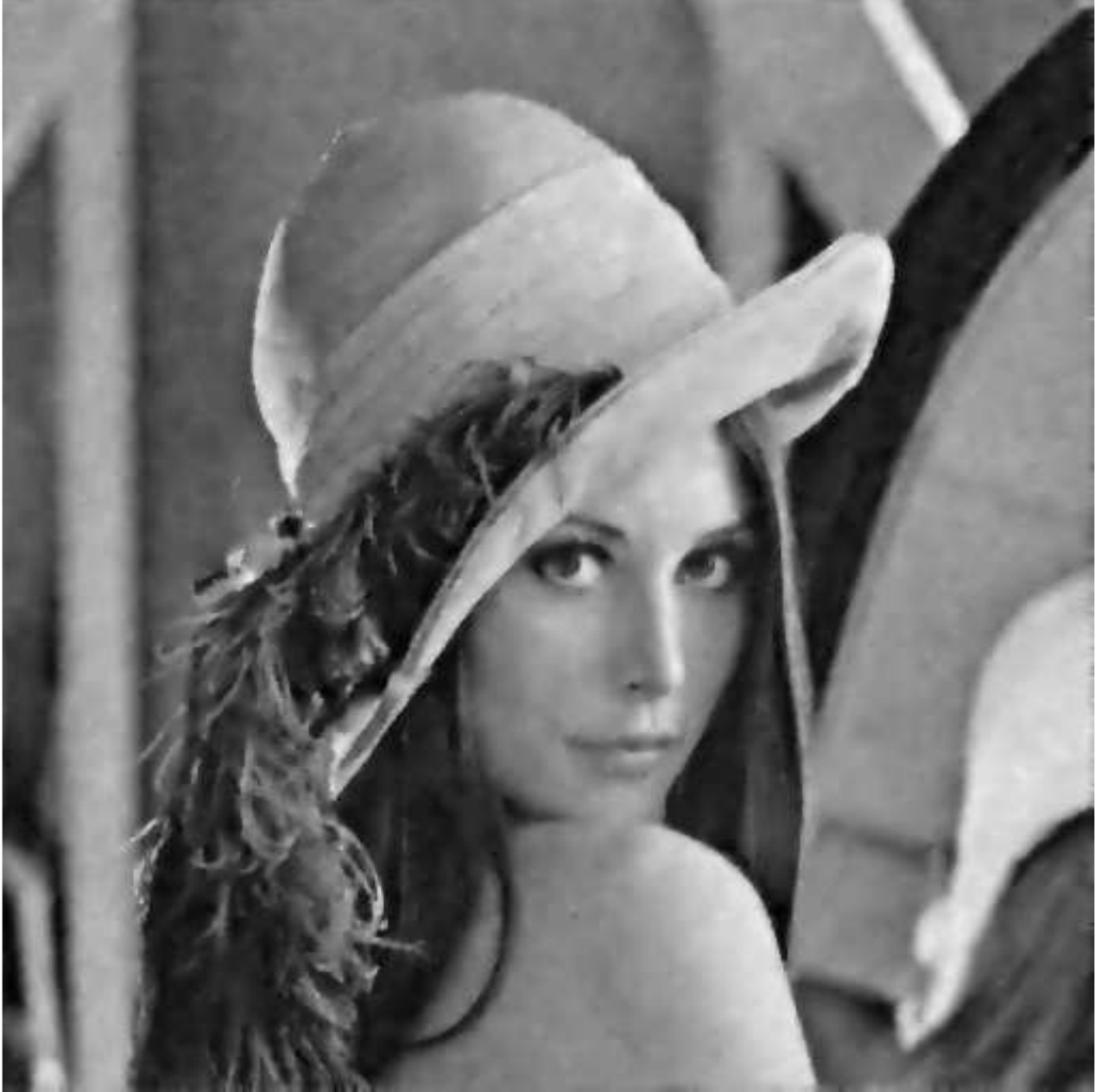}}~~
\subfigure[]{\includegraphics[width=.08\textwidth]{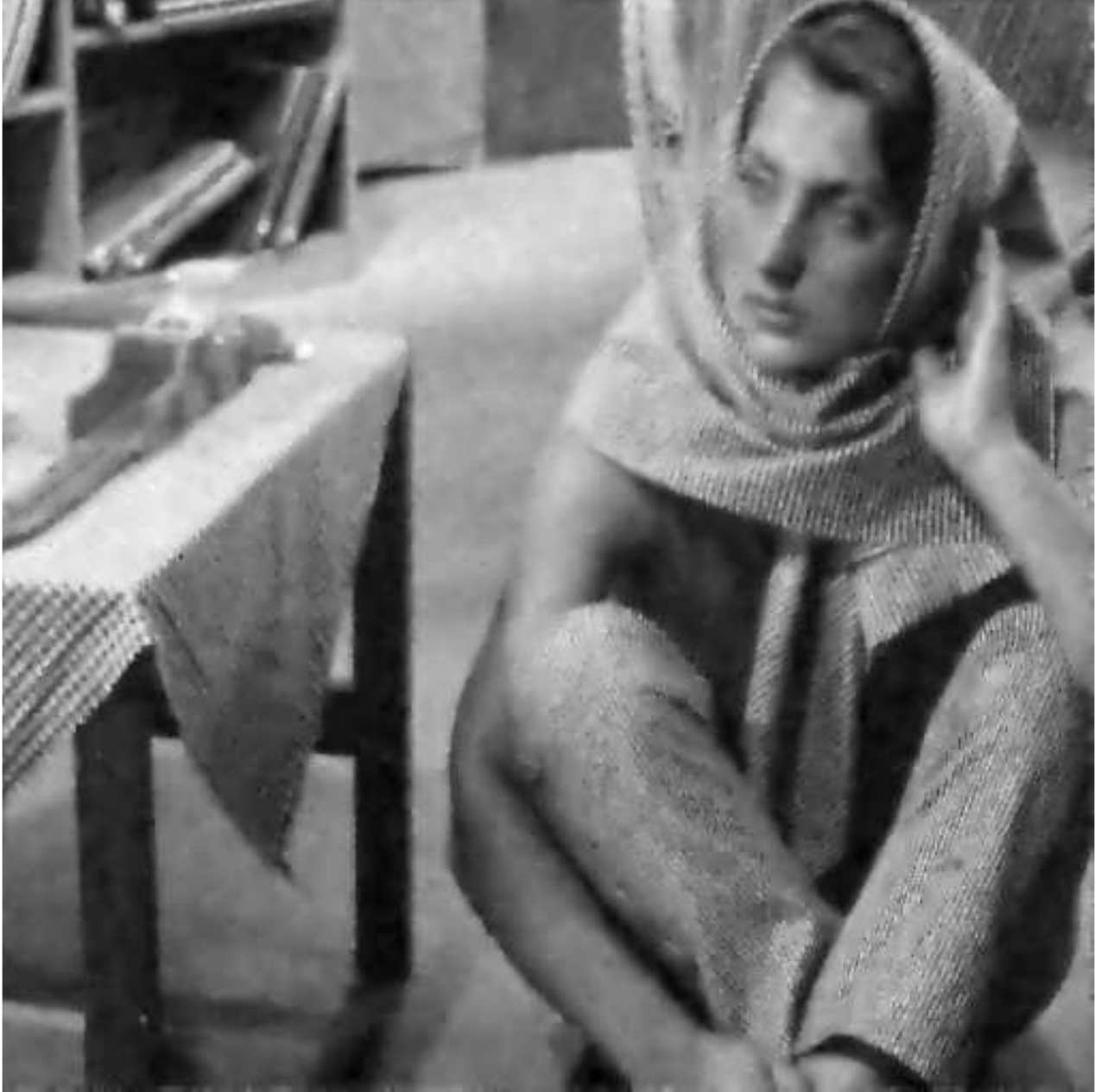}}~~
\subfigure[]{\includegraphics[width=.08\textwidth]{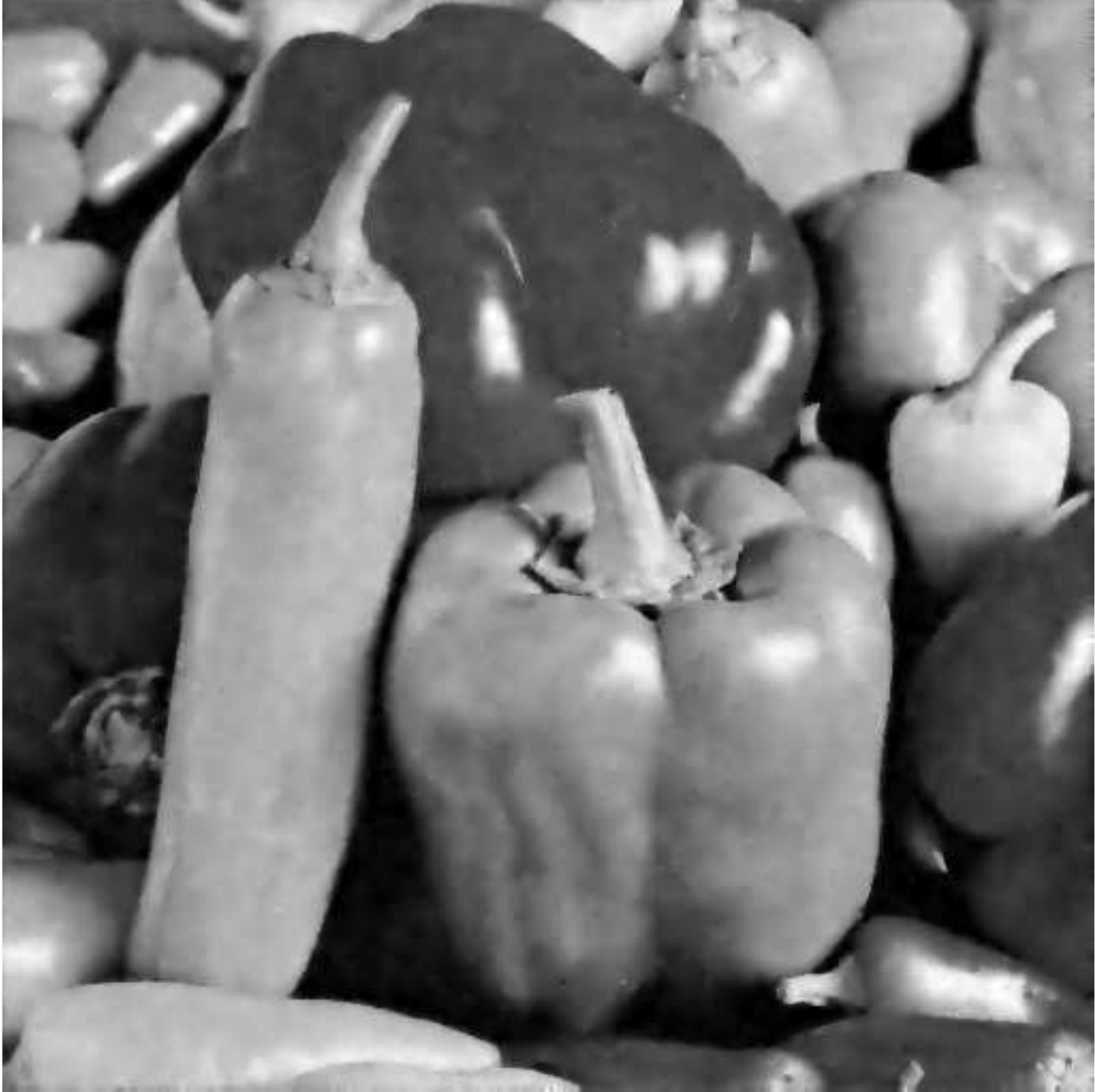}}~~
\subfigure[]{\includegraphics[width=.08\textwidth]{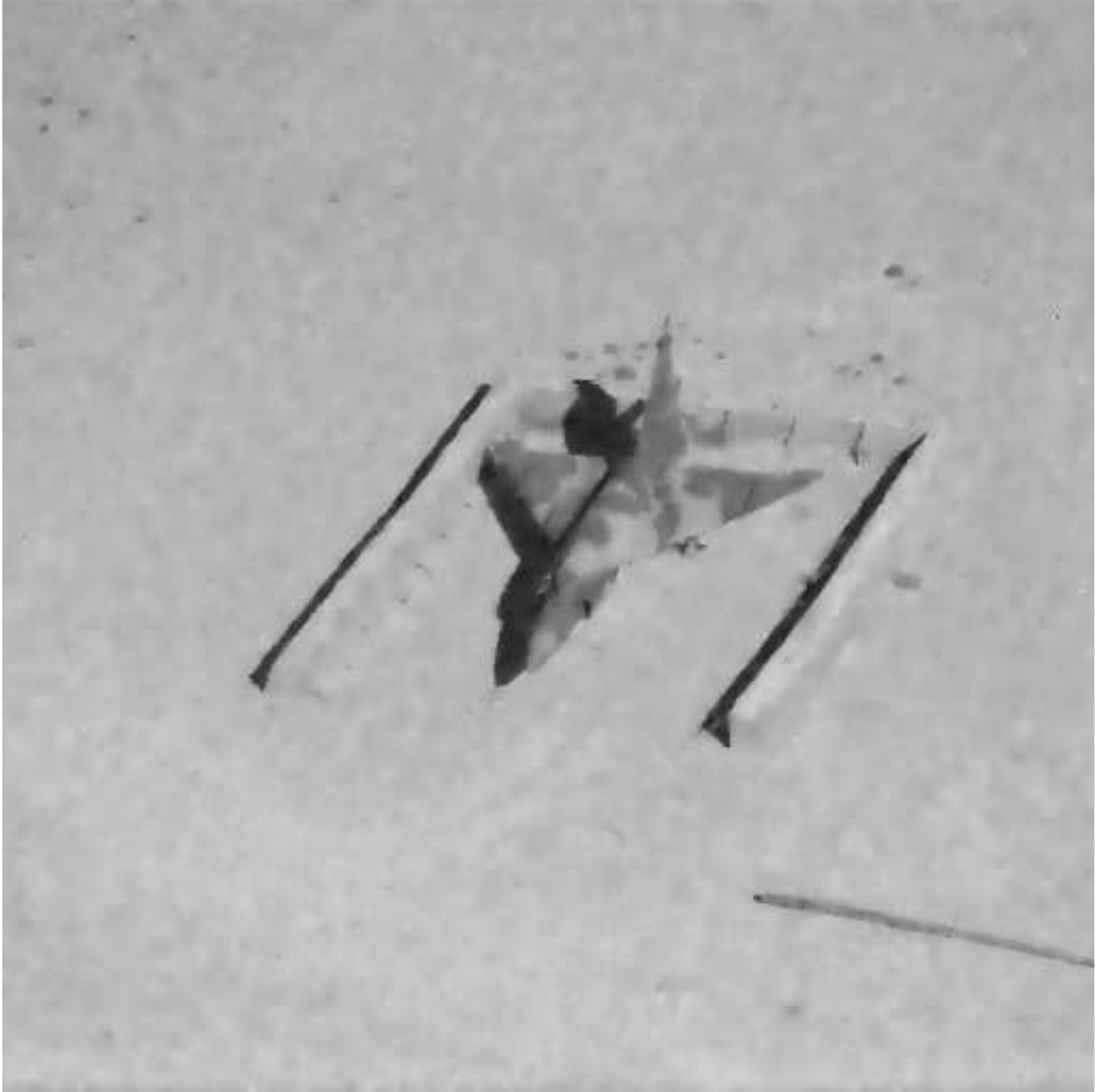}}\\
\subfigure[]{\includegraphics[width=.08\textwidth]{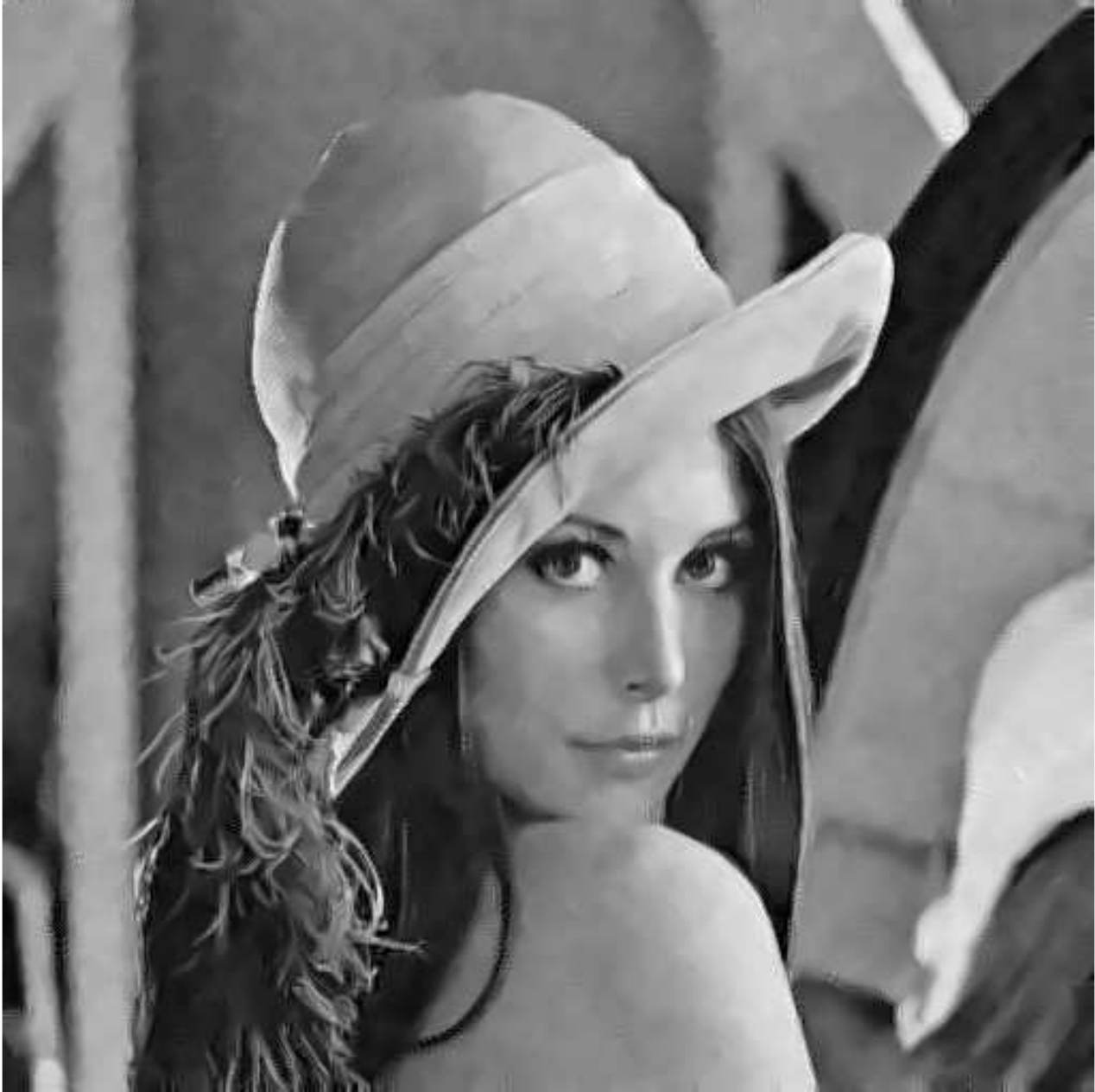}}~~
\subfigure[]{\includegraphics[width=.08\textwidth]{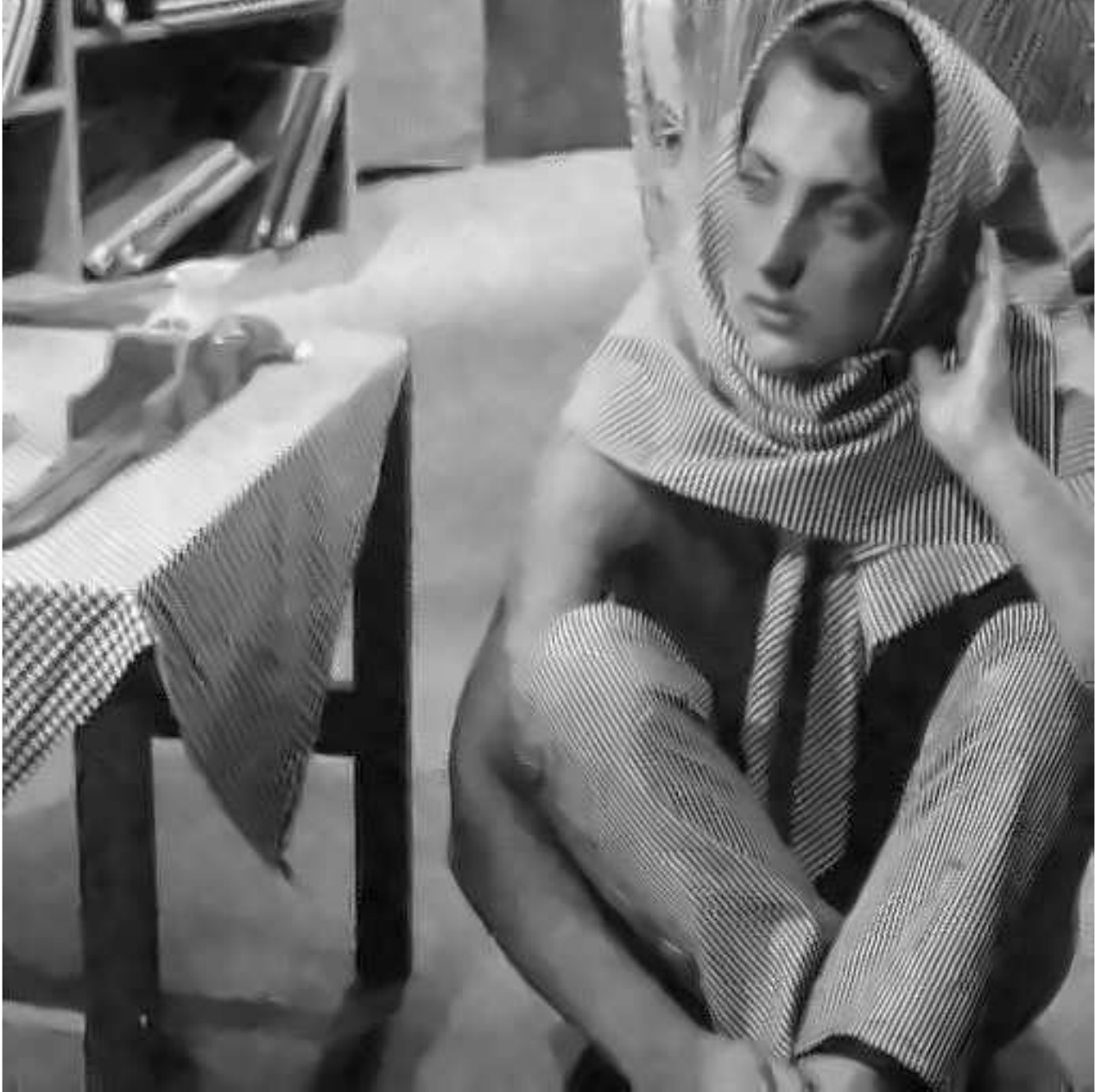}}~~
\subfigure[]{\includegraphics[width=.08\textwidth]{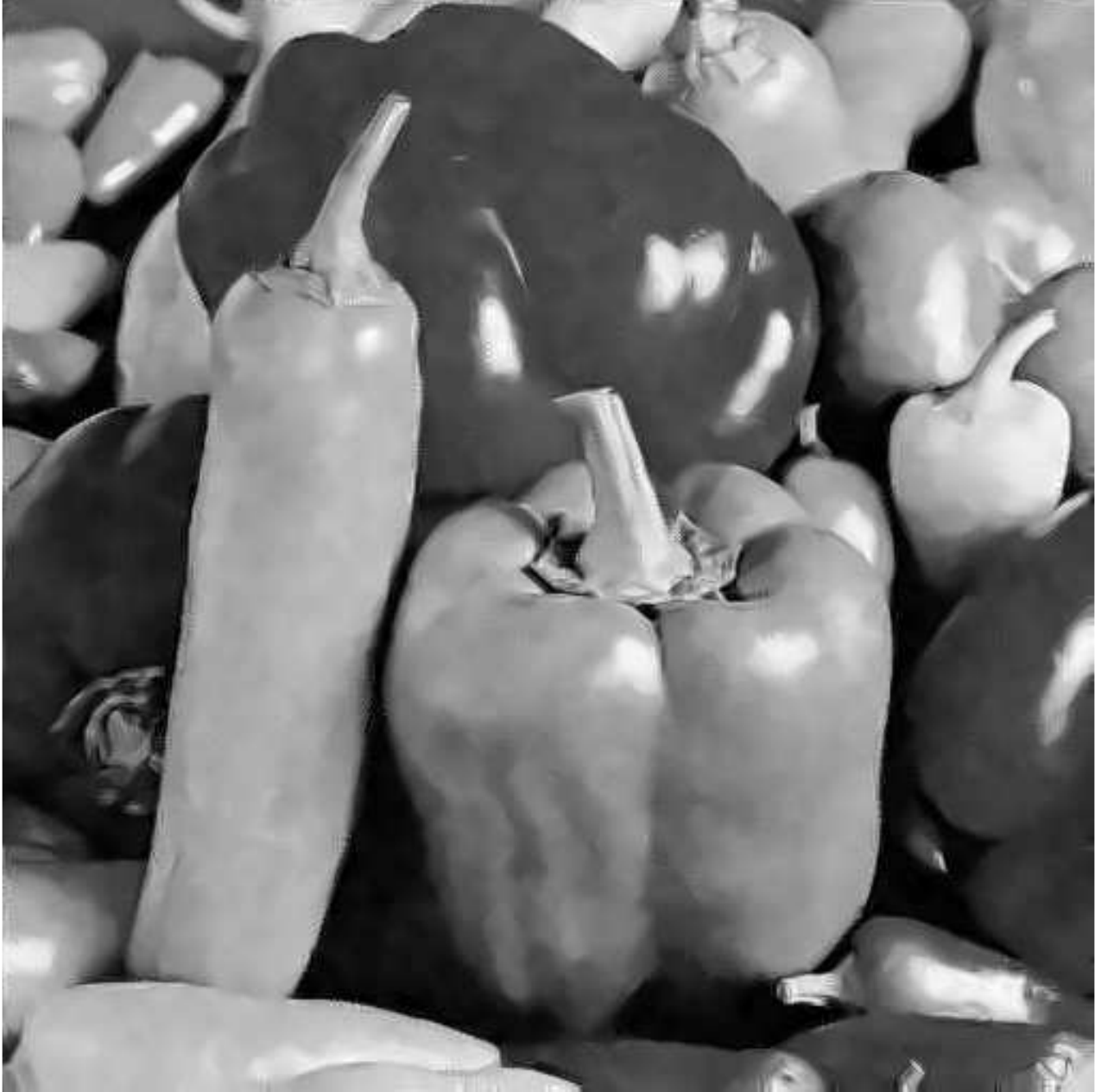}}~~
\subfigure[]{\includegraphics[width=.08\textwidth]{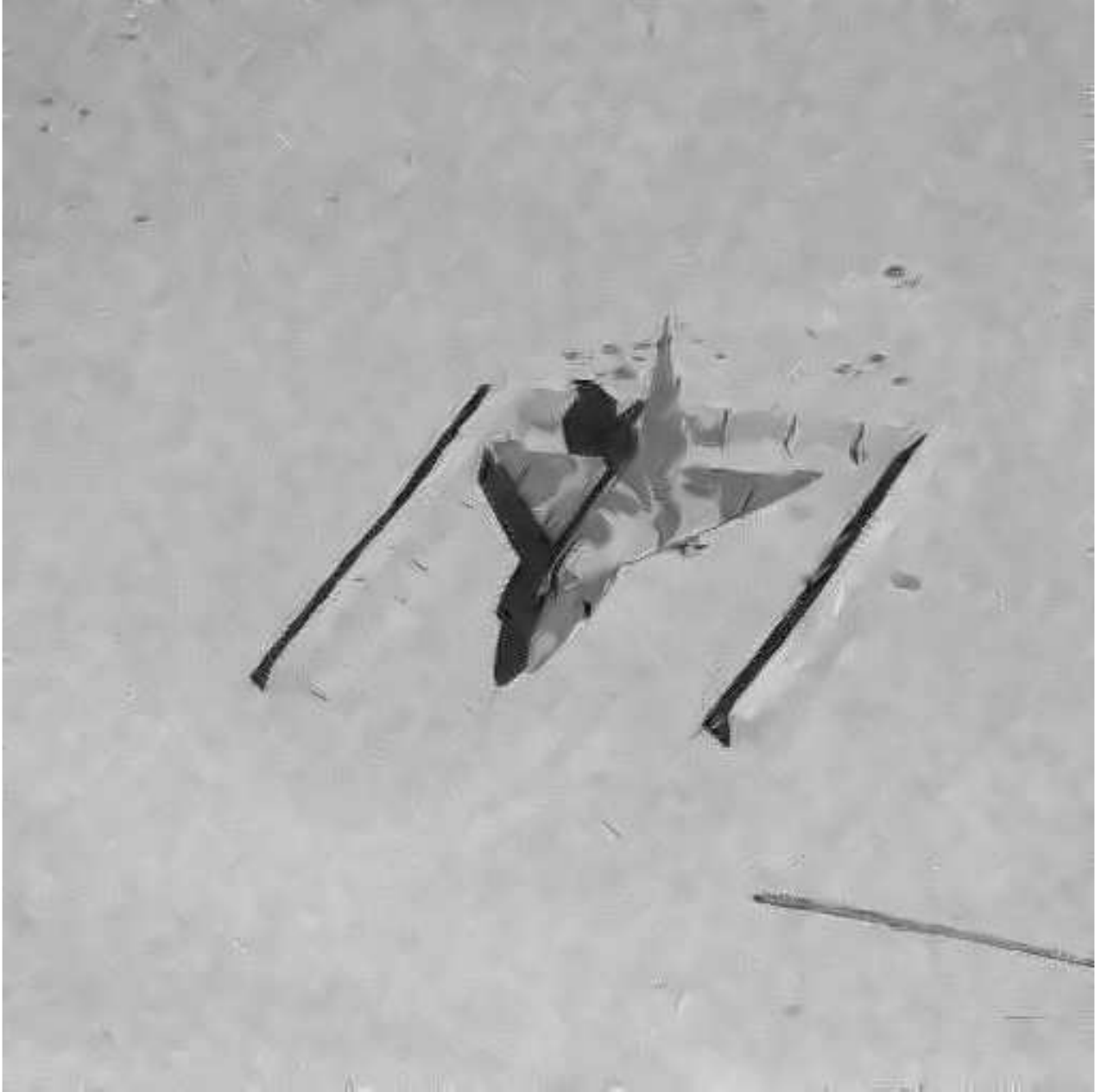}}\\
\subfigure[]{\includegraphics[width=.08\textwidth]{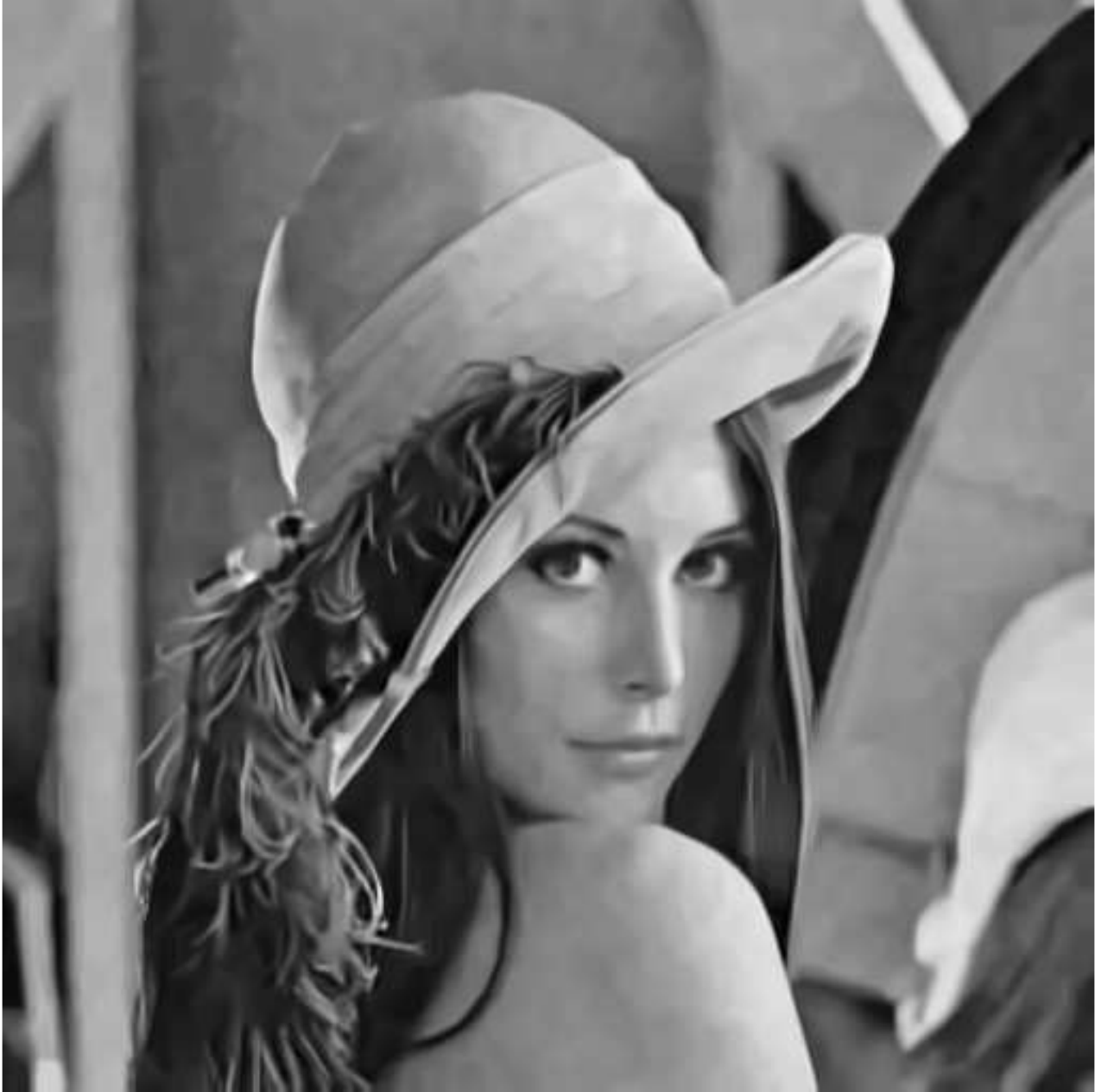}}~~
\subfigure[]{\includegraphics[width=.08\textwidth]{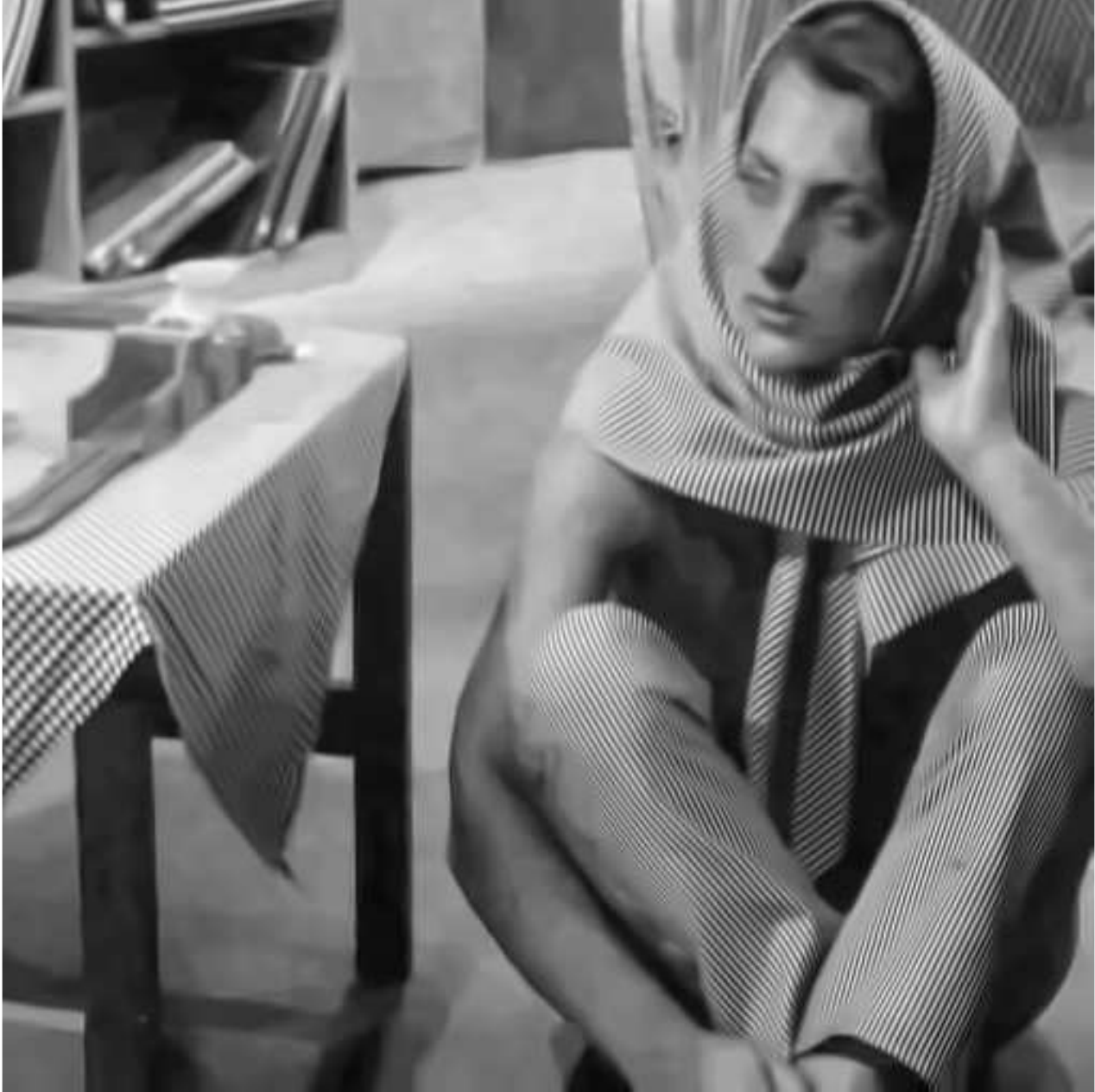}}~~
\subfigure[]{\includegraphics[width=.08\textwidth]{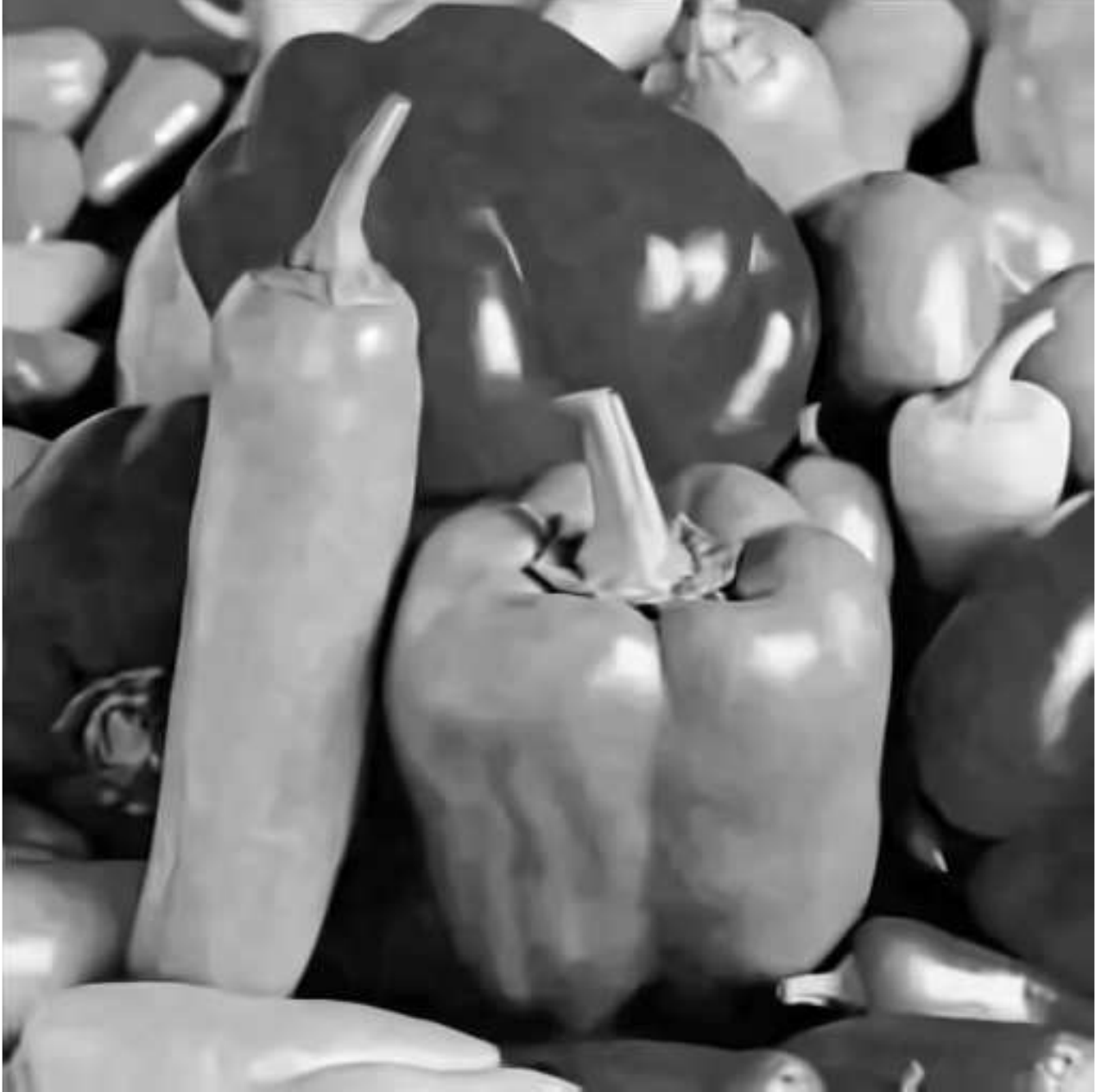}}~~
\subfigure[]{\includegraphics[width=.08\textwidth]{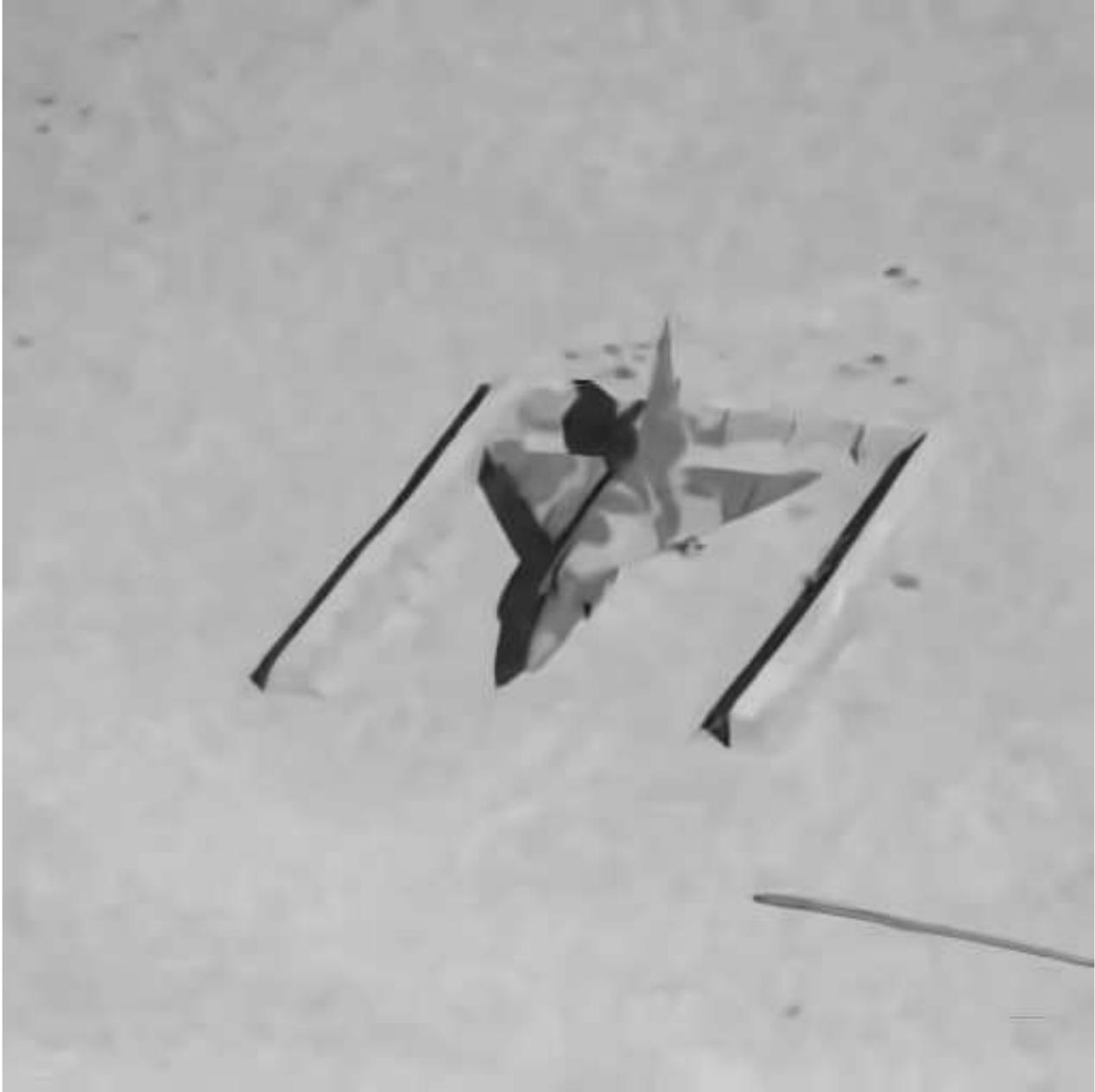}}
\end{center}
\caption{PR with CDP. Peak level $\nu=1.0\times10^{-2}$ for Poisson noise. First row: ``LS-PR''; Second row: ``TV-PR''; Third row: ``TGV-PR''; Fourth row: ``NLM-PR''; Fifth row: ``BM3D-PR''.}
\label{poicdp3}
\end{figure}

\begin{figure}
\begin{center}
\subfigure[]{\includegraphics[width=.08\textwidth]{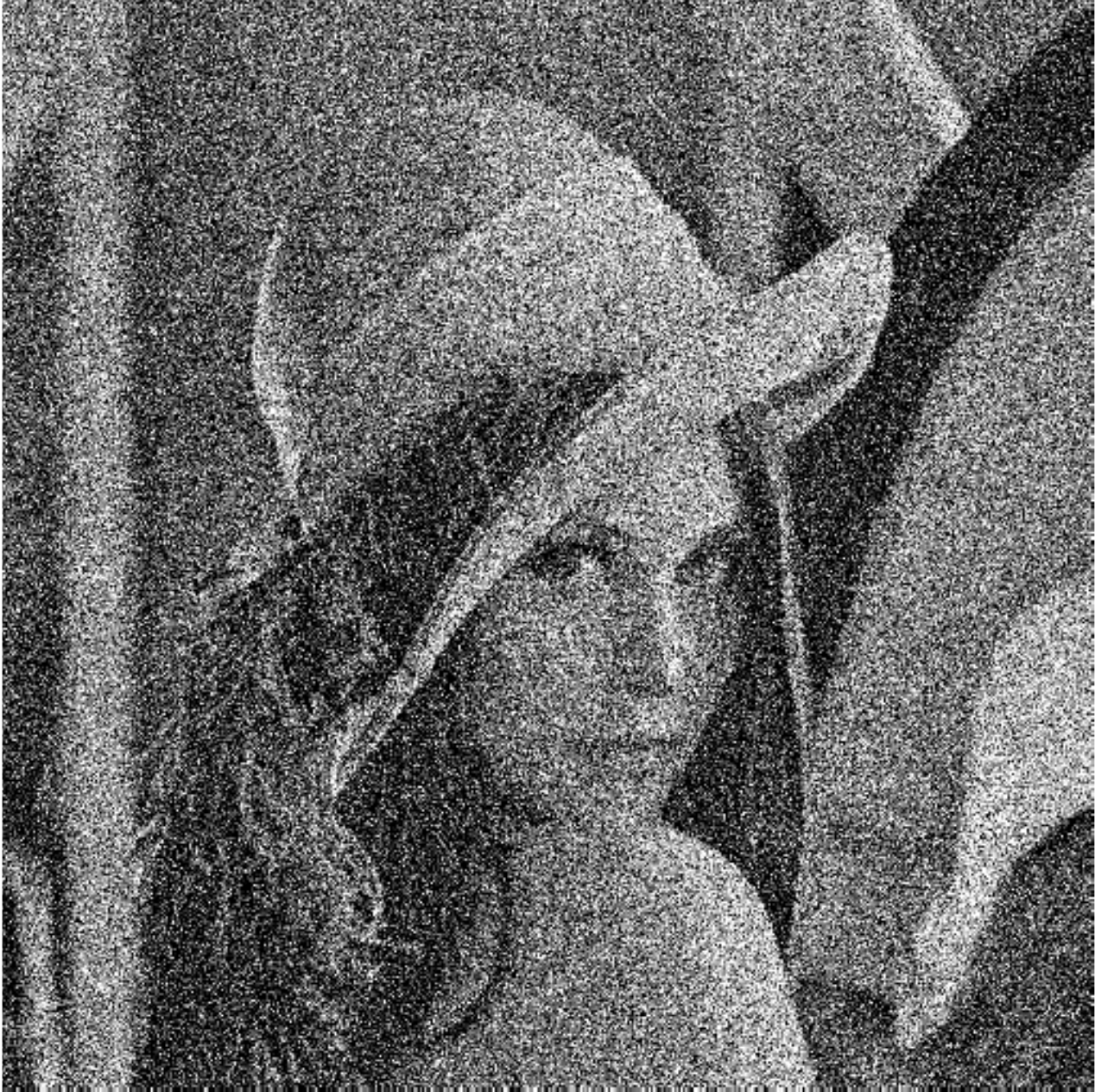}}~~
\subfigure[]{\includegraphics[width=.08\textwidth]{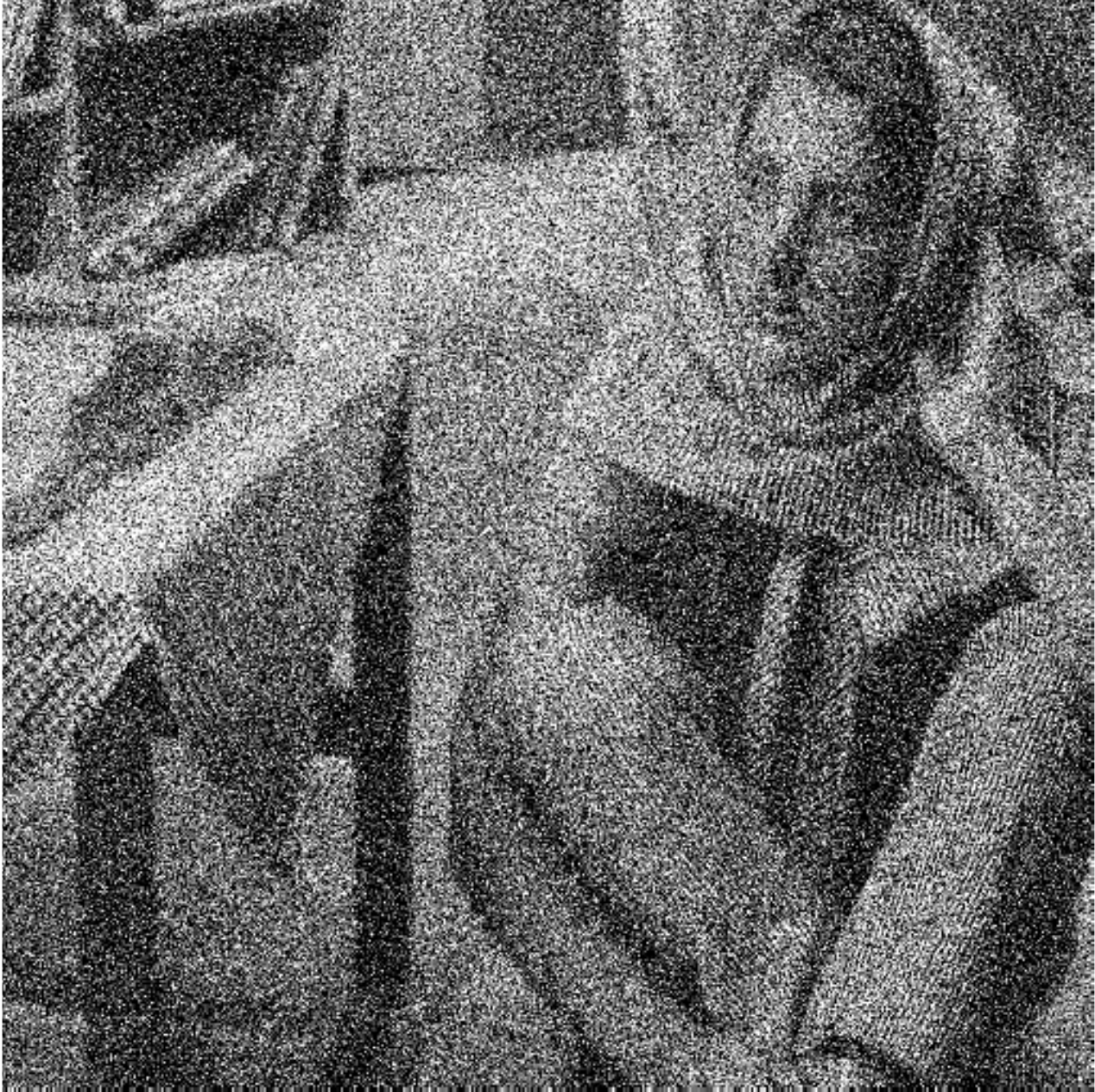}}~~
\subfigure[]{\includegraphics[width=.08\textwidth]{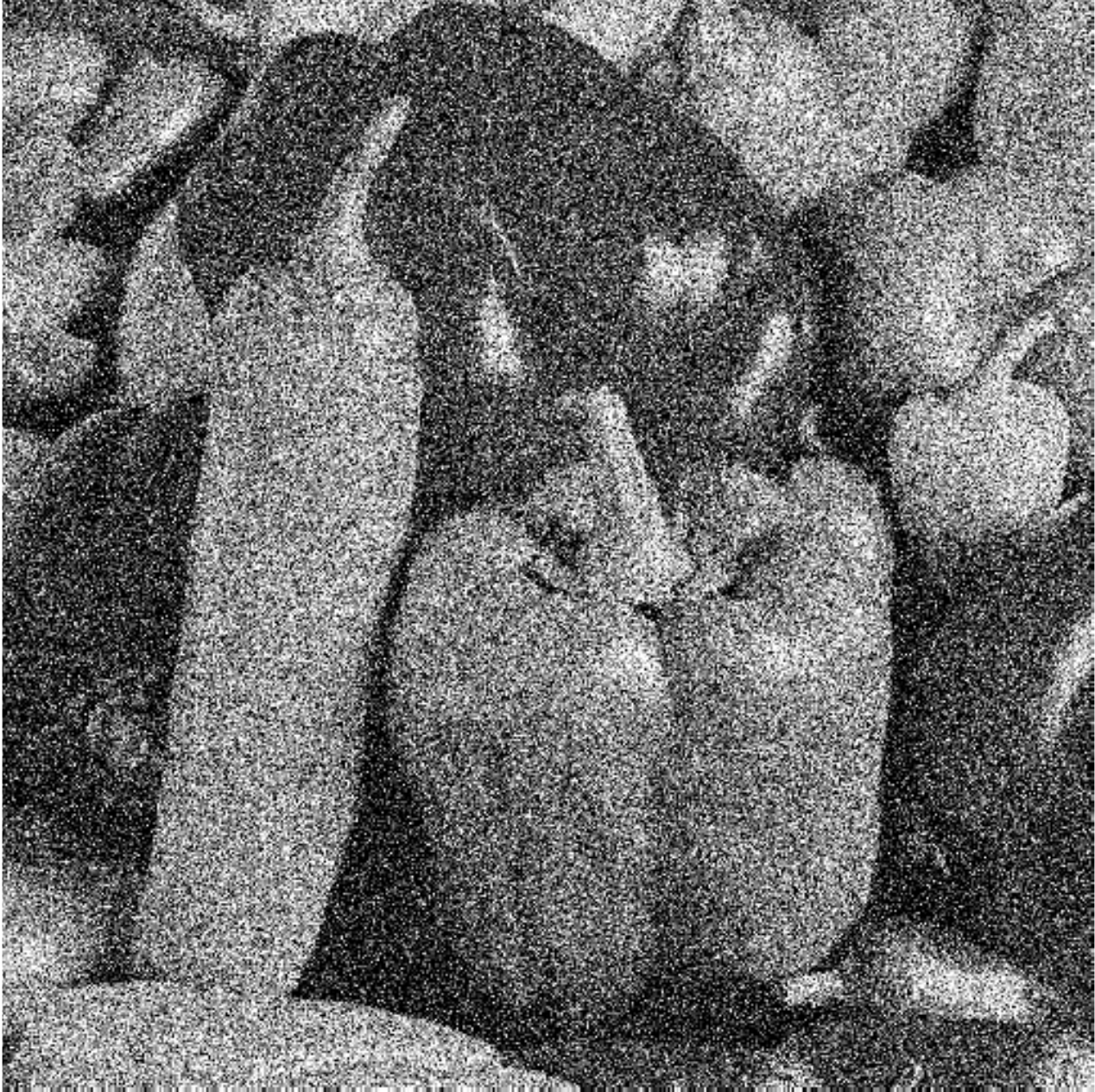}}~~
\subfigure[]{\includegraphics[width=.08\textwidth]{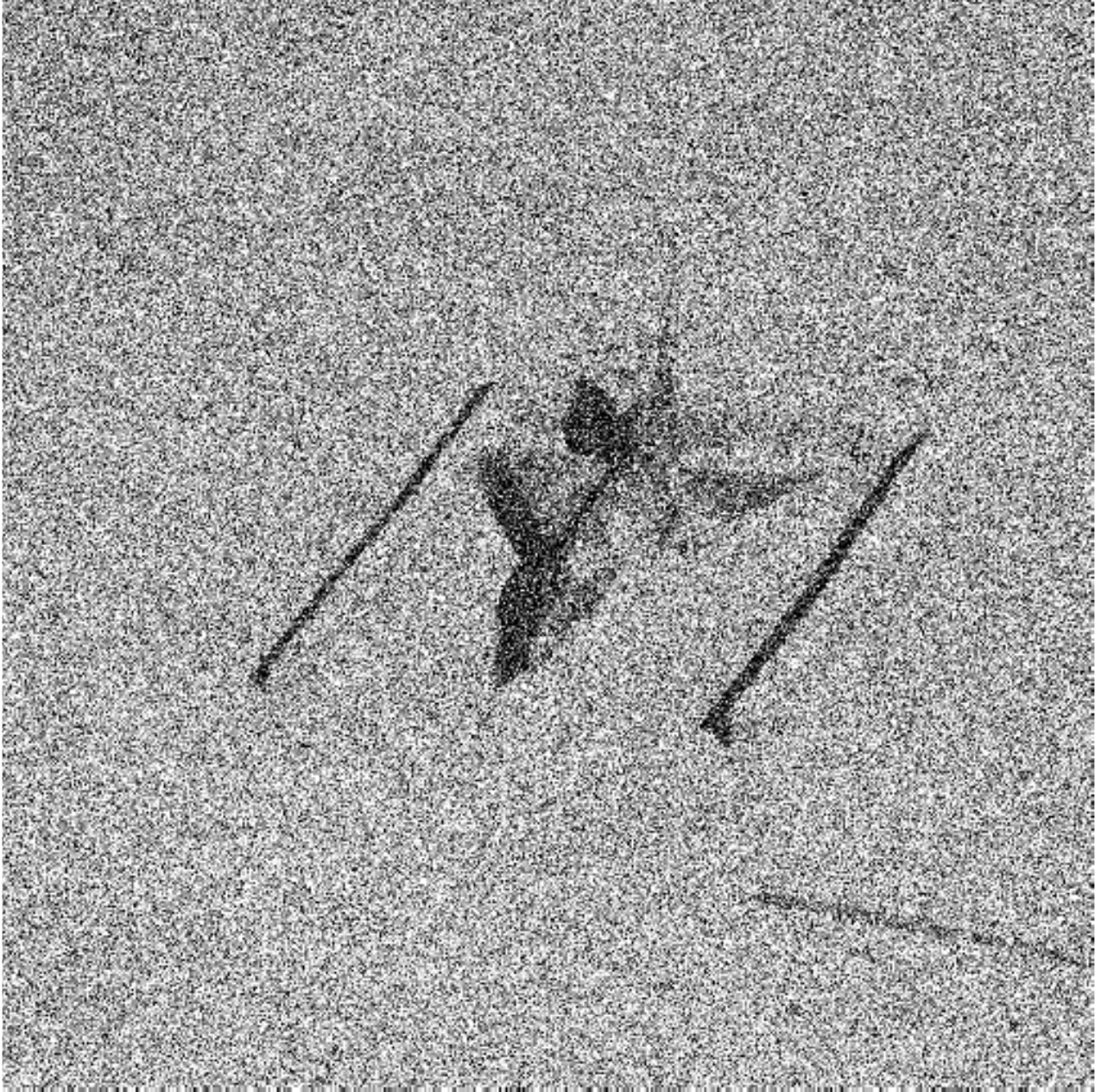}}\\
\subfigure[]{\includegraphics[width=.08\textwidth]{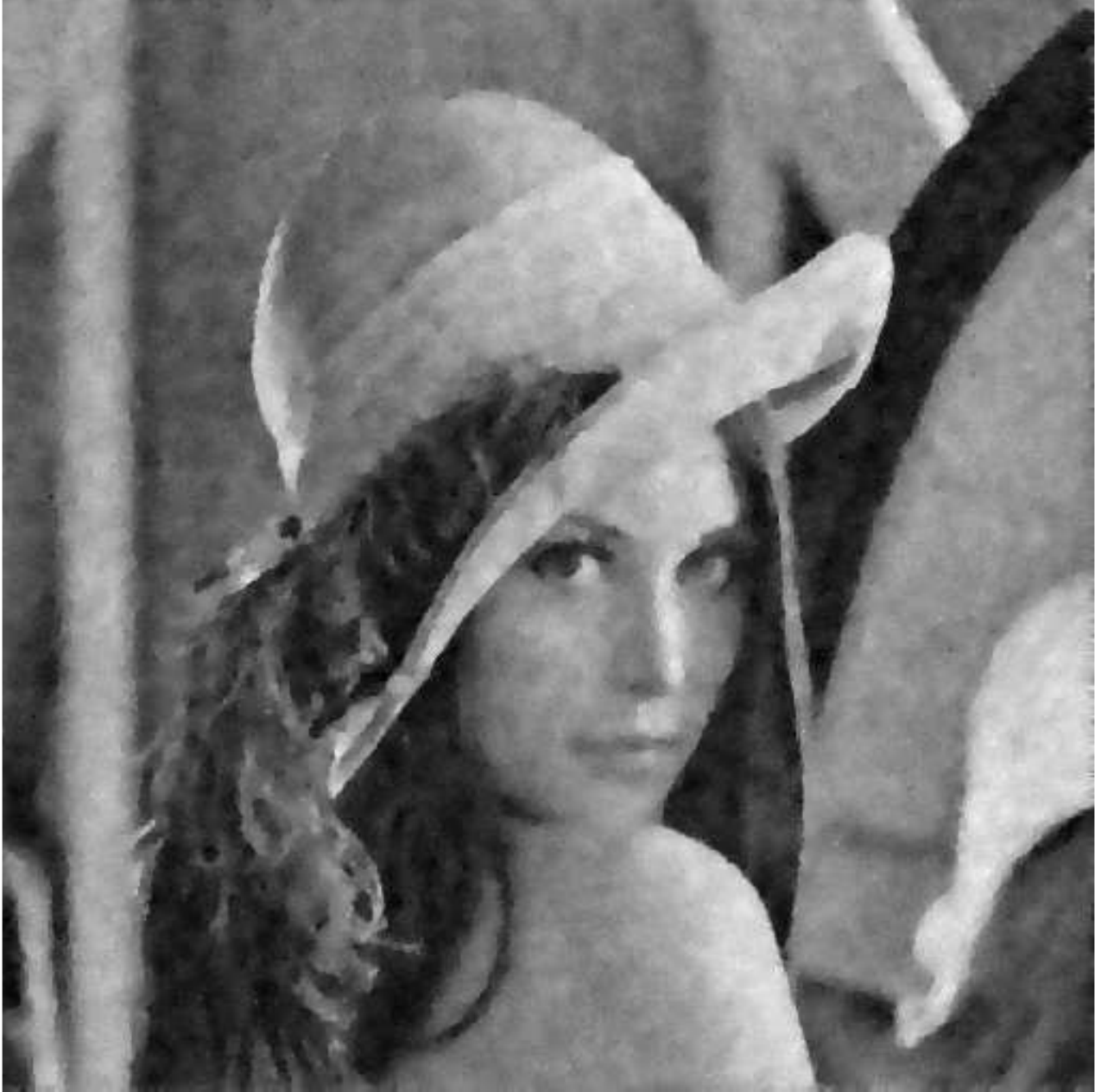}}~~
\subfigure[]{\includegraphics[width=.08\textwidth]{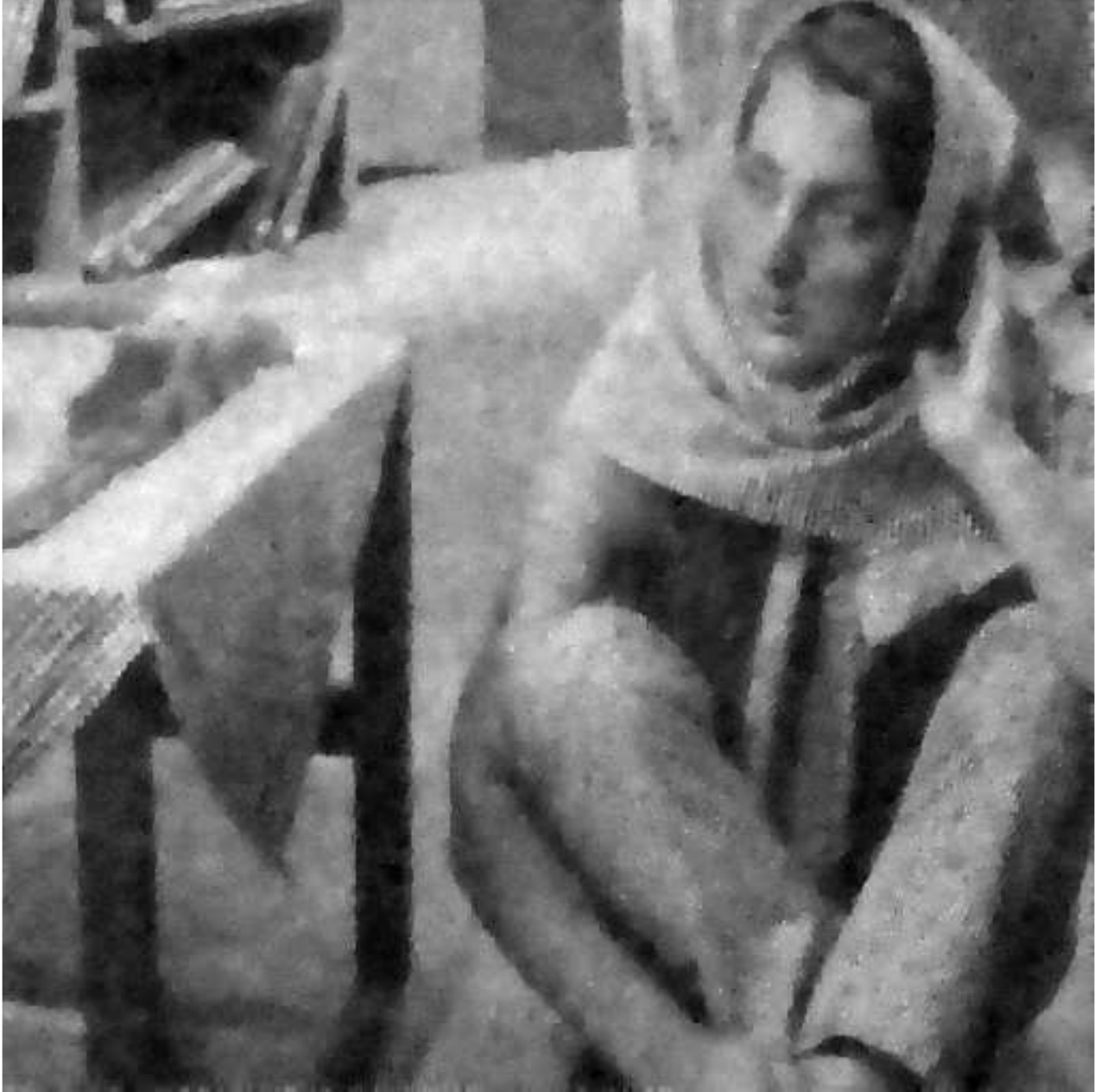}}~~
\subfigure[]{\includegraphics[width=.08\textwidth]{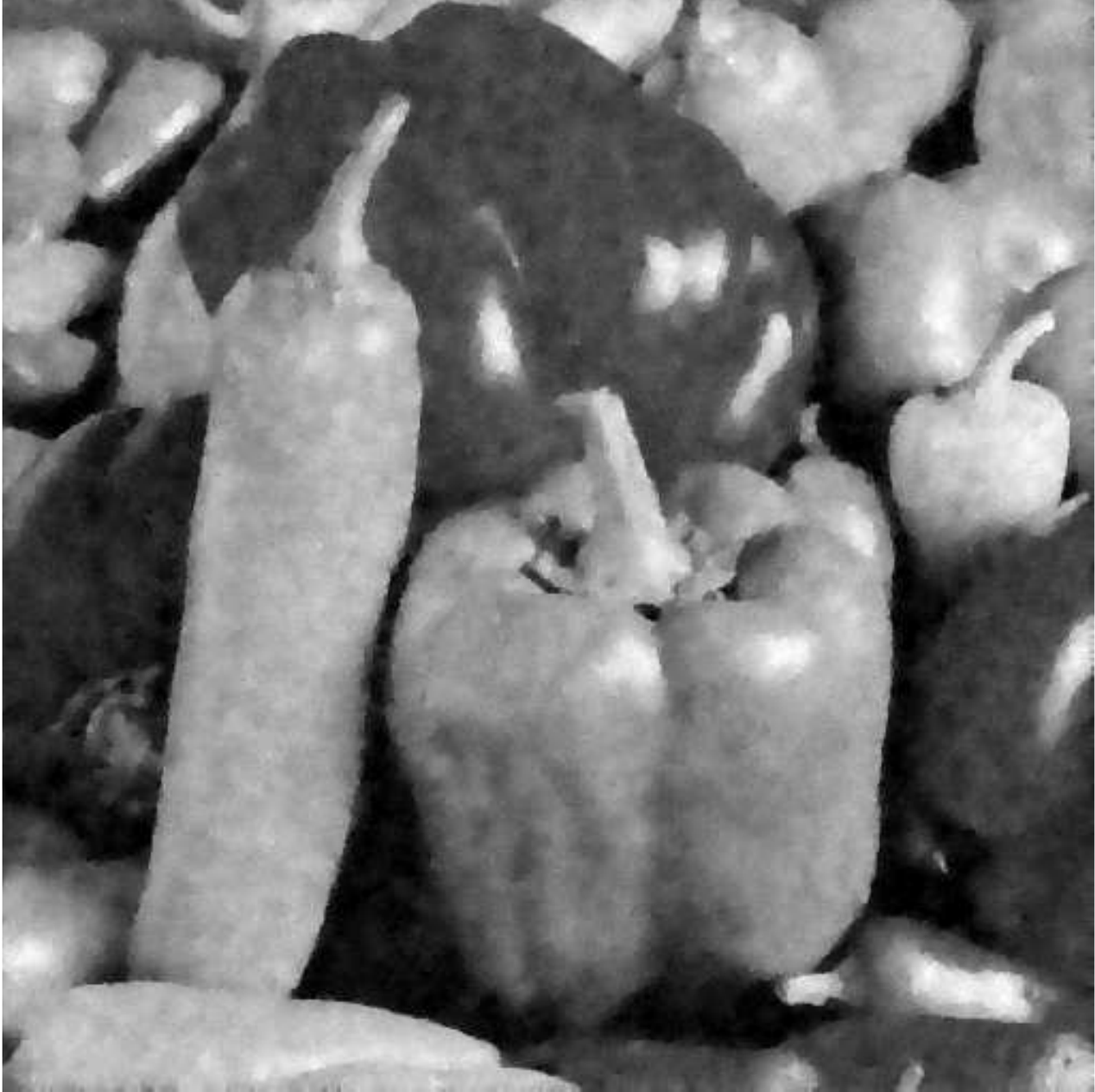}}~~
\subfigure[]{\includegraphics[width=.08\textwidth]{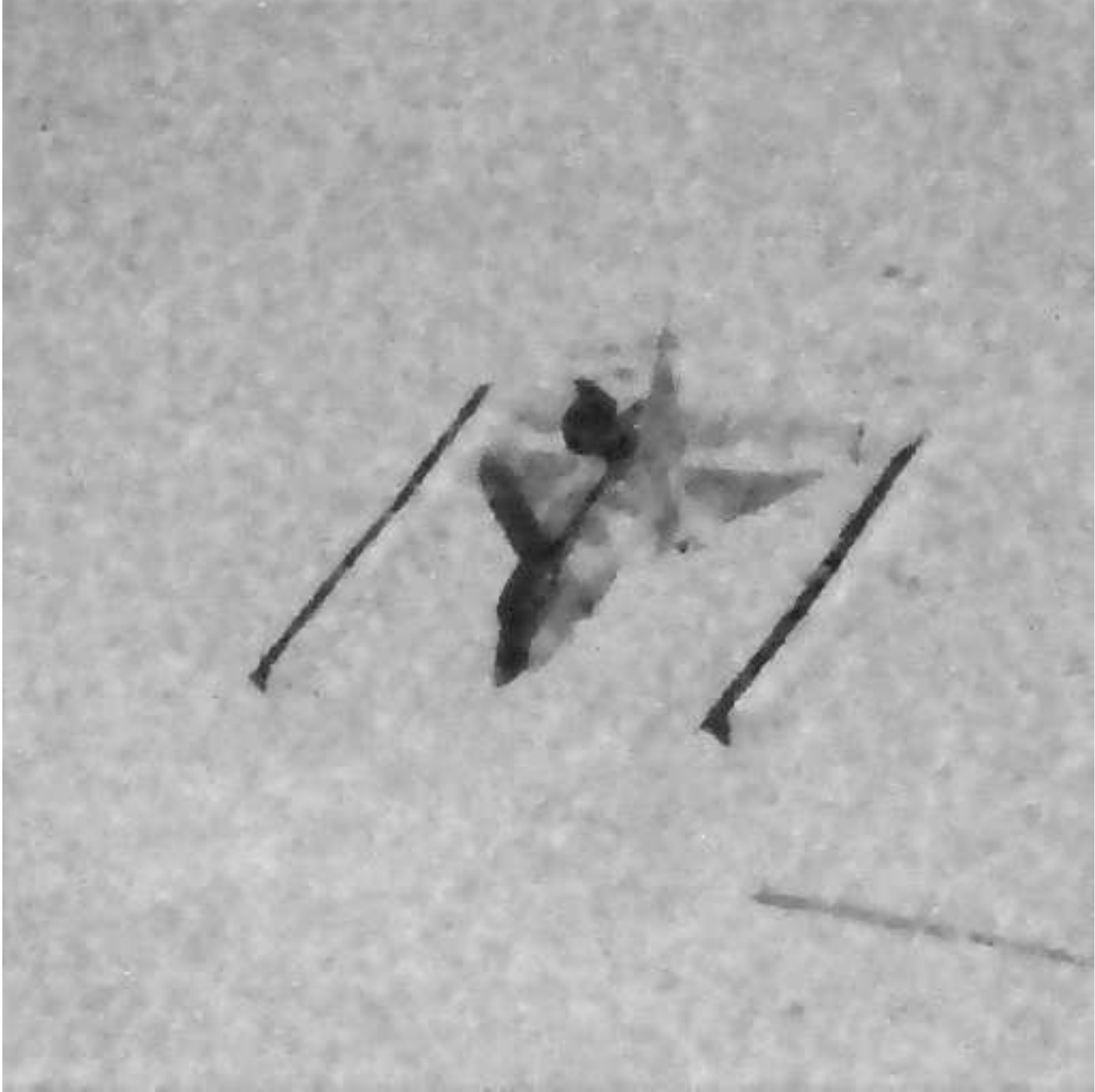}}\\
\subfigure[]{\includegraphics[width=.08\textwidth]{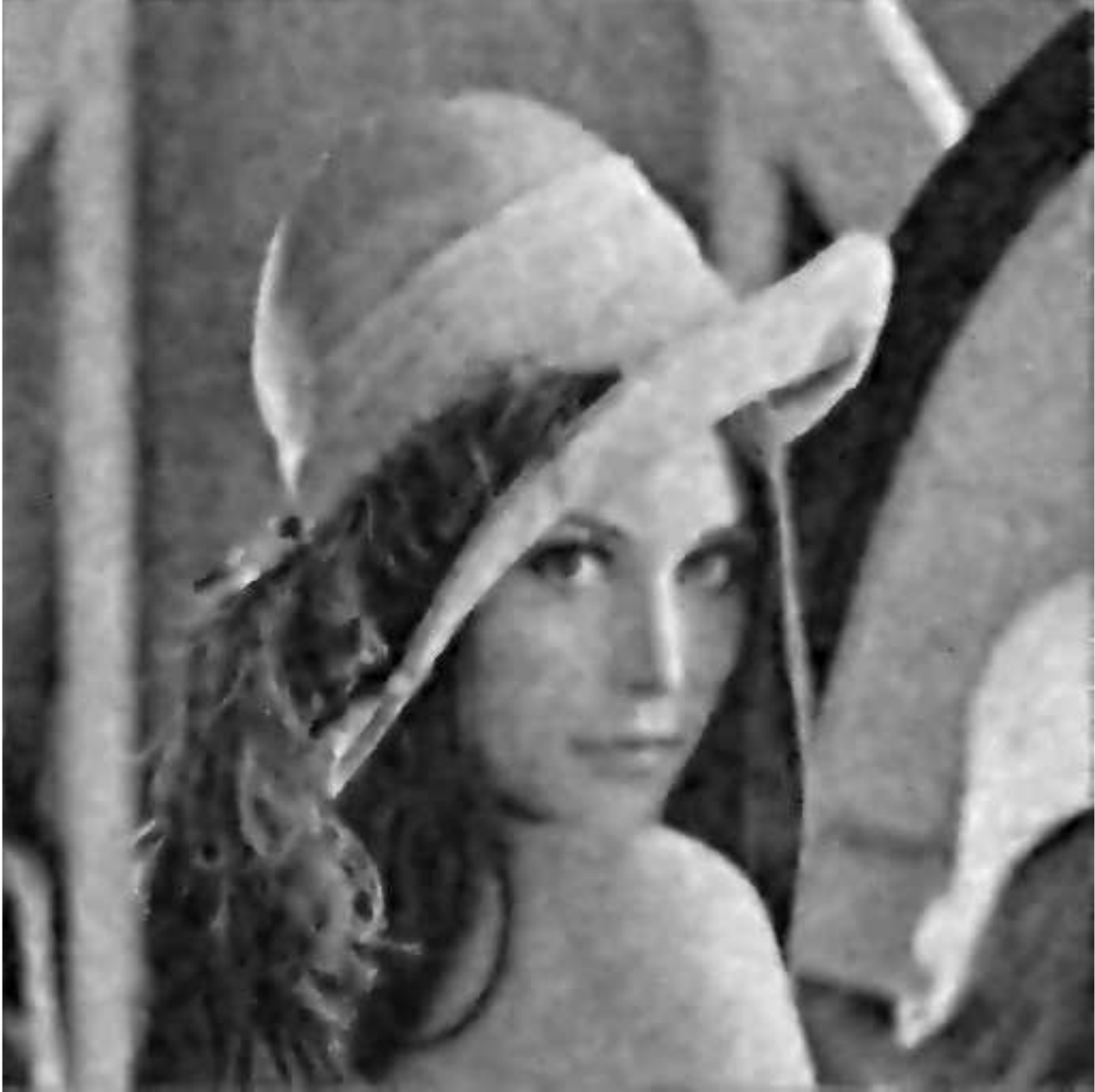}}~~
\subfigure[]{\includegraphics[width=.08\textwidth]{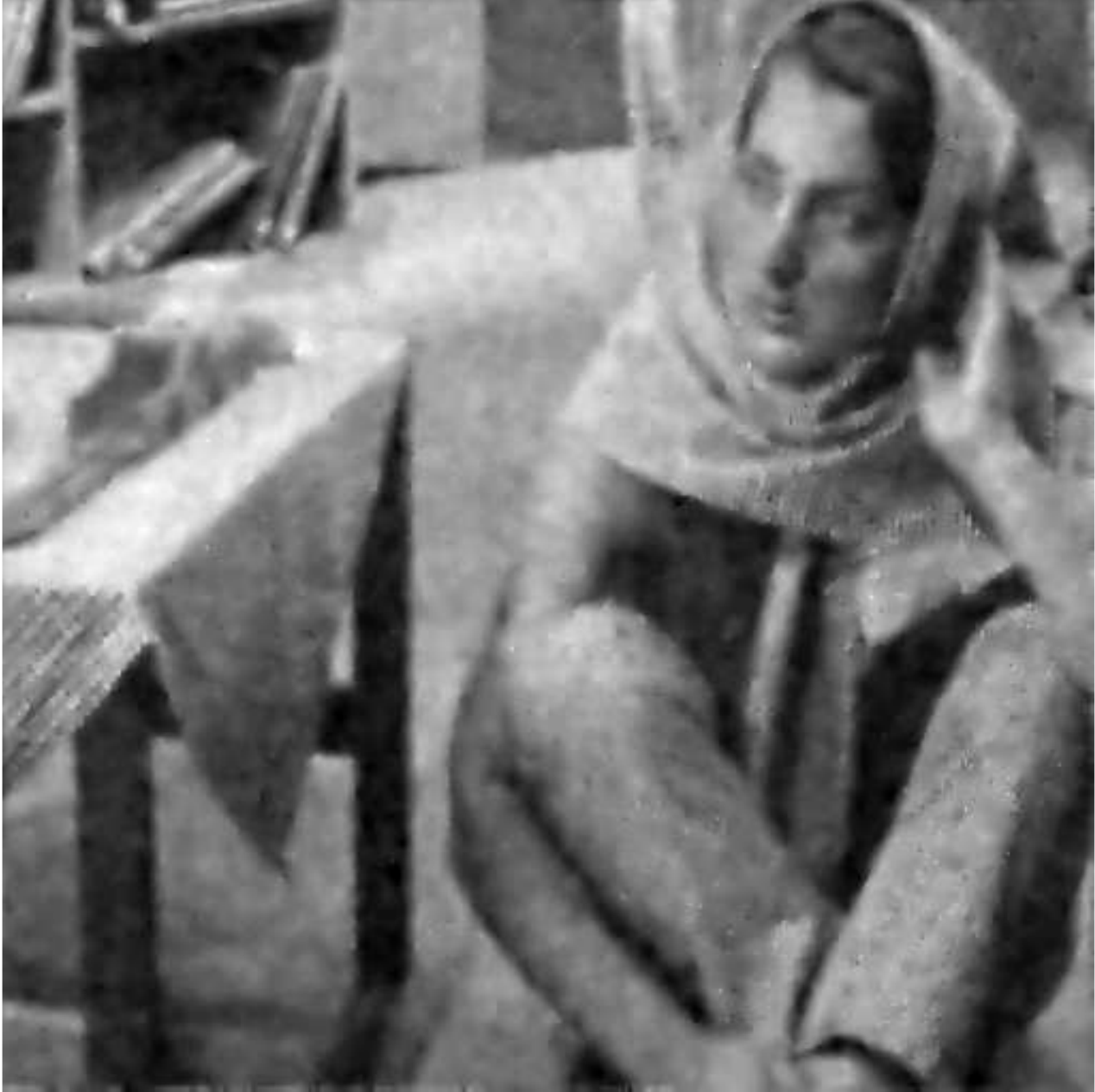}}~~
\subfigure[]{\includegraphics[width=.08\textwidth]{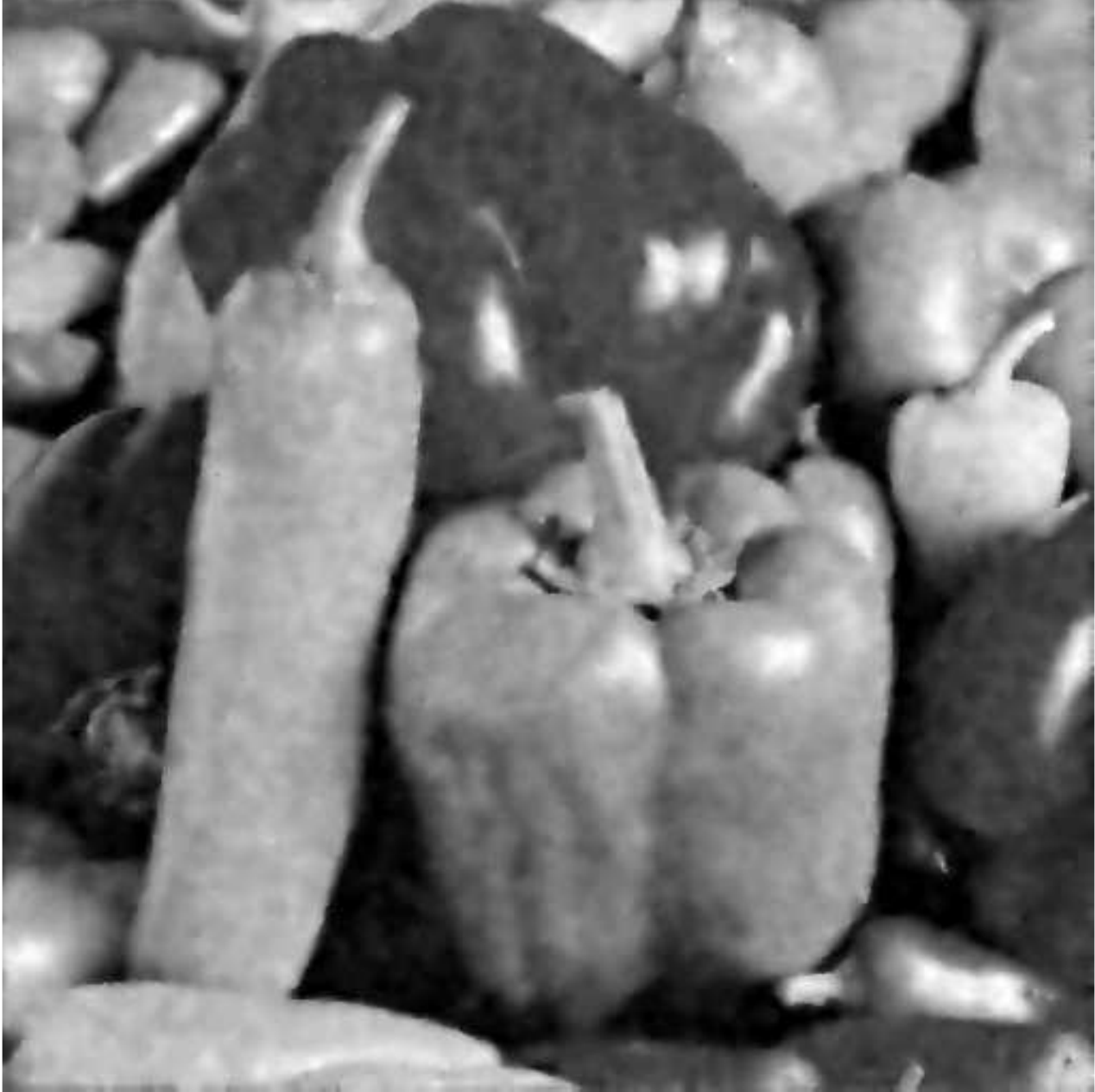}}~~
\subfigure[]{\includegraphics[width=.08\textwidth]{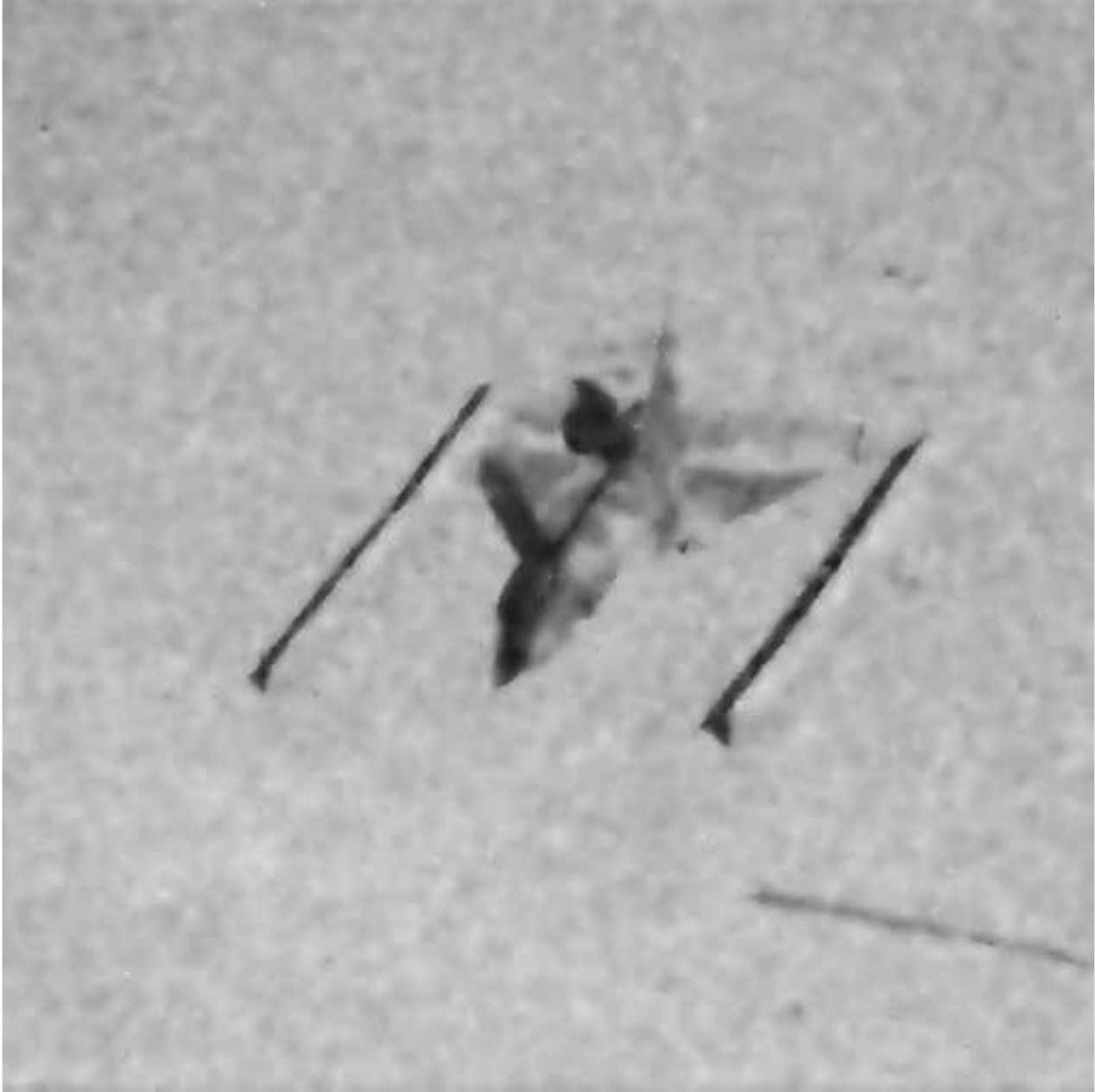}}\\
\subfigure[]{\includegraphics[width=.08\textwidth]{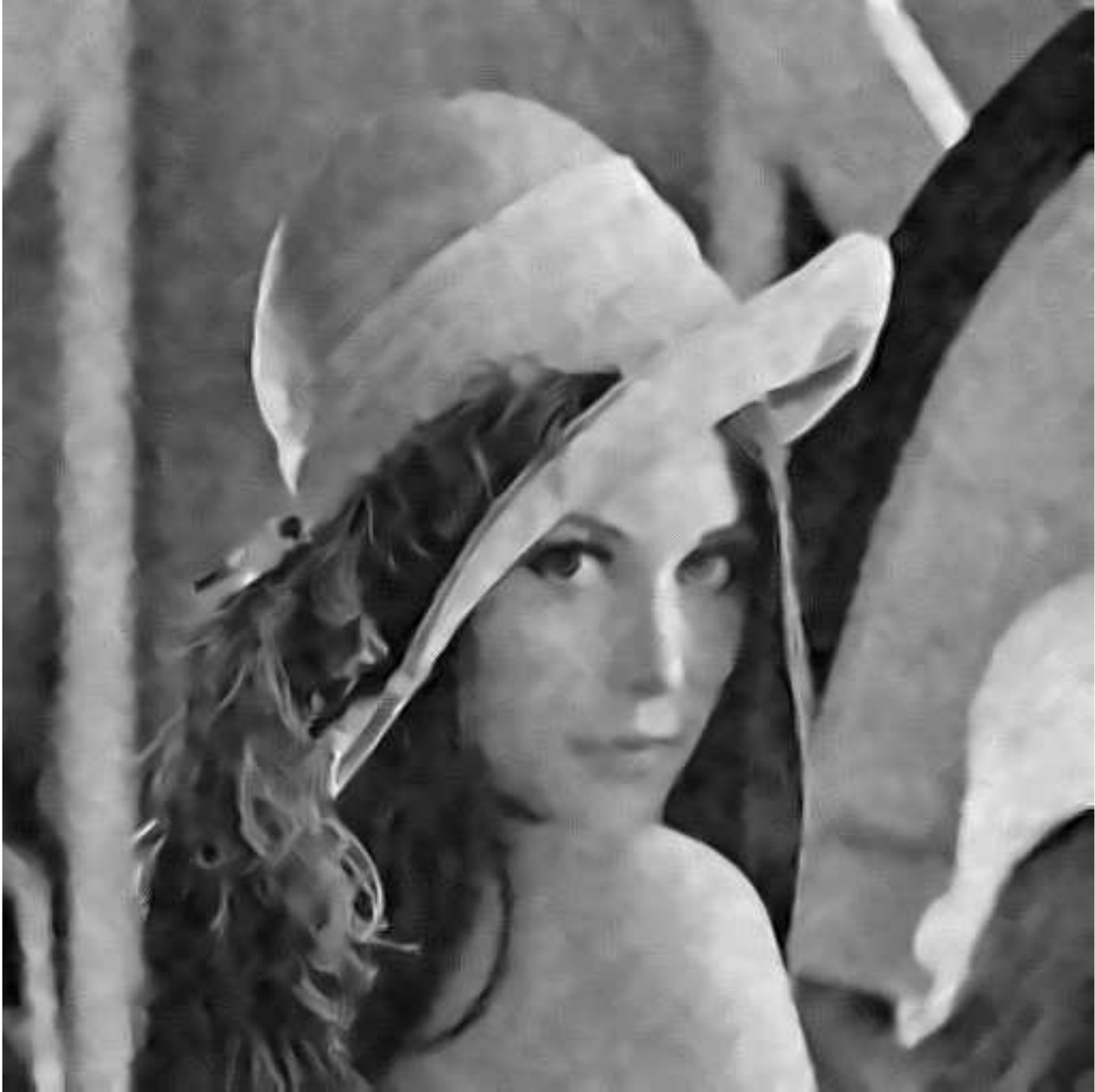}}~~
\subfigure[]{\includegraphics[width=.08\textwidth]{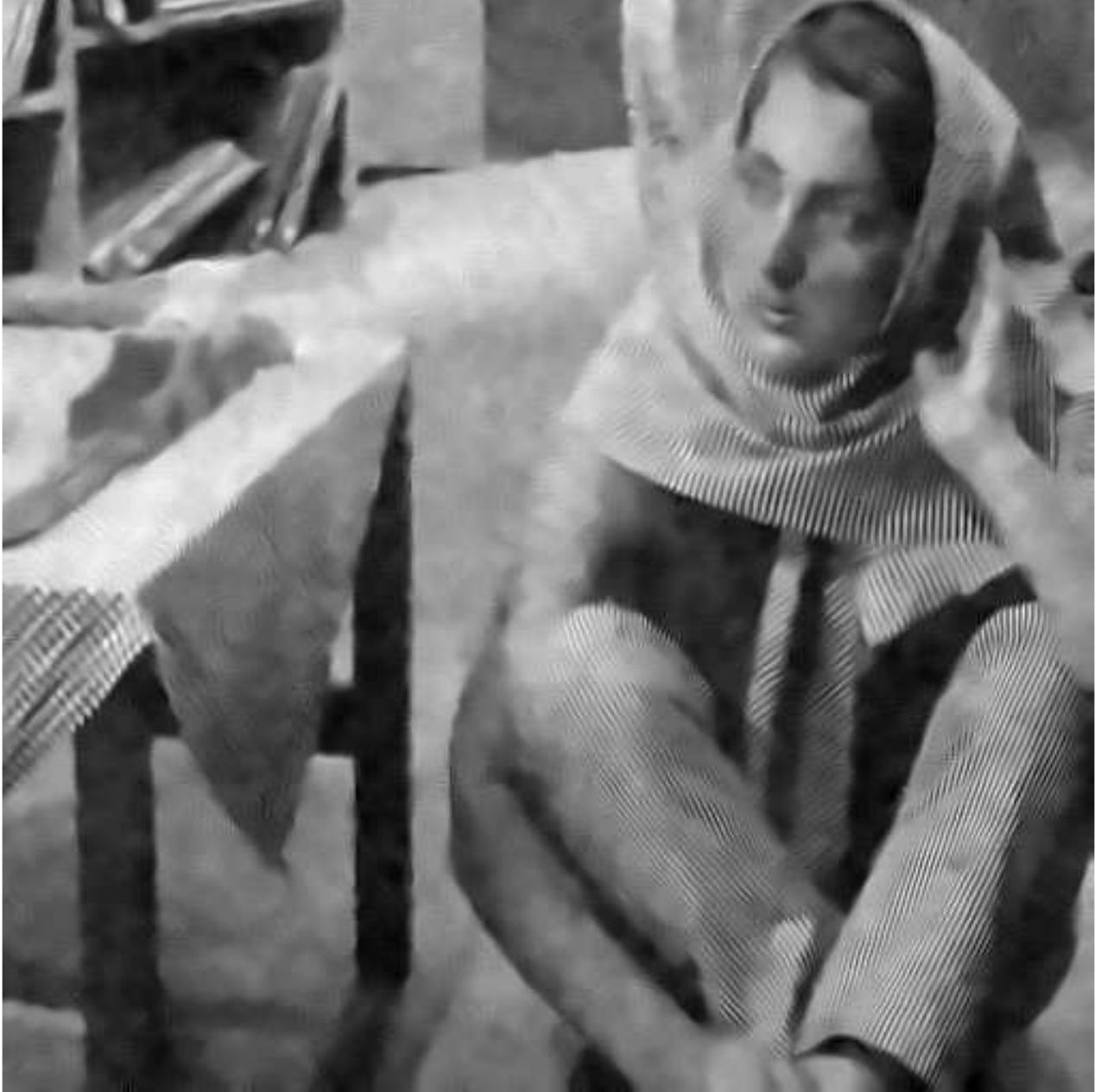}}~~
\subfigure[]{\includegraphics[width=.08\textwidth]{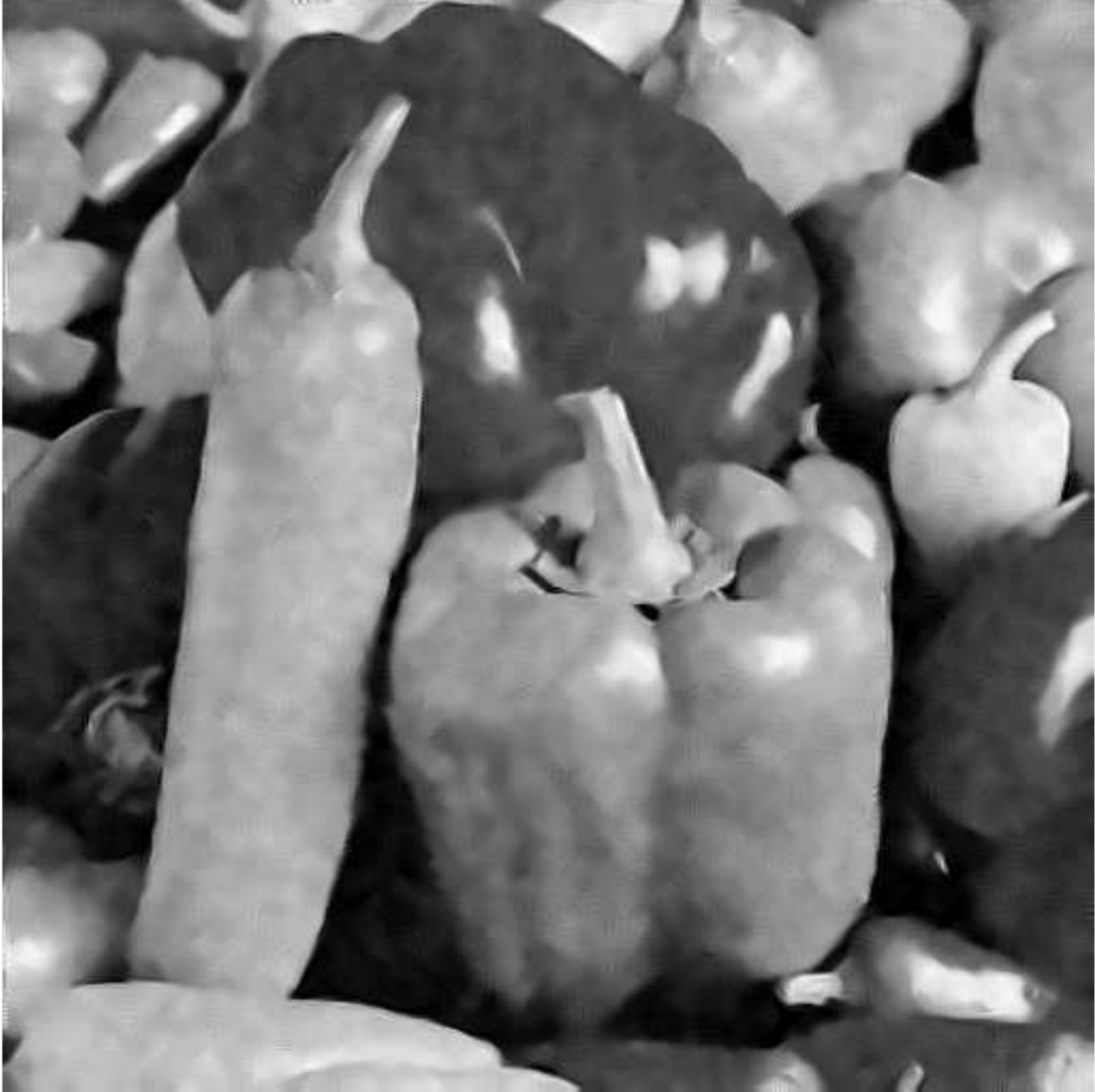}}~~
\subfigure[]{\includegraphics[width=.08\textwidth]{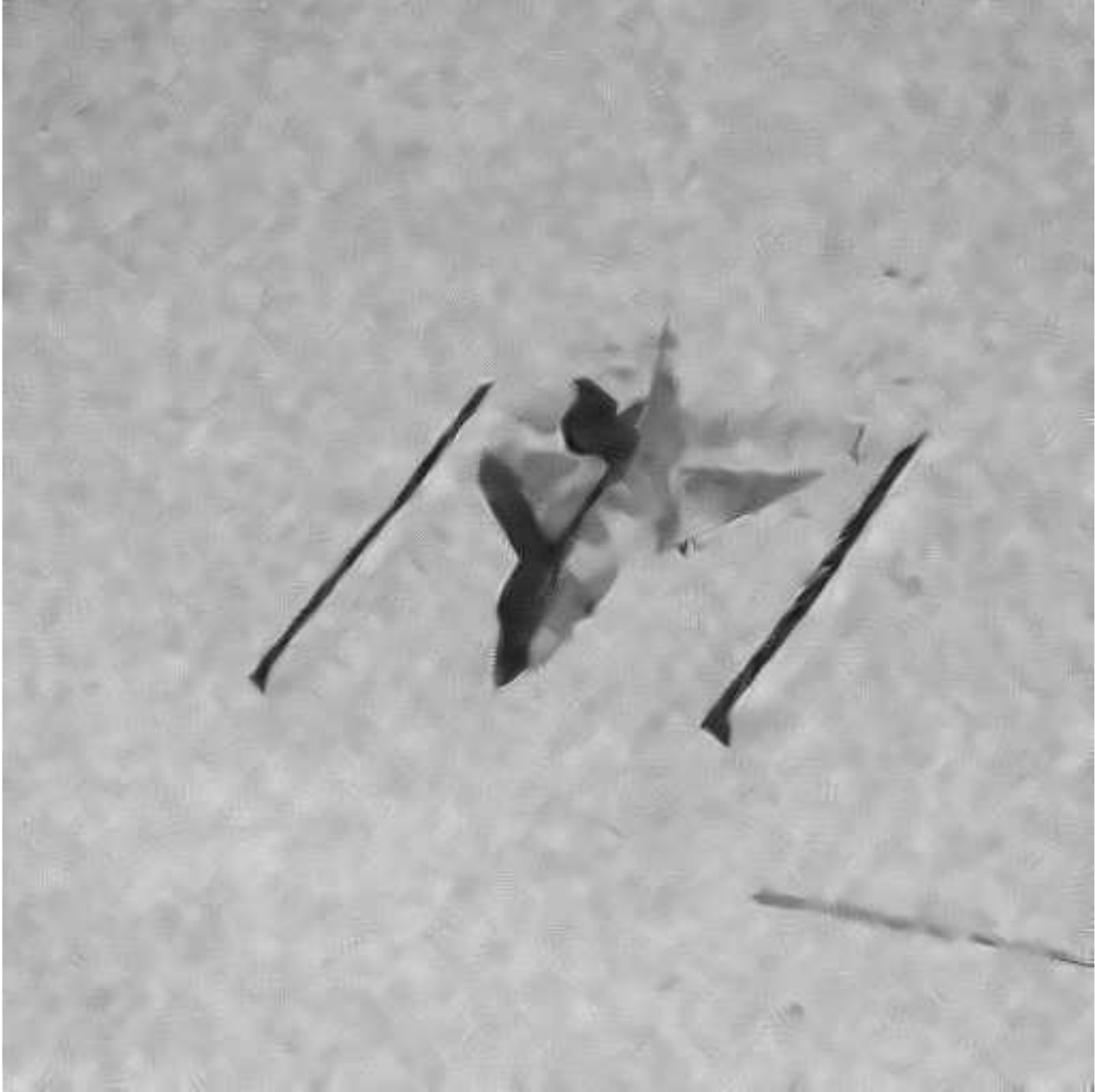}}\\
\subfigure[]{\includegraphics[width=.08\textwidth]{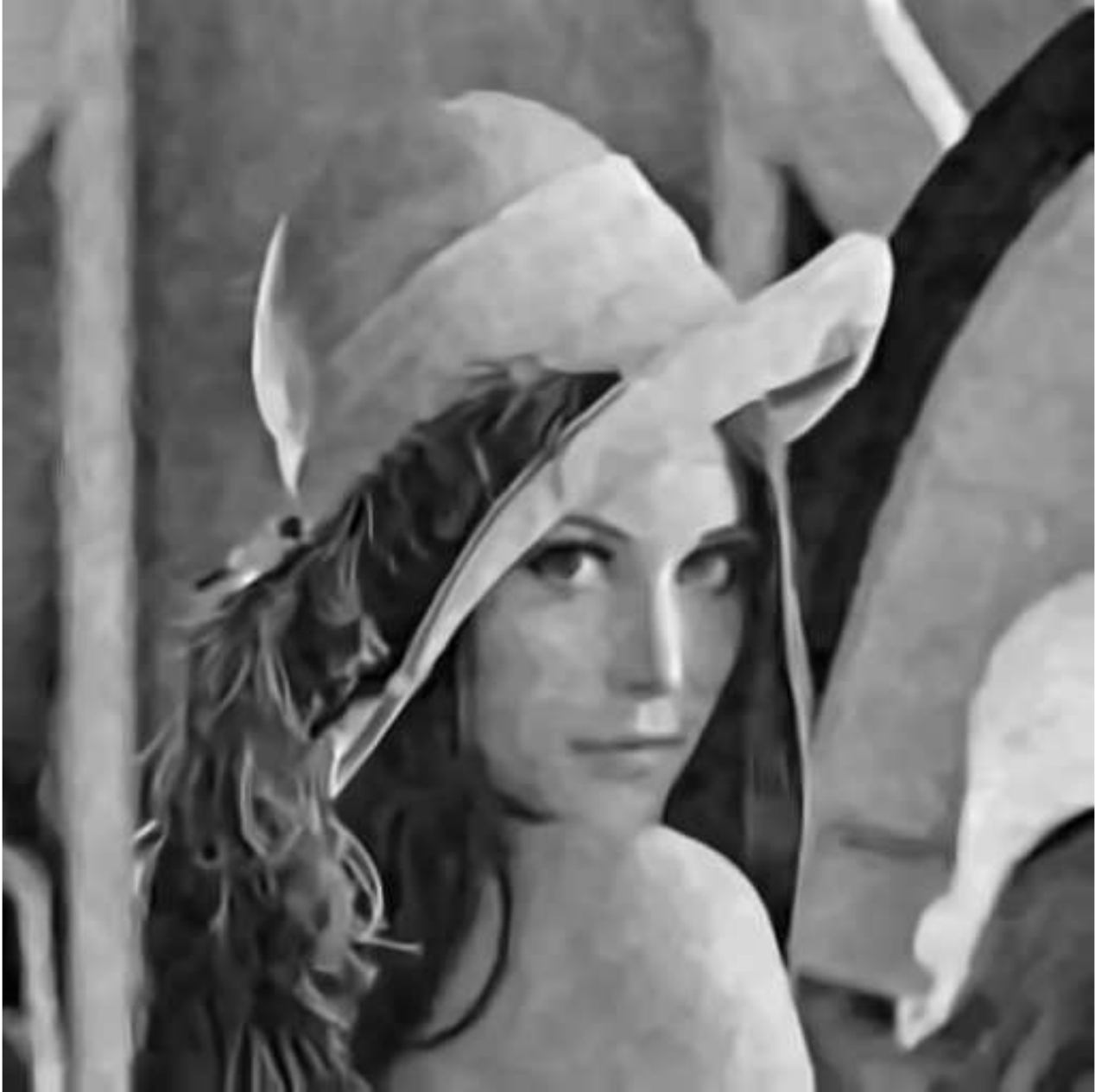}}~~
\subfigure[]{\includegraphics[width=.08\textwidth]{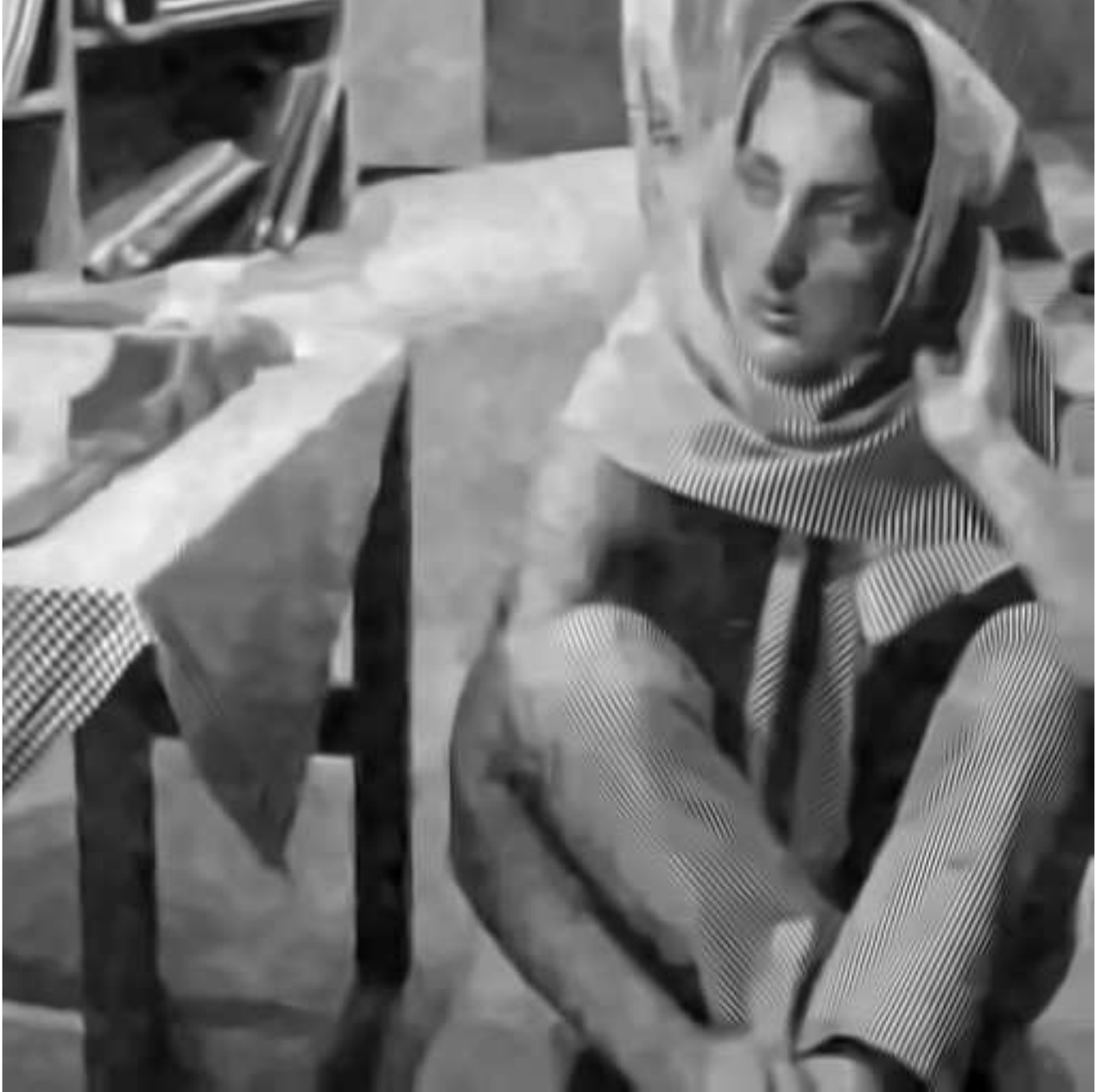}}~~
\subfigure[]{\includegraphics[width=.08\textwidth]{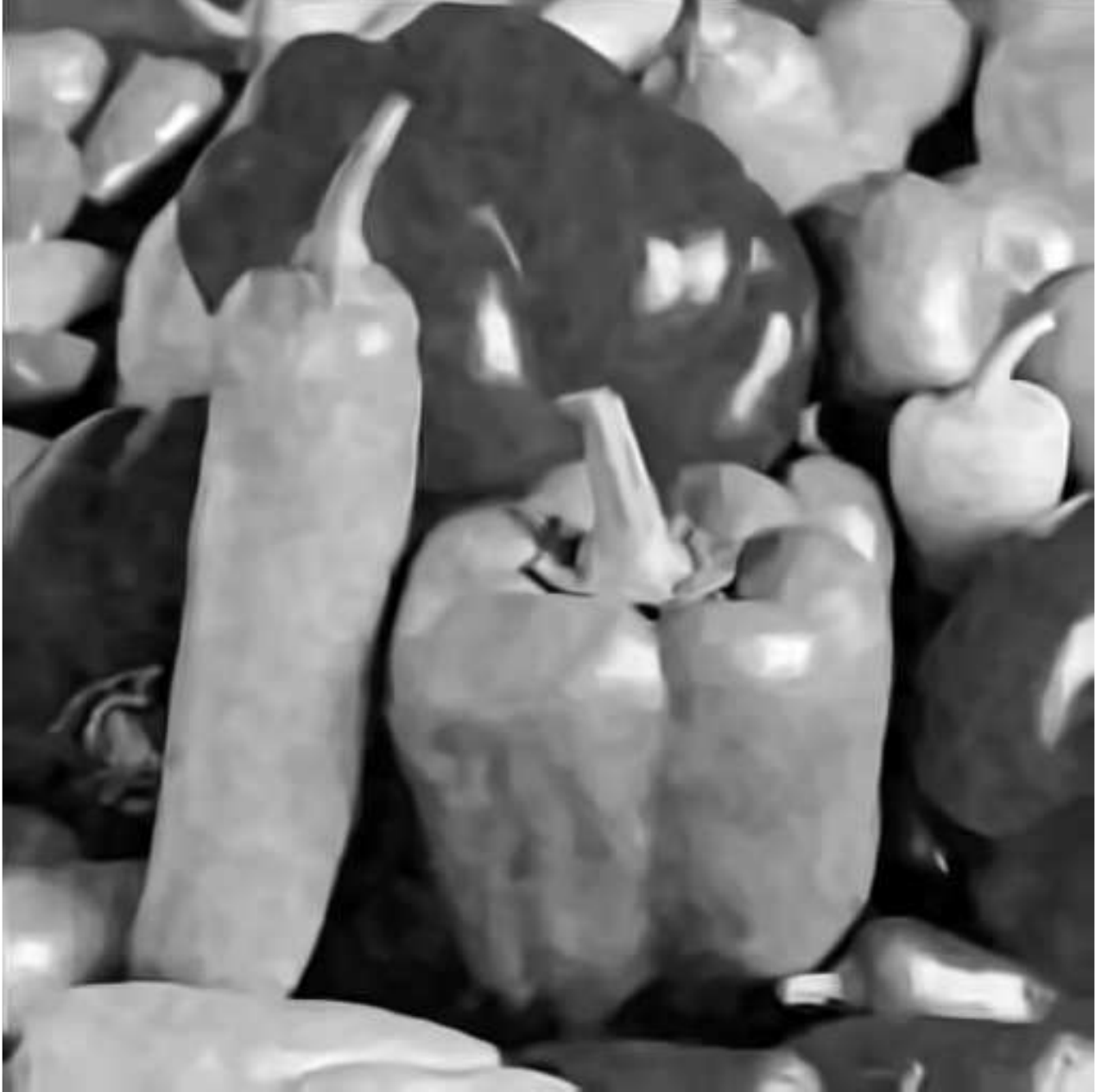}}~~
\subfigure[]{\includegraphics[width=.08\textwidth]{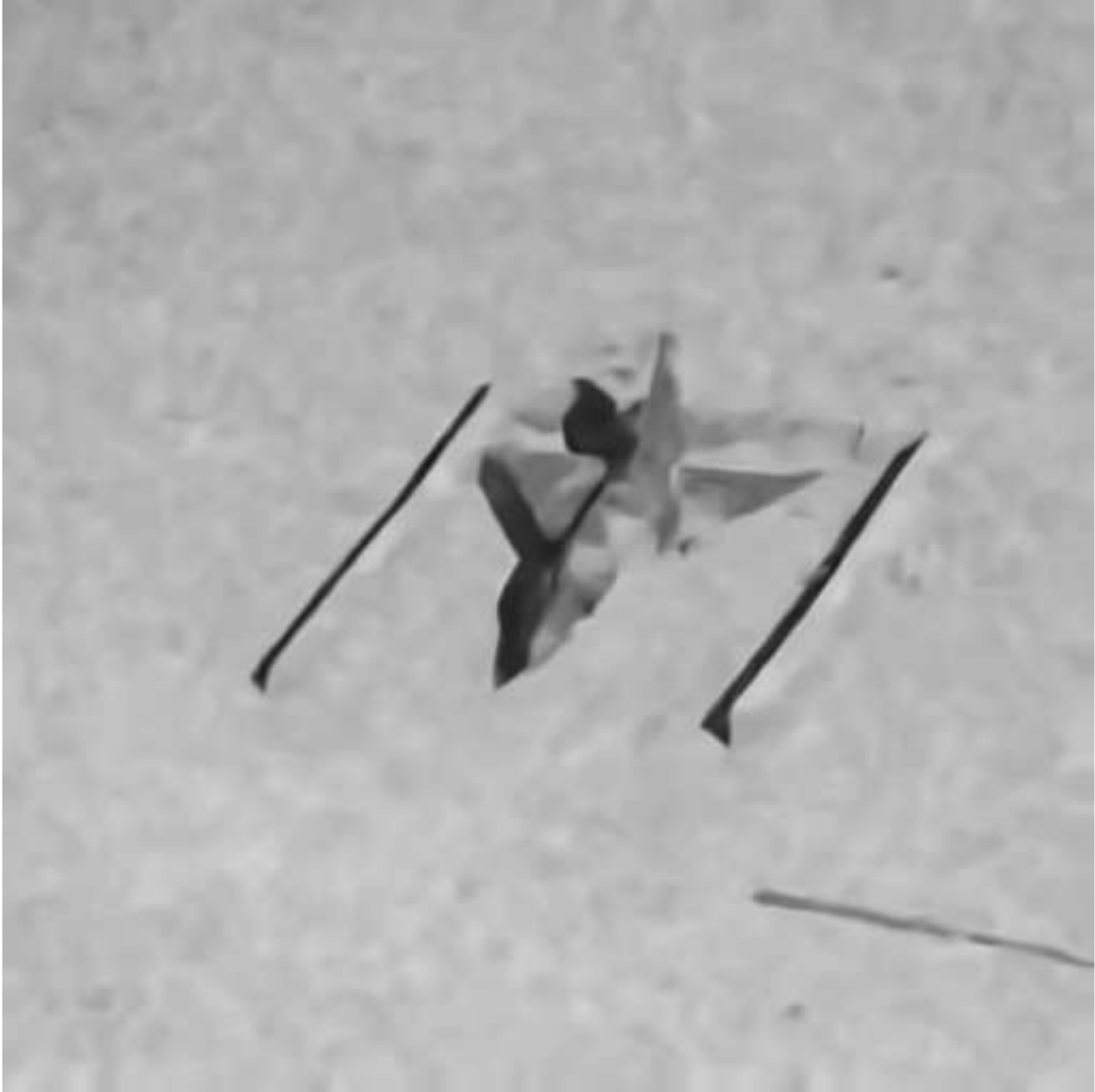}}
\end{center}
\caption{PR with CDP. Peak level $\nu=5.0\times10^{-3}$ for Poisson noise. First row: ``LS-PR''; Second row: ``TV-PR''; Third row: ``TGV-PR''; Fourth row: ``NLM-PR''; Fifth row: ``BM3D-PR''.}
\label{poicdp2}
\end{figure}

\begin{figure}
\begin{center}
\subfigure[]{\includegraphics[width=.08\textwidth]{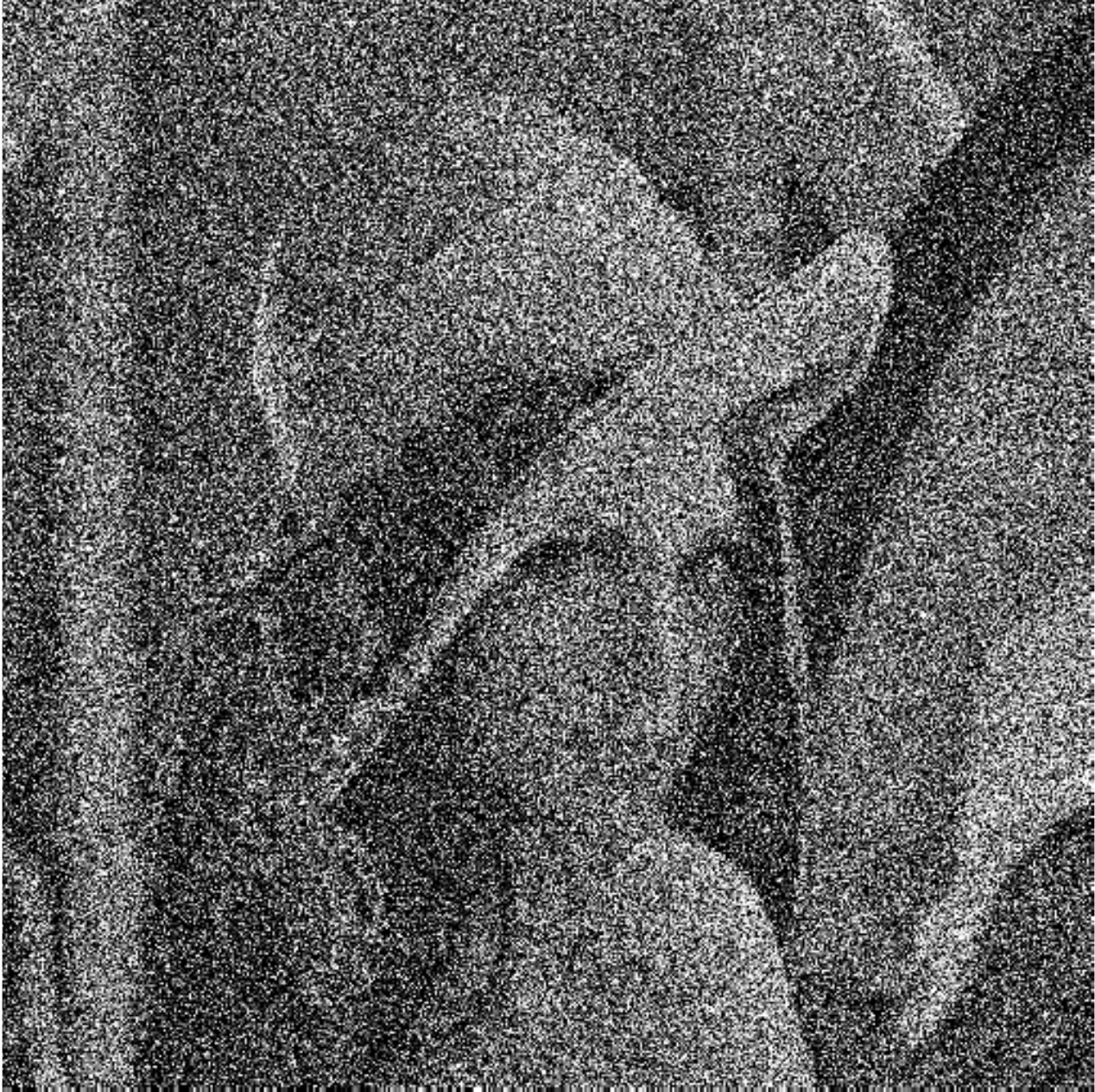}}~~
\subfigure[]{\includegraphics[width=.08\textwidth]{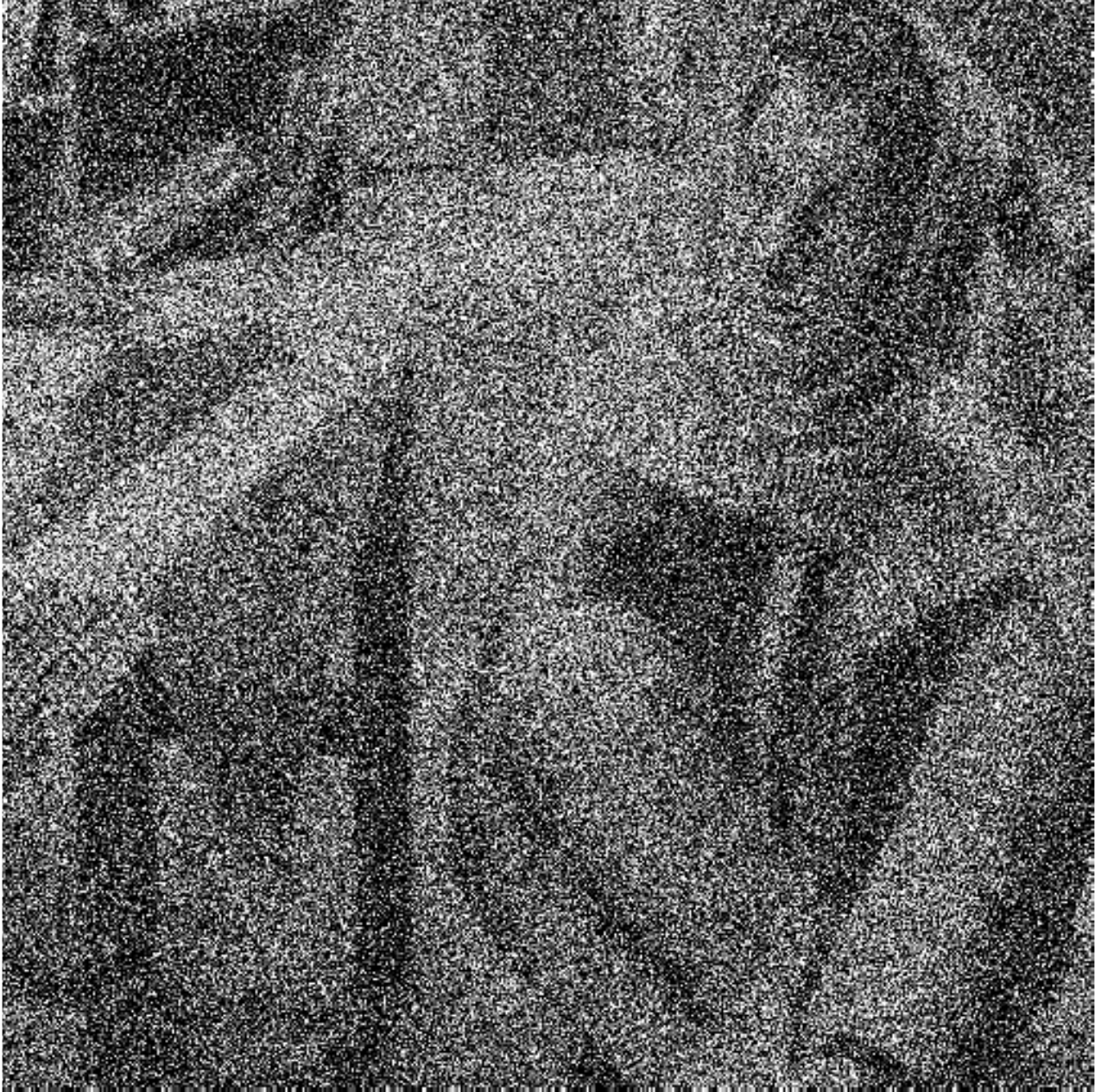}}~~
\subfigure[]{\includegraphics[width=.08\textwidth]{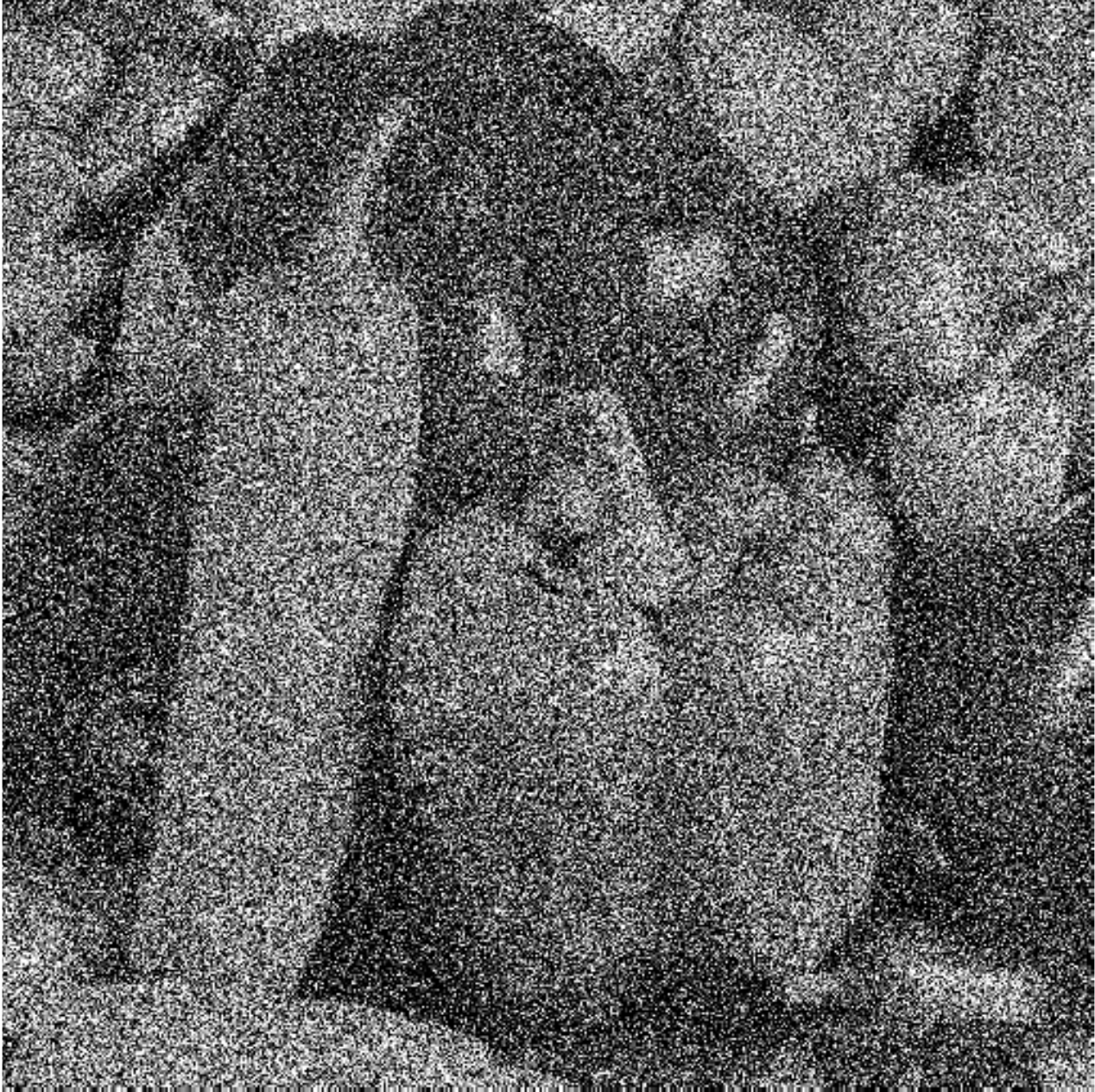}}~~
\subfigure[]{\includegraphics[width=.08\textwidth]{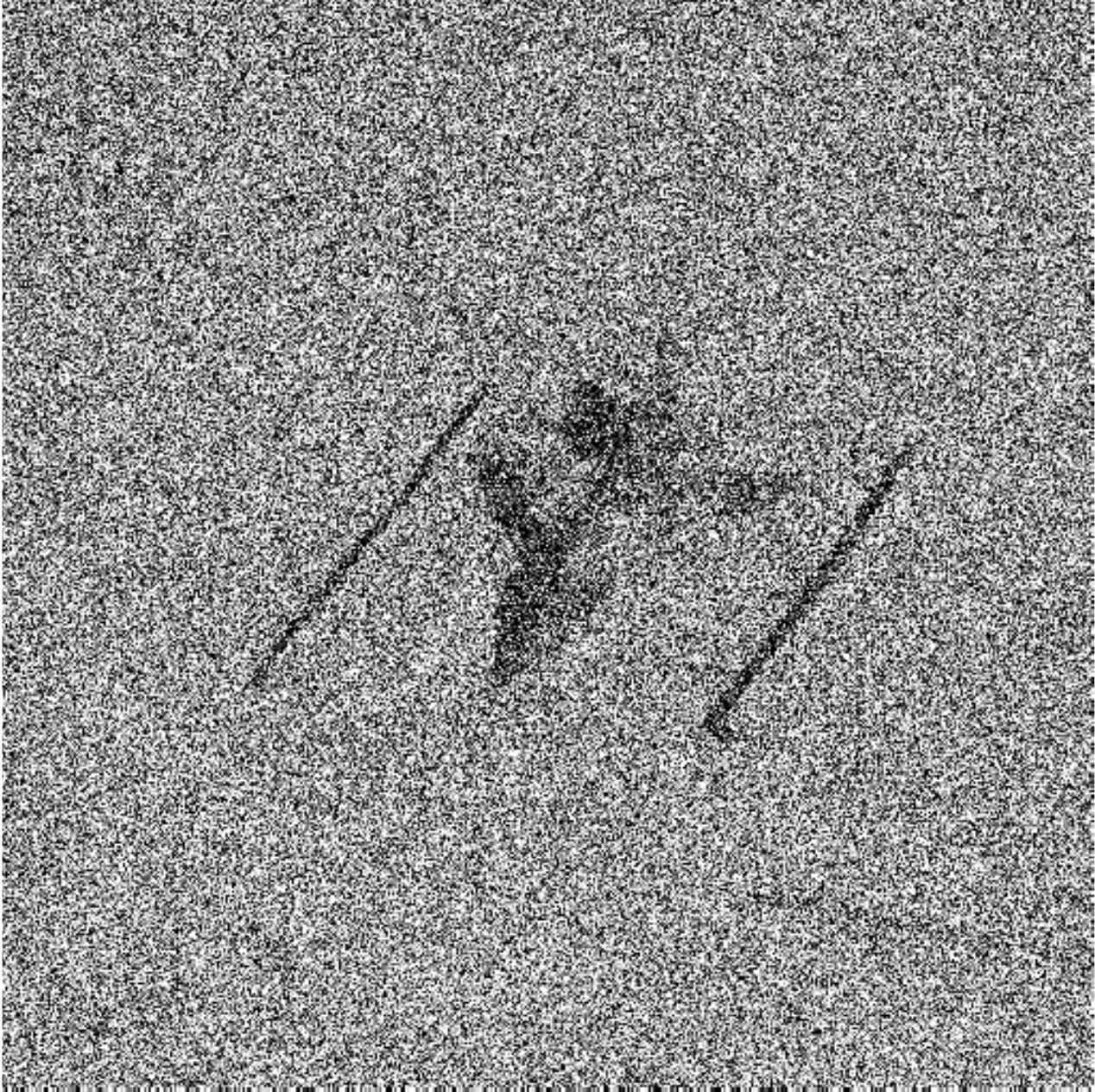}}\\
\subfigure[]{\includegraphics[width=.08\textwidth]{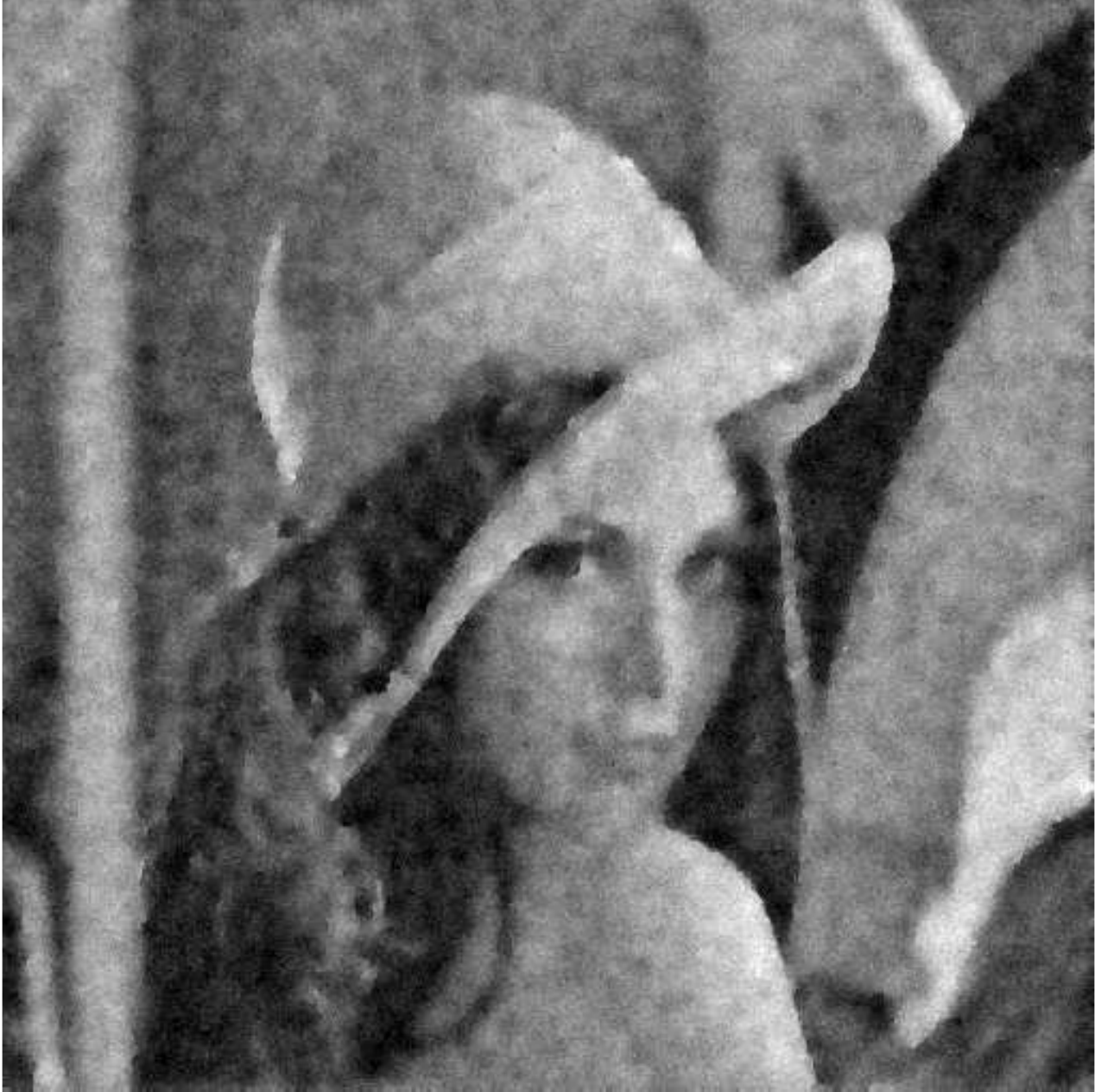}}~~
\subfigure[]{\includegraphics[width=.08\textwidth]{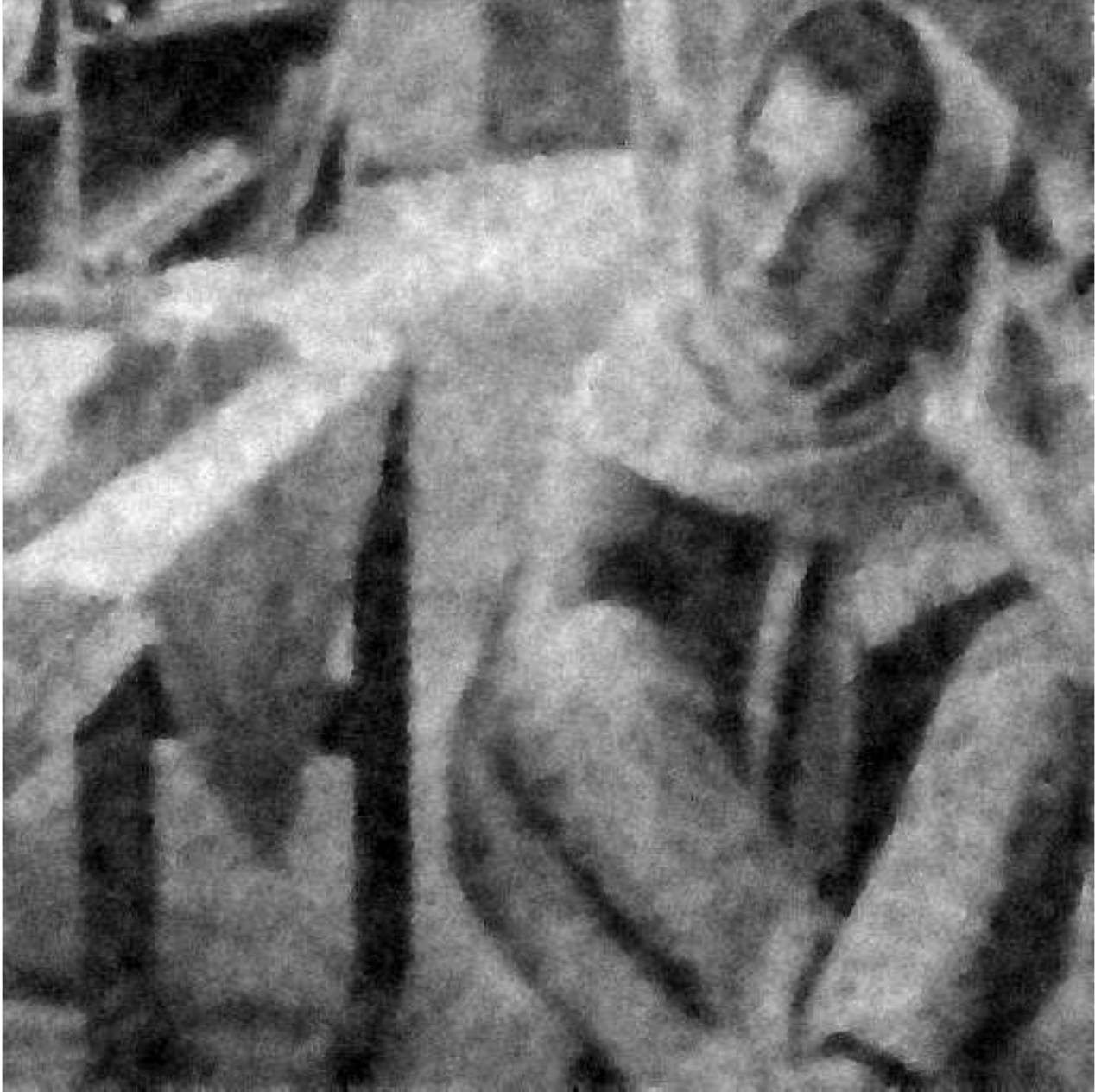}}~~
\subfigure[]{\includegraphics[width=.08\textwidth]{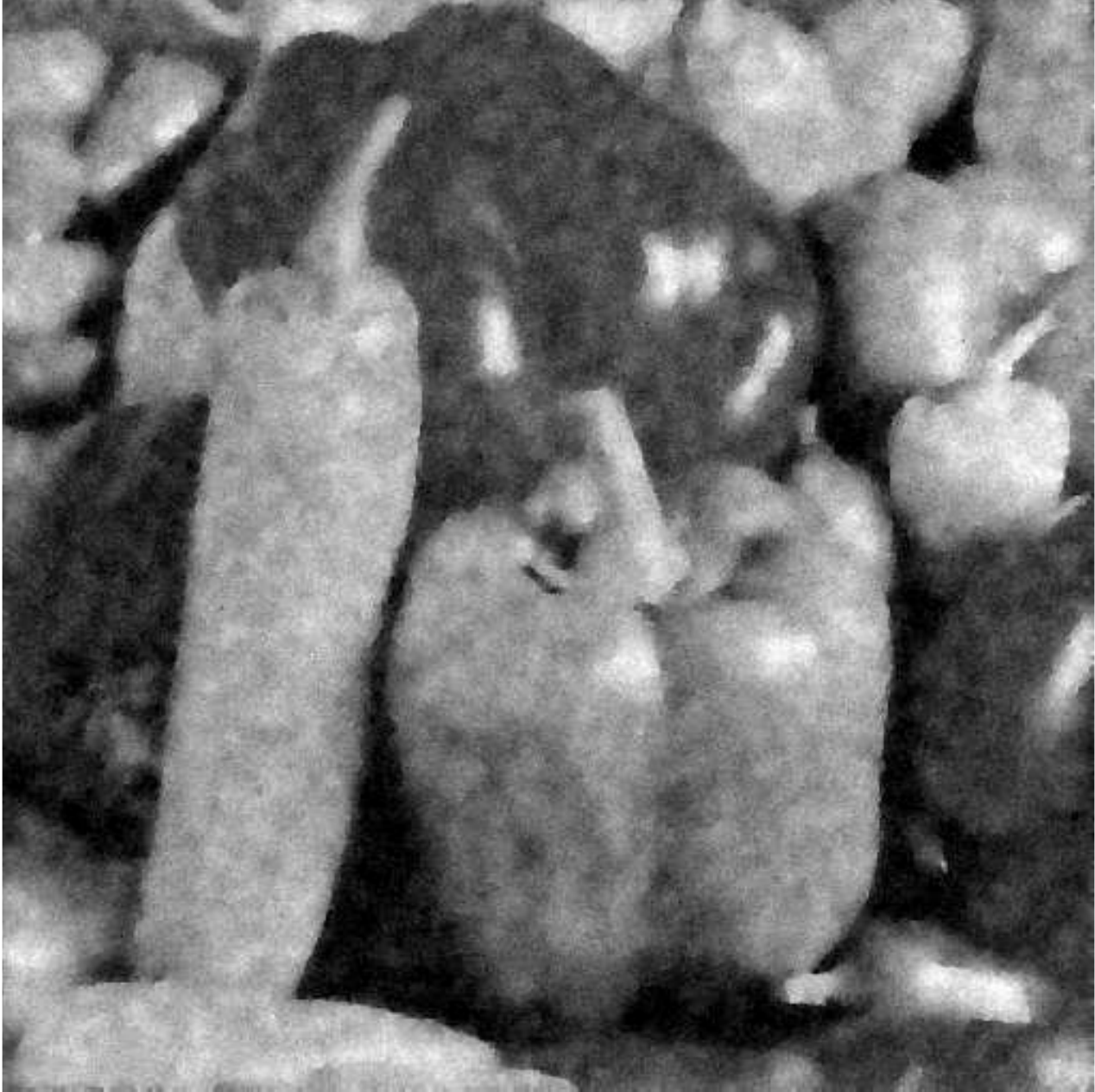}}~~
\subfigure[]{\includegraphics[width=.08\textwidth]{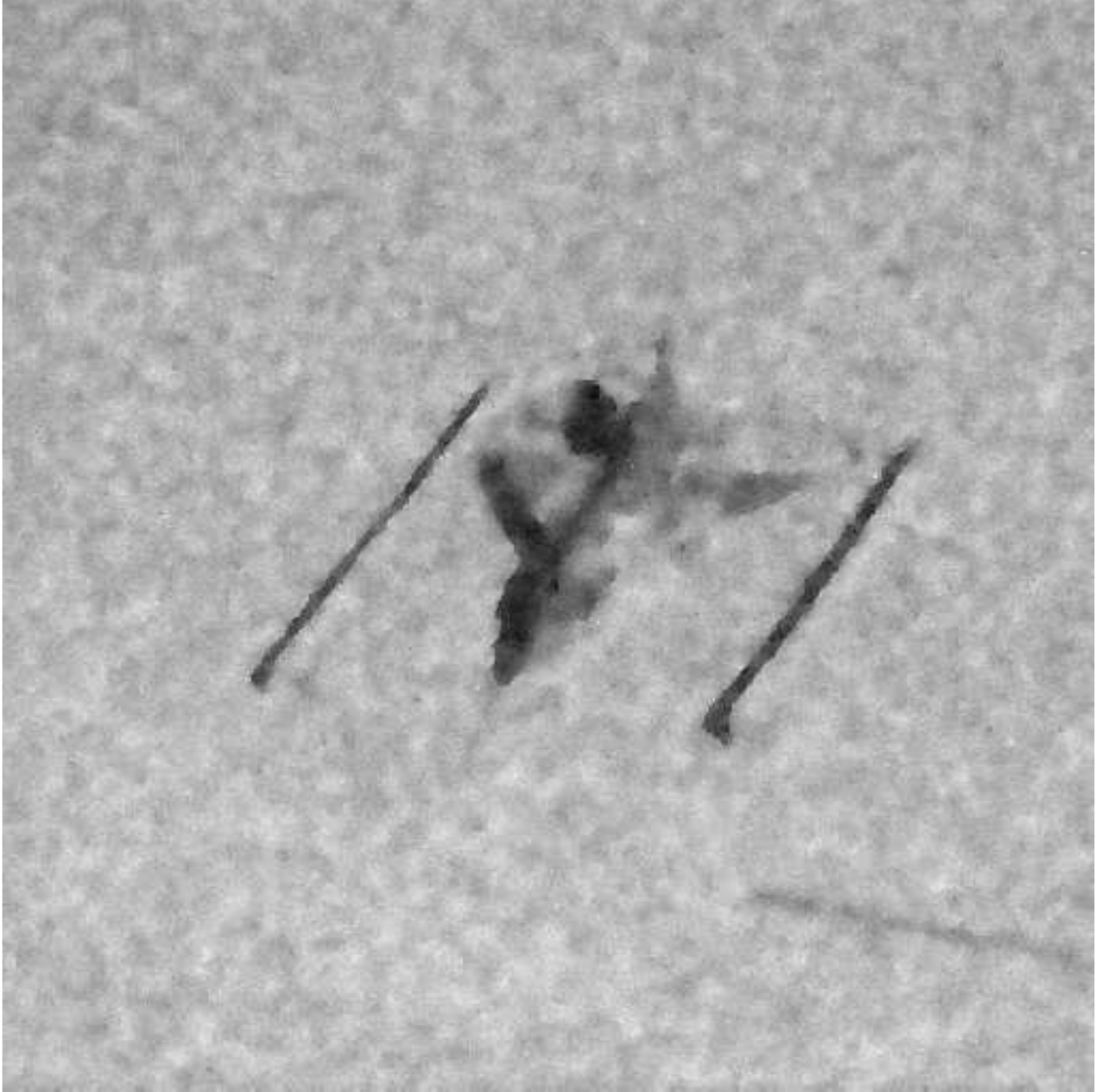}}\\
\subfigure[]{\includegraphics[width=.08\textwidth]{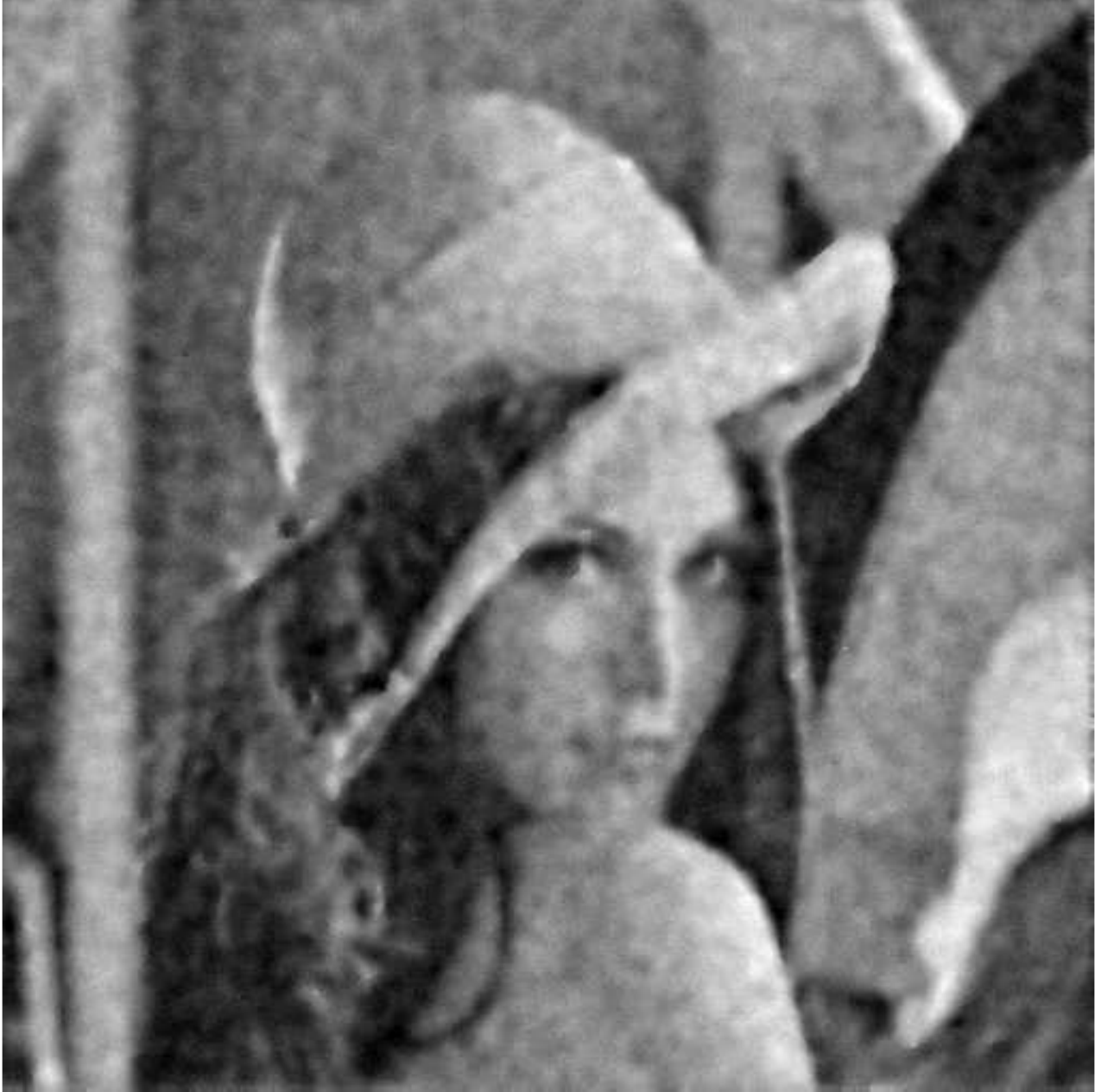}}~~
\subfigure[]{\includegraphics[width=.08\textwidth]{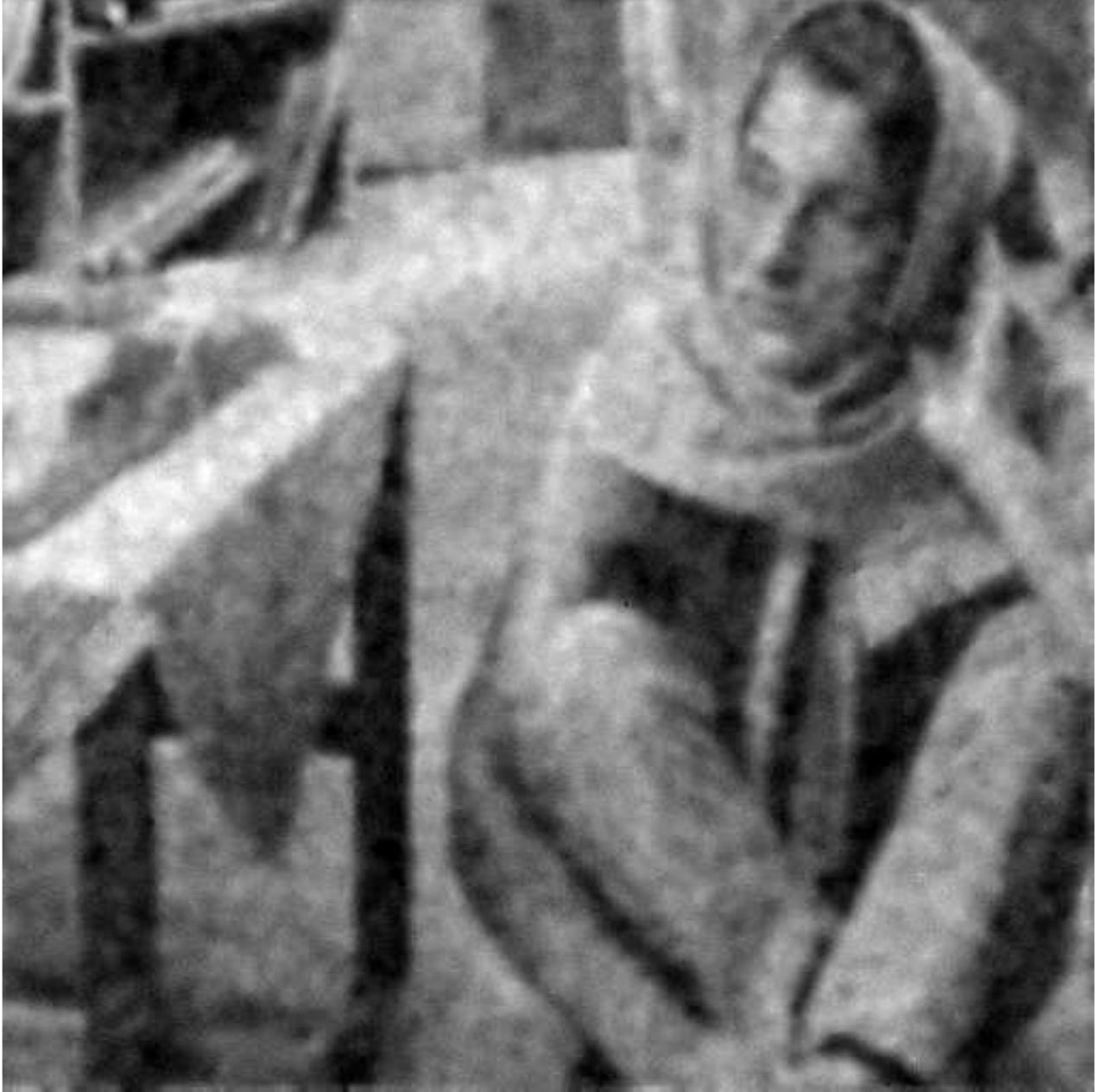}}~~
\subfigure[]{\includegraphics[width=.08\textwidth]{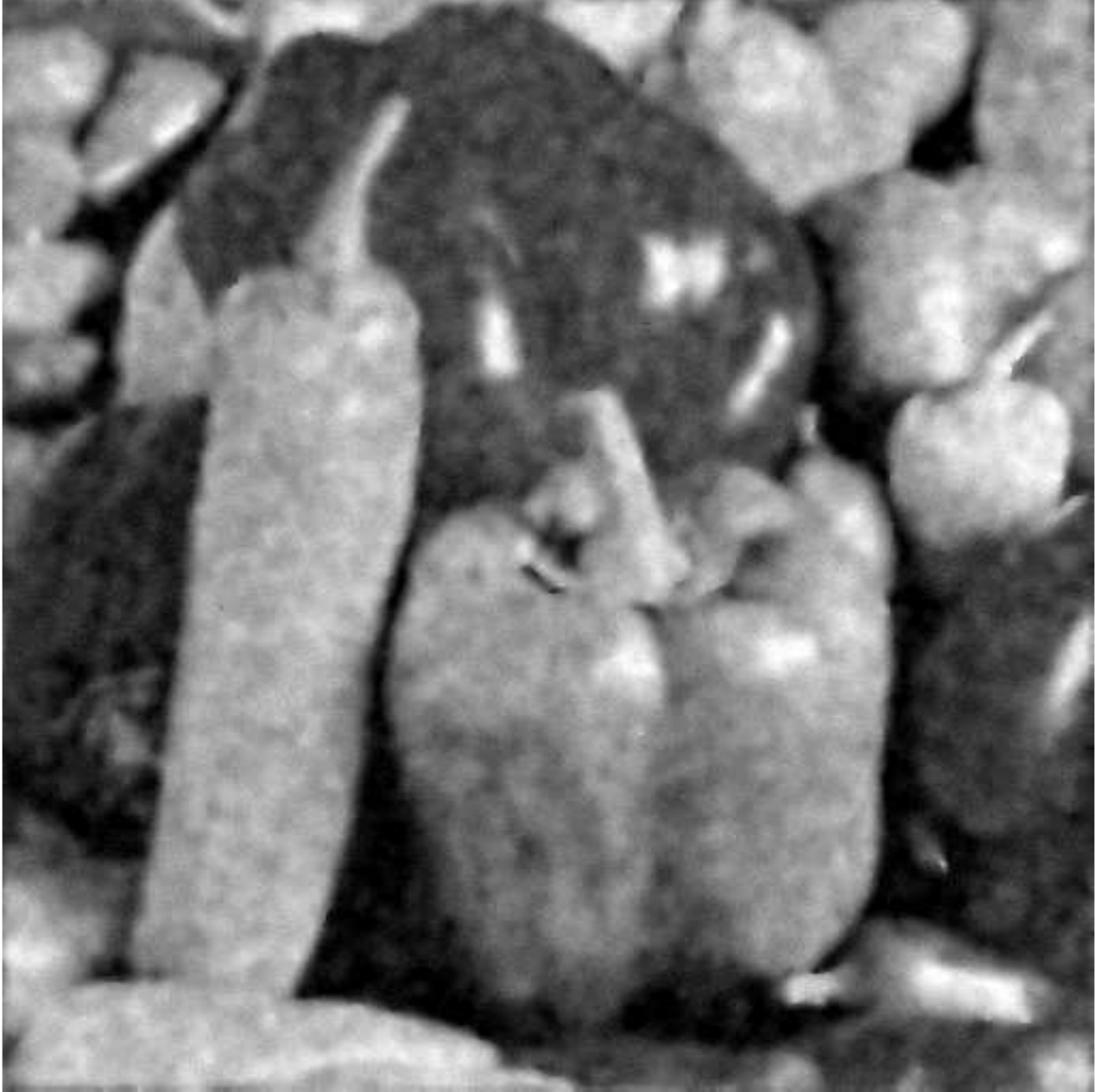}}~~
\subfigure[]{\includegraphics[width=.08\textwidth]{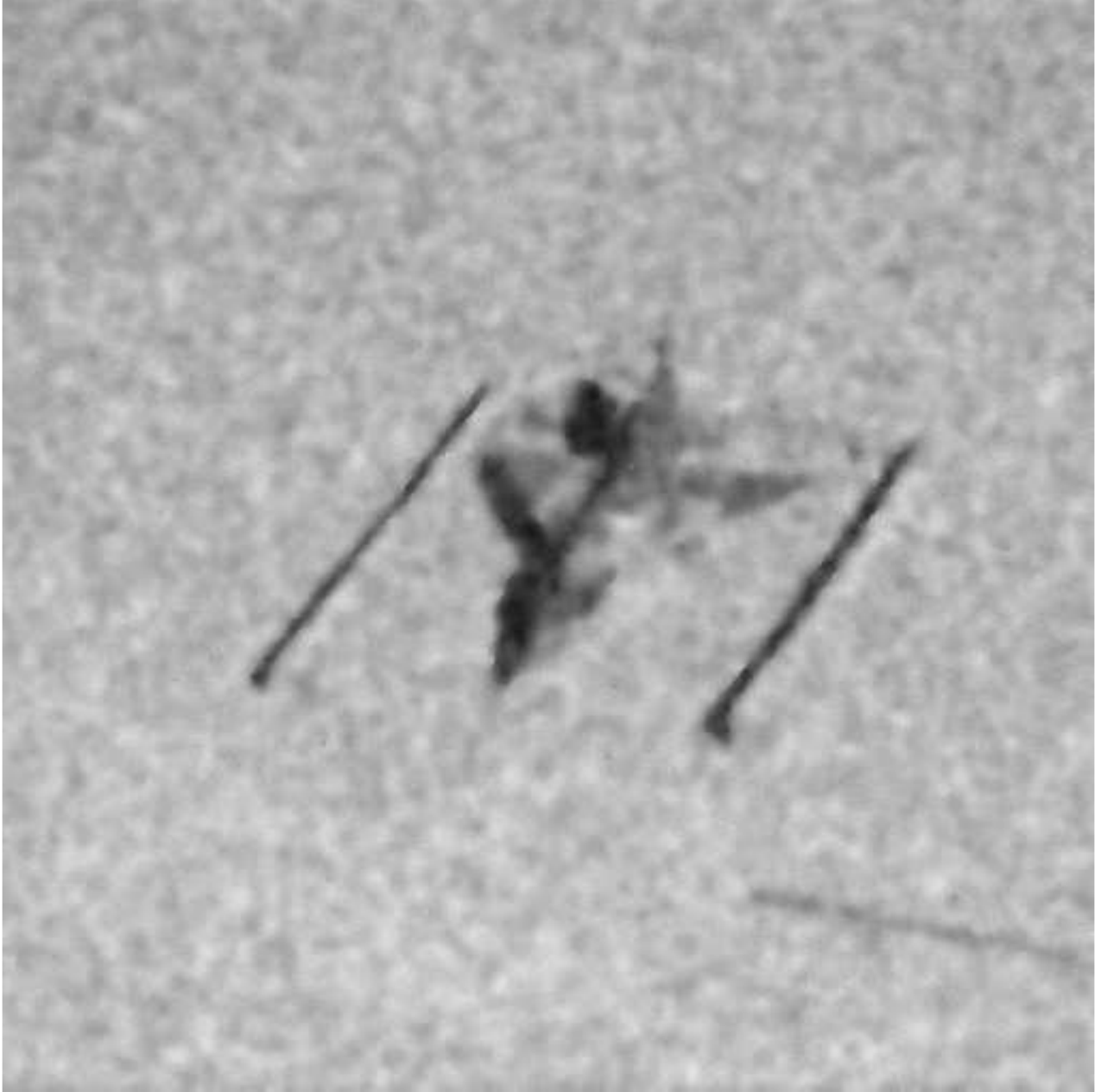}}\\
\subfigure[]{\includegraphics[width=.08\textwidth]{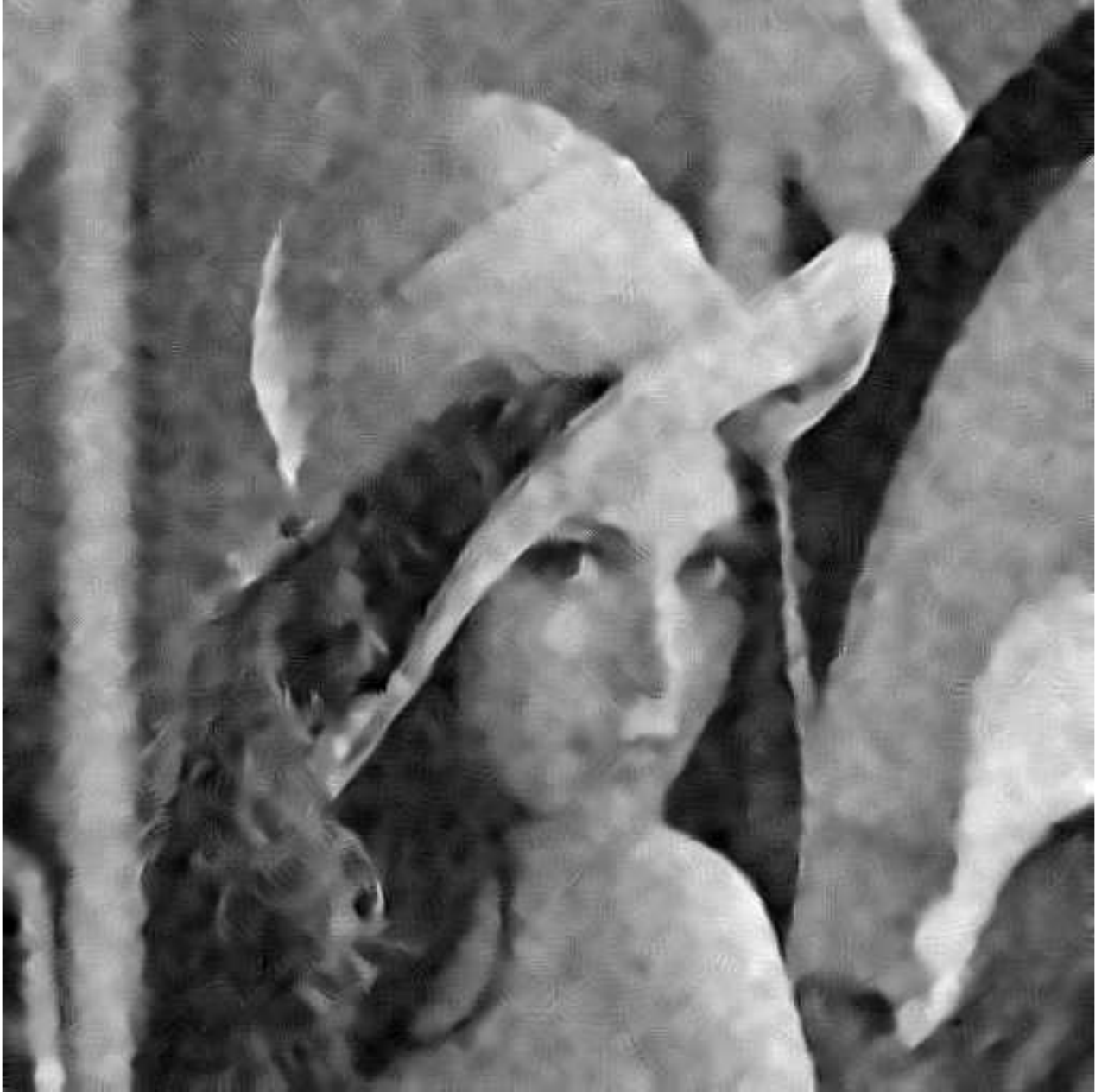}}~~
\subfigure[]{\includegraphics[width=.08\textwidth]{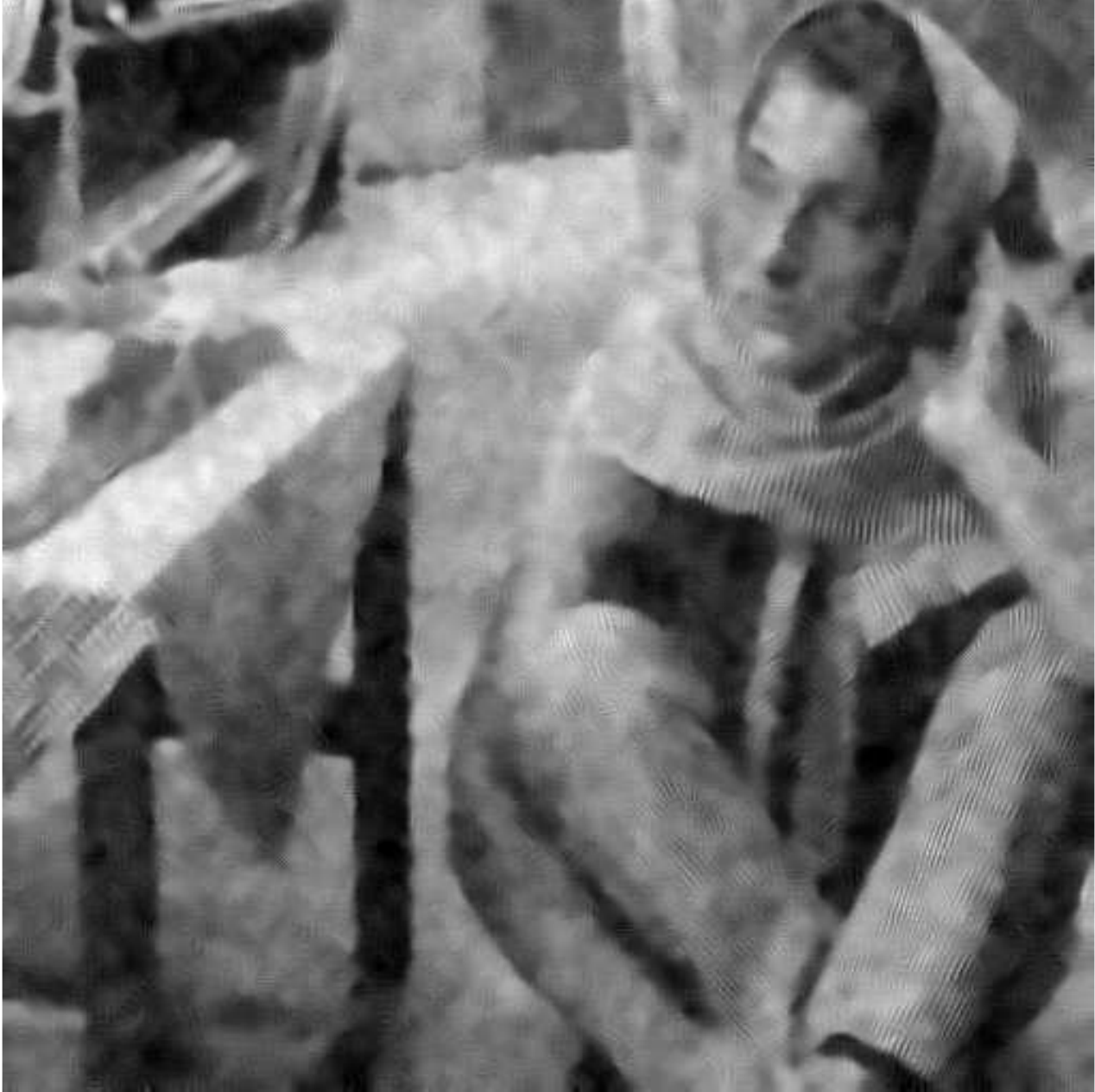}}~~
\subfigure[]{\includegraphics[width=.08\textwidth]{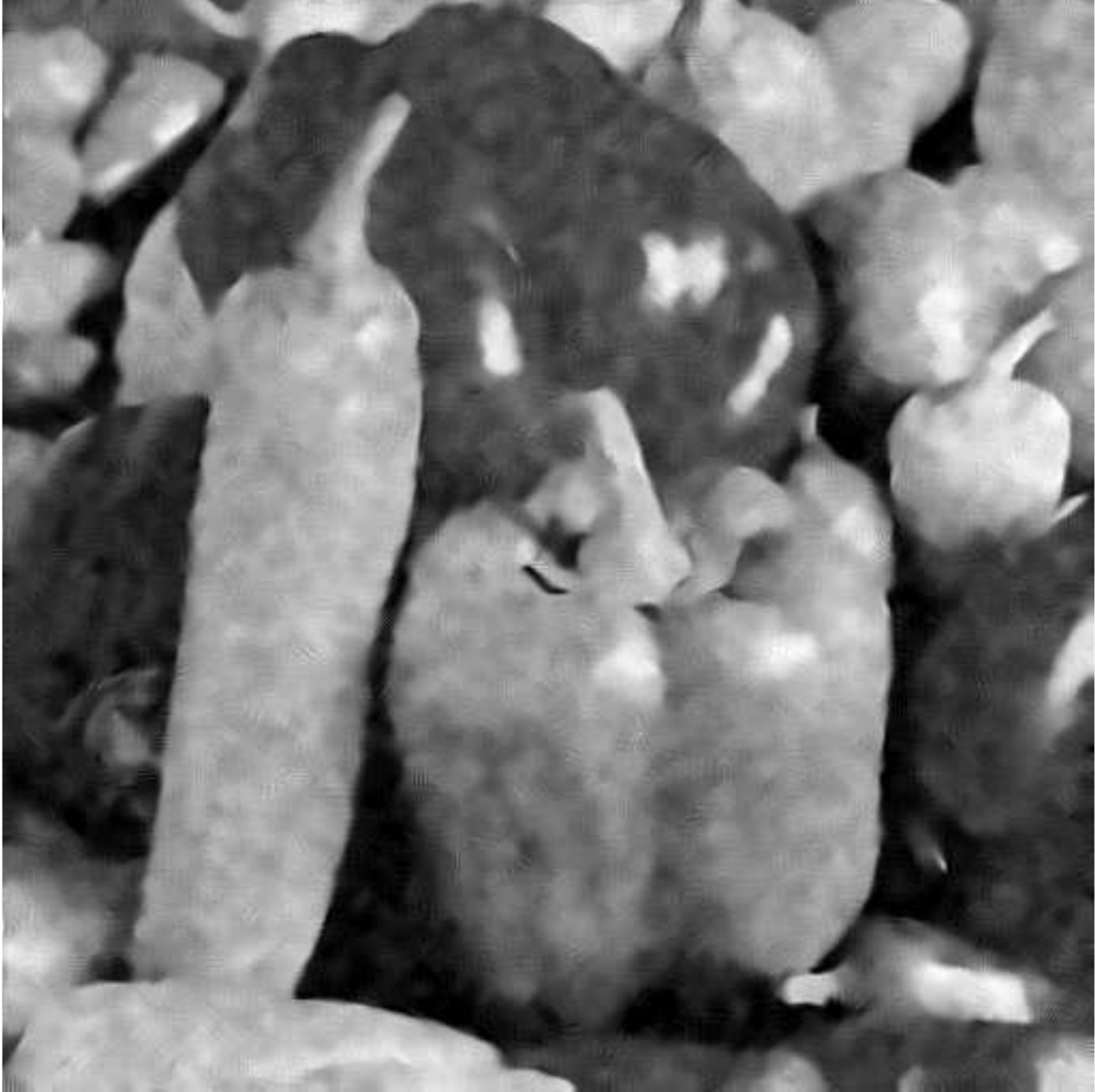}}~~
\subfigure[]{\includegraphics[width=.08\textwidth]{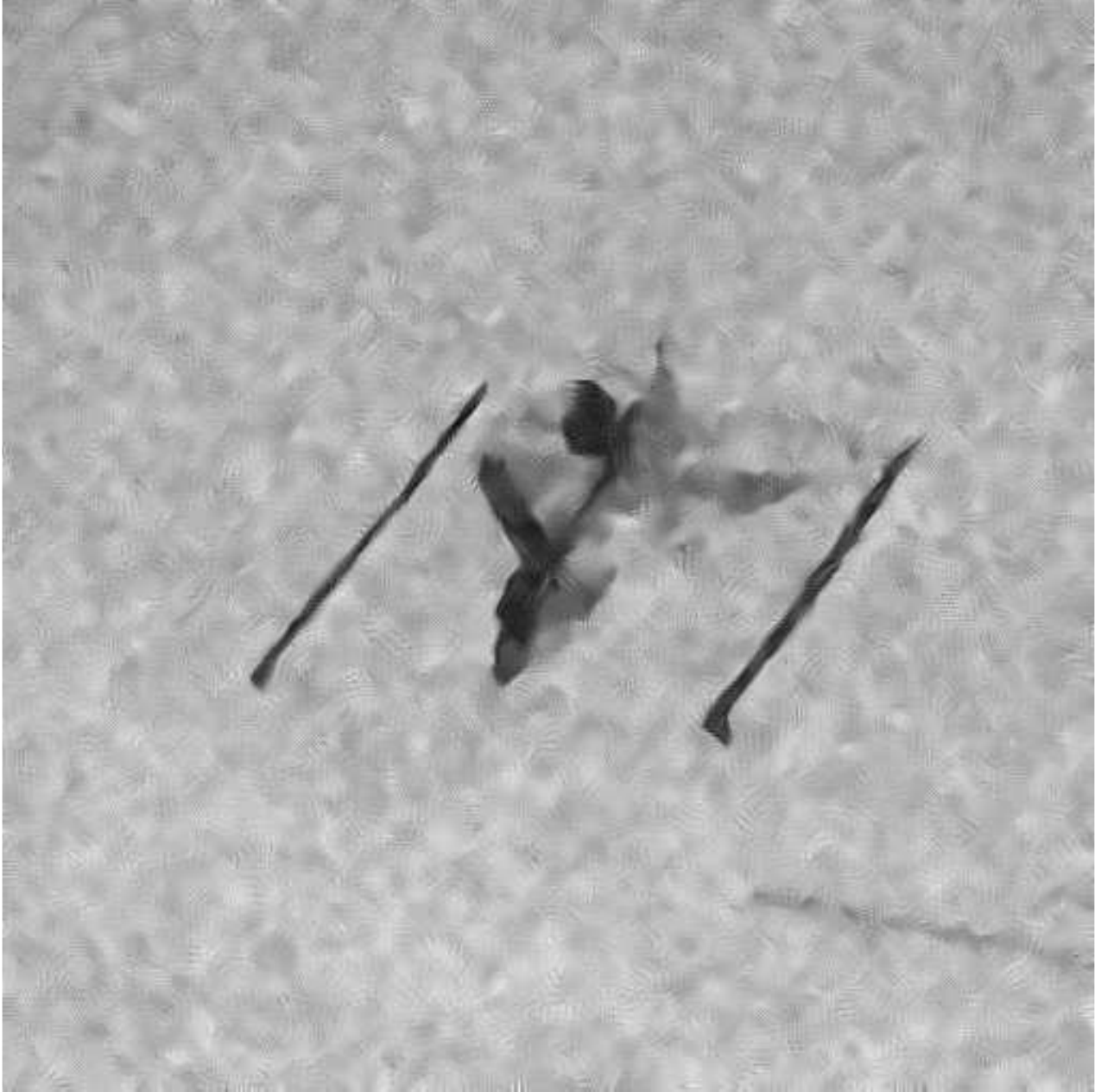}}\\
\subfigure[]{\includegraphics[width=.08\textwidth]{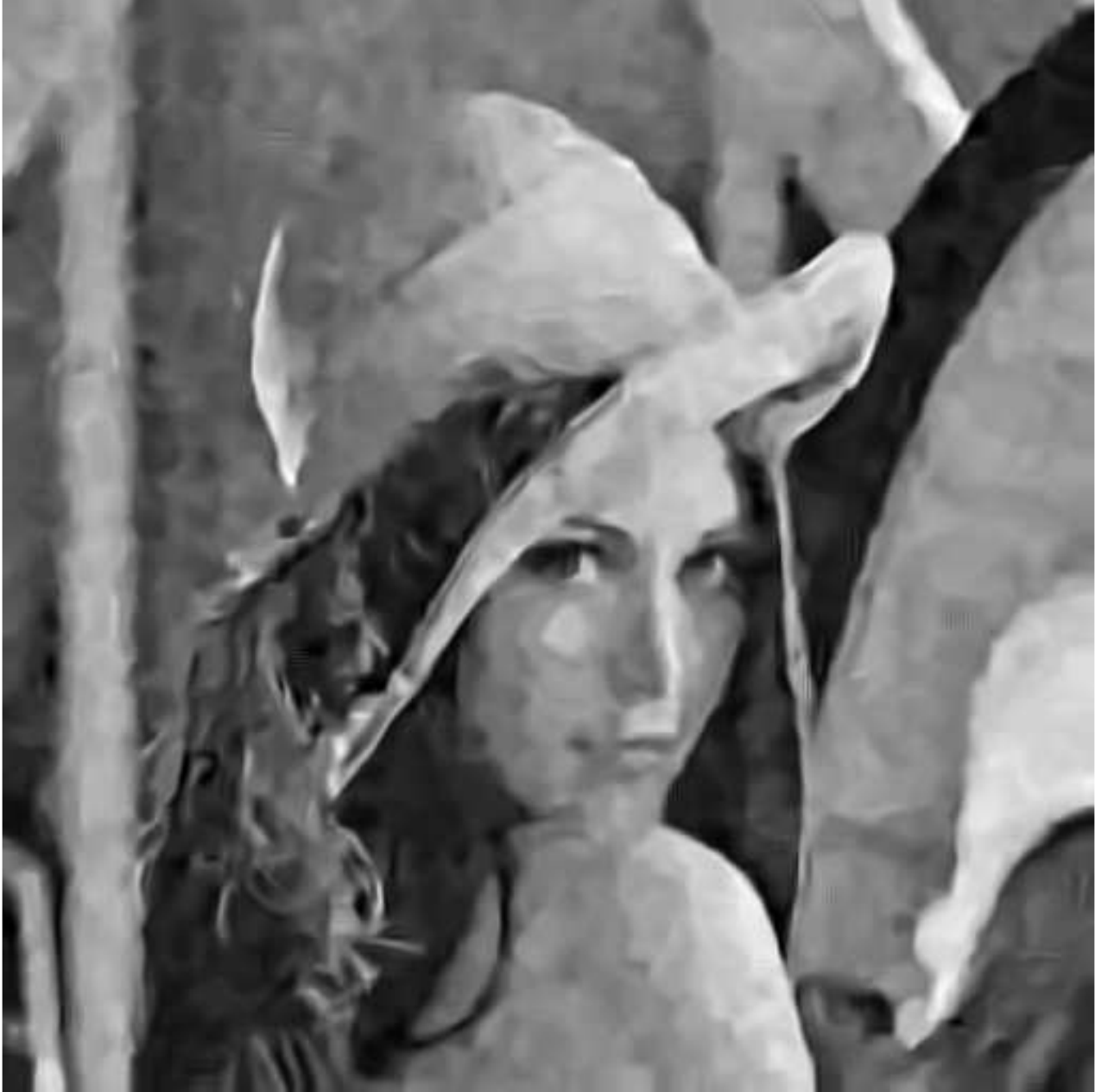}}~~
\subfigure[]{\includegraphics[width=.08\textwidth]{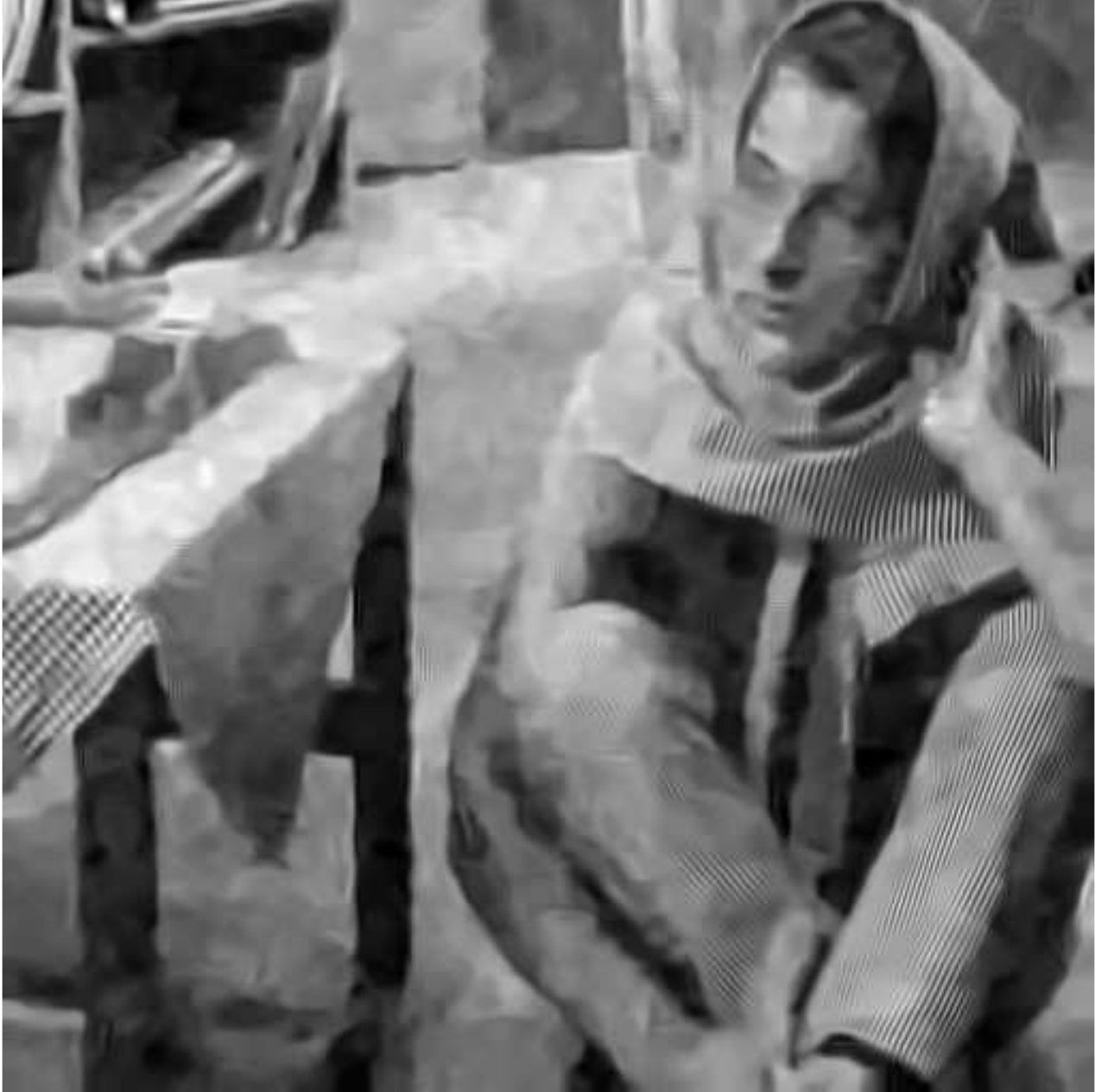}}~~
\subfigure[]{\includegraphics[width=.08\textwidth]{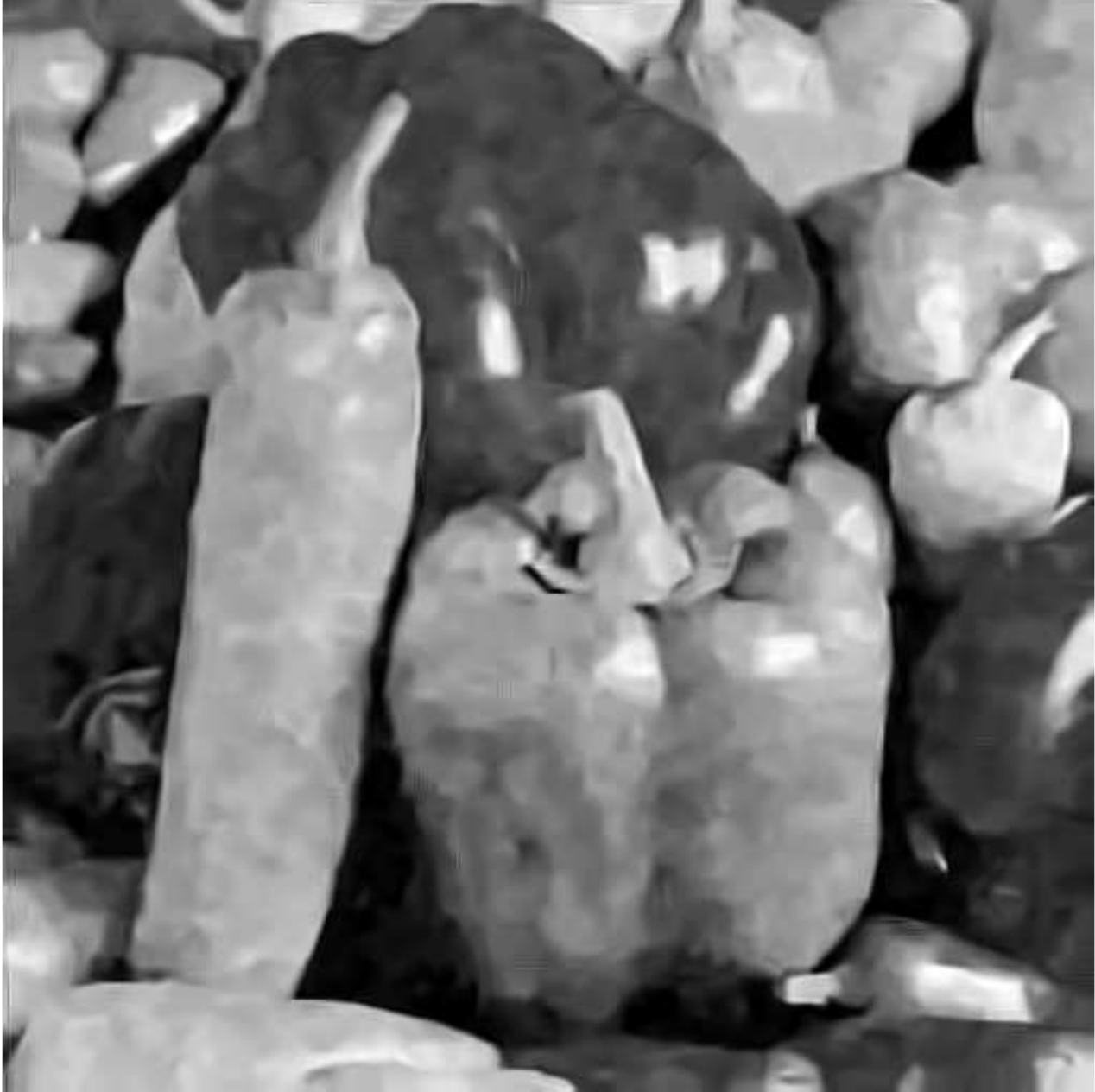}}~~
\subfigure[]{\includegraphics[width=.08\textwidth]{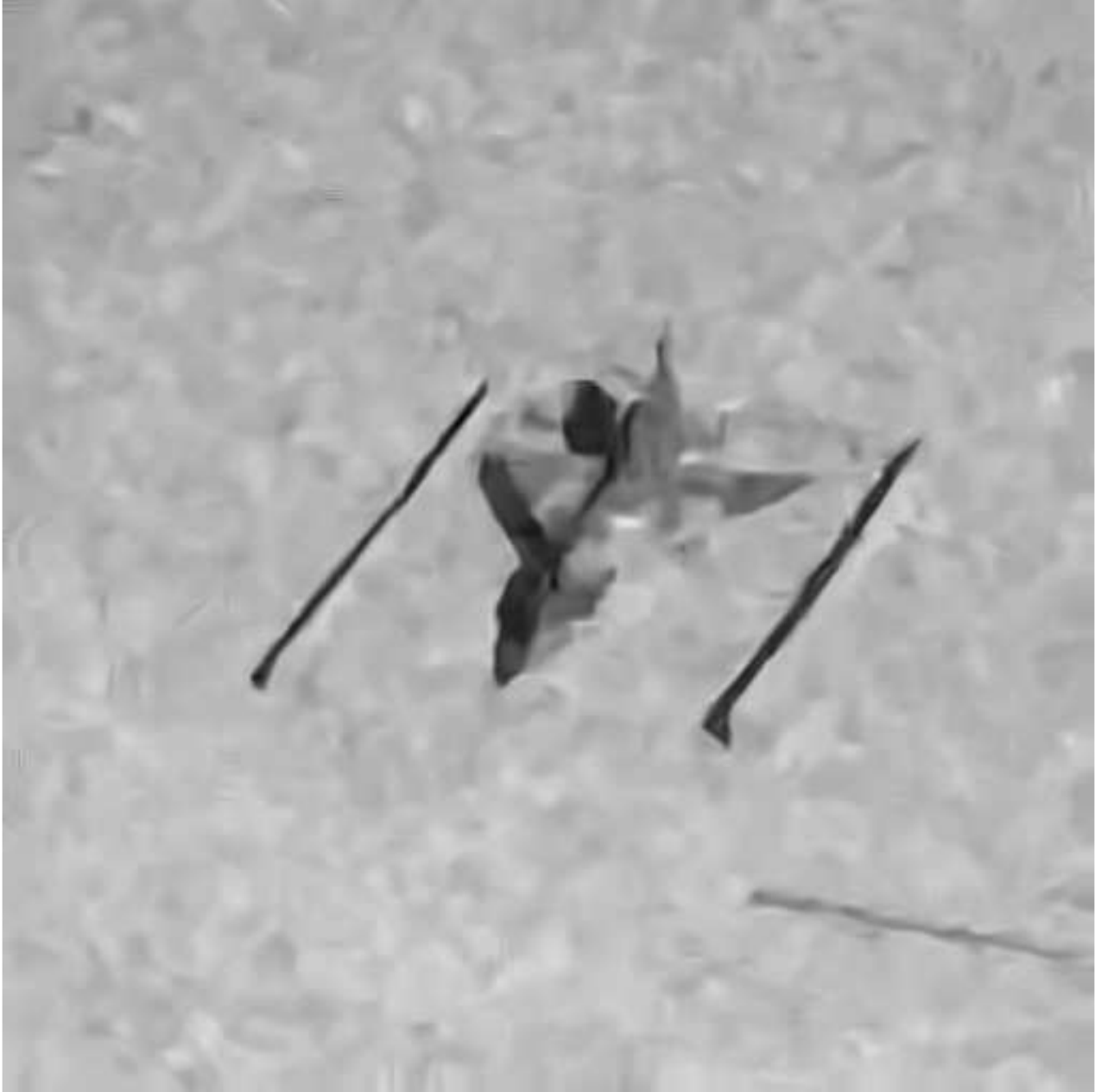}}
\end{center}
\caption{PR with CDP. Peak level $\nu=3.0\times10^{-3}$ for Poisson noise. First row: ``LS-PR''; Second row: ``TV-PR''; Third row: ``TGV-PR''; Fourth row: ``NLM-PR''; Fifth row: ``BM3D-PR''.}
\label{poicdp1}
\end{figure}

\begin{figure}
\begin{center}
\subfigure[``Lena'']{\includegraphics[width=.11\textwidth]{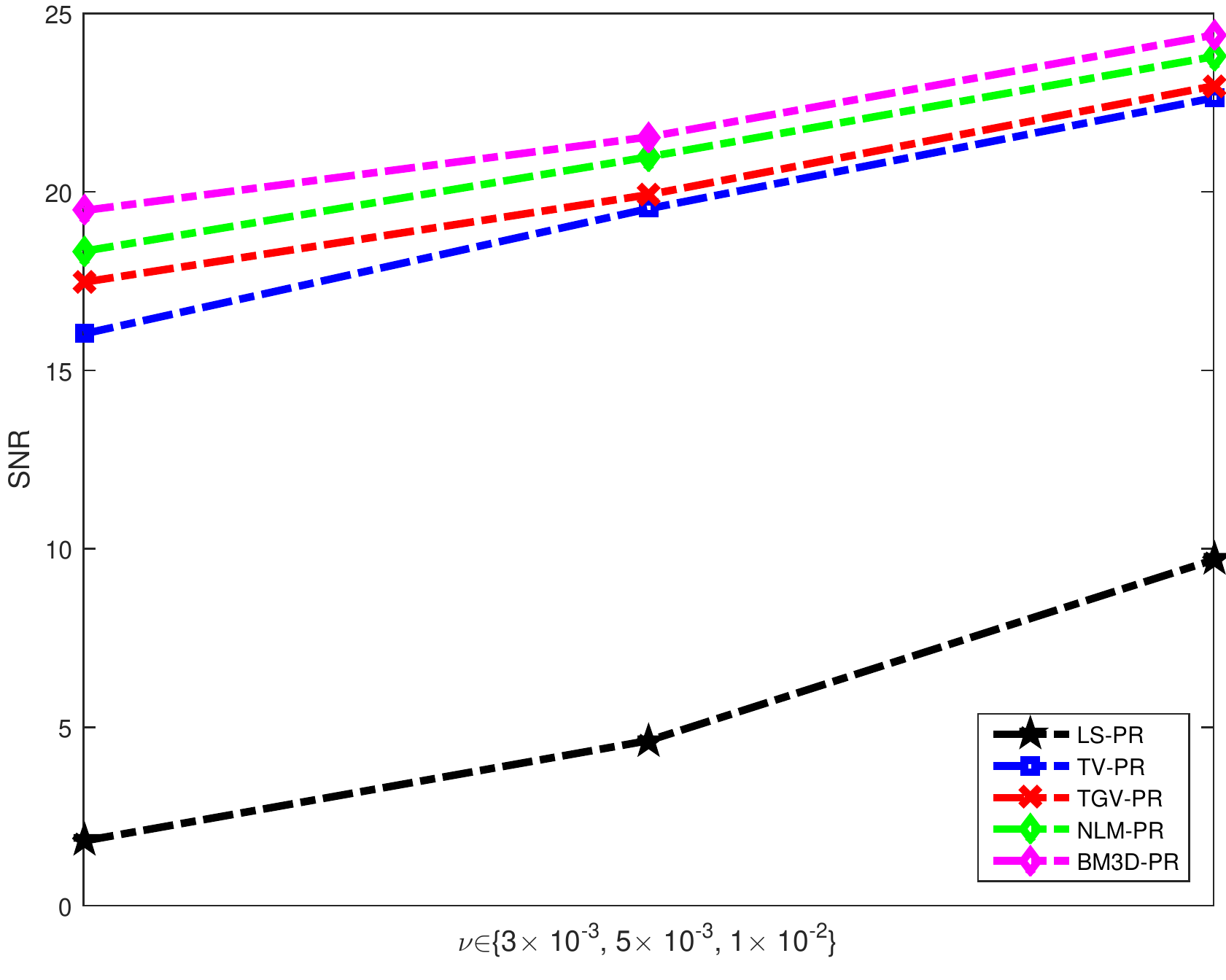}}
\subfigure[``Barbara'']{\includegraphics[width=.11\textwidth]{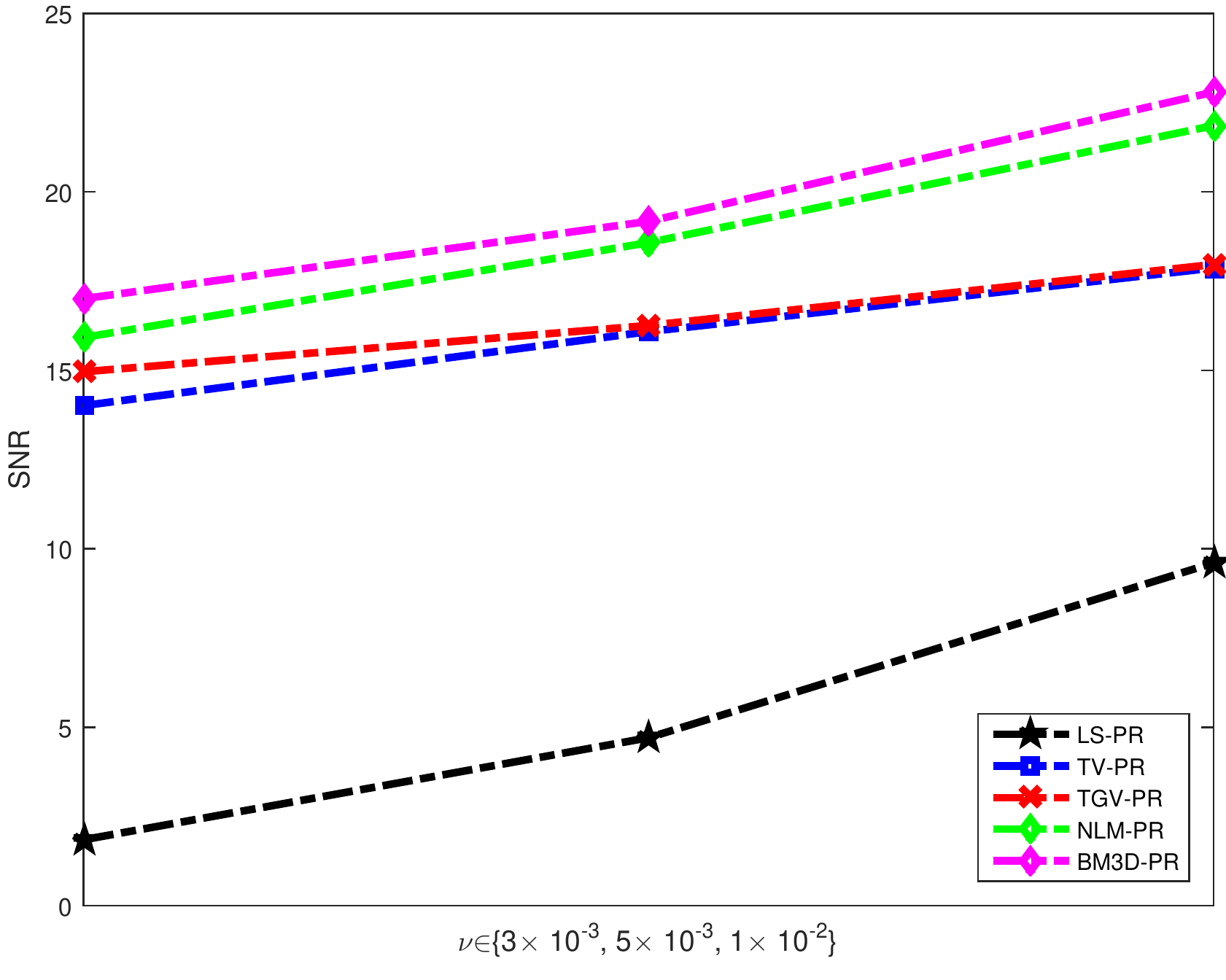}}
\subfigure[``Peppers'']{\includegraphics[width=.11\textwidth]{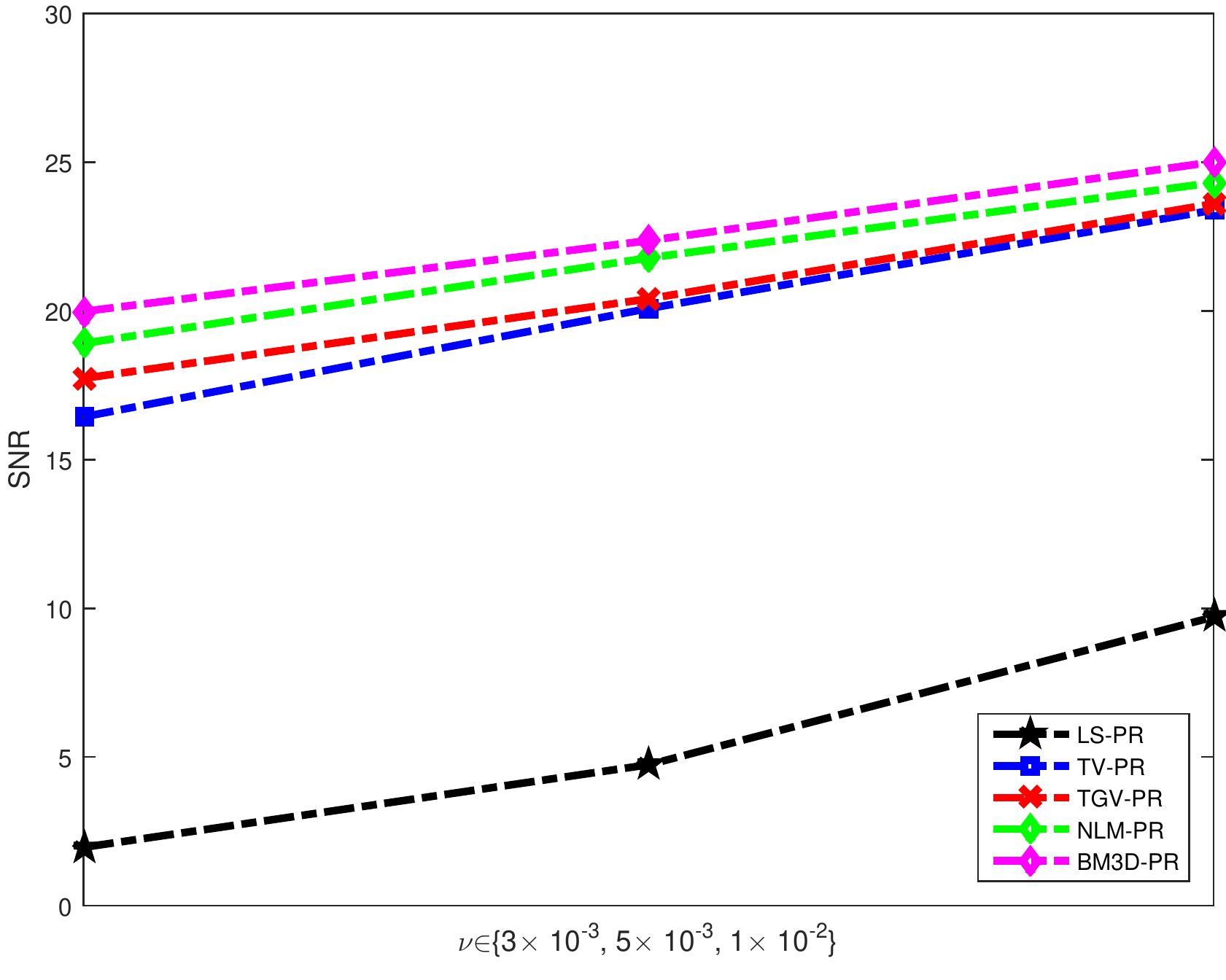}}
\subfigure[``Plane'']{\includegraphics[width=.11\textwidth]{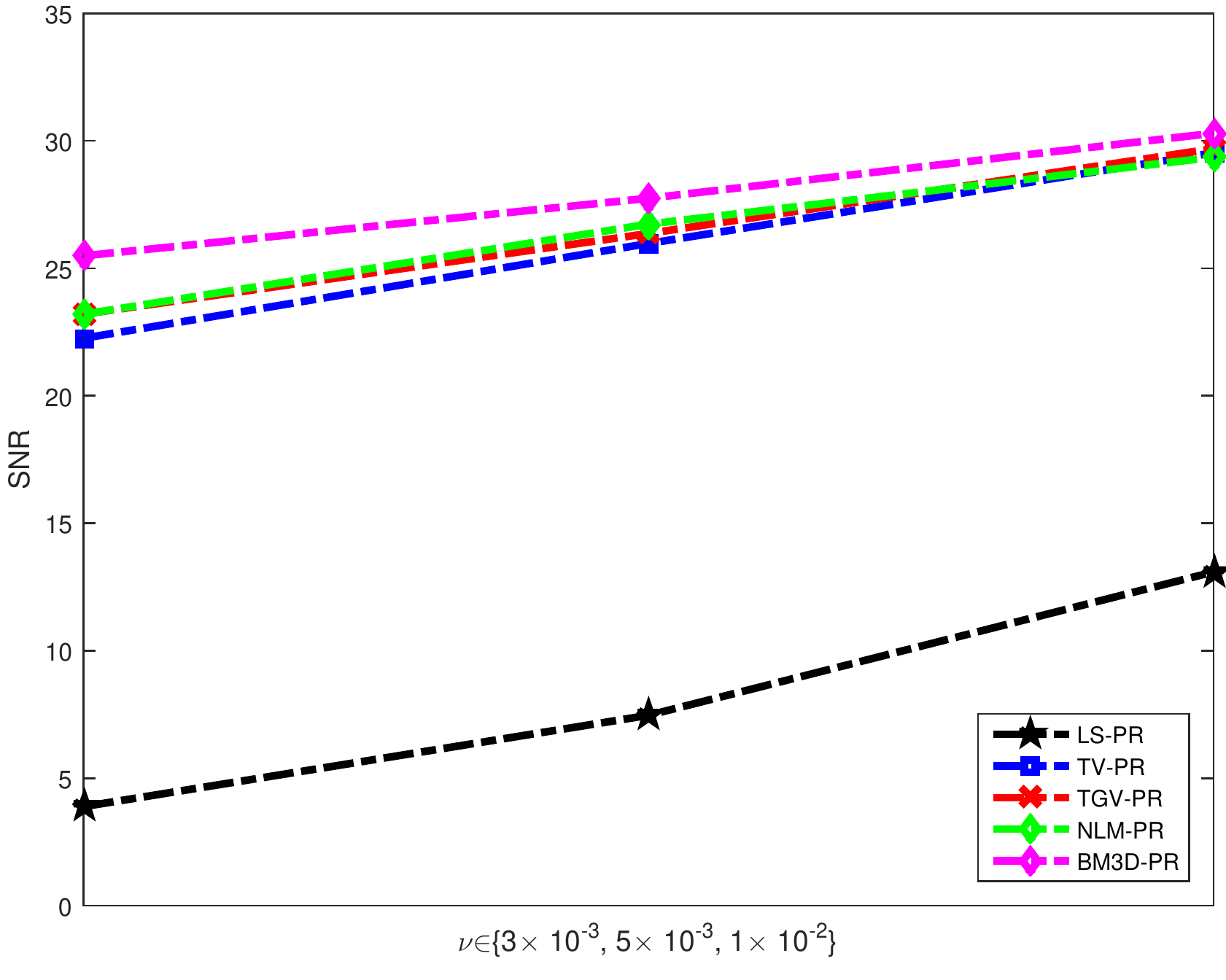}}
\end{center}
\caption{SNRs for images in Figure \ref{poicdp4} v.s. different peak levels $\nu$ by the methods with CDP.}
\label{snr2}
\end{figure}

\begin{table}
\begin{center}
\begin{spacing}{1.}
\begin{tabular}{|c|c|c|c|c|}
\hline
Method&Peak level $\nu$ &$\lambda$&r&$\eta$\\\hline
\multirow{3}{*}{TGV-PR}&$3.0\times 10^{-3}$&$7.0\times 10^{2}$&\multirow{3}{*}{$5.0\times 10^{5}$}&\multirow{9}{*}{$50$}\\
      &$5.0\times 10^{-3}$&$6.0\times 10^{2}$&&\\
      &$1.0\times 10^{-2}$&$5.0\times 10^{2}$&&\\\cline{1-4}
\multirow{3}{*}{NLM-PR}&$3.0\times 10^{-3}$&$6.0\times 10^{4}$&\multirow{3}{*}{$1.0\times 10^{6}$}&\\
      &$5.0\times 10^{-3}$&$5.0\times 10^{4}$&&\\
      &$1.0\times 10^{-2}$&$3.0\times 10^{4}$&&\\\cline{1-4}
\multirow{3}{*}{BM3D-PR}&$3.0\times 10^{-3}$&$1.5\times 10^{5}$&\multirow{3}{*}{$5.0\times 10^{5}$}&\\
      &$5.0\times 10^{-3}$&$2.0\times 10^{5}$&&\\
      &$1.0\times 10^{-2}$&$2.0\times 10^{5}$&&\\
      \hline
\end{tabular}
\end{spacing}
\end{center}
\caption{Parameters for Poisson noise removal of CDP on real-valued images for Figure \ref{poicdp3}-Figure \ref{poicdp1}.}
\label{tab1}
\end{table}

Numerical experiments for CDP is also performed on  the complex-valued image of Figure \ref{groundtruth} (e), and we show the recovery results in Figure \ref{poicdp4} with different noise level by setting the peak level $\nu\in\{5.0\times10^{-2},8.0\times 10^{-2},1.0\times 10^{-1}\}$. One can readily see that with high level noise as the first column of Figure \ref{poicdp4}, ``TV-PR'', and our proposed methods can remove the noise in the background. However, ``TV-PR'' and ``TGV-PR'' can not preserve the small structure at all. It seems that ``NLM-PR'' learns the wrong feature patterns.
Parameters used are put in Table \ref{tab2}. ``BM3D-PR'' is the most effective, which not only obtain clean background, but also recover some smaller structures. When the noise level decreases, ``NLM-PR'' still fails to find correct patterns for smaller structures, while the other methods can preserve both the larger and smaller scales features. We put the SNRs in  Figure \ref{snr1}, and one can observe the increase of SNRs by our proposed methods. The average SNRs are 2.64, 10.15, 11.78, 10.50 and 12.76 for all the five methods, and ``BM3D-PR'' improves the reconstructed images with highest SNRs among them, and about 2.5dB is increased compared with those by ``TV-PR'' even for such complicated image. Moreover,  the SNR increase of ``BM3D-PR'' comparing with those of ``LS-PR'' and ``TV-PR'' is the biggest among  the four different images in Figure \ref{snr1},  which implies that it is quite suitable for the images with textures.
\begin{figure}
\begin{center}
\subfigure[]{\includegraphics[width=.12\textwidth]{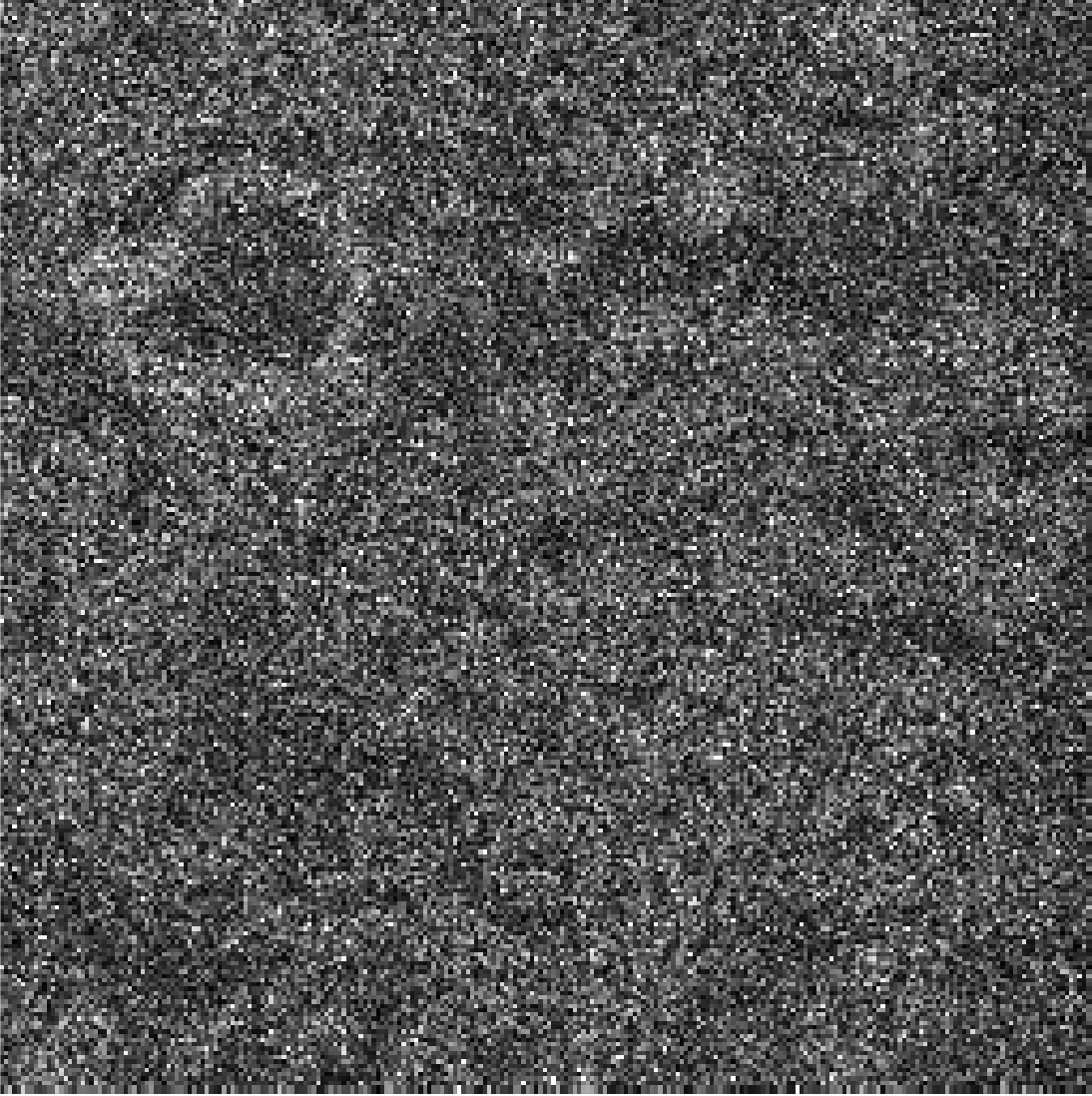}}
\subfigure[]{\includegraphics[width=.12\textwidth]{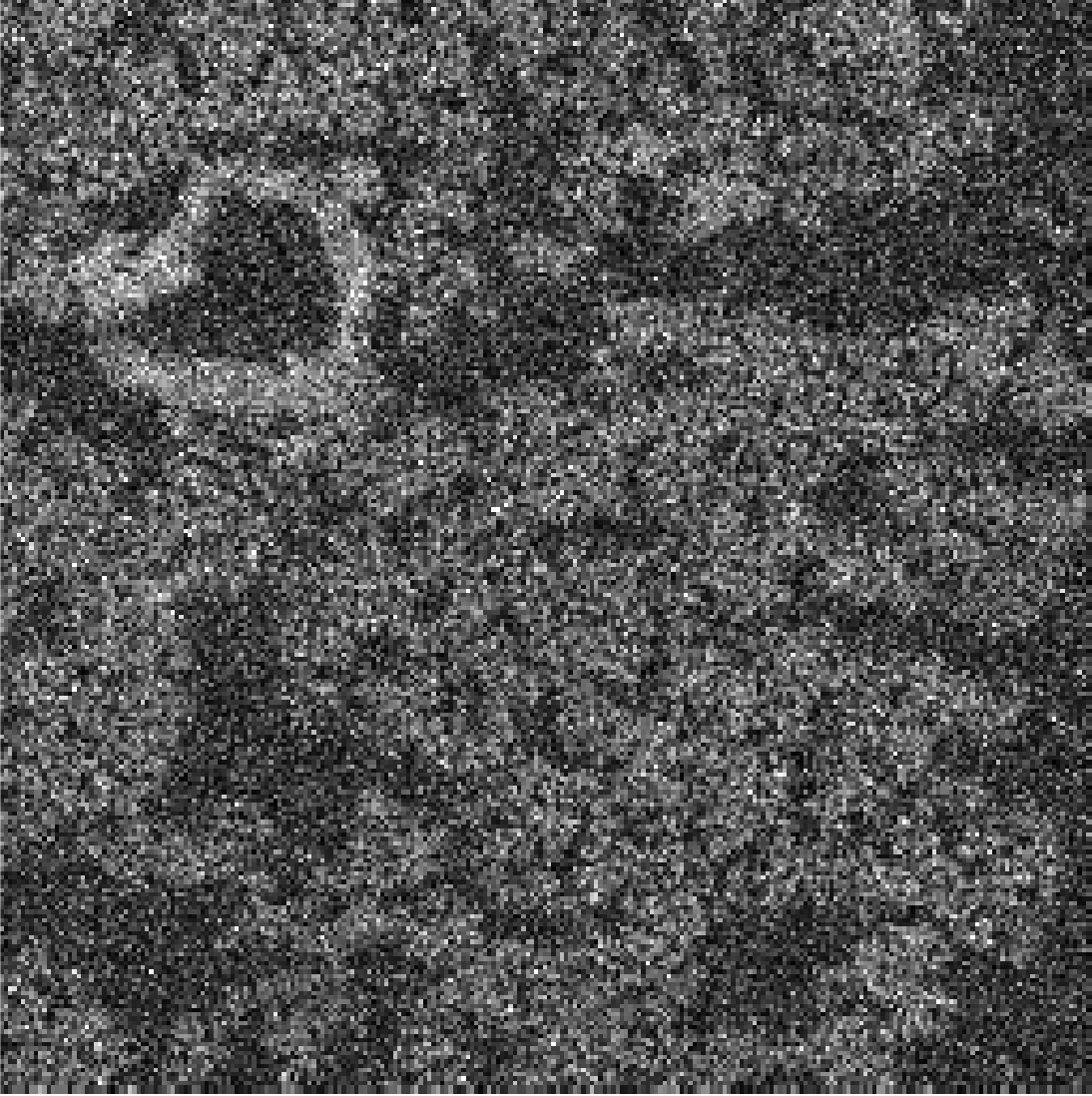}}
\subfigure[]{\includegraphics[width=.12\textwidth]{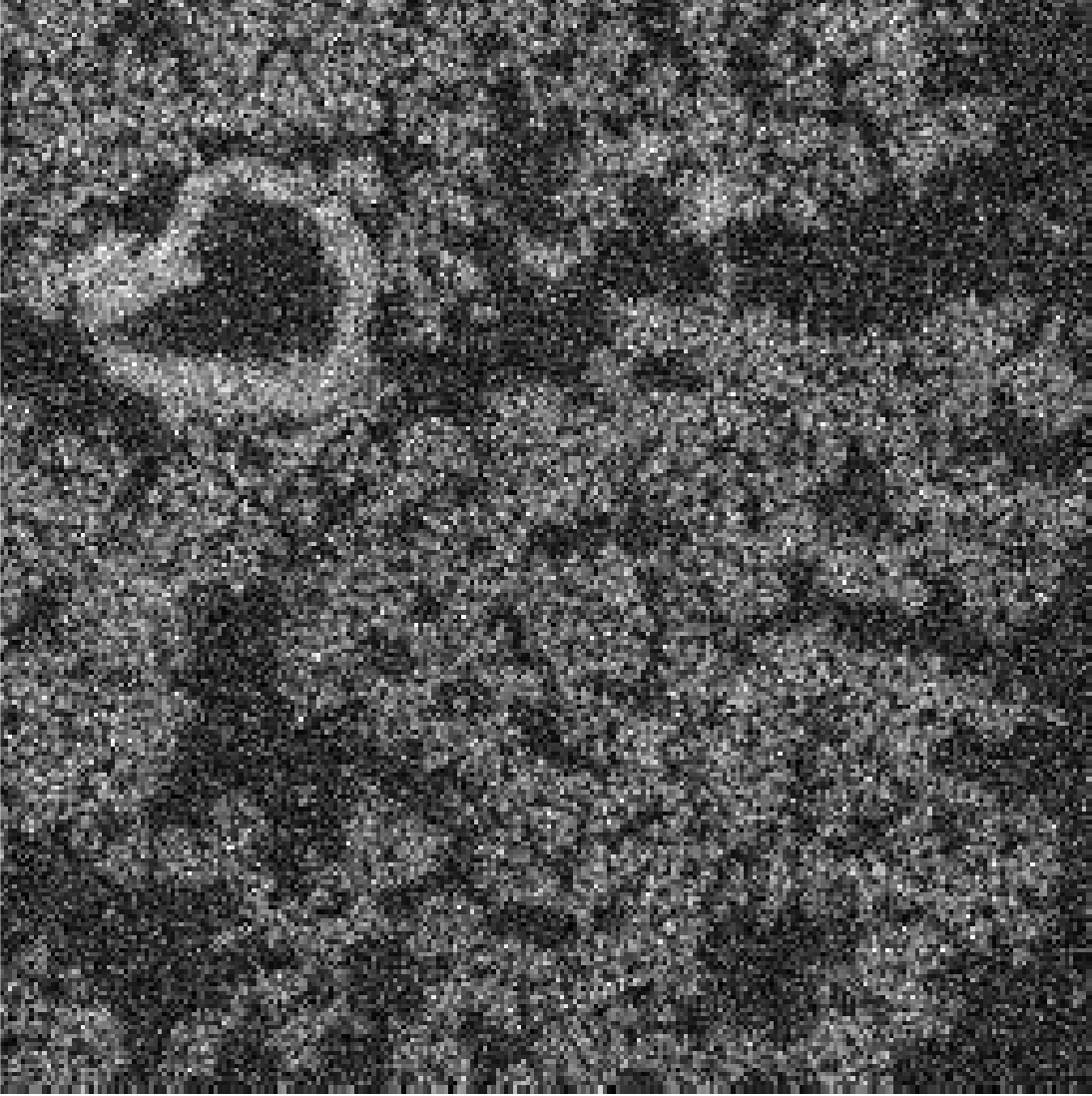}}\\
\subfigure[]{\includegraphics[width=.12\textwidth]{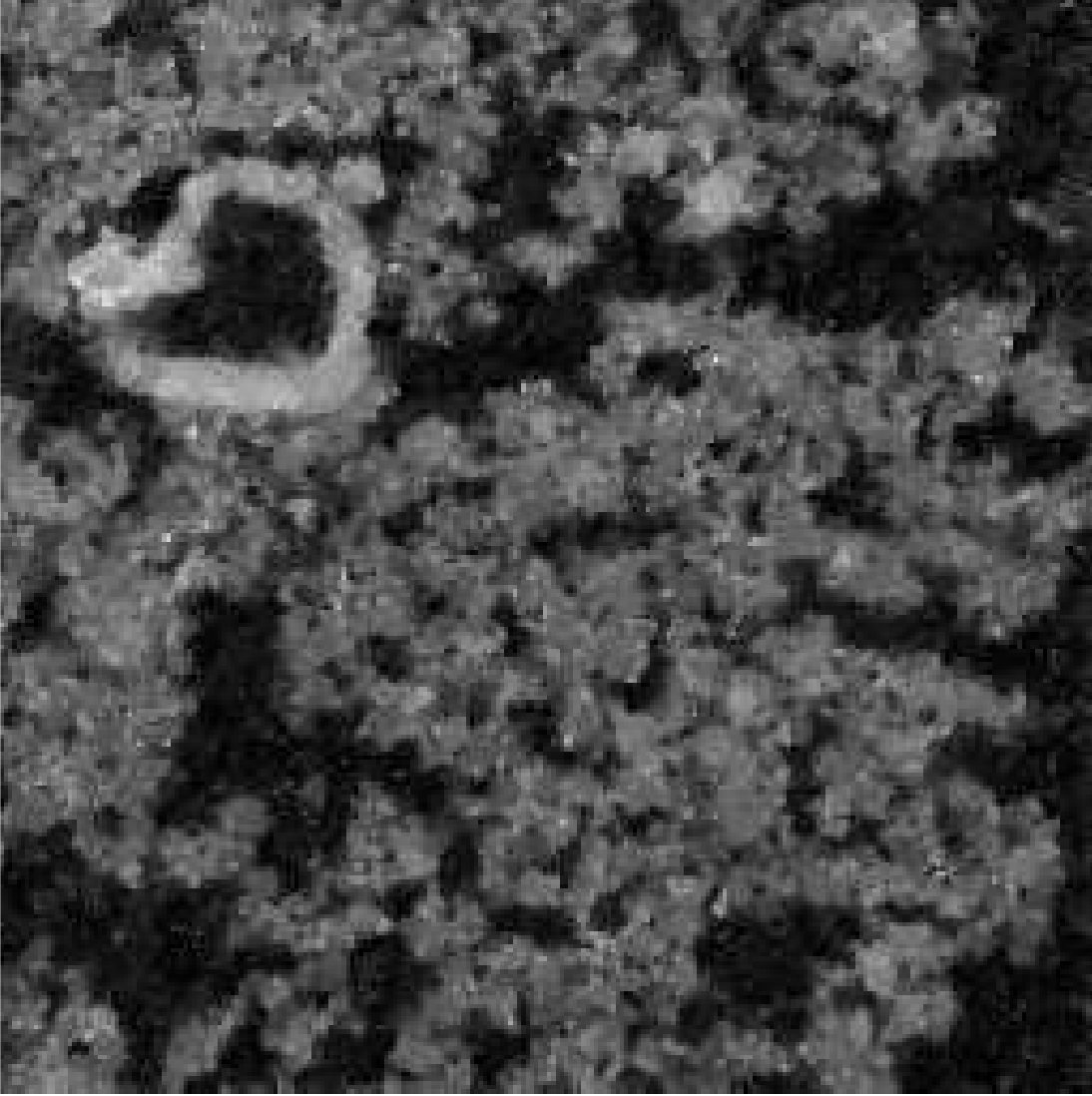}}
\subfigure[]{\includegraphics[width=.12\textwidth]{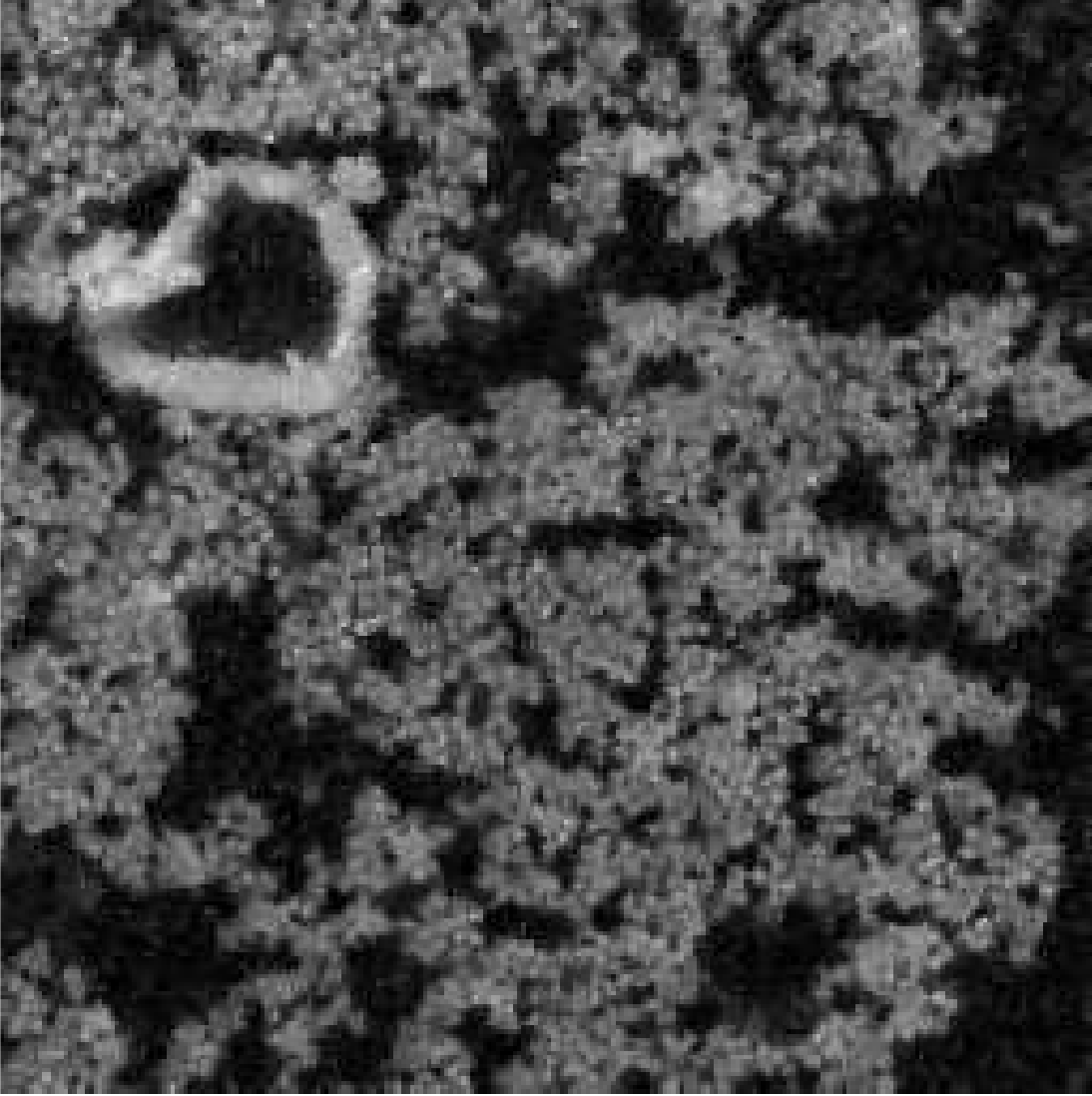}}
\subfigure[]{\includegraphics[width=.12\textwidth]{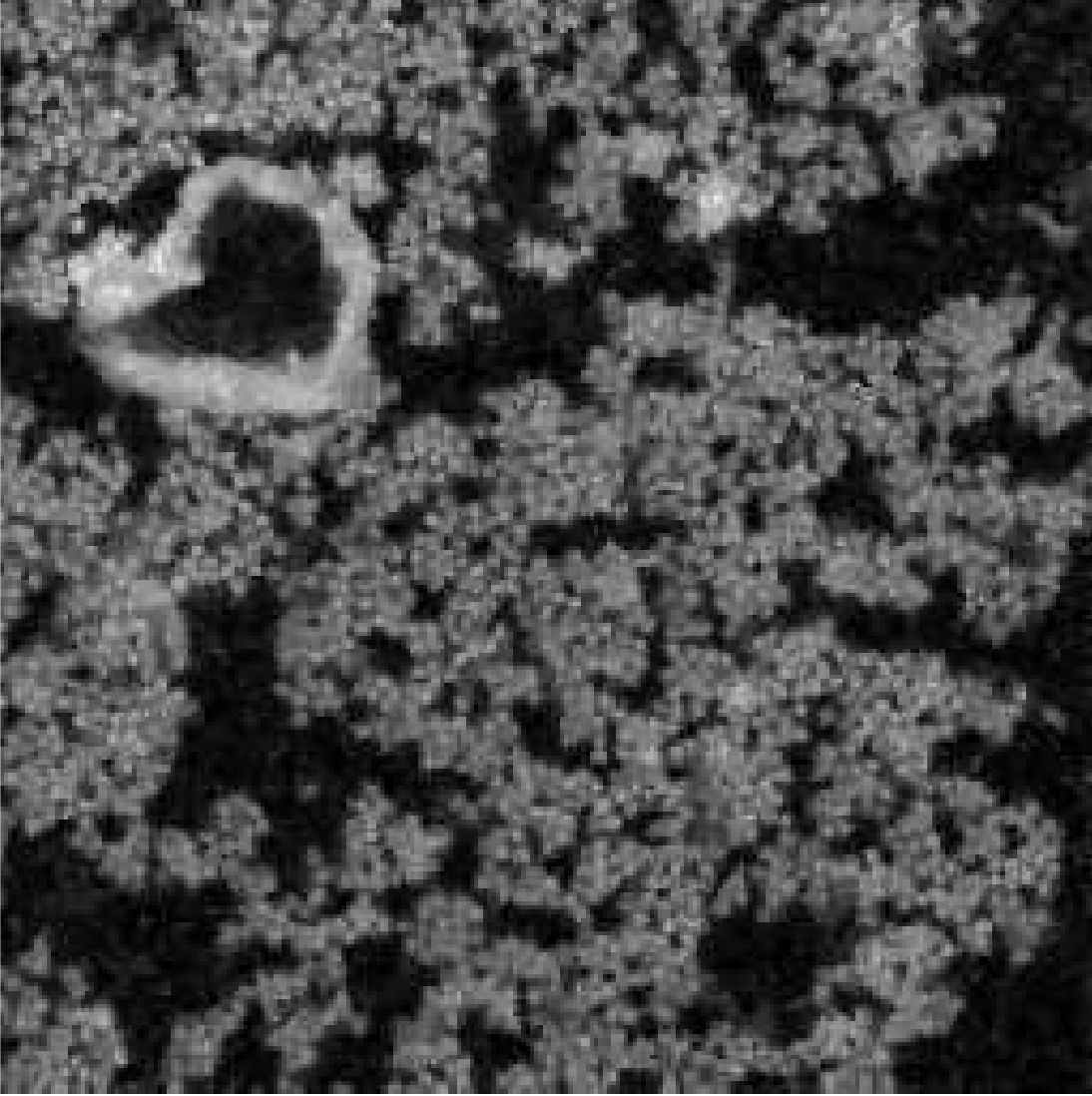}}\\
\subfigure[]{\includegraphics[width=.12\textwidth]{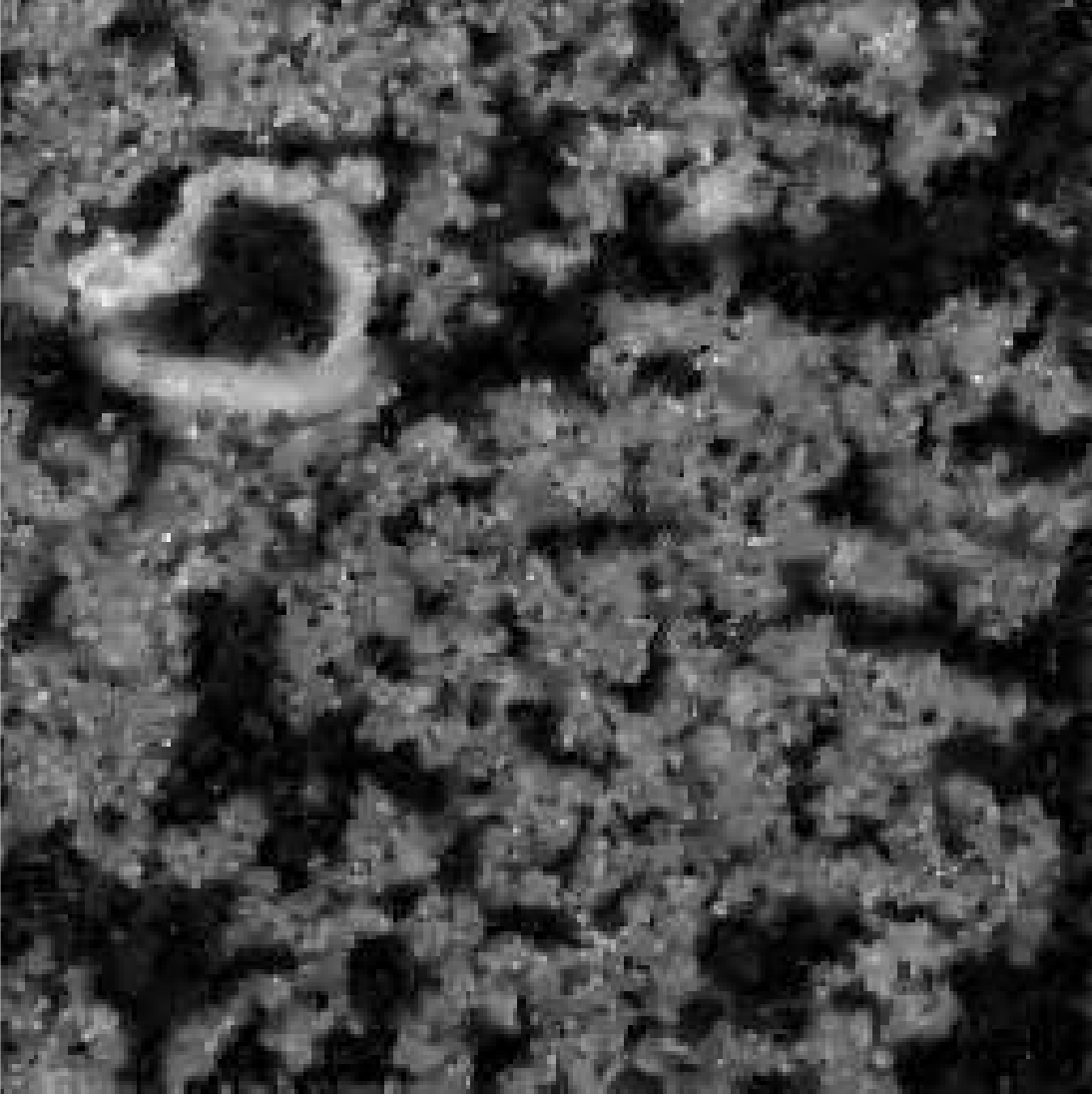}}
\subfigure[]{\includegraphics[width=.12\textwidth]{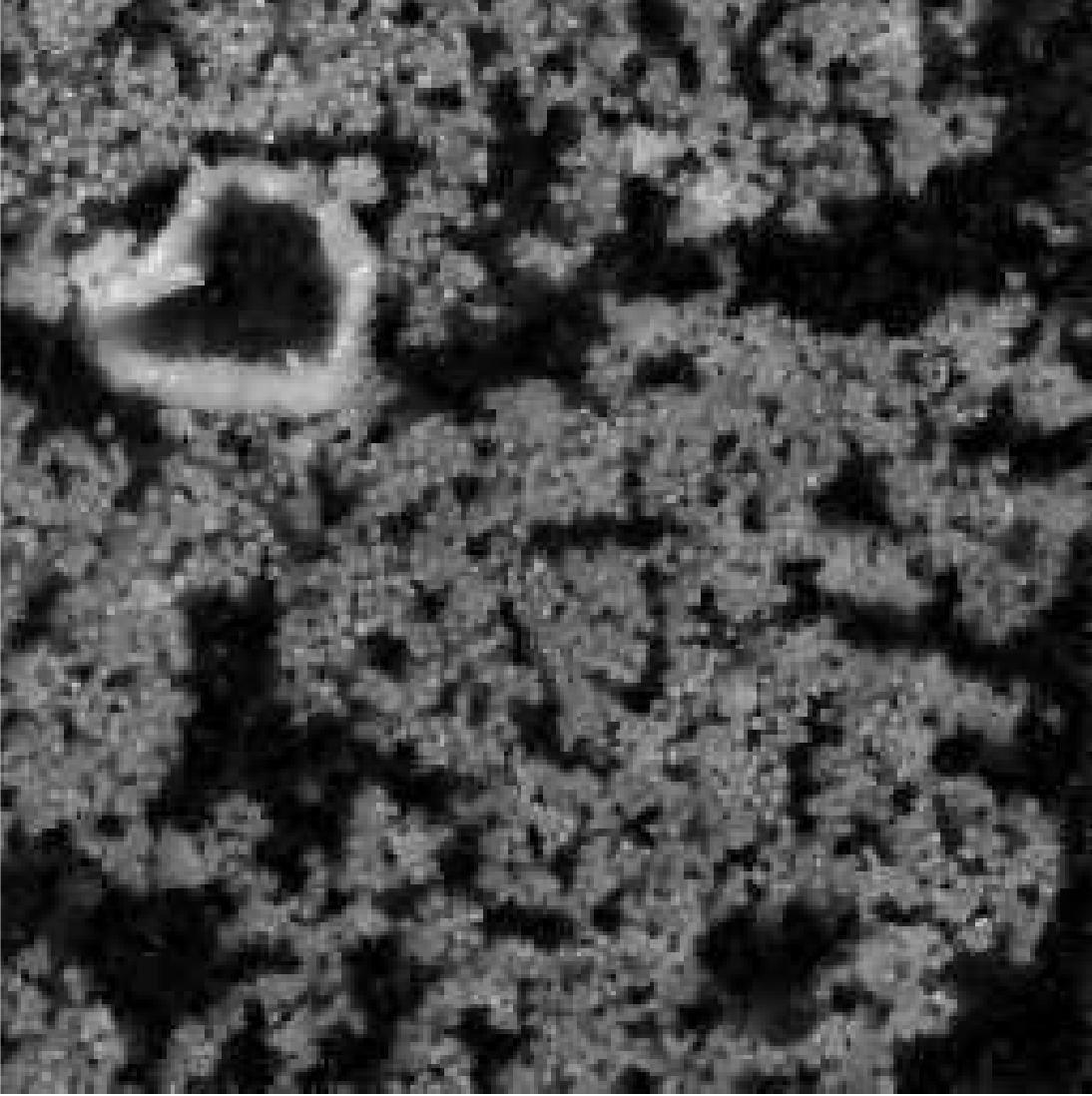}}
\subfigure[]{\includegraphics[width=.12\textwidth]{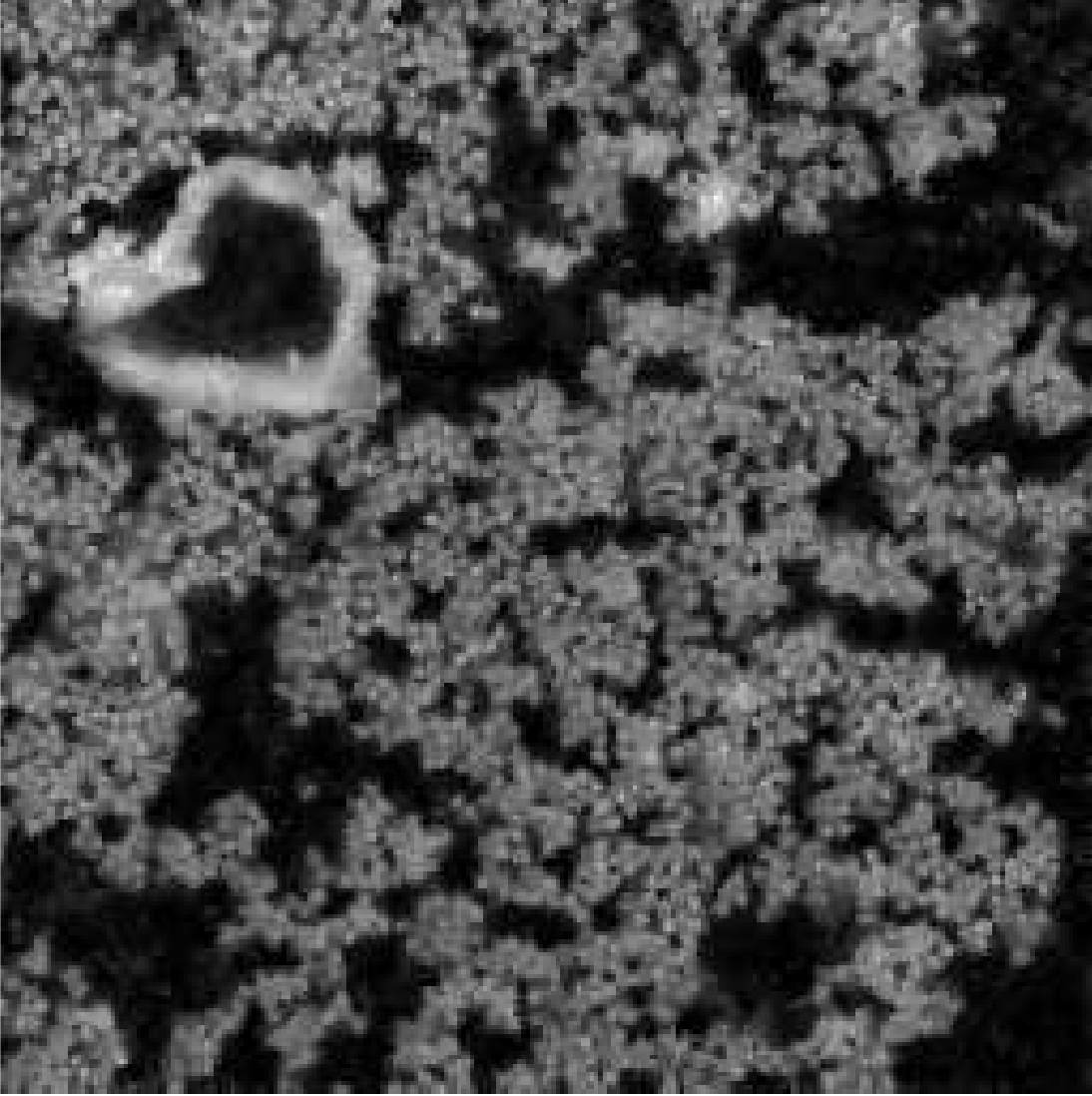}}\\
\subfigure[]{\includegraphics[width=.12\textwidth]{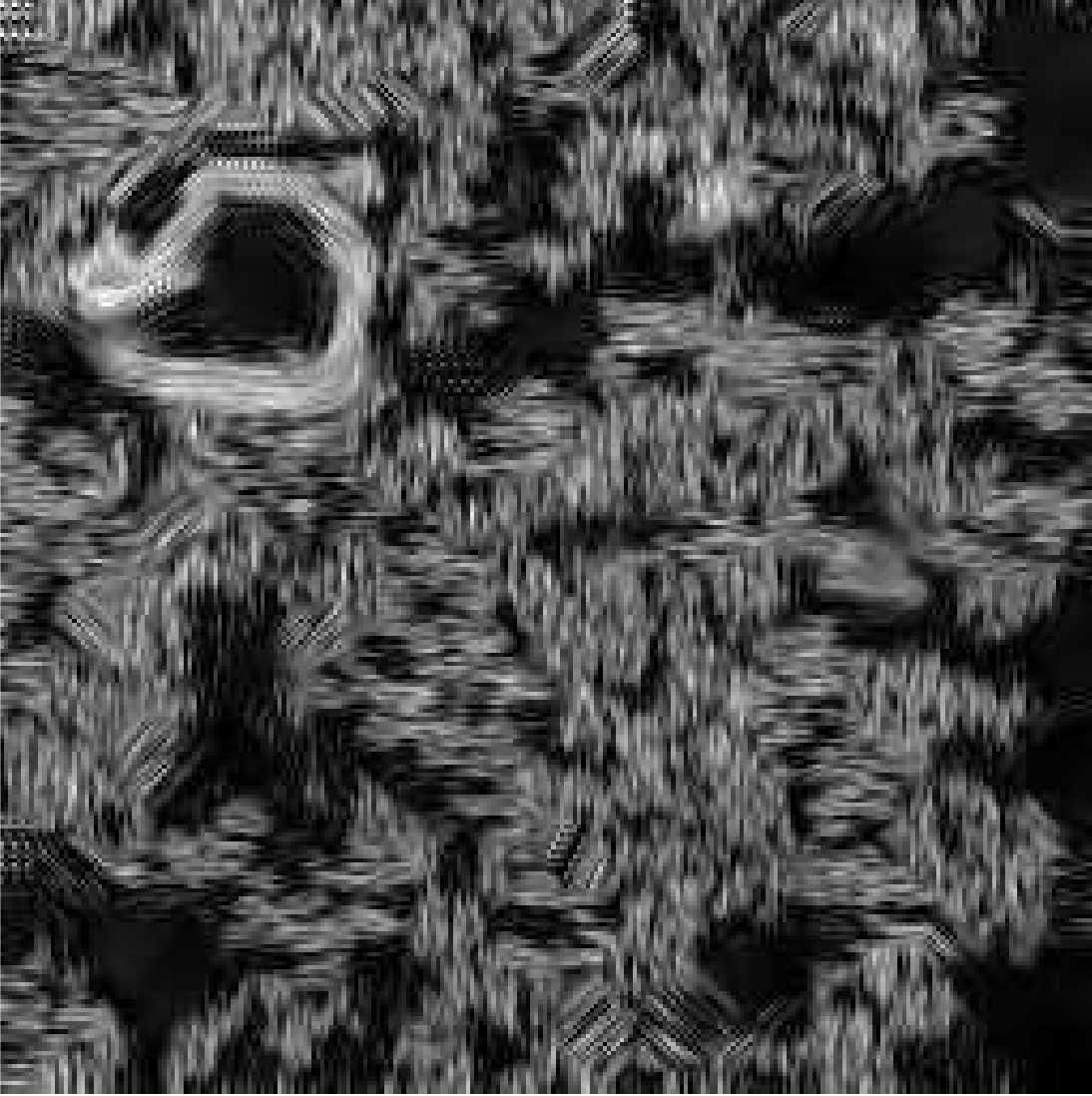}}
\subfigure[]{\includegraphics[width=.12\textwidth]{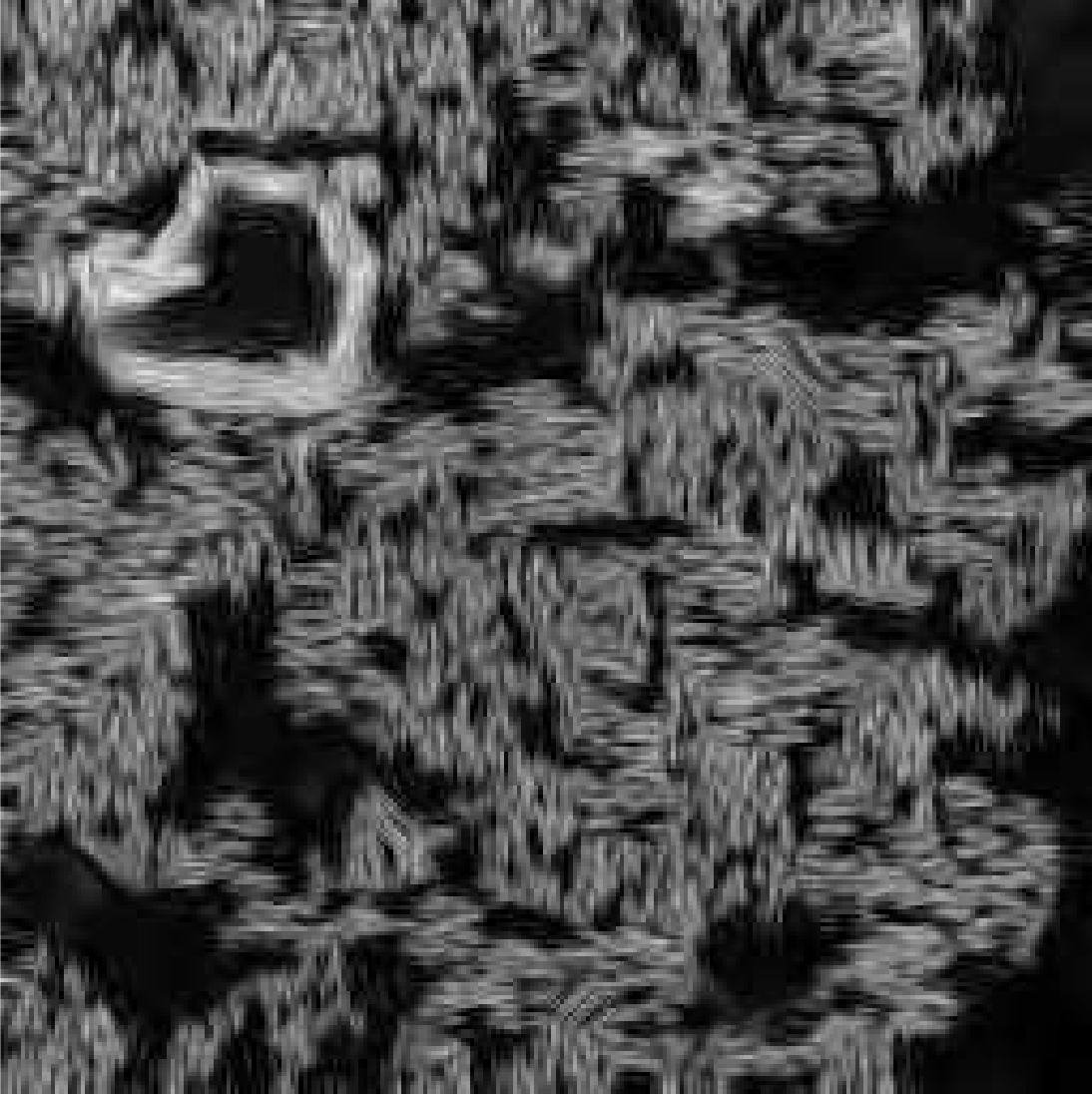}}
\subfigure[]{\includegraphics[width=.12\textwidth]{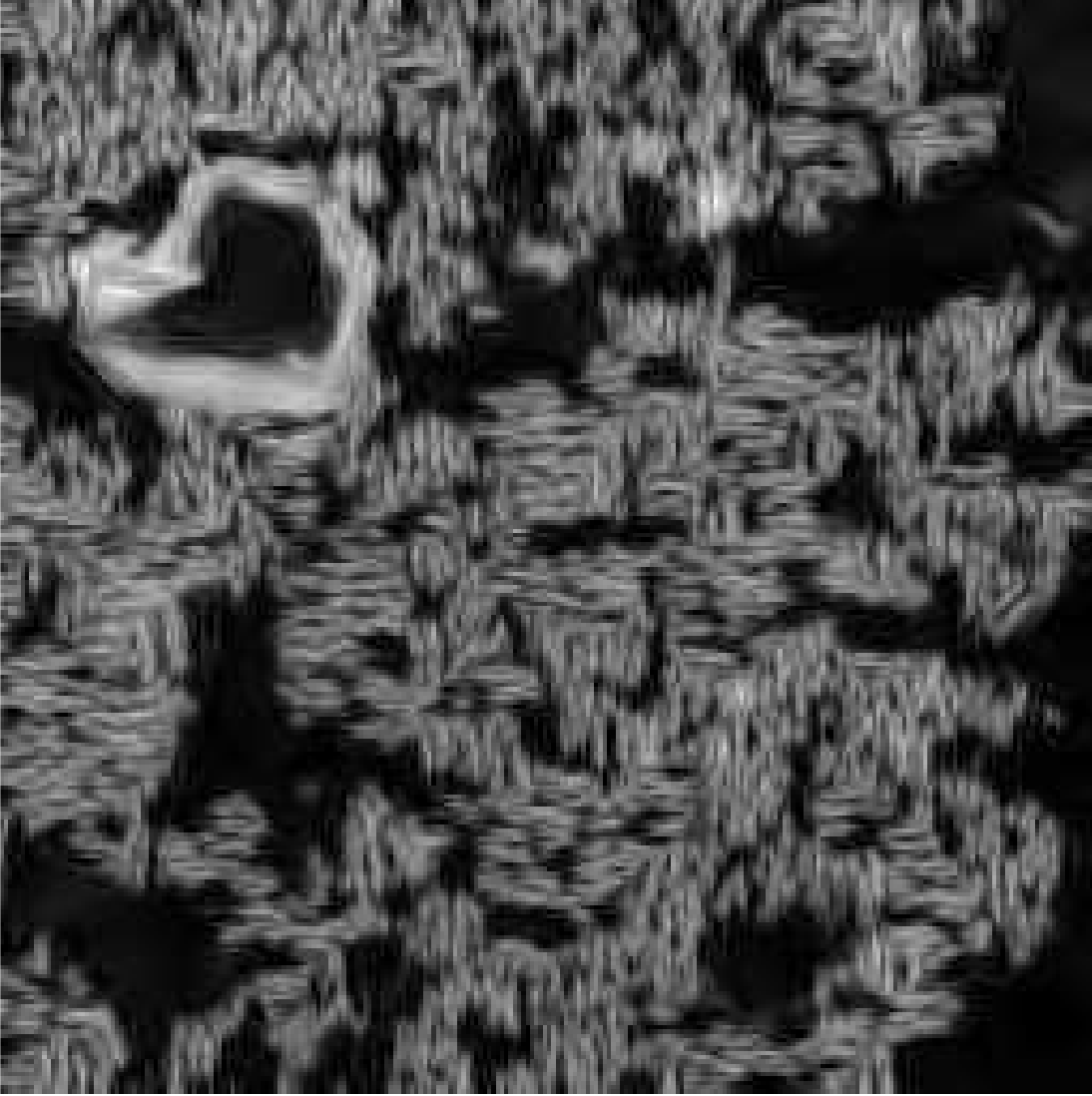}}\\
\subfigure[]{\includegraphics[width=.12\textwidth]{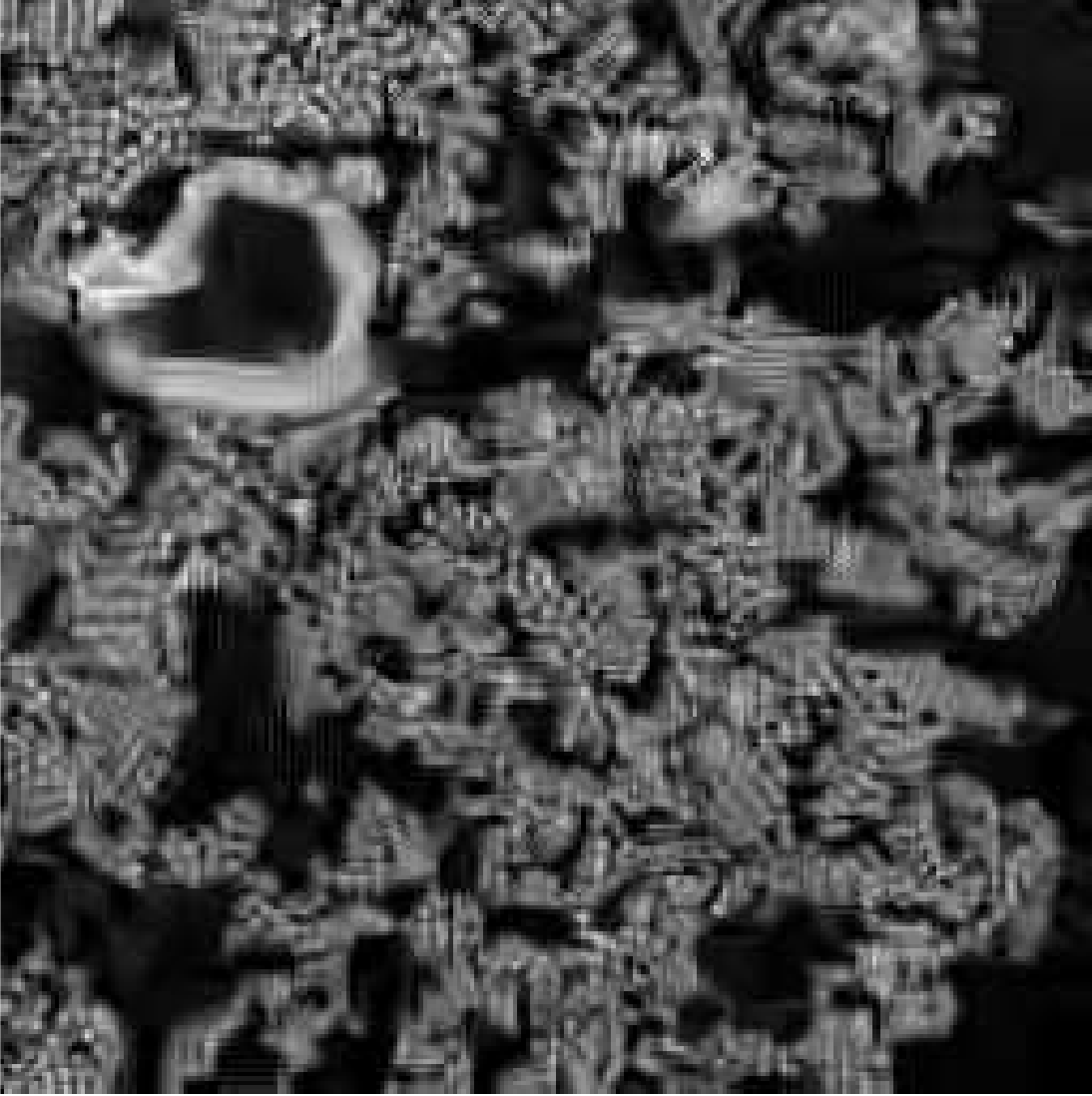}}
\subfigure[]{\includegraphics[width=.12\textwidth]{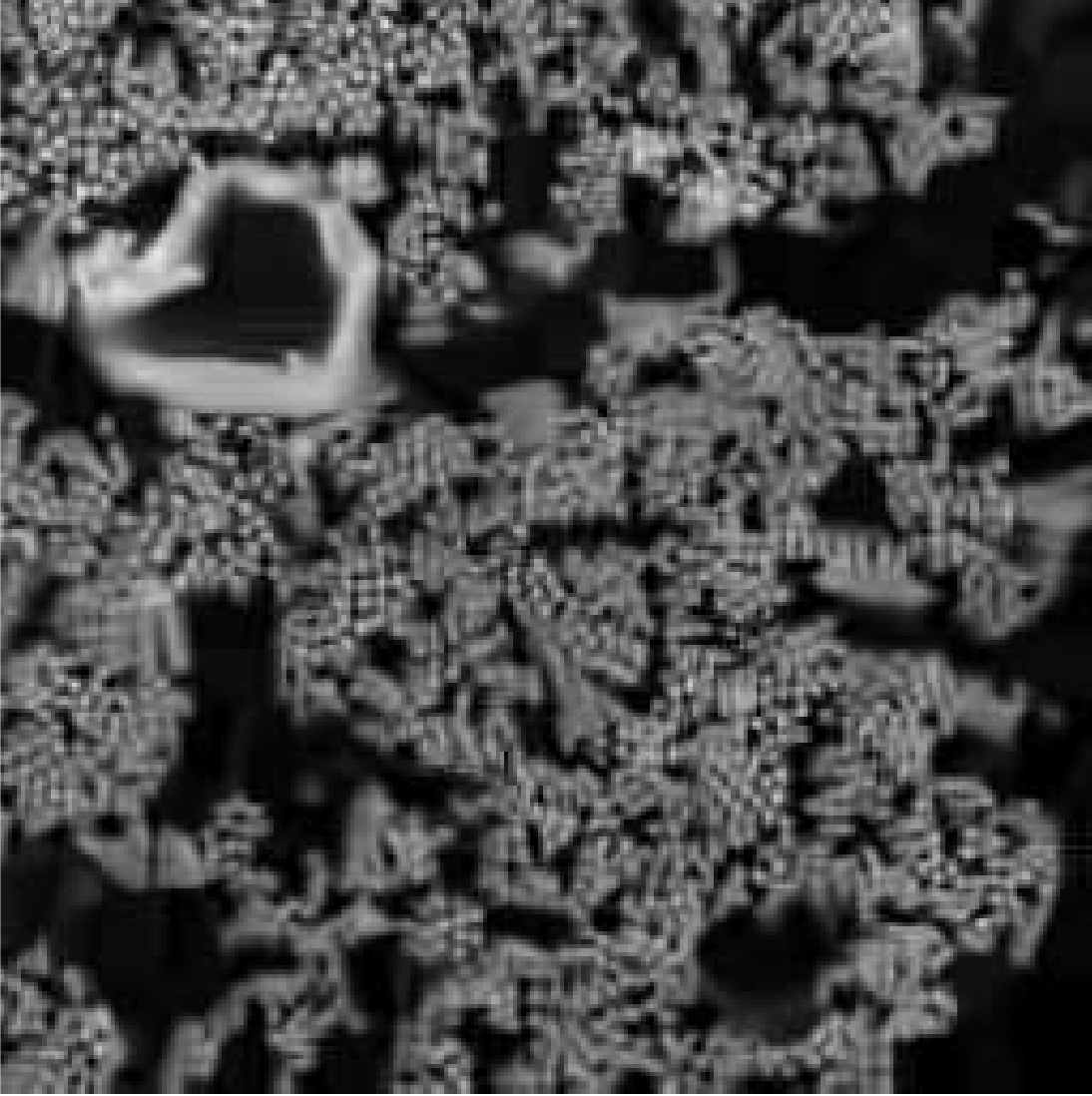}}
\subfigure[]{\includegraphics[width=.12\textwidth]{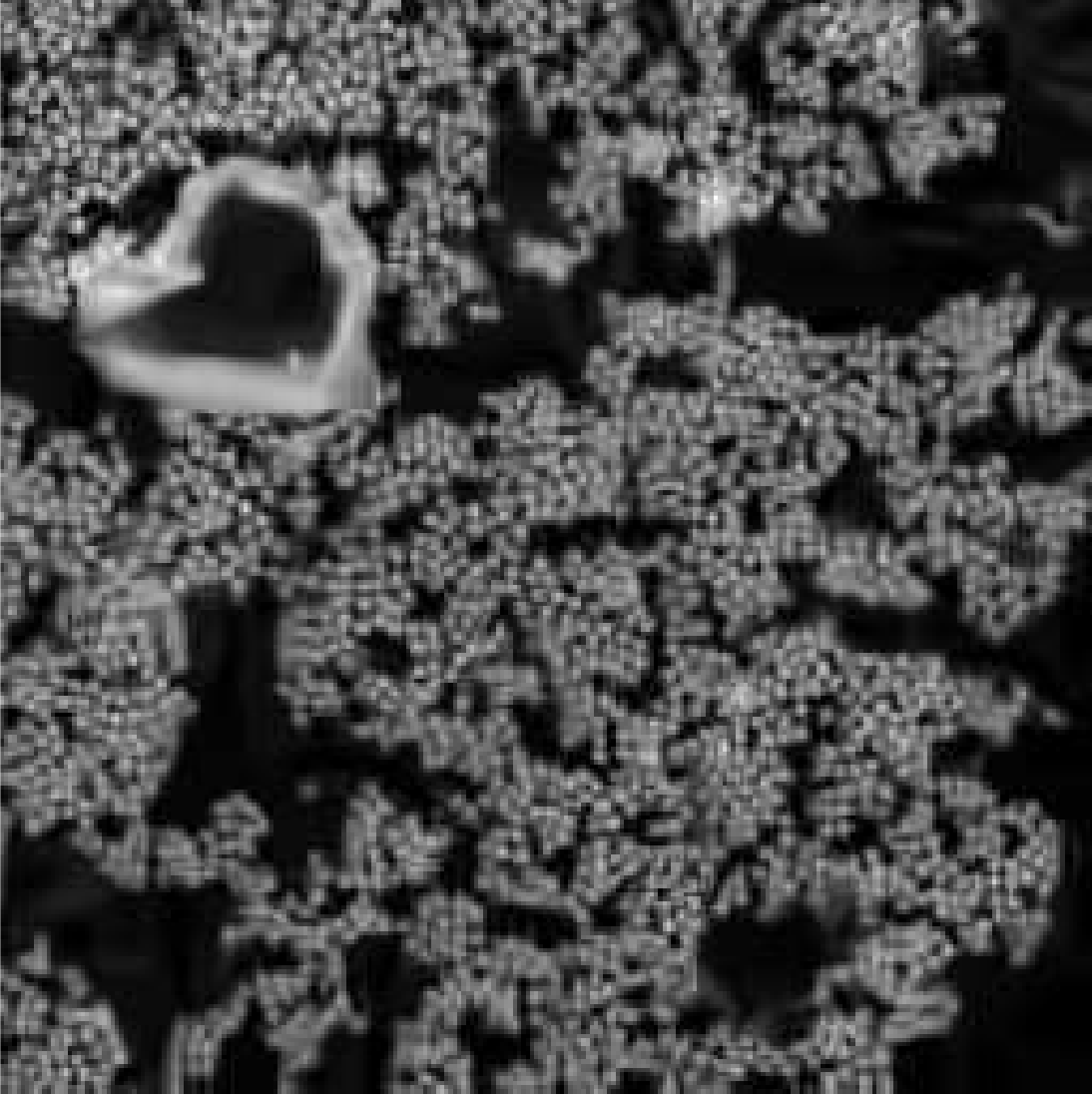}}
\end{center}
\caption{PR with CDP. Peak level $\nu=5.0\times10^{-2}, 8.0\times 10^{-2}, 1.0\times 10^{-1}$ for Poisson noise from left to right. First row: ``LS-PR''; Second row: ``TV-PR''; Third row: ``TGV-PR''; Fourth row: ``NLM-PR''; Fifth row: ``BM3D-PR''.}
\label{poicdp4}
\end{figure}

\begin{figure}
\begin{center}
\subfigure[]{\includegraphics[width=.15\textwidth]{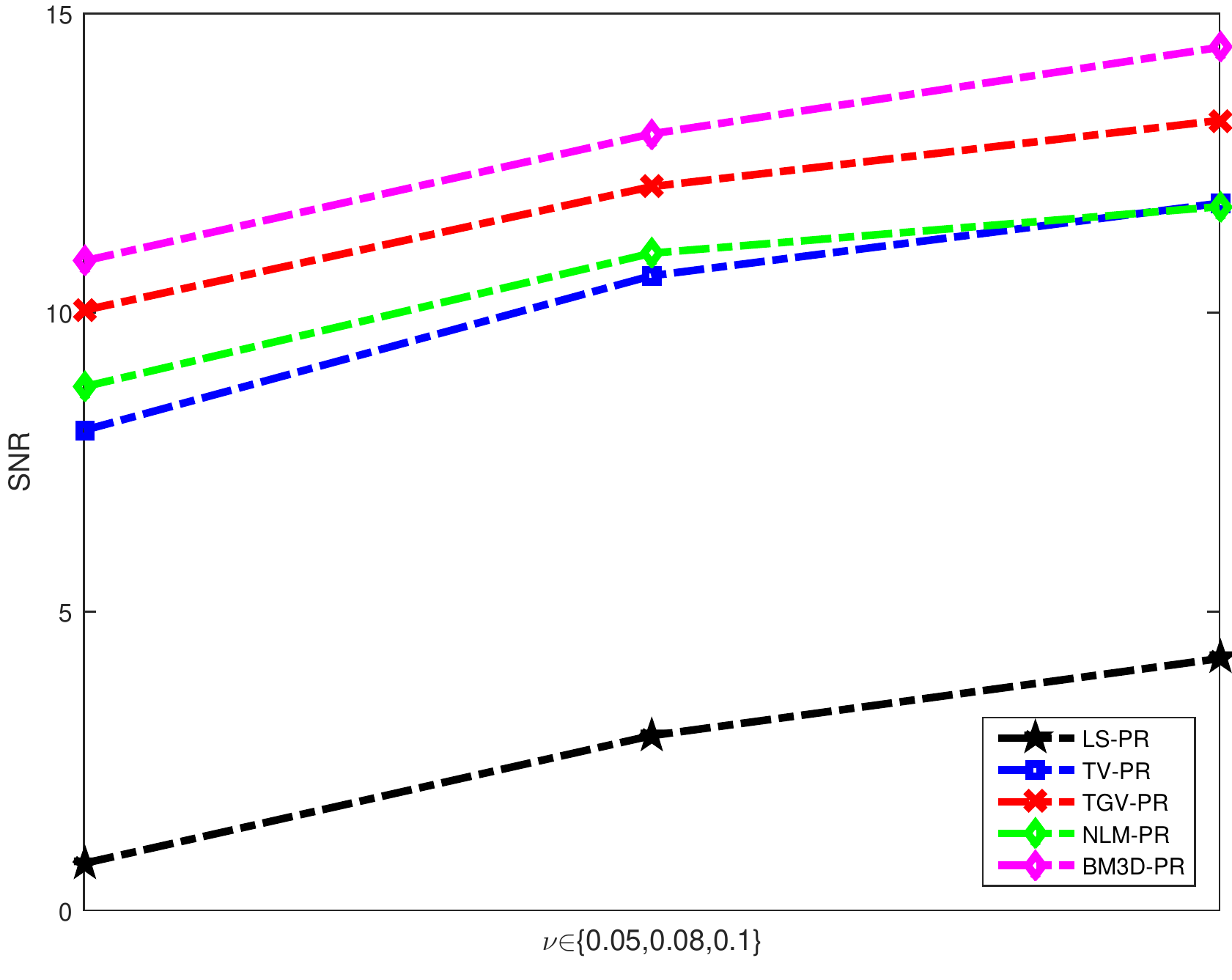}}
\end{center}
\caption{SNRs for reconstructed images in Figure \ref{poicdp4} v.s. peak level $\nu$ for different methods with CDP.}
\label{snr1}
\end{figure}

\begin{table}
\begin{center}
\begin{spacing}{1}
\begin{tabular}{|c|c|c|c|c|}
\hline
Method&Peak level $\nu$ &$\lambda$&r&$\eta$\\\hline
\multirow{3}{*}{TGV-PR}&$5.0\times 10^{-2}$&\multirow{3}{*}{2}&\multirow{3}{*}{$5.0\times 10^{5}$}&\multirow{3}{*}{$100$}\\
      &                 $8.0\times 10^{-2}$&&&\\
      &                 $1.0\times 10^{-1}$&&&\\\cline{1-5}
\multirow{3}{*}{NLM-PR}&$5.0\times 10^{-2}$& $4.0\times10^4$ &\multirow{3}{*}{$1.0\times10^6$}&\multirow{6}{*}{$50$}\\
      &$8.0\times 10^{-2}$&$3.5\times 10^{4}$& &\\
      &$1.0\times 10^{-1}$&$3.5\times 10^{4}$&&\\\cline{1-4}
\multirow{3}{*}{BM3D-PR}&$5.0\times 10^{-2}$&$8\times 10^{4}$&{$4.0\times 10^{5}$}&\\
      &$8.0\times 10^{-2}$&$1.5\times 10^{5}$&$1.0\times10^6$&\\
      &$1.0\times 10^{-1}$&$1.5\times 10^{5}$&$1.0\times10^6$&\\
      \hline
\end{tabular}
\end{spacing}
\end{center}
\caption{Parameters for Poisson noise removal of CDP on complex-valued images for Figure \ref{poicdp4}.}
\label{tab2}
\end{table}

\subsubsection{Ptychographic PR (Ptycho-PR)}
The complex-valued image in Figure \ref{groundtruth} (e) are tested to show the performance of our proposed methods with different noise level by setting the peak level $\nu\in\{0.2,0.5,0.8\}$, and see results in \ref{poipty1} and  SNRs in Figure \ref{snr3}, where we only show the performances of ``LS-PR'',``TV-PR'' and ``BM3D-PR''. We fix the parameters as $\lambda=7.0\times10^3, r=7.0\times10^4$ and $\eta=5$ for different noise levels. Reconstructed images are blurry by ``LS-PR'', which seems more challengeable than CDP with random masks. With high level noise, ``TV-PR'' recovers the images with sharp edges and clean background, but almost completely blurs the smaller scale features. ``BM3D-PR'' can work well and some of the features are recovered when one can hardly see any smaller features from the images by ``LS-PR''. When the noise level decreases, ``TV-PR'' and ``BM3D-PR'' can work well. Again ``BM3D-PR'' gains the highest SNRs  inferred from Figure \ref{snr4}. The average SNRs are 8.60, 13.67, and 15.89  for ``LS-PR'', ``TV-PR'', and ``BM3D-PR'' respectively, and our proposed method has about 2dB increase averagely compared with ``TV-PR''.

\begin{figure}
\begin{center}
\subfigure[]{\includegraphics[width=.12\textwidth]{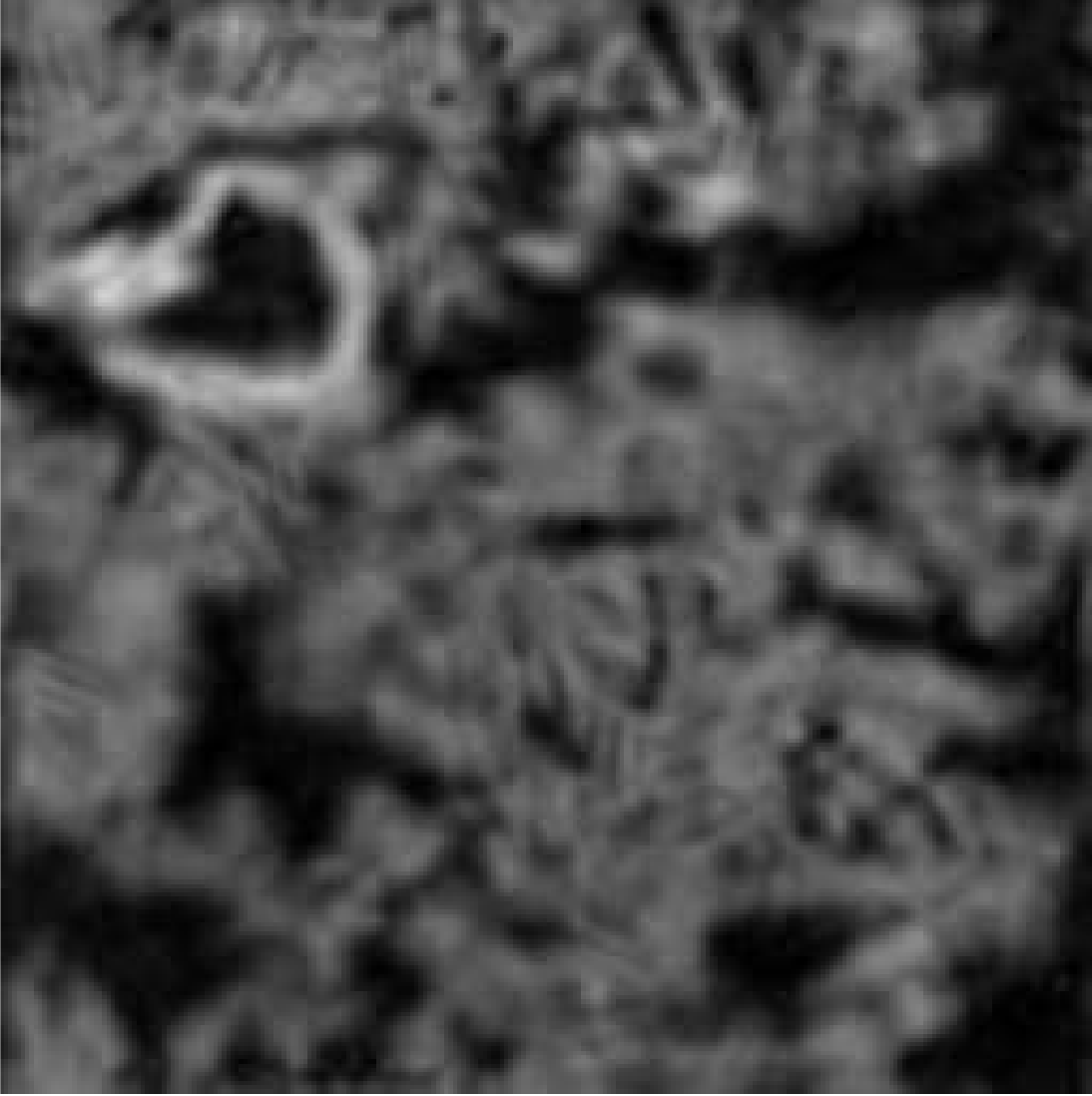}}
\subfigure[]{\includegraphics[width=.12\textwidth]{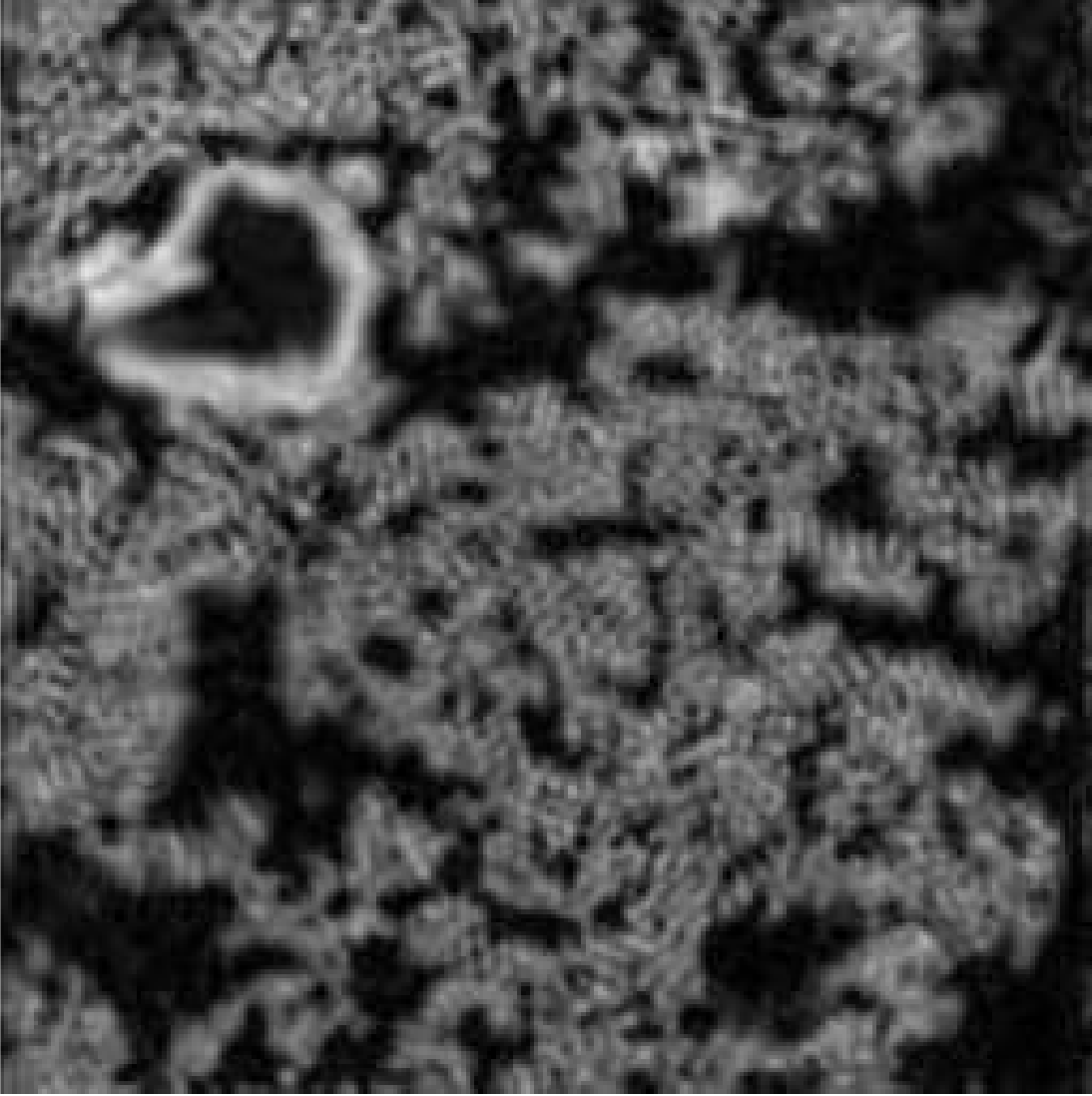}}
\subfigure[]{\includegraphics[width=.12\textwidth]{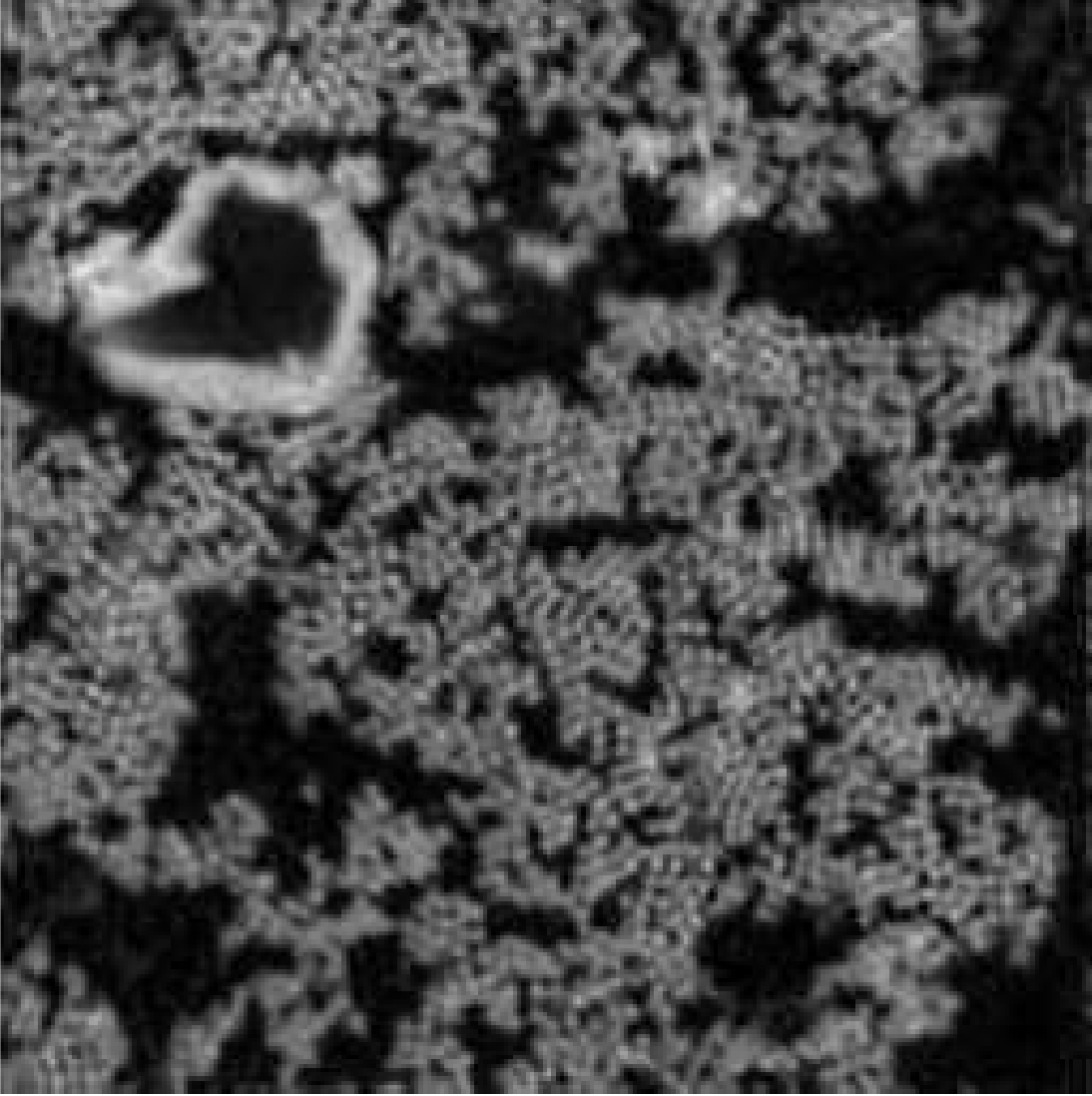}}\\
\subfigure[]{\includegraphics[width=.12\textwidth]{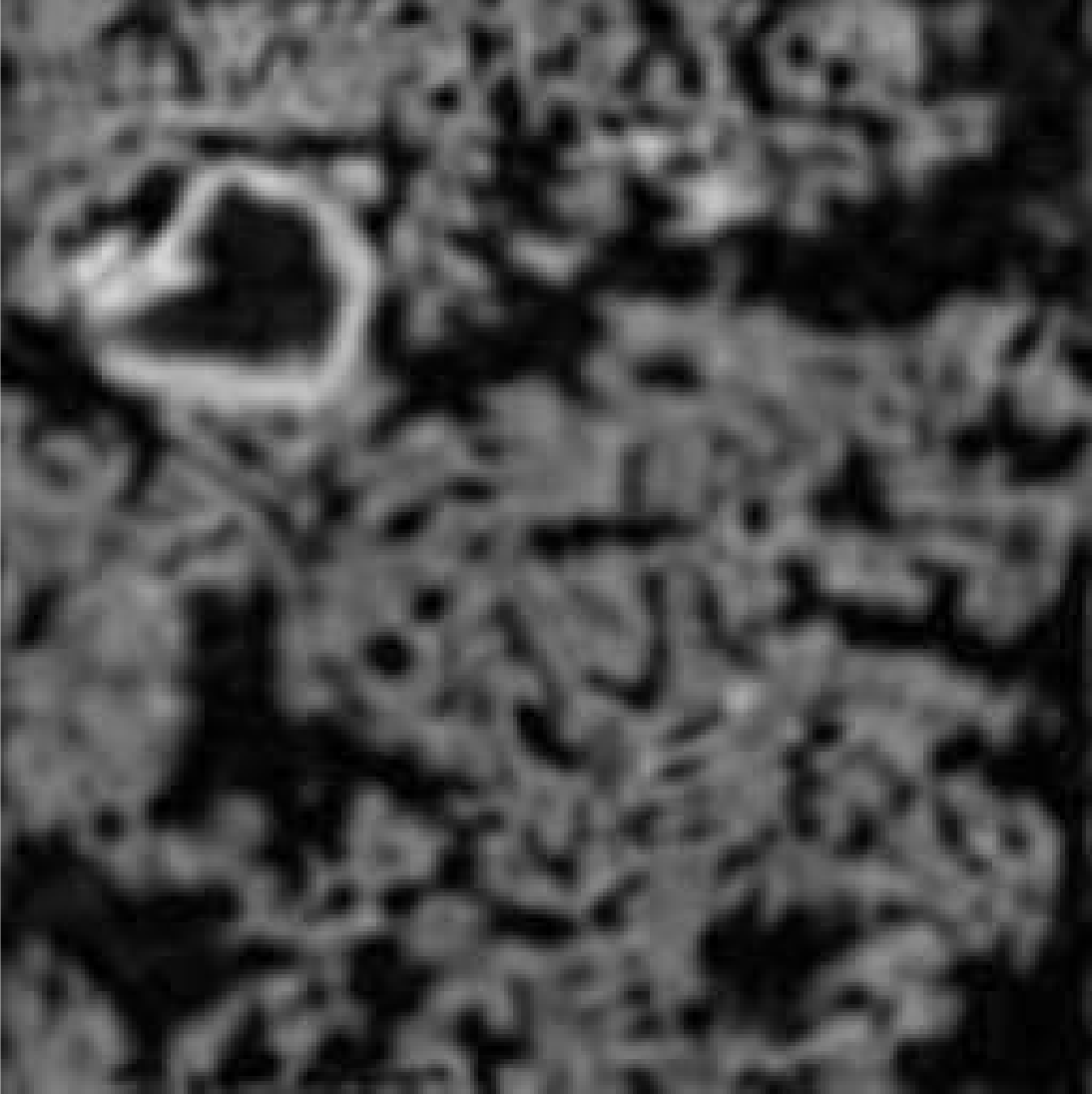}}
\subfigure[]{\includegraphics[width=.12\textwidth]{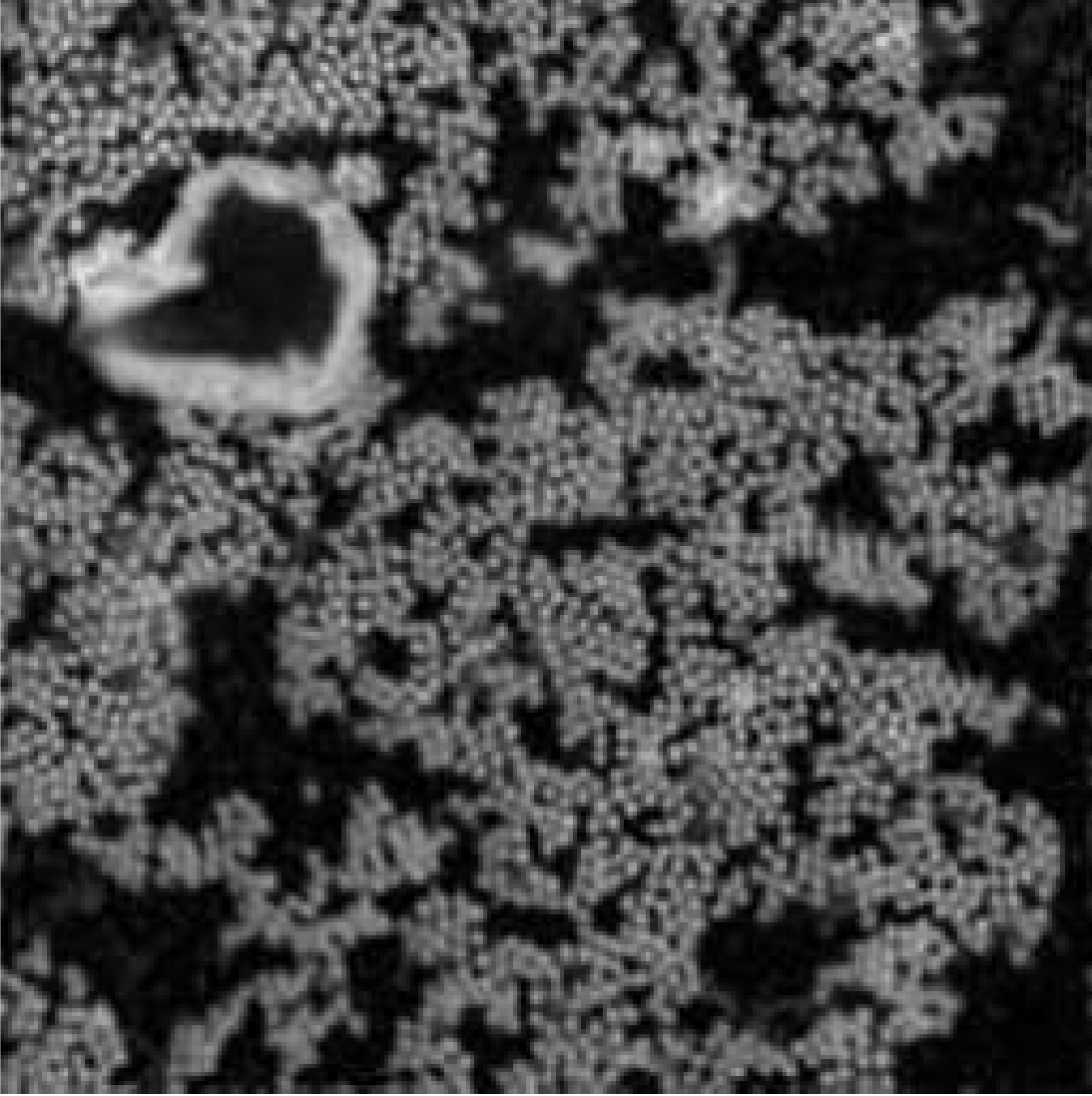}}
\subfigure[]{\includegraphics[width=.12\textwidth]{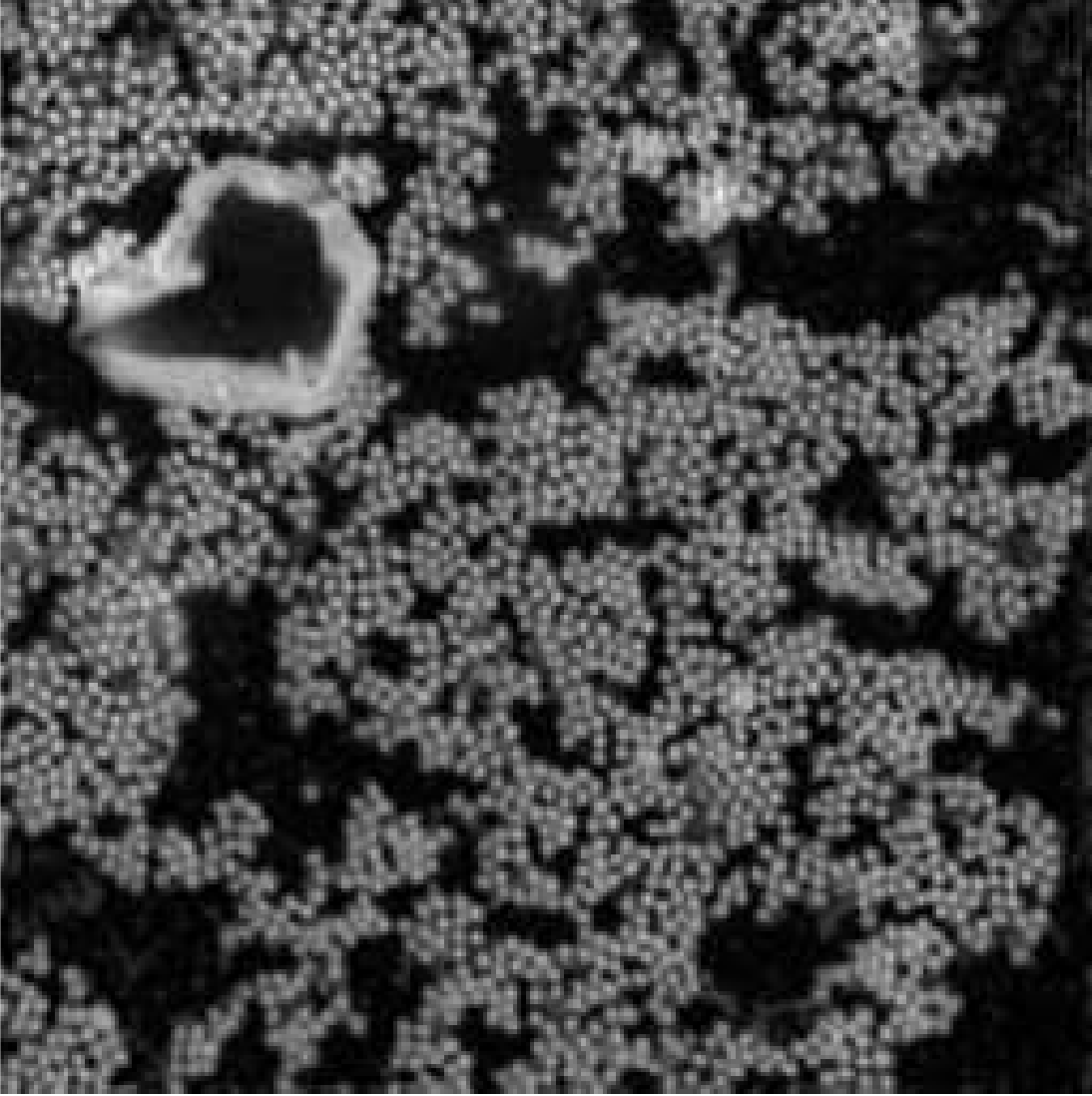}}\\
\subfigure[]{\includegraphics[width=.12\textwidth]{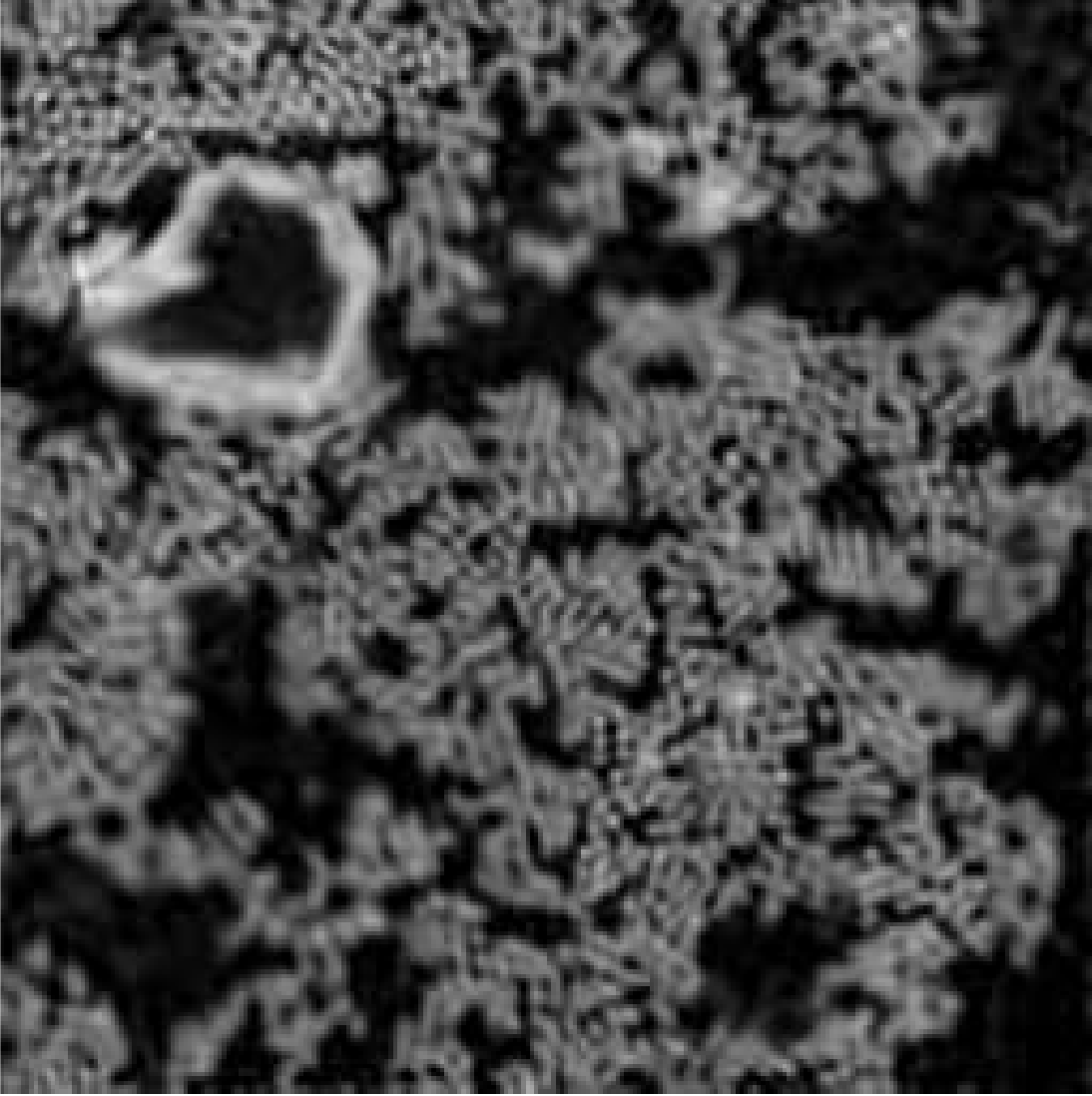}}
\subfigure[]{\includegraphics[width=.12\textwidth]{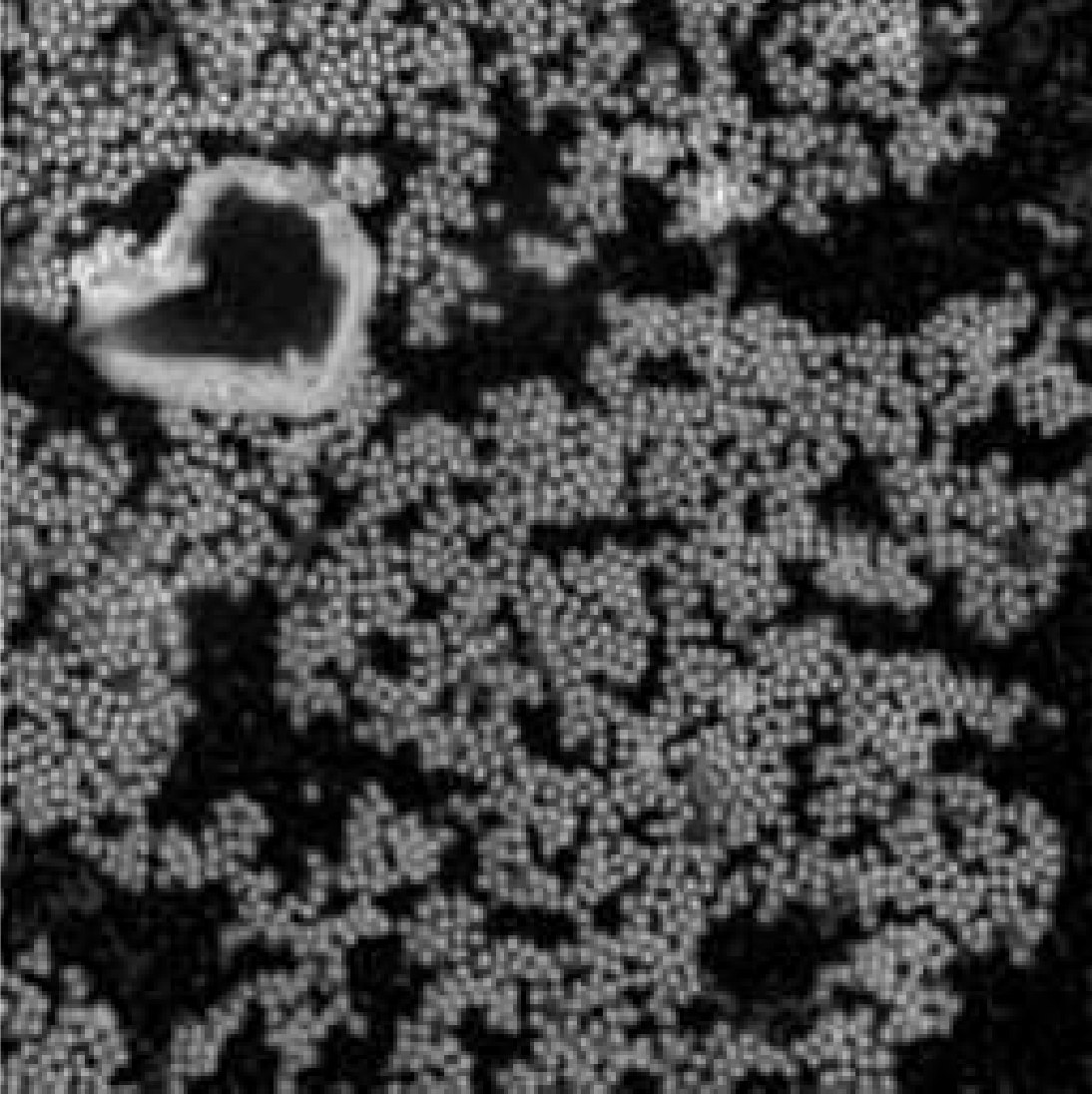}}
\subfigure[]{\includegraphics[width=.12\textwidth]{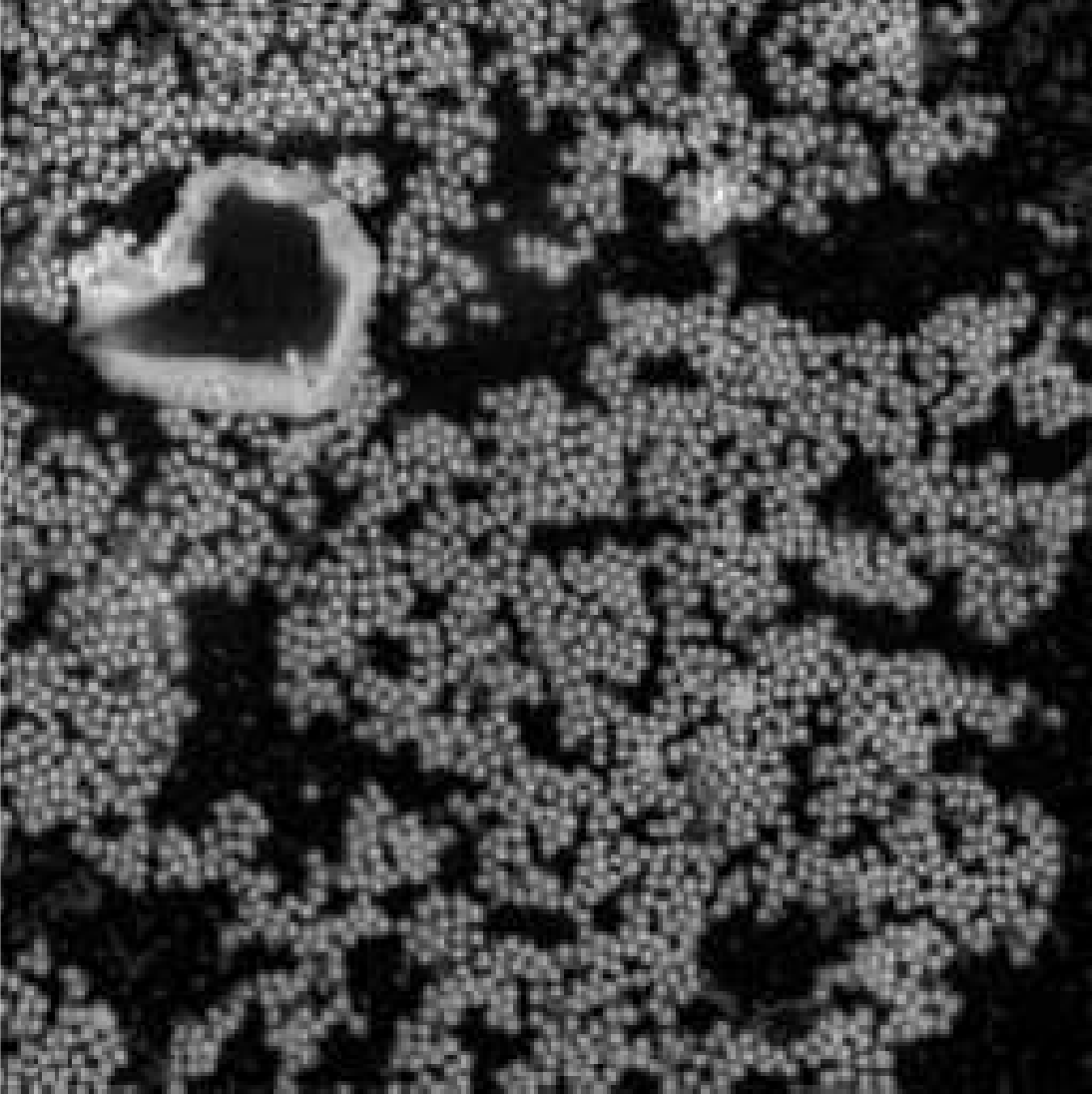}}
\end{center}
\caption{Ptychographic PR. Peak level $\nu=0.2,0.5,0.8$ for Poisson noise from left to right. First row: ``LS-PR''; Second row: ``TV-PR''; Third row: ``BM3D-PR''.}
\label{poipty1}
\end{figure}

\begin{figure}
\begin{center}
\subfigure[]{\includegraphics[width=.15\textwidth]{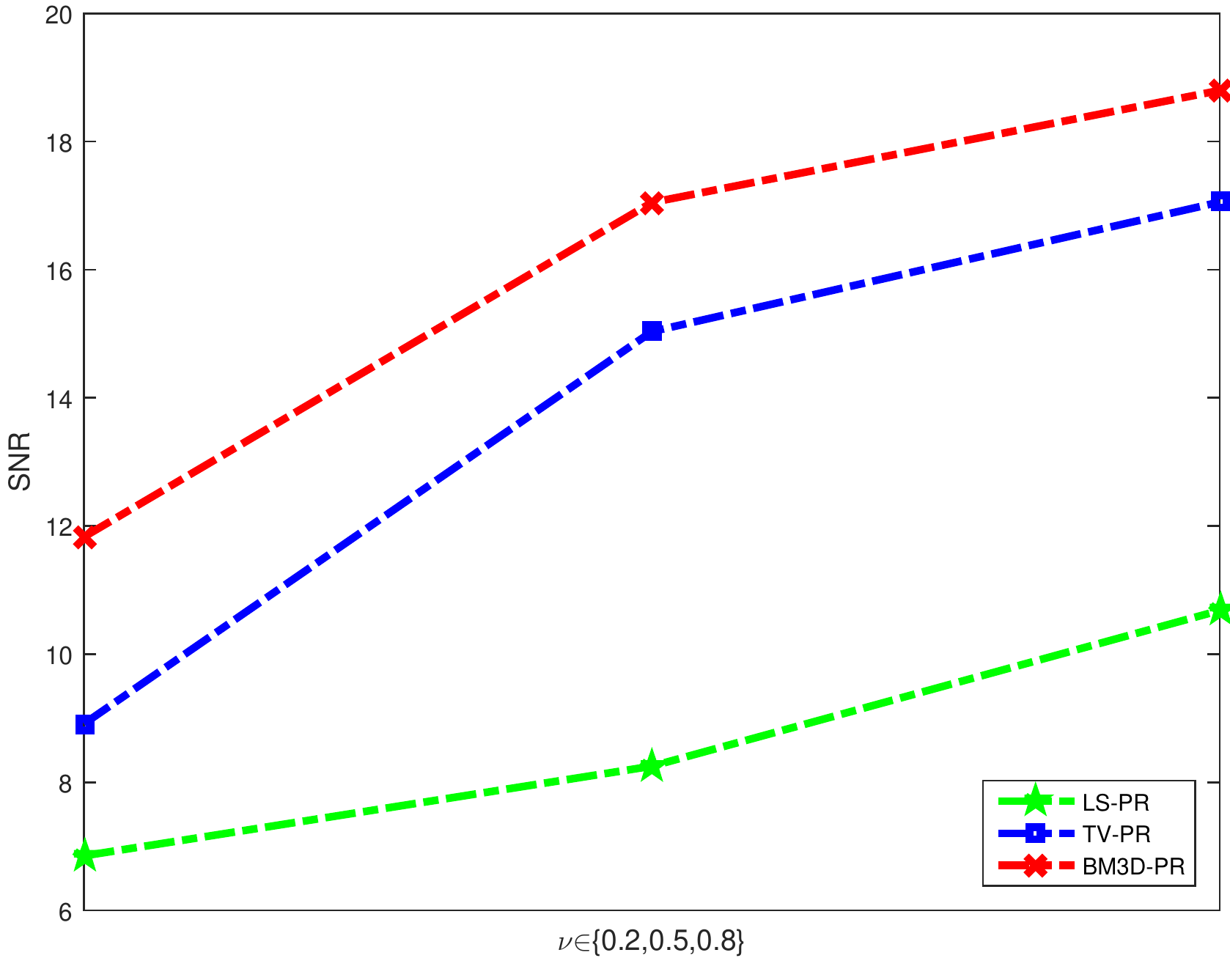}}
\end{center}
\caption{SNRs of the reconstructed images in Figure \ref{poipty1} v.s. peak level $\nu$ by different methods for ptychographic PR.}
\label{snr3}
\end{figure}
\subsection{Gaussian noise}

We only consider the ptychographic PR in this part, and see the results in Figure \ref{gaupty1}. Set parameters of ``BM3D-PR'' as  $\lambda=3.5\times 10^7, 1.5\times10^7, 1.0\times 10^7$ for the noisy measurements with SNRs to be 15, 20, and 30 respectively. Fix the parameter $r=2.5\times 10^7$, and $\eta=1.0\times 10^3$ with different noise levels.
The reconstructed images by ``LS-PR'' only keep the large-scale features, and completely lose the smaller ones. TV-PR just smooths the images, while also loses most of the smaller scale structures  in Figure \ref{gaupty1} (d) and (e).  With our proposed ``BM3D-PR'', it can recover almost all of the texture parts in Figure \ref{gaupty1} (h), and even contaminated  by very severe noise, some of the smaller scale features can be preserved in Figure \ref{gaupty1} (g). The SNRs are put in Figure \ref{snr4}, and one can readily observe the increase of the SNRs by our method. The average SNRs are 6.41, 9.65, and 11.09 for ``LS-PR'', ``TV-PR'' and ``BM3D-PR'' respectively, and our method gains about 3dB, 1.5dB increase averagely compared with ``LS-PR' and ``TV-PR'' respectively.

\begin{figure}
\begin{center}
\subfigure[]{\includegraphics[width=.12\textwidth]{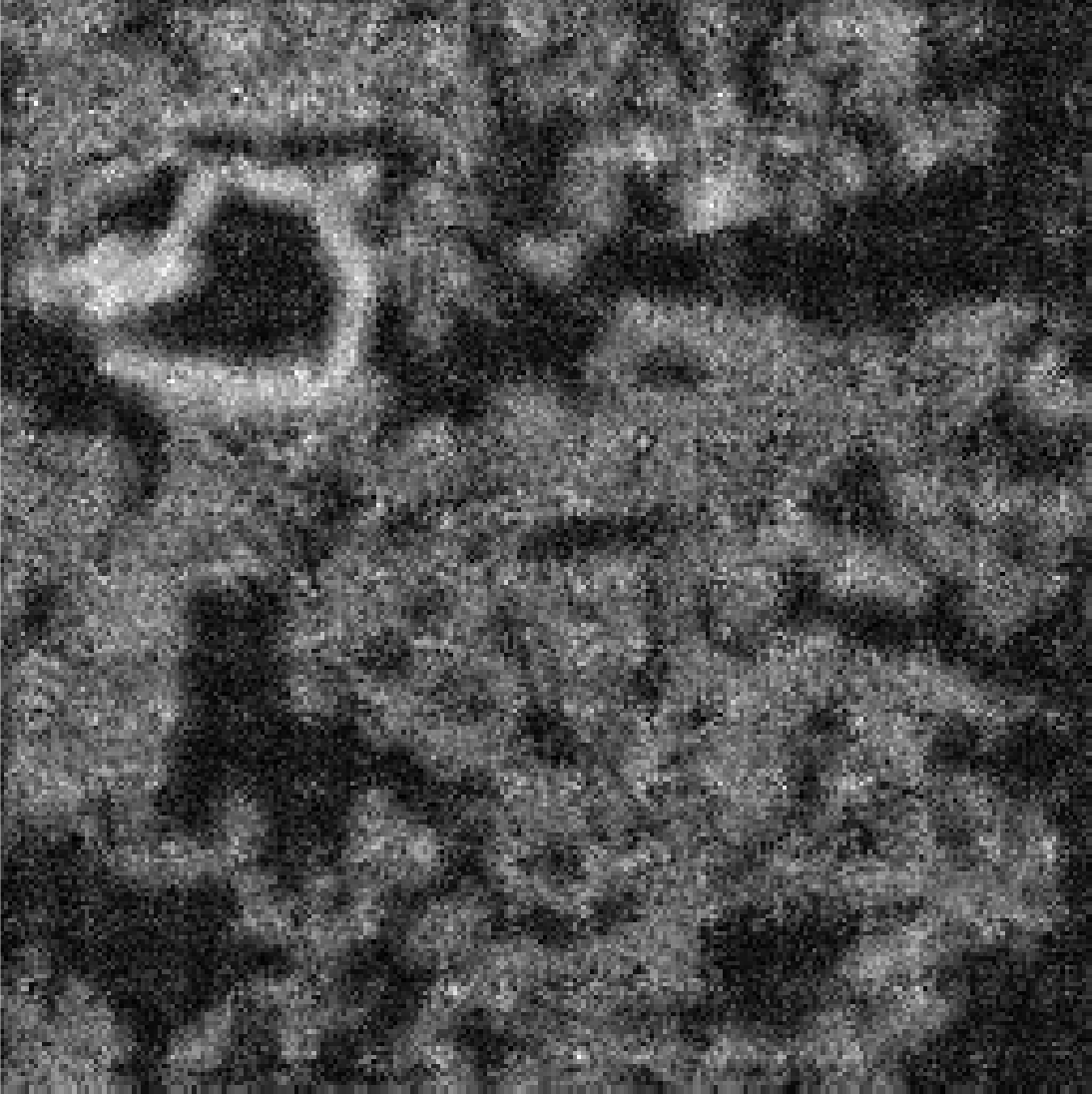}}
\subfigure[]{\includegraphics[width=.12\textwidth]{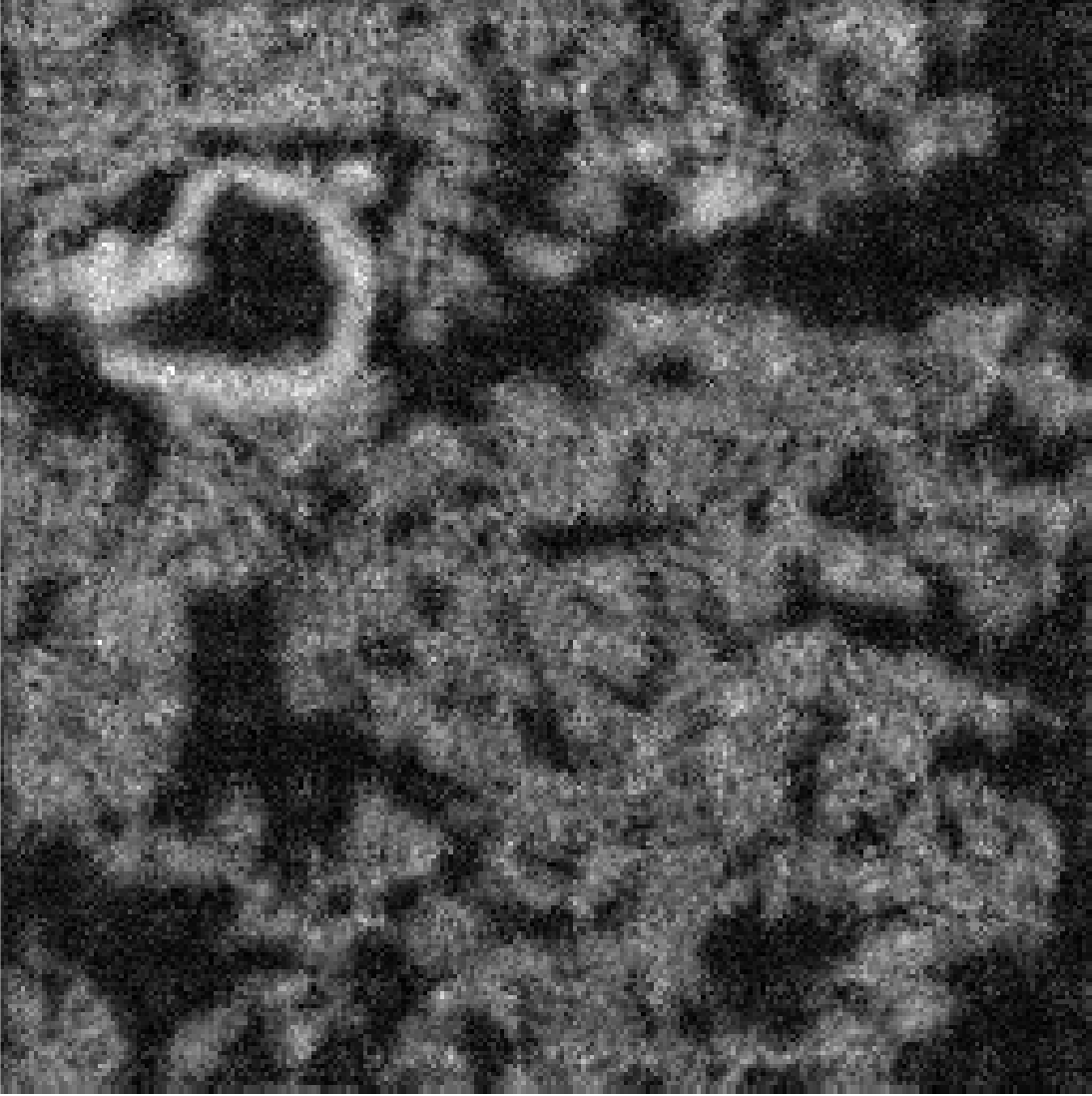}}
\subfigure[]{\includegraphics[width=.12\textwidth]{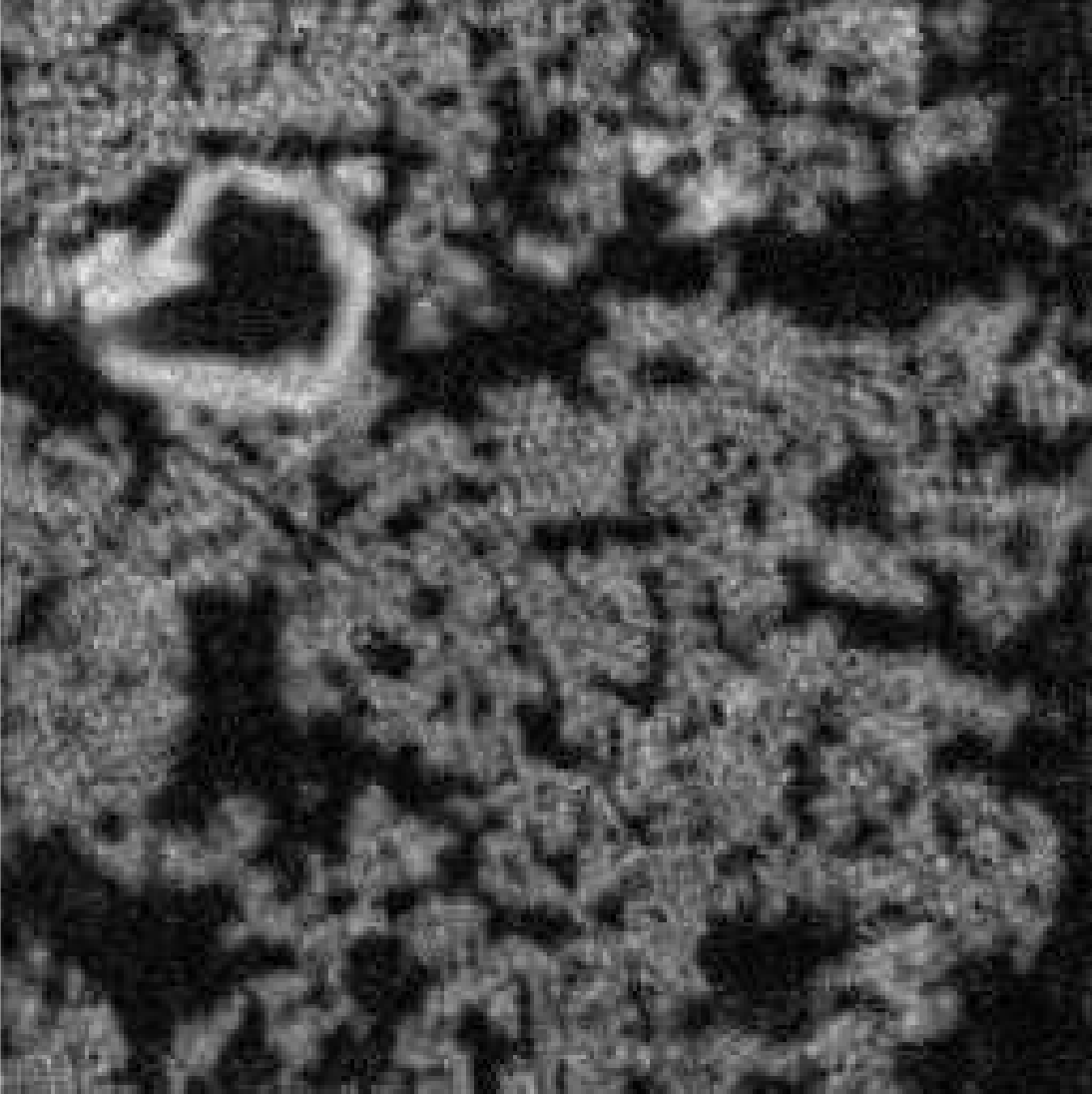}}\\
\subfigure[]{\includegraphics[width=.12\textwidth]{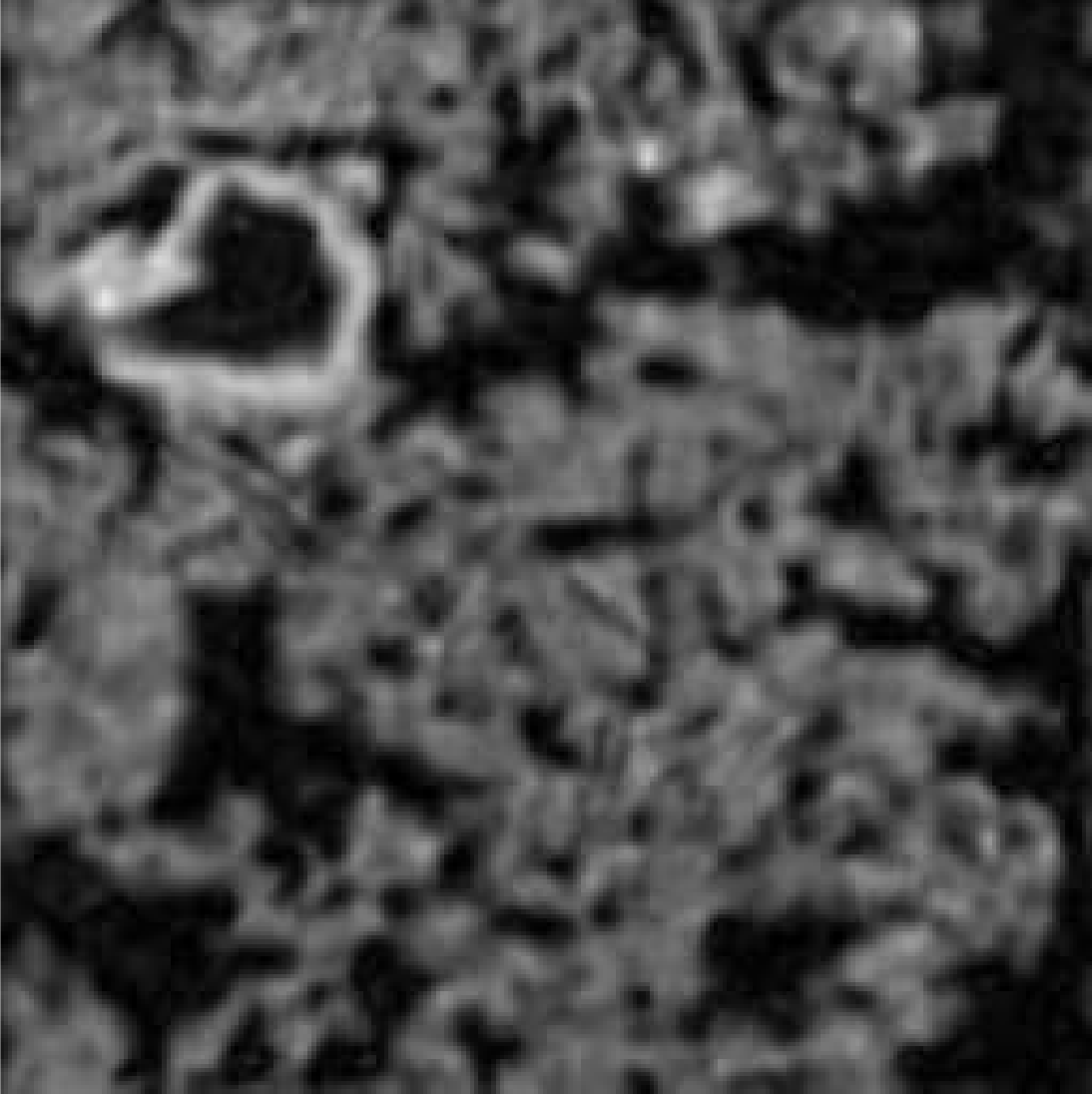}}
\subfigure[]{\includegraphics[width=.12\textwidth]{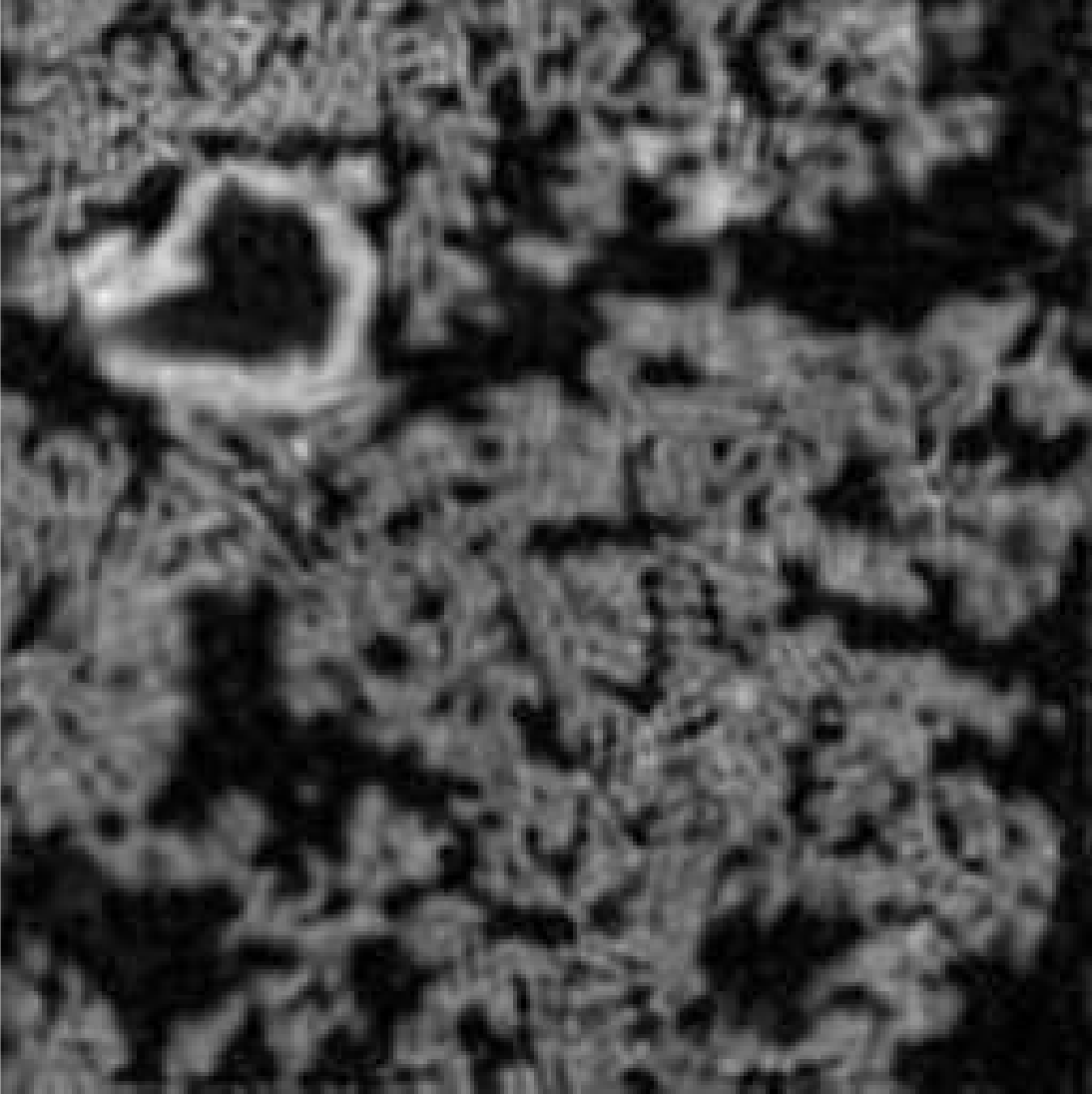}}
\subfigure[]{\includegraphics[width=.12\textwidth]{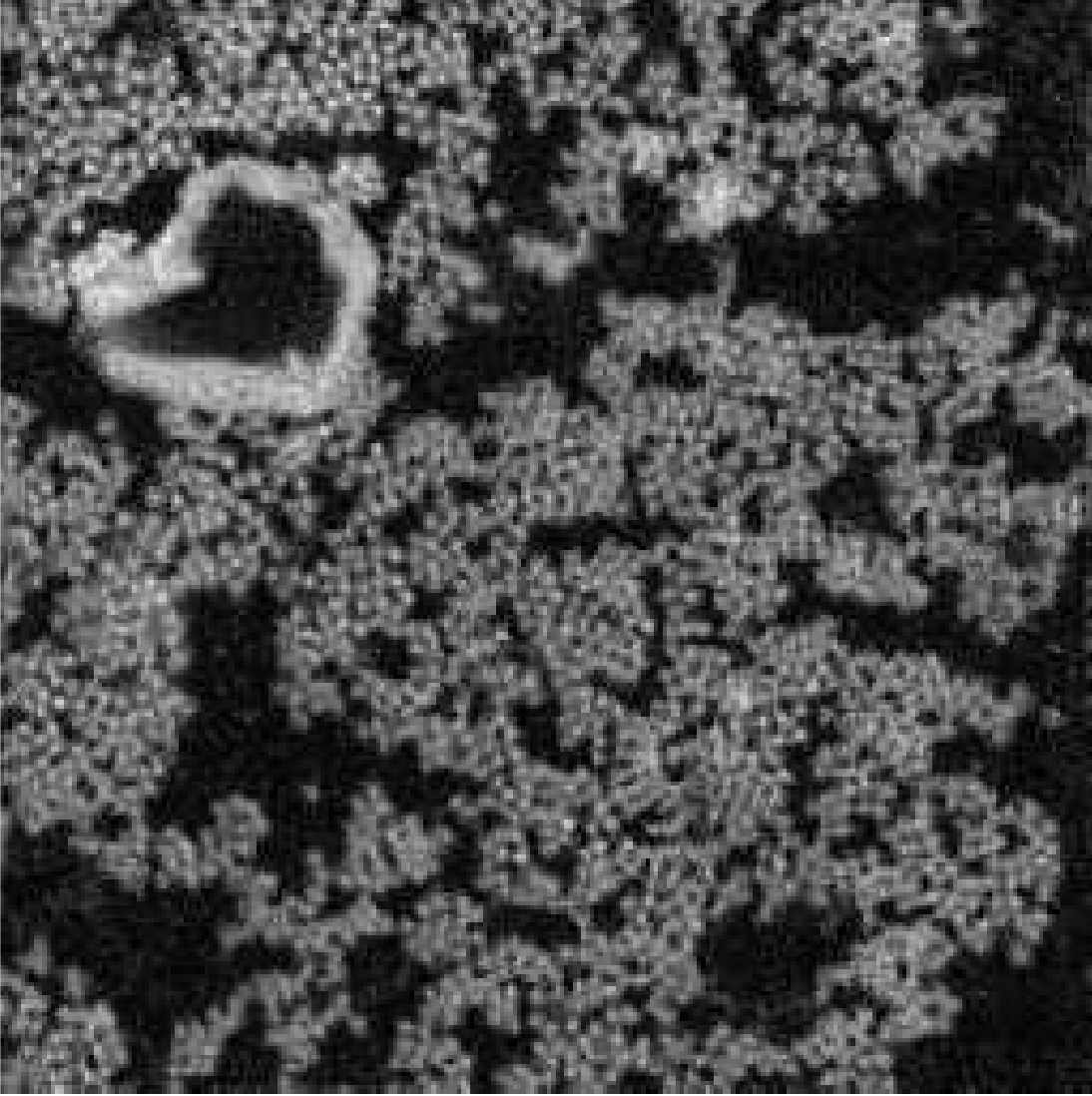}}\\
\subfigure[]{\includegraphics[width=.12\textwidth]{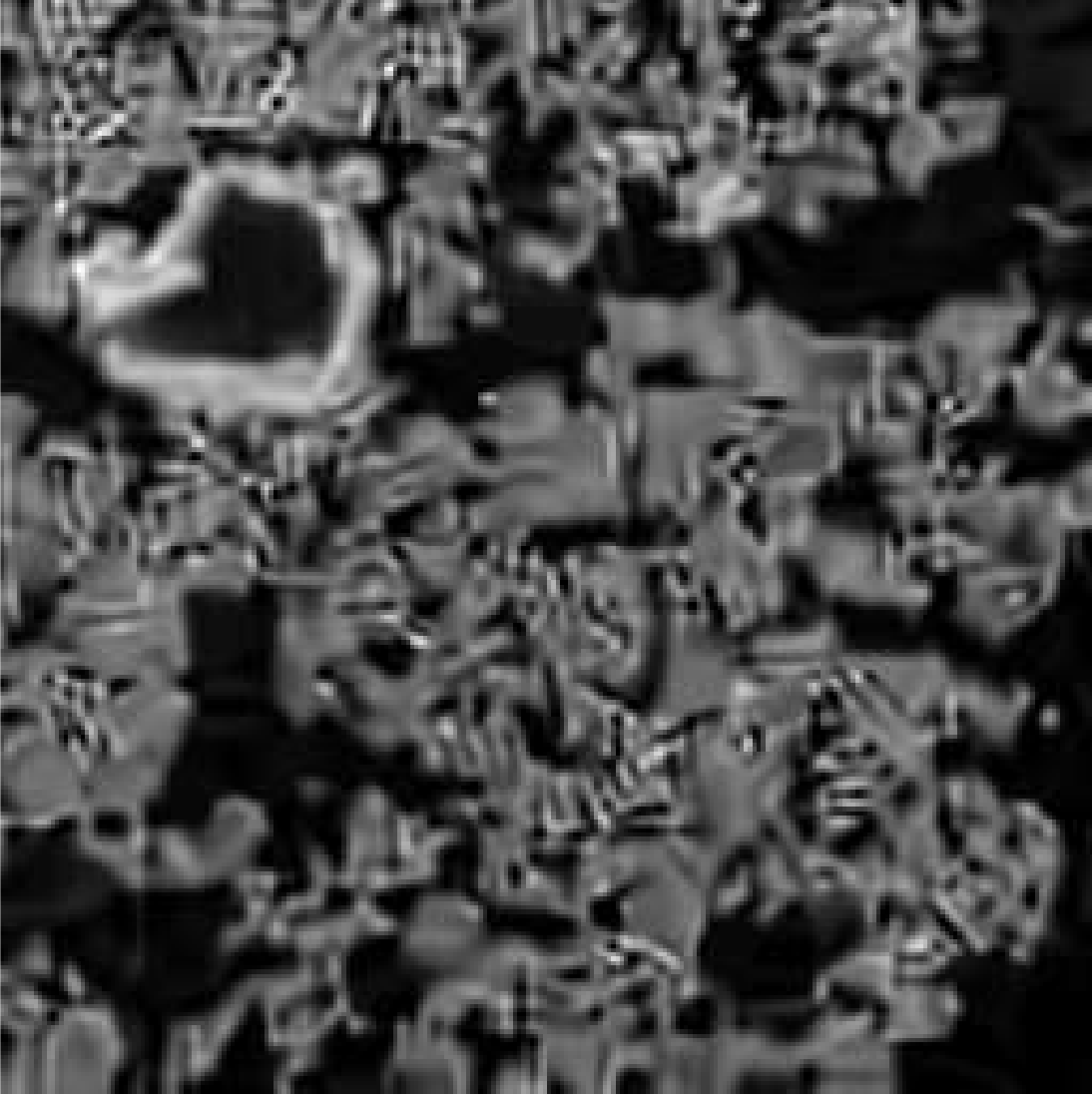}}
\subfigure[]{\includegraphics[width=.12\textwidth]{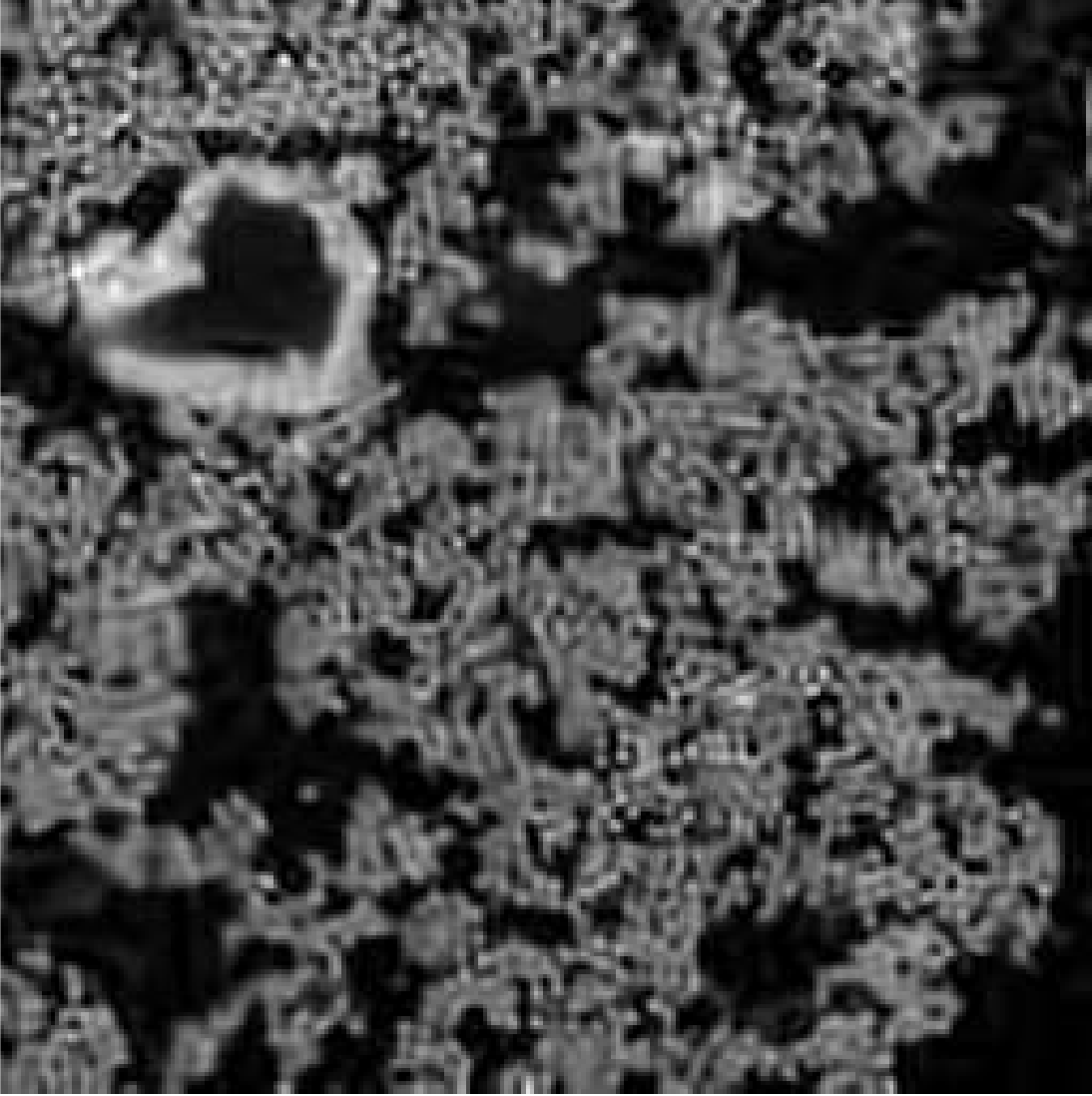}}
\subfigure[]{\includegraphics[width=.12\textwidth]{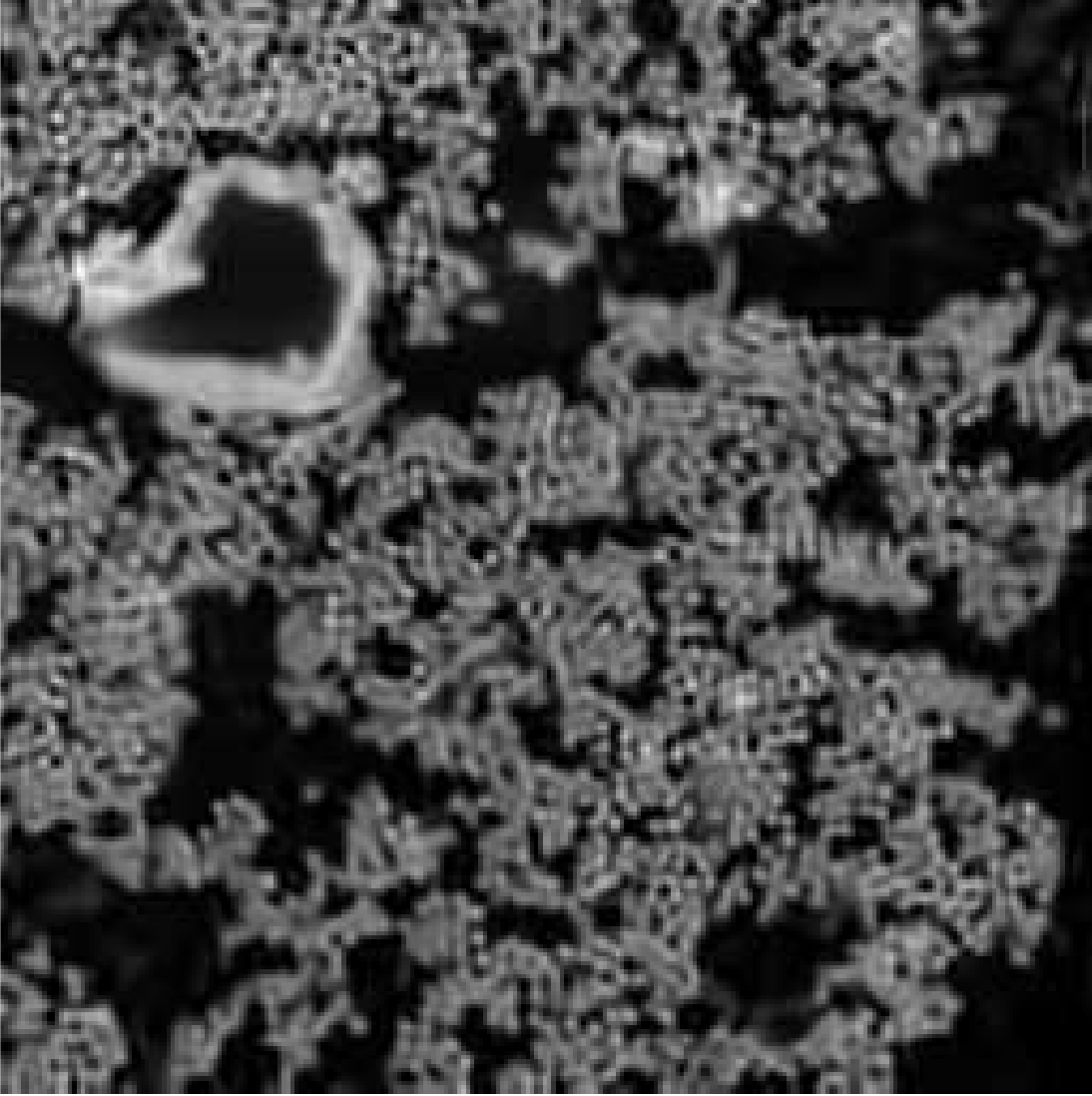}}
\end{center}
\caption{Ptychographic PR. Noisy level of measurements with $\mathrm{SNR}=15,20,30$ for Gaussian noise from left to right. First row: ``LS-PR''; Second row: ``TV-PR''; Third row: ``BM3D-PR''.}
\label{gaupty1}
\end{figure}

\begin{figure}
\begin{center}
\subfigure{\includegraphics[width=.15\textwidth]{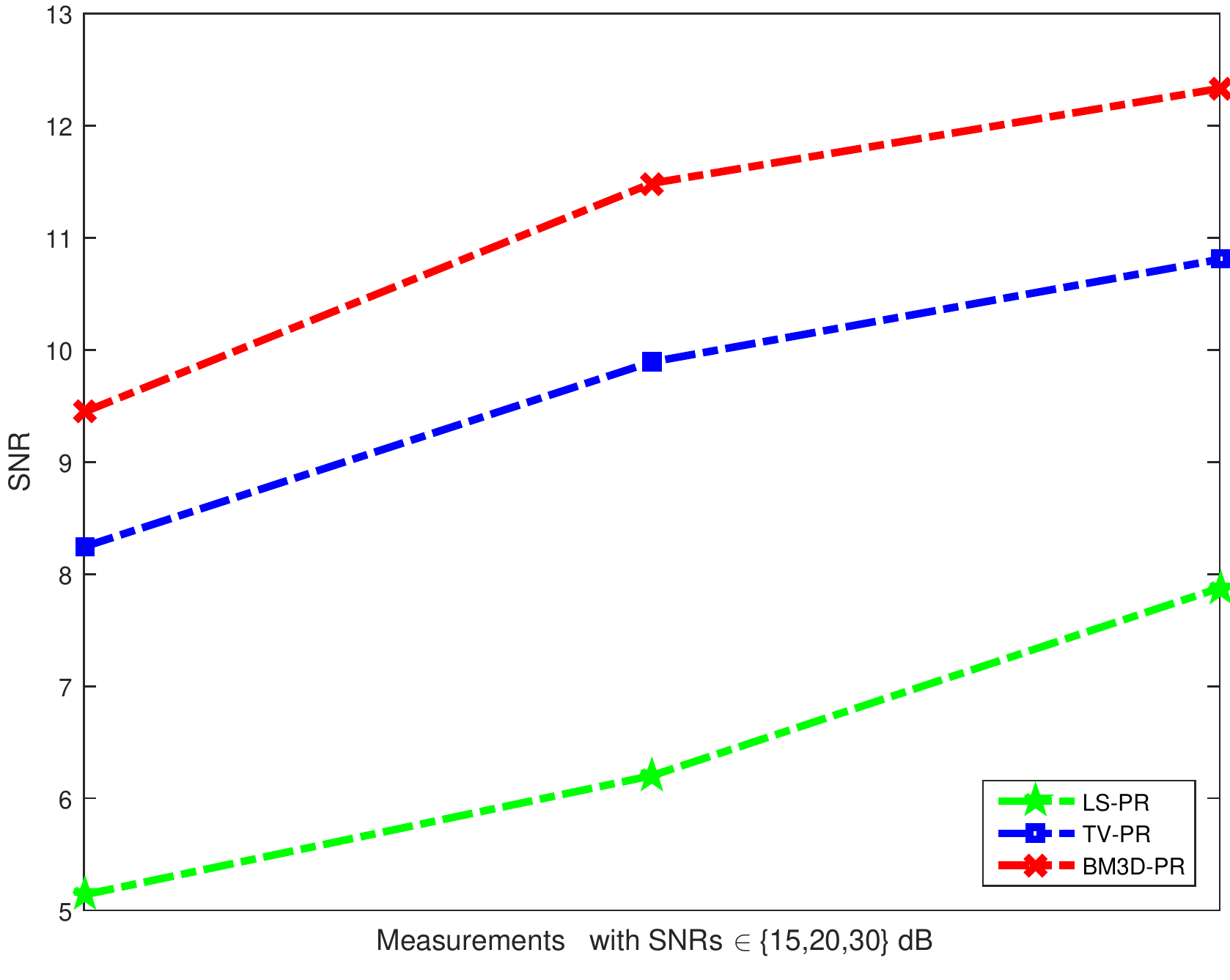}}
\end{center}
\caption{SNRs for reconstructed images in Figure \ref{gaupty1} v.s. measurements with $\mathrm{SNR}=15,20,30$ for ptychographic PR.}
\label{snr4}
\end{figure}

\subsection{Convergence}
To check the convergence of the iteration process, we monitor the histories of SNRs and relative errors of iterative solution $u^k$ w.r.t. the iteration number $k$, which are defined as $\frac{\|u^k-u^{k-1}\|}{\|u^k\|}.$
We show the histories of errors and SNRs for CDP for our proposed ``TGV-PR'', ``NLM-PR'', and ``BM3D-PR'' on images ``Lena'' with Poisson noise, and see  Figure \ref{hist} for detail. It demonstrates  that our proposed methods are convergent and stable.  For heavier noise, one needs more iterations to reach the final results with better image quality. Although we set the maximum iteration number as 30, about one third or half of the number is enough for real applications. It will be interesting to investigate how to set the optimal iteration number for different noise levels, and we put it as a future work.
\begin{figure}
\begin{center}
\subfigure{\includegraphics[width=.12\textwidth]{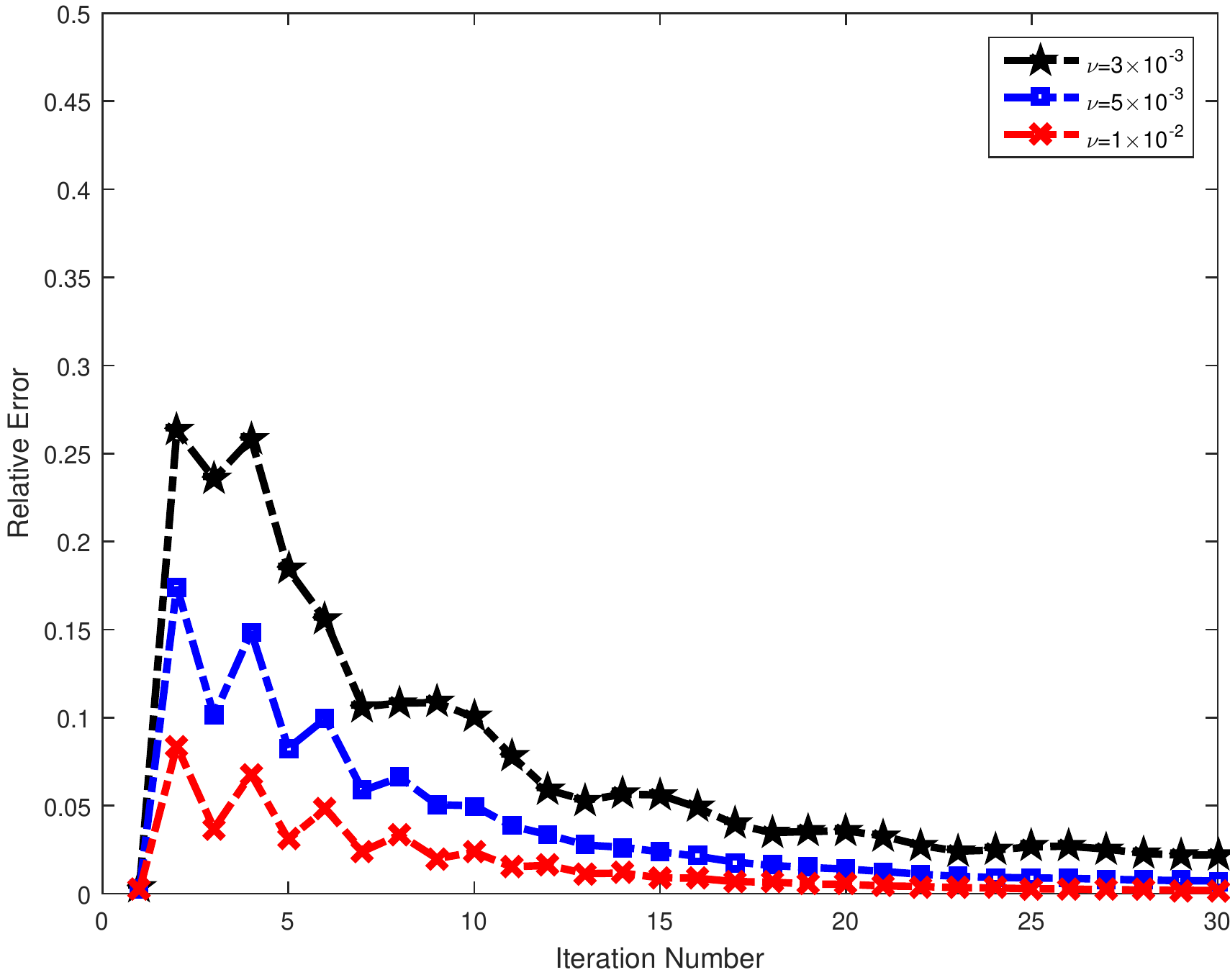}}
\subfigure{\includegraphics[width=.12\textwidth]{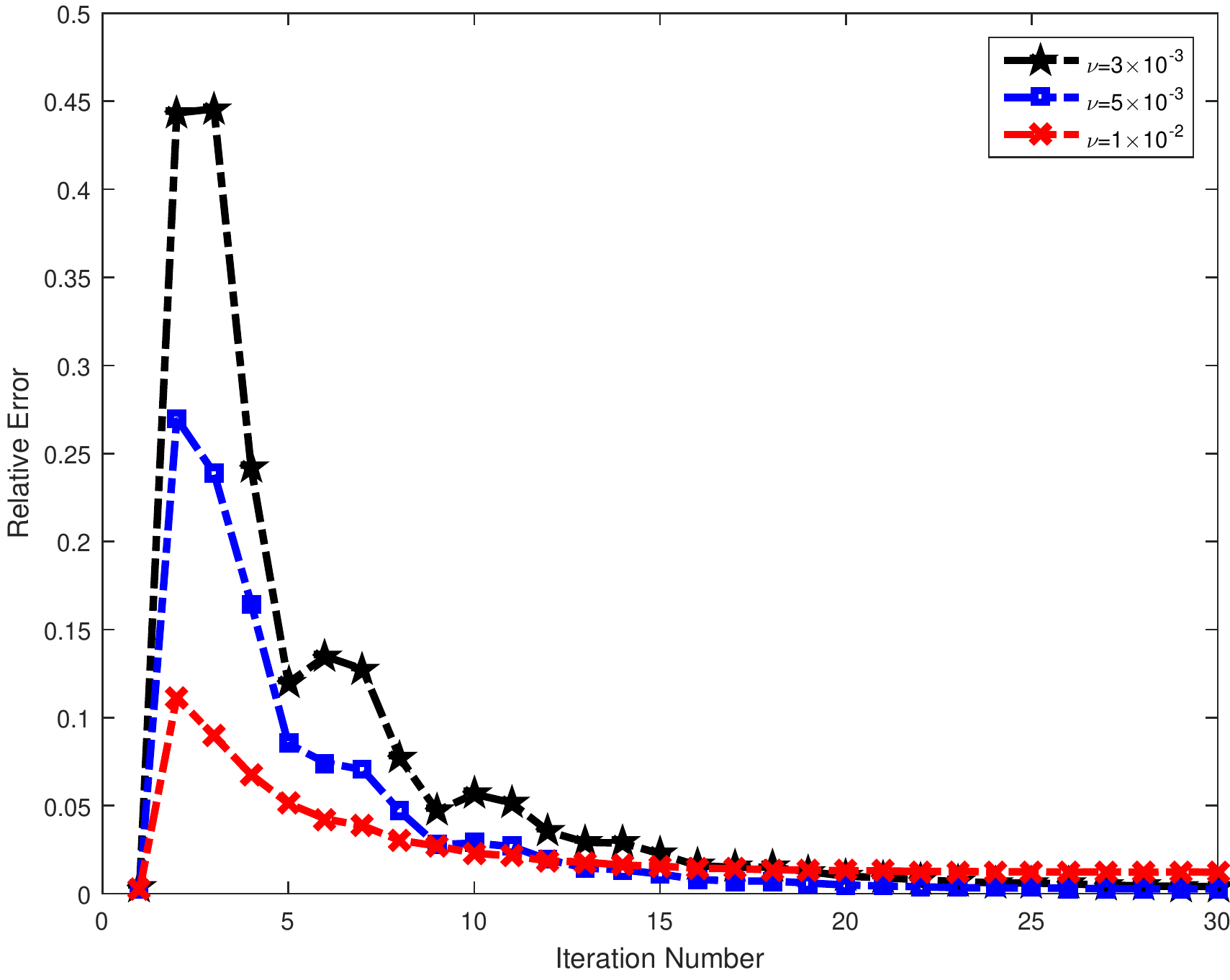}}
\subfigure{\includegraphics[width=.12\textwidth]{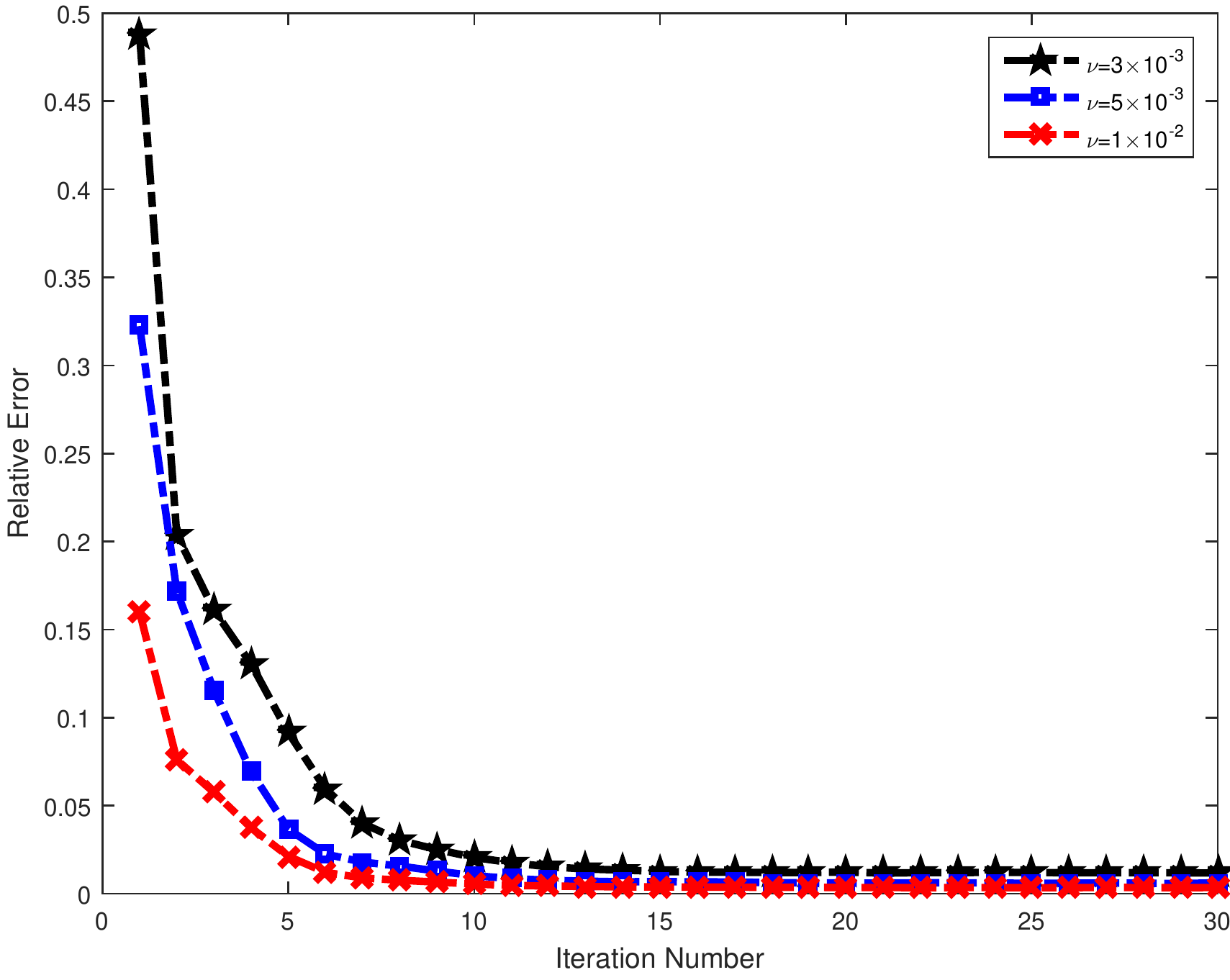}}\\
\vskip -.05in
\setcounter{subfigure}{0}
\subfigure[TGV-PR]{\includegraphics[width=.12\textwidth]{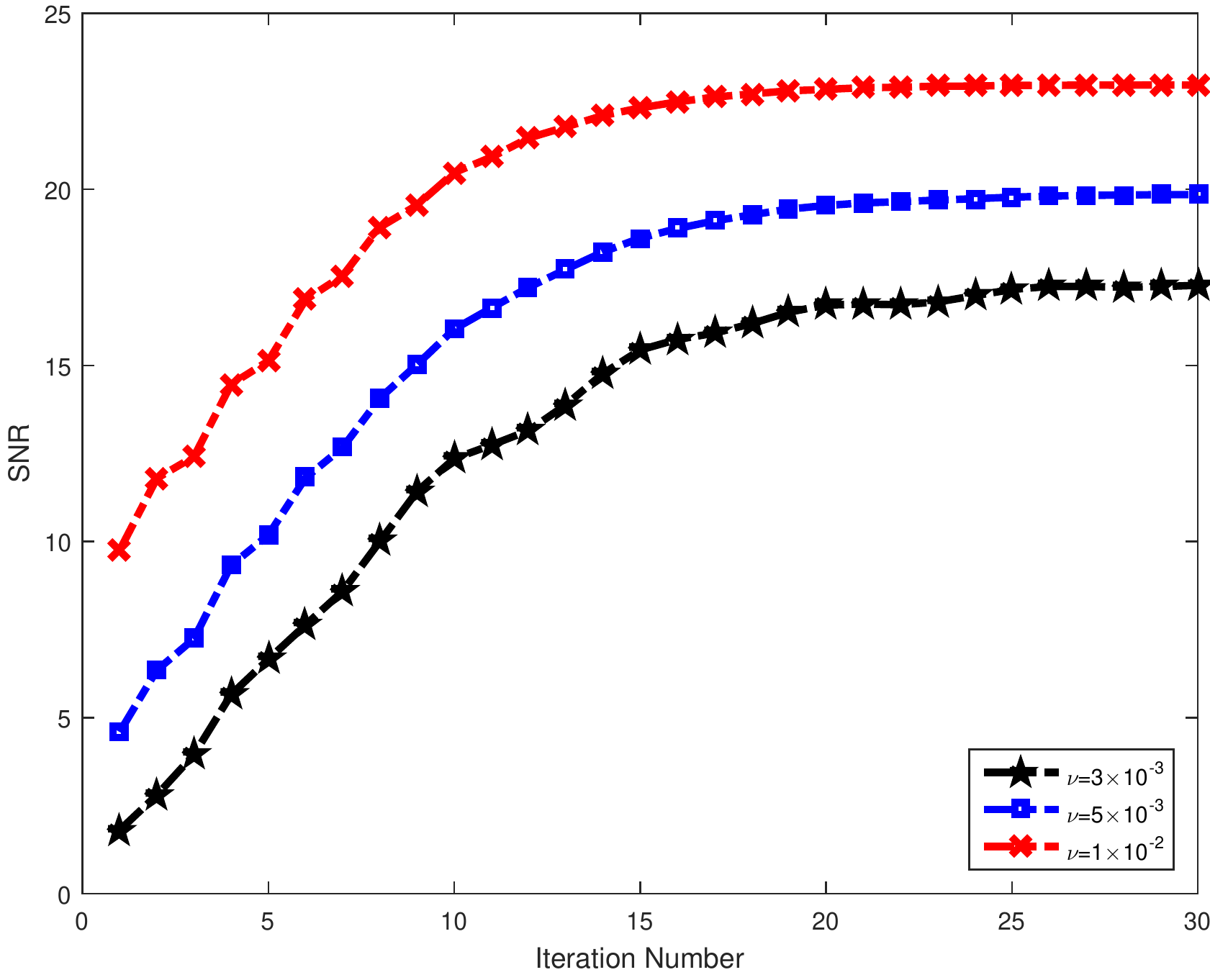}}
\subfigure[NLM-PR]{\includegraphics[width=.12\textwidth]{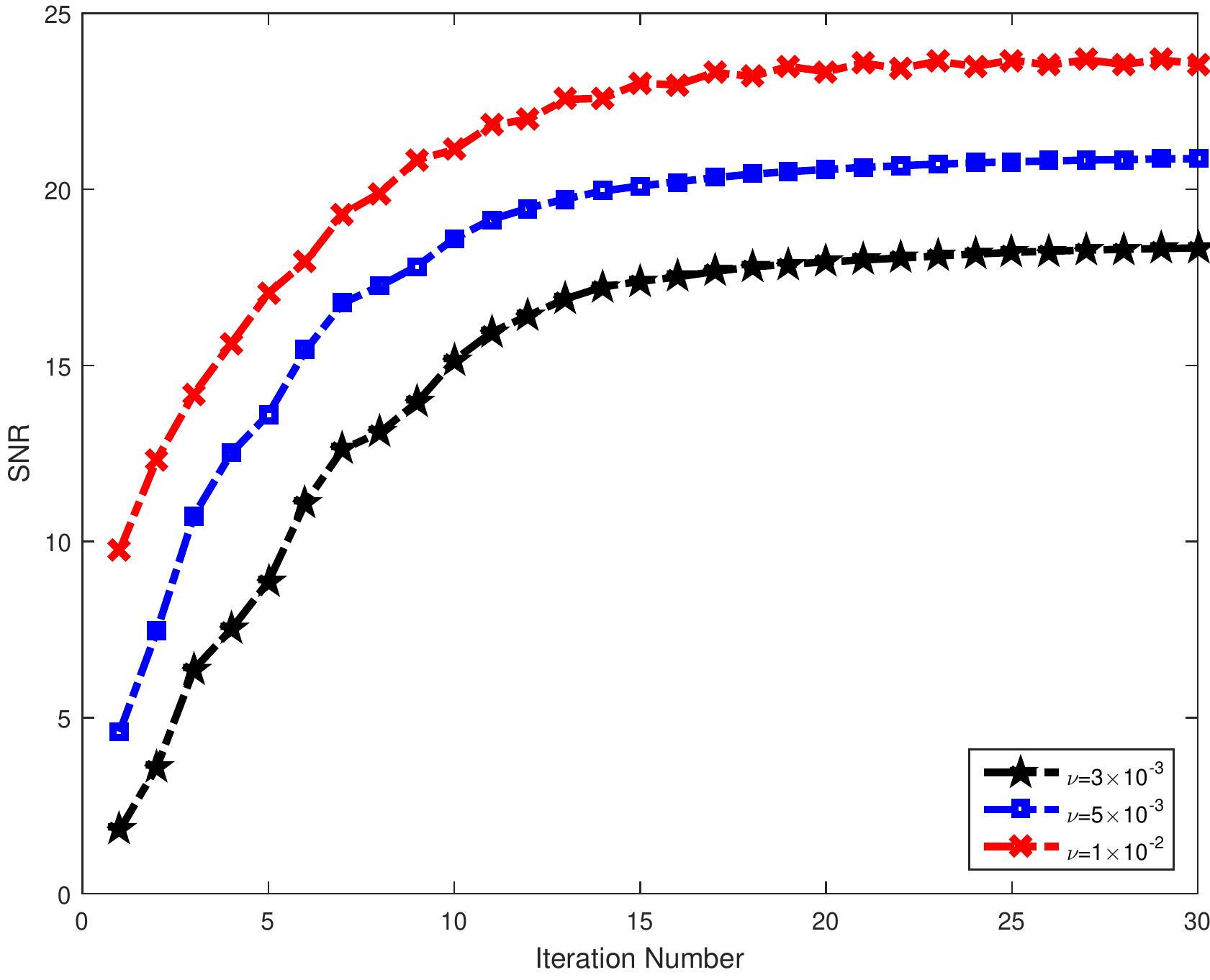}}
\subfigure[BM3D-PR]{\includegraphics[width=.12\textwidth]{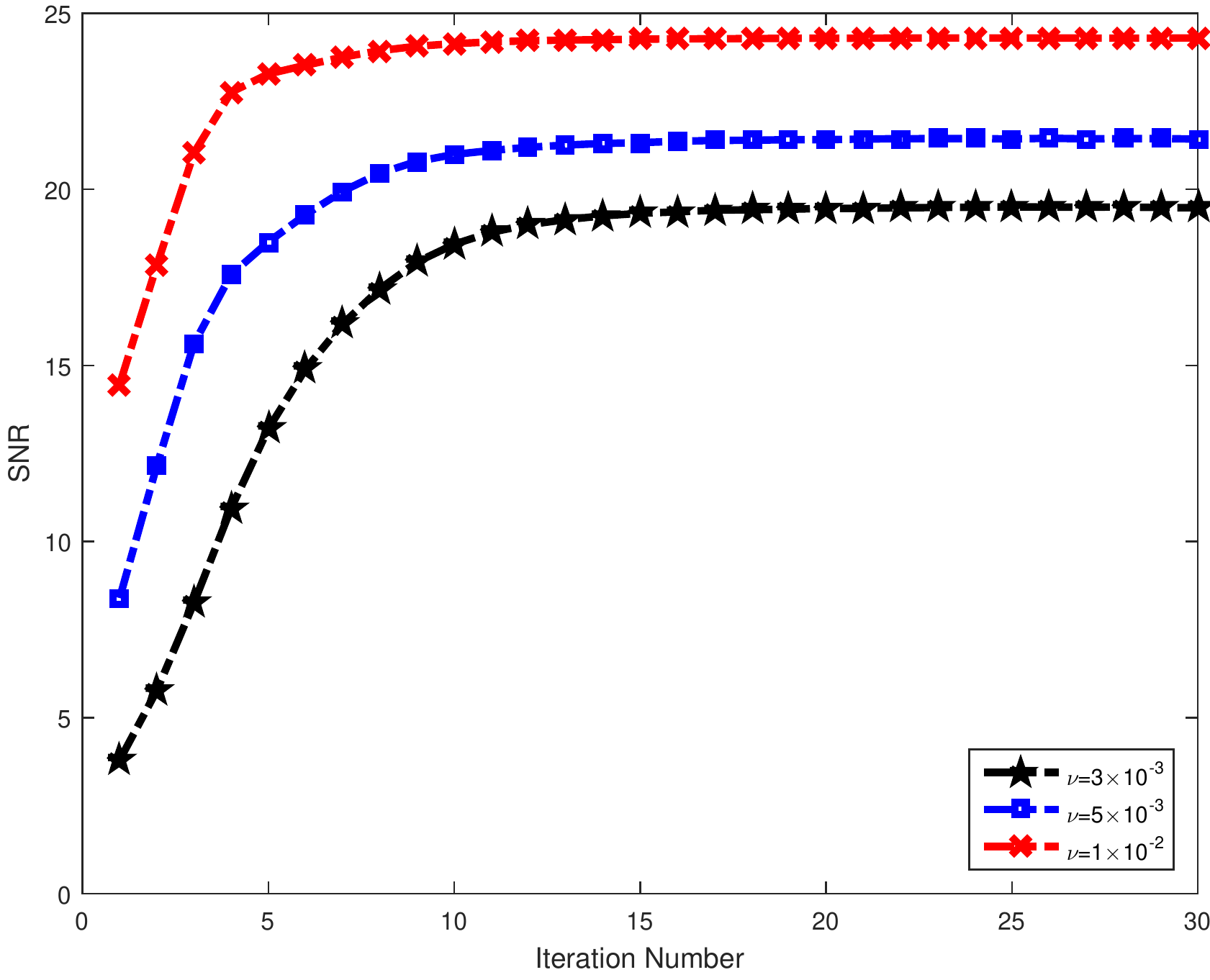}}
\end{center}
\caption{Convergence study by checking the histories of the SNRs and relative errors w.r.t. the iteration number. First row: The relative errors w.r.t. iteration number; Second row: The SNRs w.r.t. iteration number. From left to right: histories of relative errors and SNRs for ``TGV-PR'', ``NLM-PR'' and ``BM3D-PR'' respectively.}
\label{hist}
\end{figure}


We also perform an experiment to show the advantage of symmetric updating for our proposed methods compared with asymmetric style (remove Step 2 of \eqref{ADMMProximal}), and see the results in Figure \ref{hist1} for the history of the SNRs. One can readily observe the acceleration by introducing the symmetric iteration. A possible direction is to adopt the adaptive step to updating the multipliers in order to further speedup the algorithm, and we also put it as a future work.
\begin{figure}
\begin{center}
\subfigure[TGV-PR]{\includegraphics[width=.12\textwidth]{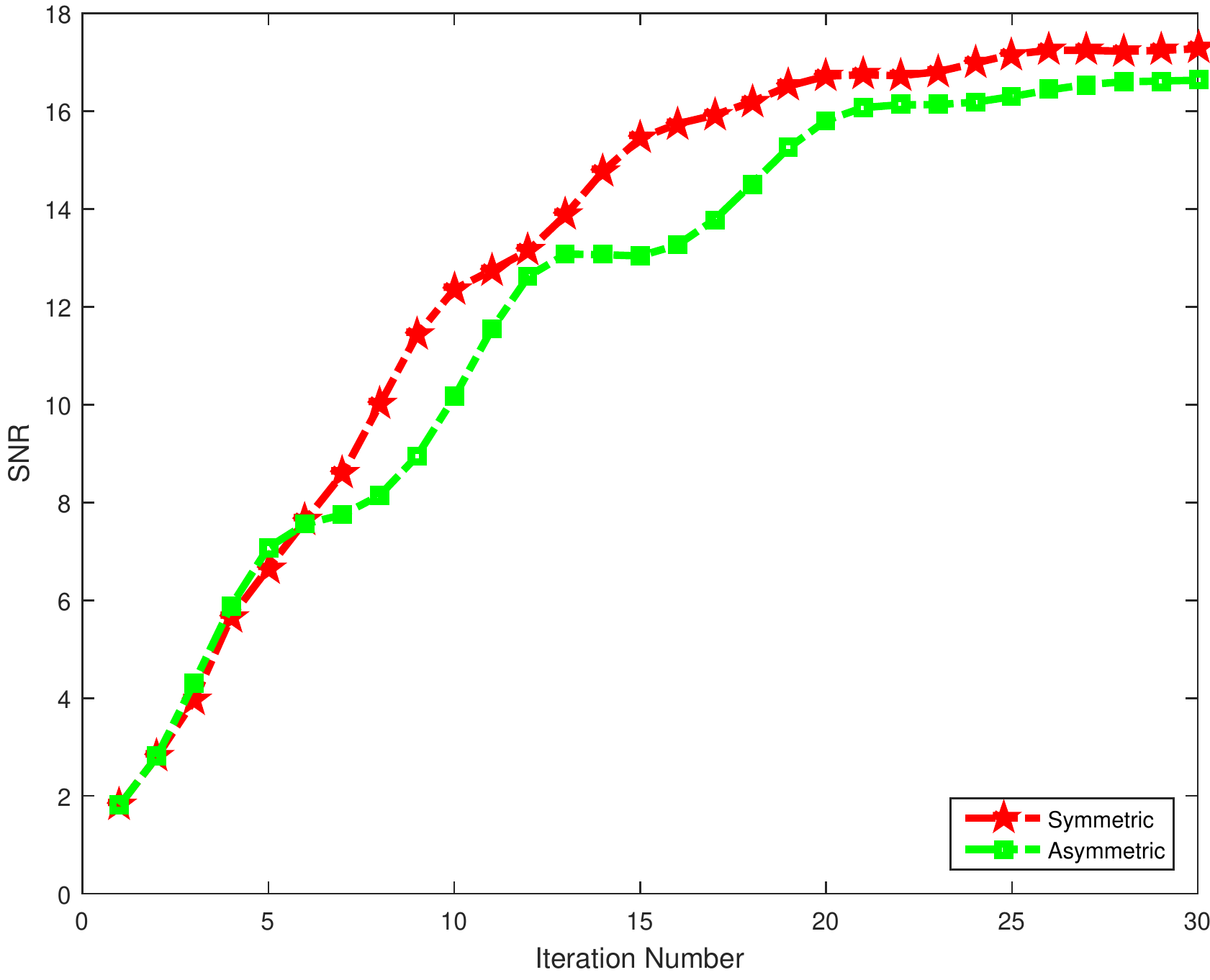}}
\subfigure[NLM-PR]{\includegraphics[width=.12\textwidth]{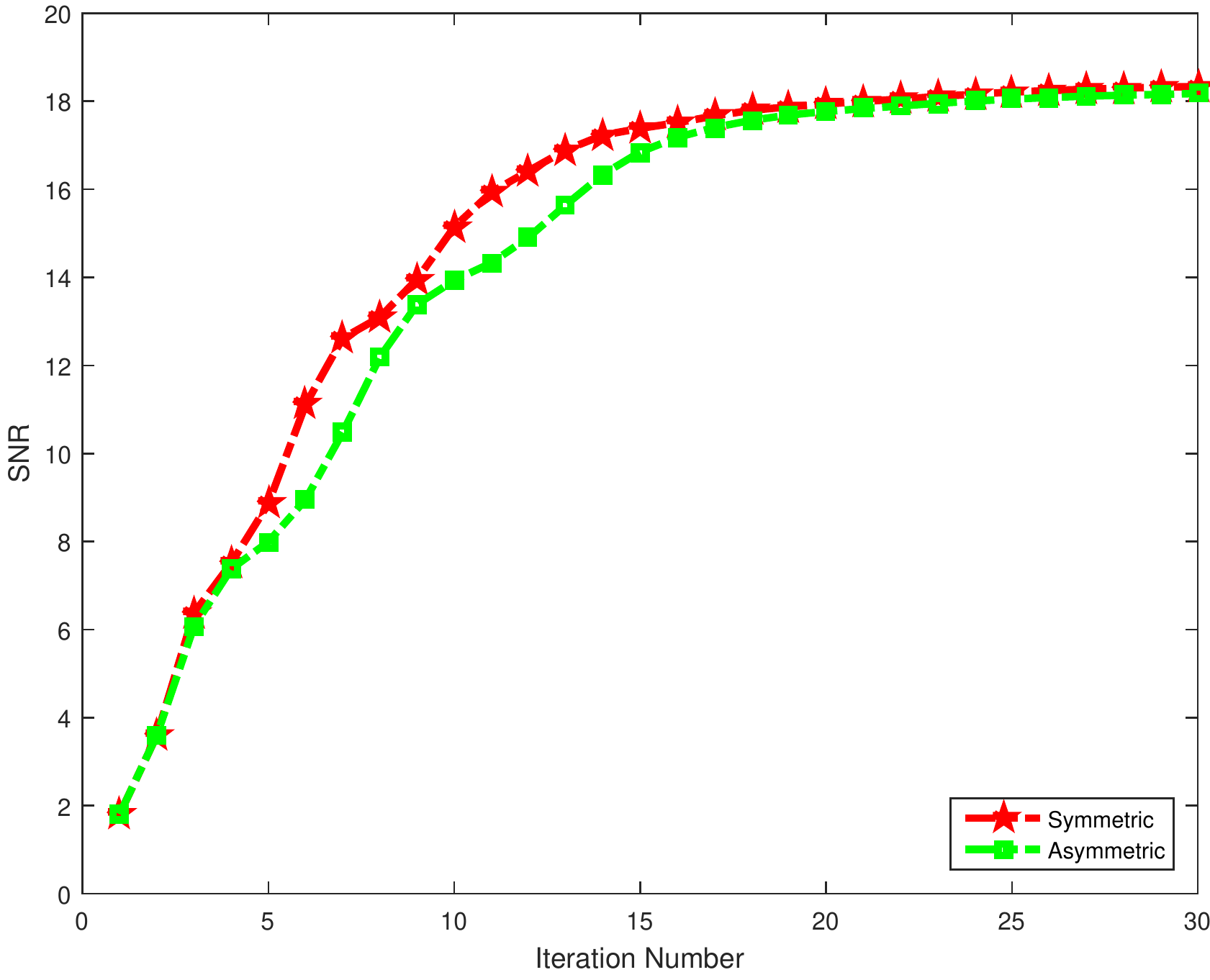}}
\subfigure[BM3D-PR]{\includegraphics[width=.12\textwidth]{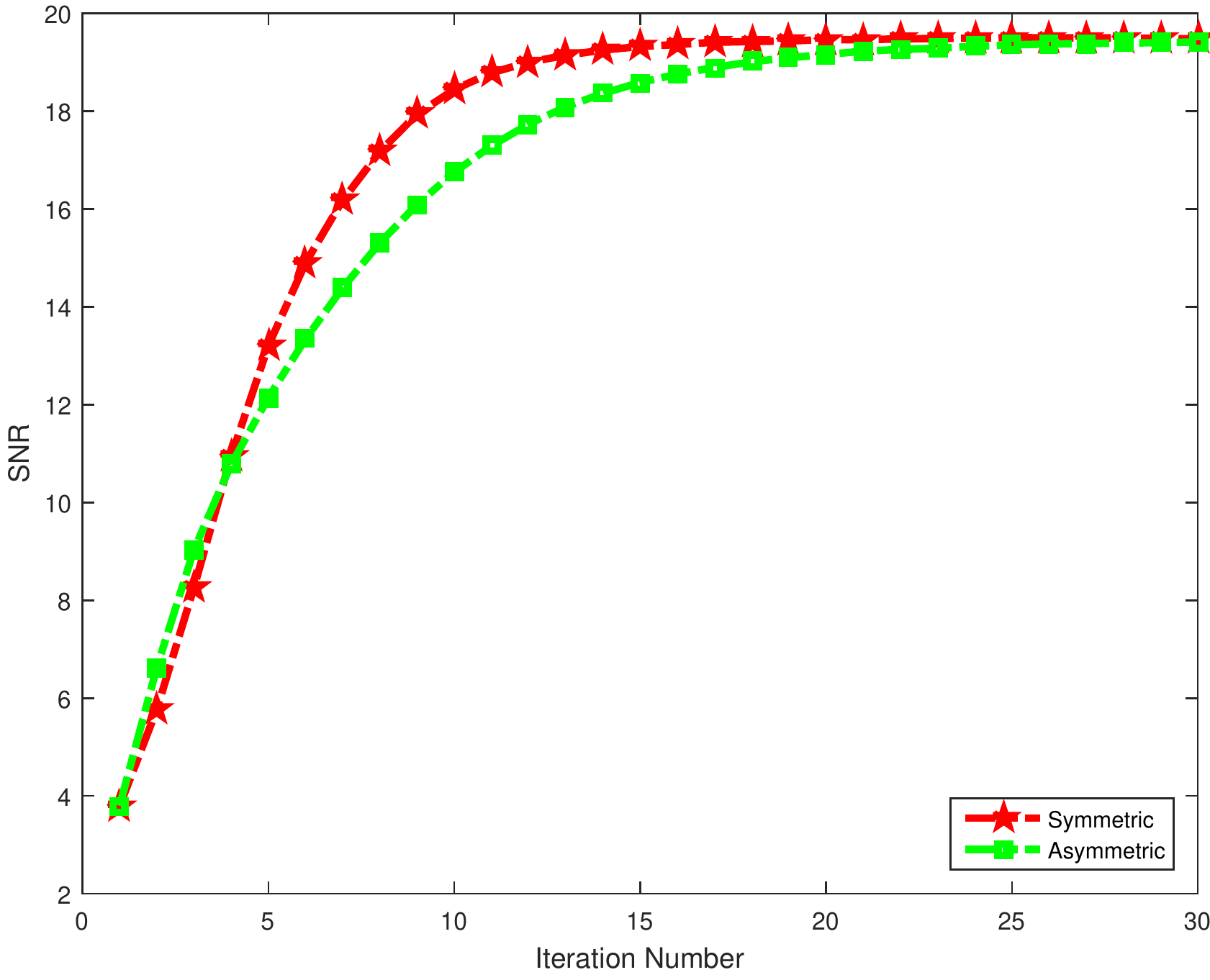}}
\end{center}
\caption{Performances comparison for  symmetric updating and asymmetric updating. From left to right: histories of SNRs for ``TGV-PR'', ``NLM-PR'' and ``BM3D-PR'' respectively.}
\label{hist1}
\end{figure}

\subsection{Stable performance for different parameters}
 Here the Poisson noise removal of CDP for real-valued image ``Lena'' is considered.   Set peak level as  $\nu=3.0\times10^{-3}$. $\lambda^0=7.0\times 10^2$, and $r^0=5.0\times 10^5$ as shown in Table \ref{tab1}. We test the performance of the proposed methods w.r.t the parameters $\lambda$ and $r$, where we vary one with  the other fixed. Particularly, we choose $\lambda$ from $\{\lambda^0\times2^{-l},\lambda^0\times2^{-l+1},\cdots,\lambda^0\times2^{l-1},\lambda^0\times2^{l}\}$ with $l=5$ and fix $r=r^0$, and see the resulted SNRs in Figure \ref{hist2} (a).  The parameter $\lambda$ play a role in balancing  the data fitting term and regularization term. If the parameter  $\lambda$ becomes very small, regularization effect get relative weak, and as a result, the recovery results contain more noise. If it is very large, the recovery results go far away from the measurements, and one also recover the images with lower quality.  Similarly, choose $r$ from $\{r^0\times2^{-l},r^0\times2^{-l+1},\cdots,r^0\times2^{l-1},r^0\times2^{l}\}$ with $l=5$ and fixed $\lambda=\lambda^0$ and plot the corresponding SNRs  of the reconstructed images in Figure \ref{hist2} (b). Since the proposed methods solve the nonconvex optimization problem just simply by  ADMM, it demonstrates we shall select a moderate value for parameter $r$.
\begin{figure}
\begin{center}
\subfigure{\includegraphics[width=.12\textwidth]{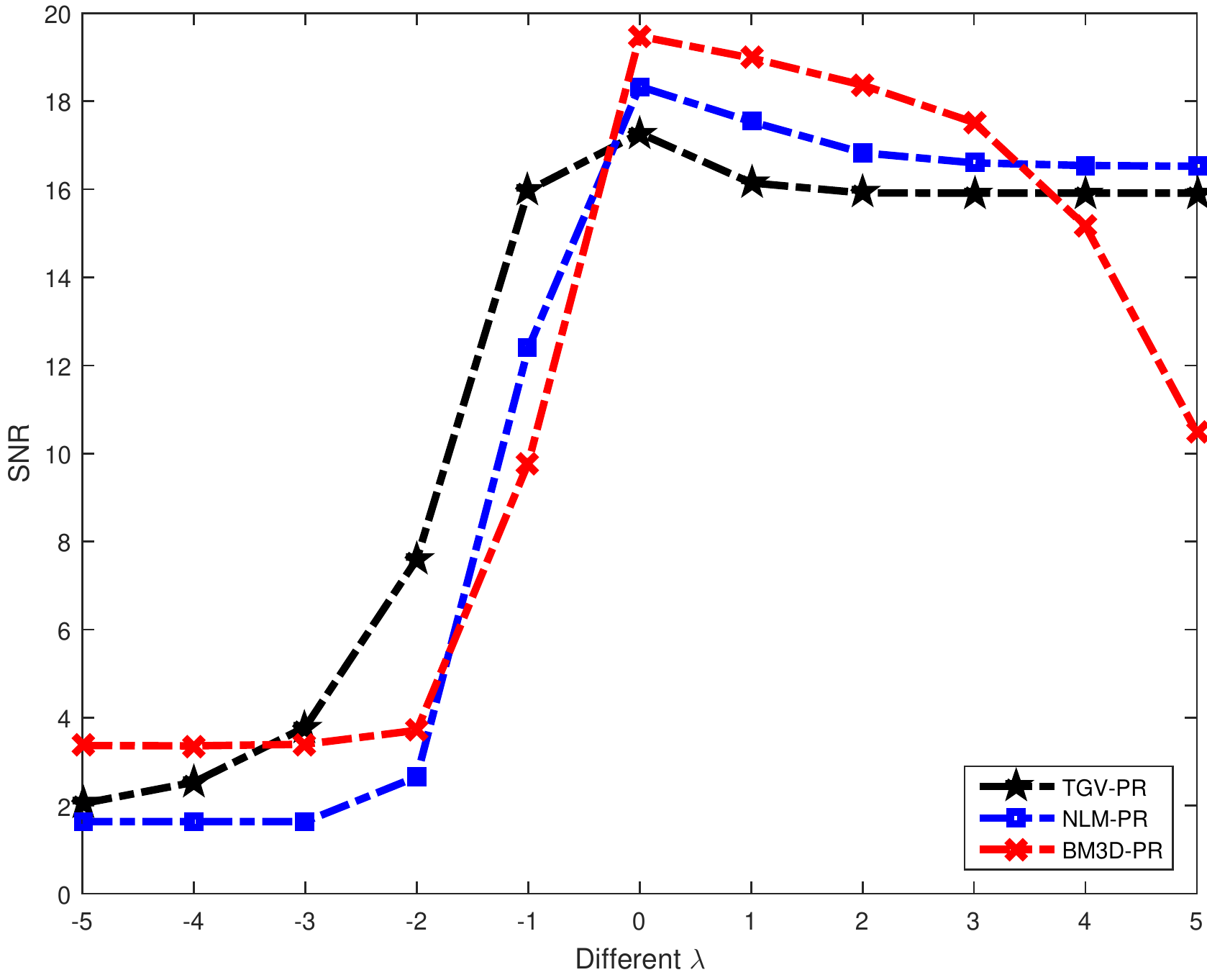}}\qquad
\subfigure{\includegraphics[width=.12\textwidth]{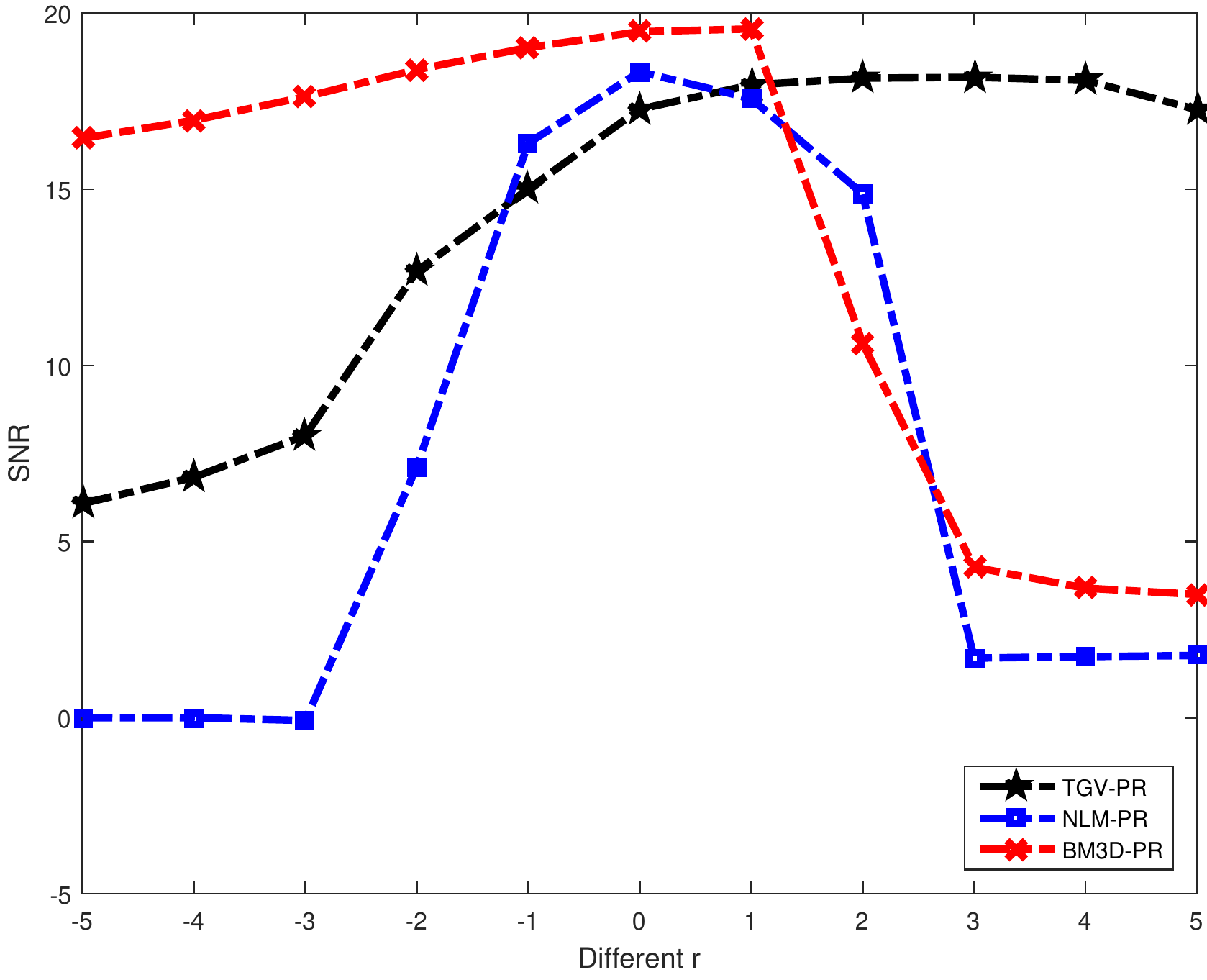}}
\end{center}
\caption{Performances w.r.t. parameters $\lambda$ and $r$.}
\label{hist2}
\end{figure}

\section{Conclusion}\label{sec:conclusions}
In this paper, we propose a simple and flexible framework to retrieve phase from noisy measurements contaminated by Poisson or Gaussian nose. By incorporating higher order TV as TGV and nonlocal sparsity based filters as NLM and BM3D, the image are recovered with sharp edges, clean background and repetitive features.  Numerical experiments show that even for the complicated complex-valued image with both large scale and repetitive smaller scale features, our proposed methods can work well compared with the traditional  PR algorithm without regularization, and simple ``TV-PR'' method. Our proposed methods rely on some important parameters, and in the future it is very interesting to explore an automatical approach for selecting optimal parameters in practise.

\section*{Acknowledgment}
Dr. H. Chang was partially supported by China Scholarship Council (CSC), National Natural Science Foundation of China (NSFC No. 11426165 and No. 11501413) and Tianjin 131 Talent Project.  This work was also partially
funded by the Center for Applied Mathematics
for Energy Research Applications, a joint ASCR-BES
funded project within the Office of Science, US Department
of Energy, under contract number DOE-DE-AC03-76SF00098.

\ifCLASSOPTIONcaptionsoff
  \newpage
\fi

\bibliographystyle{IEEEtran}
\bibliography{tip2014v1}

\end{document}